\documentclass[11pt]{amsart}
\usepackage{amsfonts,amssymb}
\usepackage{amsmath}
\usepackage{amsthm,amscd,latexsym}
\usepackage{mathrsfs}

\newtheorem{theorem}{Theorem}[section]
\newtheorem{corollary}[theorem]{Corollary}
\newtheorem{lemma}[theorem]{Lemma}
\newtheorem{proposition}[theorem]{Proposition}

\newtheorem*{claim}{Claim}

\theoremstyle{definition}
\newtheorem{definition}{Definition}[section]
\newtheorem{remark}{Remark}[section]
\newtheorem{example}{Example}[section]



\DeclareMathOperator{\argmin}{\mathrm{arg\, min}}

\DeclareMathOperator{\End}{\mathrm{End}}

\begin{document}

\title[]{Semigroups of operator means and generalized Karcher equations}
\author[Mikl\'os P\'alfia] {Mikl\'os P\'alfia}
\address{Department of Mathematics, College of Science, Sungkyunkwan University, Suwon 440-746, Republic of Korea}
\email{palfia.miklos@aut.bme.hu}

\subjclass[2000]{Primary 30C45,47A64,53A15,53B05,53C35}
\keywords{operator monotone function, starlike function, operator mean, matrix mean, affine connection}

\dedicatory{Dedicated to prof. D\'enes Petz and prof. Istv\'an Vajk}

\date{\today}

\begin{abstract}
In this article we consider means of positive bounded linear operators on a Hilbert space. We extend the theory of matrix power means to arbitrary operator means in the sense of Kubo-Ando. The basis of the extension is relying on ideas coming from differential geometry. We consider generalized Karcher equations for positive operators and show that such equations admit unique positive solutions that can be obtained as a limit of one-parameter families of operator means called induced operator means. These means are themselves unique fixed points of one parameter families of strict contractions induced, through Kubo-Ando theory of operator means, by semigroups of holomorphic functions mapping the upper half-plane into itself. These semigroups of holomorphic functions are considered with Koenigs function corresponding to Schroeder's functional equation. Koenigs function in this setting provides us with a logarithm map corresponding to every 2-variable operator mean. The semigroups of 2-variable means behave as geodesics and we exactly classify the cases when they are indeed geodesics of affinely connected manifolds, thereby providing the cases when these generalized Karcher equations are exactly Karcher equations in the geometric sense. This is achieved by studying the arising holonomy groups. The unique solutions of these generalized Karcher equations are called lambda extensions and have numerous desirable properties which are inherited from the induced operator means themselves.

\end{abstract}

\maketitle

\section{Introduction}
Let $E$ be Hilbert space and $S(E)$ denote the Banach space of bounded linear self-adjoint operators. Let $\mathbb{P}\subseteq S(E)$ denote the cone of positive definite operators on $E$. In this article we are concerned with means of members of $\mathbb{P}$ that enjoy certain attractive properties that recently became important from the point of view of averaging in the finite dimensional case, see for example \cite{AF,fillard,karcher,Ba08,BBC}. Usually the main difficulties here arise from the required property of operator monotonicity, i.e. our means must be monotone with respect to the positive definite order on $\mathbb{P}$. The 2-variable theory of such functions is relatively well understood, each such function is represented by an operator monotone function according to the theory of Kubo-Ando \cite{kubo}, however in the several variable case we have no such characterization of operator monotone functions.

When $E$ is finite dimensional, then there are additional geometrical structures that are used to define certain n-variable mappings on $\mathbb{P}$ that are regarded as several variable operator means \cite{ando,moakher,lawsonlim}. In this setting $\mathbb{P}$ is just the cone of positive n-by-n Hermitian matrices denoted by $\textit{P}(n,\mathbb{C})$. It is a smooth manifold as an open subset of the vector space of n-by-n Hermitian matrices $\textit{H}(n,\mathbb{C})$ (which is just $S(E)$ in this case) and has a Riemannian symmetric space structure $\textit{P}(n,\mathbb{C})\cong\textit{GL}(n,\mathbb{C})/\textit{U}(n,\mathbb{C})$, where $\textit{U}(n,\mathbb{C})$ is the unitary group \cite{bridsonhaefliger}. This symmetric space is nonpositively curved, hence a unique minimizing geodesic between any two points exists. The midpoint operation on this space, which is defined as taking the middle point of the geodesic connecting two points, is the geometric mean of two positive definite matrices \cite{bhatia2}. The multivariable geometric mean or Karcher mean of the k-tuple ${\Bbb A}:=(A_1,\ldots,A_k)\in\textit{P}(n,\mathbb{C})^k$ is defined as the center of mass
\begin{equation}\label{riemmean}
\Lambda(w_1,\dots,w_k;A_{1},\dots,A_{k})=\underset{X\in \textit{P}(n,\mathbb{C})}{\argmin}\sum_{i=1}^k w_i\delta^2(X,A_i).
\end{equation}
on the Riemannian manifold $\textit{P}(n,\mathbb{C})$ endowed with the trace metric
\begin{equation*}
d(A,B)=\sqrt{Tr\log(A^{-1}B)}
\end{equation*}
with respect to the positive probability vector $\omega:=(w_1,\ldots,w_k)$. The Karcher mean $\Lambda(\omega;{\Bbb A})$ is also the unique positive definite solution of the Karcher equation
\begin{equation}\label{eq:intr1}
\sum_{i=1}^k w_i\log(X^{-1}A_i)=0
\end{equation}
corresponding to the gradient of the function in the minimization problem \eqref{riemmean}.

Recently Lim and P\'alfia \cite{limpalfia} found a one parameter family of multivariable matrix means called the matrix power means which are defined as the unique positive definite solution of the matrix equation
\begin{equation}\label{eq:intr2}
X=\sum_{i=1}^{n}w_iG_t(X,A_{i})
\end{equation}
where $G_t(A,B)=A^{1/2}\left(A^{-1/2}BA^{-1/2}\right)^tA^{1/2}$, $t\in[0,1]$ is the weighted geometric mean of $A,B\in\textit{P}(n,\mathbb{C})$. An attractive property of this family is that as the defining parameter $t\to 0$, the matrix power means converge to the Karcher mean. Moreover this limiting behavior still holds if $E$ is infinite dimensional \cite{lawsonlim1}, i.e. the operator equation \eqref{eq:intr1} still has a unique positive solution and the unique positive definite solutions of the operator equations \eqref{eq:intr2} for $t\in[0,1]$ still converge to the unique solution of the Karcher equation \eqref{eq:intr1}, although there is no Riemannian metric in the infinite dimensional case. This limiting behavior is used to prove certain nice properties of the Karcher mean, for example operator monotonicity, since the matrix power means have nice properties and these properties are preserved in the limit $t\to 0$. The source of such results can be traced back to the existence of affinely connected geometric structures, like the Riemannian symmetric space structure of $\textit{P}(n,\mathbb{C})$ \cite{ando,moakher,lawsonlim}. For example the well known arithmetic and harmonic means also occur as center of mass operations on, this time, Euclidean spaces, hence also unique solutions of gradient (in other words Karcher) equations.

In this paper we are concerned of extending the above ideas to the case of all possible operator means. One of the main results proved in Section 8 is that an extension of \eqref{eq:intr2} for all possible 2-variable operator means $M(A,B)$ (in the sense of Kubo-Ando)
\begin{equation}\label{eq:intr3}
X=\sum_{i=1}^{n}w_iM(X,A_{i})
\end{equation}
admits a unique positive definite solution in $\mathbb{P}$ defining a mean extension to multiple variables called the induced operator means in Section 9. We achieve this by showing that the map $f(X)=M(A,X)$ is a strict contraction with respect to Thompson's part metric \cite{thompson} on arbitrary bounded subsets of the cone $\mathbb{P}$, hence the map $g(X)=\sum_{i=1}^{n}w_iM(X,A_{i})$ is also a strict contraction, so it has a unique fixed point in $\mathbb{P}$. This result itself, as a byproduct, also proves the open problem that the extension of Ando-Li-Mathias \cite{ando} also works in the infinite dimensional setting $\mathbb{P}$ for all 2-variable operator means using the constructions in \cite{lawsonlim0}. In our setting this contraction property leads to a new multivariable theory of operator means relying on these induced operator means. We show several crucial properties of these induced means, for example operator monotonicity and later our goal is to consider one parameter families of these induced means in Section 10, similarly to the case of matrix power means \eqref{eq:intr2}.

In order to consider those one parameter families we have to construct such families for 2-variable operator means in the sense of Kubo-Ando. In the mentioned cases above of the geometric, arithmetic and harmonic means these one parameter families are naturally derived from the corresponding geometric structures as geodesic lines. In Section 12 we find all possible 2-variable operator means which occur such a way, in other words we classify affine operator means (the problem was raised in \cite{palfia3} and \cite{fujii}). It turns out that these means are exactly the matrix power means. We prove also that the corresponding affine connections are 
\begin{equation*}
\nabla_{X_p}Y_p=DY[p][X_p]-\frac{\kappa}{2}\left(X_pp^{-1}Y_p+Y_pp^{-1}X_p\right)\text{,}
\end{equation*}
where $0\leq\kappa\leq 2$ and the tangent space is $\textit{H}(n,\mathbb{C})$ at every point $p\in \textit{P}(n,\mathbb{C})$. These connections appear earlier when we construct them as prototypes of invariant affine connections in Section 6. In Section 13 among other results we show that these affine connections are non-metric in general, i.e. there exist no other Riemannian (or other metric) structures as in the case of the Karcher mean \eqref{riemmean} in the finite dimensional setting. In order to achieve this we investigate the holonomy groups and other properties of these affine connections.

By knowing that in general there are no geometric structures available for us, we consider instead one parameter semigroups of 2-variable operator means. The idea relies on a geometric construction in Section 3 which can be used over a general affinely connected space to reconstruct the logarithm (hence also the exponential) map of the corresponding affine connection from the midpoint operation $m(p,q)=\exp_p(1/2\log_p(q))$ on the manifold as
\begin{equation*}
\log_p(q)=\lim_{n\to \infty}\frac{m(p,q)^{\circ n}-p}{\frac{1}{2^{n}}}
\end{equation*}
where we use the notation $m(p,q)^{\circ n}\equiv m\left(p,m(p,q)^{\circ (n-1)}\right)$ and $\log_p(q)$ is the logarithm map. We apply an analogue of such a process to 2-variable operator means in Section 4 and we obtain a corresponding "logarithm map"
\begin{equation*}
\log_A(B)=A^{1/2}\log_I\left(A^{-1/2}BA^{-1/2}\right)A^{1/2}
\end{equation*}
of the operator mean, where $\log_I(x)$ is an operator monotone function. We show that $\log_A(B)$, hence $\log_I(x)$ directly induce a one parameter family of operator monotone functions which represent operator means. Since all operator monotone functions are Pick functions, i.e. holomorphic function mapping the upper complex half-plane into itself \cite{bhatia}, this construction is closely related to the classical topic in iteration theory \cite{cowen,kuczma,szekeres} of holomorphic functions coming from Koenigs classical paper \cite{koenigs} written on the problem of solving Schroeder's functional equation \cite{schroeder}
\begin{equation*}
\sigma \circ f_t(z)=t\sigma(z)
\end{equation*}
for a given $f_t(z)$ holomorphic function with isolated attractive fixed point. Then in Section 5 we show that taking directly such operator monotone functions $\log_I(x)$ that can be prototype of logarithm maps, we obtain similar one parameter semigroups of operator means. In the univalent case these families are Loewner semigroups of Pick functions which itself has a classical and rich theory \cite{fitzgerald,fitzgerald2,horn,horn2,horn3,horn4}. We also show that the further extendability to greater parameter values of the one parameter family depends on the distribution of the ramification points of the corresponding logarithm map. We relate this extendability property to functional equations over the upper complex half-plane of the form
\begin{equation}
\log_I(f_t(z))=t\log_I(z),
\end{equation}
where $f_t(z)$ is the representing operator monotone function of the matrix mean and $\log_I(z)$ is the corresponding unique logarithm map. We show that if $\log_I(z)$ has no ramification points in the upper half-plane, then the functional equation, hence the one parameter family $f_t(z)$ is a Pick function, i.e. an operator monotone function for all $t\in[0,1]$.

With these one parameter families of operator means in hand we consider the limit of the corresponding parameter $t\to 0+$ in Section 10. We prove that the one parameter semigroup of induced operator means occurring as unique solutions of operator equations
\begin{equation*}
X=\sum_{i=1}^{n}w_iM_t(X,A_{i})
\end{equation*}
converge as $t\to 0+$ in the strong operator topology and the limit point satisfies, what we call, a generalized Karcher equation
\begin{equation}\label{eq:intr4}
\sum_{i=1}^n w_i\log_X(A_i)=0,
\end{equation}
where $\log_X(A)$ is the "logarithm map" corresponding to the one parameter semigroup of 2-variable operator means $M_t(A,B)$. This limit points are referred to as the lambda extensions of an operator mean. Moreover we prove that these lambda extensions provide the unique positive solutions of \eqref{eq:intr4} and the numerous properties fulfilled by the induced operator means, for example as operator monotonicity, are preserved in the limit, hence also fulfilled by the lambda extensions. These considerations provide our other main result.

In Section 11 we consider the consequences of this theory of induced operator means and lambda extensions in the case of 2-variable operator means. We characterize the subset of lambda extensions 
in the set of all 2-variable operator means. We further prove that even in the case of induced operator means there are generalized Karcher equations \eqref{eq:intr4} such that induced operator means provide their unique positive solutions. Then we formulate some further open problems related to these extensions.

\section{Matrix means and some constructions}
Let us recall the family of matrix (or operator) means \cite{kubo}:
\begin{definition}\label{symmean}
A two-variable function \textit{M}:
$ \mathbb{P}\times \mathbb{P}\mapsto \mathbb{P}$ is called
a matrix or operator mean if
\begin{enumerate}
\renewcommand{\labelenumi}{(\roman{enumi})}
\item $M(I,I)=I$ where $I$ denotes the identity,
\item if $A\leq A'$ and $B\leq B'$, then $M(A,B)\leq M(A',B')$,
\item $CM(A,B)C\leq M(CAC,CBC)$ for all Hermitian $C$,
\item if $A_n\downarrow A$ and $B_n\downarrow B$ then $M(A_n,B_n)\downarrow M(A,B)$,
\end{enumerate}
where $\downarrow$ denotes the convergence in the strong operator topology of a monotone decreasing net.
\end{definition}
In property (ii), (iii), (iv) the partial order being used is the positive definite order, i.e. $A\leq B$ if and only if $B-A$ is positive semidefinite. An important consequence of these properties is \cite{kubo} that every matrix mean can be uniquely represented by a normalized, operator monotone function $f(t)$ in the following form
\begin{equation}\label{mean}
M(A,B)=A^{1/2}f\left(A^{-1/2}BA^{-1/2}\right)A^{1/2}\text{.}
\end{equation}
This unique $f(t)$ is said to be the representing function of the matrix mean $M(A,B)$. So actually matrix means are in one to one correspondence with normalized operator monotone functions, the above characterization provides an order-isomorphism between them. Normalization means that $f(1)=1$. For symmetric means, i.e. for means $M(A,B)=M(B,A)$, we have $f(t)=tf(1/t)$ which implies that $f'(1)=1/2$. Operator monotone functions have strong continuity properties, namely all of them are analytic functions and can be analytically continued to the upper complex half-plane. This is the consequence of the integral characterization of an operator monotone function $f(t)$, which is given over the interval $(0,\infty)$:
\begin{equation}\label{integralchar}
f(t)=\alpha+\beta t+\int_{0}^{\infty}\left(\frac{\lambda}{\lambda^2+1}-\frac{1}{\lambda+t}\right)d\mu(\lambda)\text{,}
\end{equation}
where $\alpha$ is a real number, $\beta\geq 0$ and $\mu$ is a unique positive measure on $(0,\infty)$ such that
\begin{equation}
\int_{0}^{\infty}\frac{1}{\lambda^2+1}d\mu(\lambda)<\infty\text{.}
\end{equation}
Actually the interval $(0,\infty)$ may be changed to an arbitrary $(a,b)$, in this case the integral is transformed to this interval accordingly. These are the consequences of the theory of Loewner, an introduction to the theory can be found in Chapter V \cite{bhatia}. We will use such integral characterization at several points in the article. The set of all matrix means is denoted by $\mathfrak{M}$, i.e.
\begin{equation*}
\begin{split}
\mathfrak{M}=\{&M(\cdot,\cdot):M(A,B)=A^{1/2}f(A^{-1/2}BA^{-1/2})A^{1/2}\text{, }f\text{ operator monotone on }\\
&(0,\infty), f(1)=1\}.
\end{split}
\end{equation*}
Similarly $\mathfrak{m}=\{f(x):f$ is a representing function of an $M\in\mathfrak{M}\}$.

One of our objectives is to find all possible symmetric matrix means which are also geodesic midpoint operations on smooth manifolds. Or more generally those matrix means that are arbitrary dividing points of geodesics. We will call such a matrix mean affine \cite{palfia3}:
\begin{definition}[Affine matrix mean]\label{affinemean}
An affine matrix mean $M:W^2\mapsto W$ is a matrix mean which is also a point of an arc-length parametrized geodesic on a smooth manifold $W\supseteq \textit{P}(n,\mathbb{C})$ equipped with an affine connection $\nabla$. I.e. $M(A,B)=\exp_A(t\log_A(B))$ for a fixed $t\in(0,1)$ and for all $A,B\in \textit{P}(n,\mathbb{C})$, $B$ is assumed to be in the injectivity radius of the exponential map $\exp_A(x)$ of the connection $\nabla$ given at the point $A$. The mapping $\log_A(x)$ is just the inverse of the exponential map at the point $A\in W$.
\end{definition}

We can make some basic observations about affine matrix means. First of all note, that by \eqref{mean} we have that $f(X)=M(I,X)$, so if $M(A,B)$ is an affine matrix mean, then $f(X)$ is some point of a geodesic connecting $X$ and $I$. Also on a smooth manifold with an affine connection if we differentiate the exponential map $\exp_{p}(X)$ at $p$, then we get $d\exp_p=I_p$, where $I_p$ is the identity transformation of the tangent space at $p$ \cite{helgason}. Therefore if we differentiate its inverse, the logarithm map $\log_{p}(q)$ we also get $d\log_p=I_p$ at $p$. So if we combine this with the chain rule we get that the differential of the mapping $M(p,q)=\exp_p(t\log_p(q))$ is $dM(p,\cdot)=tI_p$.

Now if we apply the above argument to an affine matrix mean $M(A,B)$ we get the following result.
\begin{proposition}
Let $M(A,B):=\exp_A(t\log_A(B))$ be an affine matrix mean. Then $f'(1)=t$.
\end{proposition}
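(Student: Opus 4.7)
The plan is to combine the preceding two paragraphs directly with the scalar representing function. From \eqref{mean}, setting $A=I$ yields $f(X)=M(I,X)$, so $f$ (extended to operators by the usual functional calculus) agrees with the mapping $X\mapsto M(I,X)$ on $\mathbb{P}$. By the defining equation of an affine matrix mean this composite is $f(X)=\exp_{I}(t\log_{I}(X))$. I would then differentiate at $X=I$ and exploit the two differential identities quoted just above the proposition: $d\exp_{p}|_{0}=I_{p}$ at the base point, and consequently $d\log_{p}|_{p}=I_{p}$, so by the chain rule the Fr\'echet differential $df|_{I}\colon H(n,\mathbb{C})\to H(n,\mathbb{C})$ acts as the scalar $t$ times the identity of the tangent space at $I$.

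The remaining step is to identify this differential with $f'(1)$. Since $I$ is a scalar multiple of the identity, the Fr\'echet derivative of $f$ at $I$ (viewed through the functional calculus) acts on any Hermitian direction $H$ simply by $f'(1)H$; this is the standard fact that for commuting perturbations the derivative of $f$ is scalar multiplication by $f'(1)$. Comparing with the previous paragraph's conclusion that this derivative is $tH$, one reads off $f'(1)=t$.

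The main obstacle is really just making the identification of the two derivatives precise: on the one hand $df|_{I}$ arises through the connection and its exponential/logarithm maps on $W$, and on the other through the one-variable scalar calculus of $f$ lifted to operators. One has to verify that these two viewpoints of ``$df$ at $I$'' coincide, which boils down to noting that the tangent space to $\textit{P}(n,\mathbb{C})\subseteq W$ at $I$ is canonically $H(n,\mathbb{C})$ and that the operator mean formula \eqref{mean} is built from precisely this functional calculus. Once that identification is granted, the equality $tI_{p}=f'(1)I_{p}$ on the tangent space finishes the proof.
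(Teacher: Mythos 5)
Your proposal is correct and follows essentially the same route as the paper: both arguments equate the Fr\'echet differential of $X\mapsto M(\cdot,X)$ at the base point computed two ways, once as $t$ times the identity via $d\exp_p=d\log_p=I_p$ and the chain rule, and once as $f'(1)$ times the identity via the functional calculus (power series) of the operator monotone representing function. The only cosmetic difference is that the paper works at a general point $A$ and reduces to $I$ by the congruence formula \eqref{mean}, whereas you set $A=I$ from the outset.
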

\begin{proof}
Since $\textit{P}(n,\mathbb{C})$ is diffeomorphically embedded in $\textit{H}(n,\mathbb{C})$, therefore we can differentiate the map $M(A,B)=\exp_A(t\log_A(B))$ using the vector space structure of $\textit{H}(n,\mathbb{C})$, i.e. calculate the Fr\'echet differential which we denote for an arbitrary differentiable function $g$ by
\begin{equation}
Dg[X][Y]=\lim_{s\to 0}\frac{g(X+Ys)-g(X)}{s}
\end{equation}
at the matrix $X$ in the direction of the matrix $Y$. So by \eqref{mean} for all $H\in \textit{H}(n,\mathbb{C})$ we have
\begin{equation*}
\begin{split}
&\lim_{s\to 0}\frac{M(A,A+Hs)-M(A,A)}{s}\\
&=\lim_{s\to 0}A^{1/2}\frac{M(I,I+A^{-1/2}HA^{-1/2}s)-M(I,I)}{s}A^{1/2}=\\
&=A^{1/2}\lim_{s\to 0}\frac{f(I+A^{-1/2}HA^{-1/2}s)-f(I)}{s}A^{1/2}=A^{1/2}Df[I][A^{-1/2}HA^{-1/2}]A^{1/2}.
\end{split}
\end{equation*}
Since $f$ is an operator monotone function on $(0,\infty)$, it admits an integral characterization \eqref{integralchar}, so it can be analytically continued to the upper half-plane through the interval $(0,\infty)$. Therefore we may differentiate a power series representation of $f$, that uniformly converges on an open interval which contains $1$, so then we get that $Df[I][K]=Df[I][I]K=f'(1)K$ for all $K\in\textit{H}(n,\mathbb{C})$. Combining this with the above we get that 
\begin{equation*}
\lim_{s\to 0}\frac{M(A,A+Hs)-M(A,A)}{s}=A^{1/2}\left(Df[I][I]A^{-1/2}HA^{-1/2}\right)A^{1/2}=f'(1)H.
\end{equation*}
Since $H$ was arbitrary this yields that $t=f'(1)$, because $dM(p,\cdot)=tI_p$ and also the tangent space of $\textit{P}(n,\mathbb{C})$ at every point can be indentified by $\textit{H}(n,\mathbb{C})$.
$\square$
\end{proof}

By the preceding proposition we shall focus on matrix means represented by operator monotone functions $f$ on $(0,\infty)$ such that $f'(1)\in(0,1)$. We will use the notation $\mathfrak{P}(t)$ to denote the set of all operator monotone functions $f$ on $(0,\infty)$ such that $f(x)>0$ for all $x\in(0,\infty)$ and $f(1)=1, f'(1)=t$. We can find the minimal and maximal elements of $\mathfrak{P}(t)$ for all $t\in(0,1)$ easily.
\begin{lemma}\label{maxmininp}
For all $f(x)\in\mathfrak{P}(t)$ we have
\begin{equation}
\left((1-t)+tx^{-1}\right)^{-1}\leq f(x)\leq (1-t)+tx.
\end{equation}
\end{lemma}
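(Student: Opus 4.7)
\medskip

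\noindent\textbf{Proof plan.} The plan is to prove the upper bound by concavity plus tangent line, and derive the lower bound from the upper bound via the duality $f \mapsto 1/f(1/\cdot)$.

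\smallskip

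First I would observe that every $f\in\mathfrak{P}(t)$ is concave on $(0,\infty)$. This drops out of the integral representation \eqref{integralchar}: the kernel $\lambda/(\lambda^2+1)-1/(\lambda+x)$ has second $x$-derivative $-2/(\lambda+x)^3<0$ for $\lambda\geq 0$, while the affine part $\alpha+\beta x$ contributes nothing. Hence $f''\leq 0$ on $(0,\infty)$. Since $f(1)=1$ and $f'(1)=t$, the tangent line at $1$ dominates $f$, which gives exactly
\begin{equation*}
f(x)\leq f(1)+f'(1)(x-1)=(1-t)+tx,
\end{equation*}
so the upper bound is immediate.

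\smallskip

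For the lower bound I would pass to the \emph{adjoint} representing function
\begin{equation*}
f^{\perp}(x):=\frac{1}{f(1/x)},
\end{equation*}
and show $f^{\perp}\in\mathfrak{P}(t)$. Positivity and the normalization $f^{\perp}(1)=1$ are immediate from $f>0$ and $f(1)=1$. A direct chain-rule computation gives
\begin{equation*}
(f^{\perp})'(1)=\frac{f'(1)}{f(1)^{2}}=t.
\end{equation*}
Operator monotonicity of $f^{\perp}$ follows from two order-reversing steps composed: if $0<A\leq B$ then $B^{-1}\leq A^{-1}$, hence $f(B^{-1})\leq f(A^{-1})$ by operator monotonicity of $f$, and inverting once more yields $f(A^{-1})^{-1}\leq f(B^{-1})^{-1}$, i.e.\ $f^{\perp}(A)\leq f^{\perp}(B)$. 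Thus $f^{\perp}\in\mathfrak{P}(t)$, and applying the upper bound already established to $f^{\perp}$ gives
\begin{equation*}
\frac{1}{f(1/x)}=f^{\perp}(x)\leq (1-t)+tx.
\end{equation*}
Substituting $x\mapsto 1/x$ and inverting rearranges this to
\begin{equation*}
f(x)\geq\bigl((1-t)+tx^{-1}\bigr)^{-1},
\end{equation*}
which is the desired lower bound.

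\smallskip

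The only step that needs care is the duality claim $f^{\perp}\in\mathfrak{P}(t)$; everything else is a one-line consequence of concavity plus the tangent-line inequality. Concavity is standard for positive operator monotone functions on $(0,\infty)$ via \eqref{integralchar}, and the adjoint construction $f\mapsto 1/f(1/\cdot)$ is classical in Kubo--Ando theory (it corresponds to passing to the adjoint mean $M^{\perp}(A,B)=M(A^{-1},B^{-1})^{-1}$), so no new input is required.
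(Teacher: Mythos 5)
Your proof is correct and follows essentially the same route as the paper: the upper bound comes from concavity of operator monotone functions plus the tangent line at $x=1$, and the lower bound is obtained by applying that upper bound to the adjoint function $x\mapsto f(x^{-1})^{-1}$, which lies in $\mathfrak{P}(t)$ by the order-reversal of inversion. Your explicit verification that $(f^{\perp})'(1)=t$ is a detail the paper leaves implicit, but nothing substantive differs.
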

\begin{proof}
Since every operator monotone function is operator concave, see Chapter V \cite{bhatia}, therefore we must have $f(x)\leq (1-t)+tx$ by concavity and the normalization conditions on elements of $\mathfrak{P}(t)$. Since the map $x^{-1}$ is order reversing on positive matrices, we have that if $f(x)\in\mathfrak{P}(t)$ then also $f(x^{-1})^{-1}\in\mathfrak{P}(t)$. So again by concavity
\begin{eqnarray*}
f(x^{-1})^{-1}&\leq& (1-t)+tx\\
f(x^{-1})&\geq& \left((1-t)+tx\right)^{-1}\\
f(x)&\geq& \left((1-t)+tx^{-1}\right)^{-1}.
\end{eqnarray*}

\end{proof}

Since $\left((1-t)+tx^{-1}\right)^{-1}$ and $(1-t)+tx$ are operator monotone we see that the they are the minimal and maximal elements of $\mathfrak{P}(t)$ respectively, and also they are the representing functions of the weighted harmonic and arithmetic means. This already gives us that the minimal and maximal affine matrix means are the weighted harmonic and arithmetic means respectively, so if $M(A,B)$ is an affine matrix mean, then
\begin{equation}
\left[(1-t)A^{-1}+tB^{-1}\right]^{-1}\leq M(A,B)\leq (1-t)A+tB.
\end{equation}
In general by the previous Lemma~\ref{maxmininp} the above inequality is true for all $M(A,B)$ matrix means with representing operator monotone function $f$ for which we have $f'(1)=t$. In this sense $\mathfrak{P}(t)$ characterizes weighted matrix means. If we take this as the definition of weighted matrix means, one can compare it with the definition of weighted matrix means given in \cite{palfia3}.

Consider a real differentiable function $f$ on some real open interval $I$ and $a\in I$. The function $f$ has a fixed point at $a$ if $f(a)=a$ and this is an attractive fixed point if $|f'(a)|<1$ or in other words the iterates $f^{\circ n}(x)$ converge to $a$ in a neighborhood of $a$, where $f^{\circ n}(x)=f(f^{\circ (n-1)}(x))$, see \cite{kuczma}.

\begin{lemma}\label{fixedpoint}
All $f(x)\in\mathfrak{P}(t)$ for $t\in(0,1)$ has only one fixed point in $(0,\infty)$ which is $1$ and $1$ is an attractive fixed point on $(0,\infty)$.
\end{lemma}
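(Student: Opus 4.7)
The plan is to use the two-sided envelope from Lemma~\ref{maxmininp} to pin down fixed points exactly and then exploit monotonicity of the iterates. First, for uniqueness, suppose $f(x)=x$ for some $x\in(0,\infty)$. The upper bound $f(x)\leq (1-t)+tx$ gives $x\leq (1-t)+tx$, i.e.\ $(1-t)x\leq 1-t$, hence $x\leq 1$. Symmetrically, the lower bound $f(x)\geq\left((1-t)+tx^{-1}\right)^{-1}$ rewrites as $x\left((1-t)+tx^{-1}\right)\geq 1$, i.e.\ $(1-t)x+t\geq 1$, hence $x\geq 1$. Combining, $x=1$, and $f(1)=1$ by normalization, so $1$ is the unique fixed point.

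Next I would show that $f$ maps each of the intervals $(0,1)$ and $(1,\infty)$ strictly inside itself, between its argument and $1$. On $(1,\infty)$, the strict version of the upper envelope gives $f(x)\leq (1-t)+tx<x$ (since $x>1$ and $t<1$), while the lower envelope gives $f(x)\geq x/\bigl((1-t)x+t\bigr)>1$ because $(1-t)x+t<x$. So $1<f(x)<x$ on $(1,\infty)$. Symmetrically, on $(0,1)$ the lower envelope rewrites as $f(x)\geq x/\bigl((1-t)x+t\bigr)>x$ (since $(1-t)x+t<1$), and the upper envelope yields $f(x)\leq (1-t)+tx<1$. So $x<f(x)<1$ on $(0,1)$.

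Finally, attractiveness follows by monotone convergence. For $x>1$, the sequence $f^{\circ n}(x)$ is strictly decreasing and bounded below by $1$, hence converges to some $L\geq 1$; continuity of $f$ forces $f(L)=L$, and by the uniqueness step $L=1$. For $x<1$, the sequence is strictly increasing and bounded above by $1$, so the same argument gives the limit $1$. For $x=1$ there is nothing to prove. Thus $f^{\circ n}(x)\to 1$ for every $x\in(0,\infty)$, which is exactly the assertion that $1$ is an attractive fixed point on the whole half-line. No step is really the main obstacle here; the Lemma~\ref{maxmininp} envelope does essentially all the work.
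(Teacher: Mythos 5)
Your proof is correct. The uniqueness half is essentially the paper's argument: both of you read the fixed-point statement off the two-sided envelope of Lemma~\ref{maxmininp}, though you carry out the algebra $(1-t)x\leq 1-t$ and $(1-t)x+t\geq 1$ explicitly, which makes the conclusion $x=1$ cleaner. The attractivity half is genuinely different. The paper invokes Banach's fixed point theorem on an interval $(\epsilon,\infty)$ chosen so that $f'(\epsilon)=1$, and separately argues that iterates starting in $(0,\epsilon)$ eventually enter that interval; this requires concavity, control of the derivative, and the existence of such an $\epsilon$ (which in fact fails for the extremal case $f(x)=(1-t)+tx$, where $f'\equiv t<1$ everywhere, so the paper's argument needs a small patch there). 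Your route instead establishes the sandwich $1<f(x)<x$ on $(1,\infty)$ and $x<f(x)<1$ on $(0,1)$ directly from the same envelope, so that the orbit is monotone and bounded, converges to a fixed point by continuity of $f$ (which holds since operator monotone functions are analytic), and that fixed point must be $1$ by the uniqueness step. This is more elementary, avoids any contraction constant or derivative estimate, and gives global convergence on all of $(0,\infty)$ in one stroke; what it does not give, and the paper's derivative-based argument does, is a geometric rate of convergence near $1$ — but the lemma as stated does not ask for one.
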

\begin{proof}
By the definition of $\mathfrak{P}(t)$ for all members $f(x)$ of this set $f(1)=1$, so $1$ is indeed a fixed point. By the preceding Lemma~\ref{maxmininp} we have
\begin{equation*}
\left((1-t)+tx^{-1}\right)^{-1}\leq f(x)\leq (1-t)+tx.
\end{equation*}
Therefore for all $x>1$ we have $f(x)<x$, i.e. $f(x)$ has no fixed point in $(1,\infty)$. Similarly for all $x\in(0,1)$ we have
\begin{equation*}
x<\left((1-t)+tx^{-1}\right)^{-1}\leq f(x),
\end{equation*}
therefore $f(x)$ cannot have a fixed point in $(0,1)$ as well.

Now the attractivity of the fixed point follows from the fact that $f(x)$ is monotonically increasing positive and concave on $(0,\infty)$ by operator monotonicity. Concavity implies that $f''(x)\leq 0$ for all $x\in(0,\infty)$. Also its derivative $f'(1)=t\in(0,1)$ at the fixed point $1$, so by Banach's fixed point theorem this fixed point is attractive on $(\epsilon,\infty)$, where $\epsilon<1$ is such that the derivative $f'(\epsilon)=1$. On $(0,\epsilon)$ the function $(1-t)+tx\geq f(x)>x$ so its subsequent iterates form an increasing sequence of functions. I.e. if we start an iteration with
$x_0\in(0,\epsilon)$, then after finitely many iterations by $f(x)$, $x_n=f(x_{n-1})$ will be in the interval $(\epsilon,1)$. From there convergence to $1$ follows again from Banach's fixed point theorem.

\end{proof}

In the paper \cite{kubo} Kubo and Ando used the following variant of the integral characterization \eqref{integralchar} for all $f(x)$ positive operator monotone functions on $(0,\infty)$:
\begin{equation}\label{kuboint}
f(x)=a+bx+\int_{0}^{\infty}\frac{(1+\lambda)x}{x+\lambda}dm(\lambda)
\end{equation}
where $m$ is a positive Borel measure on $(0,\infty)$, see Chapter 6 in \cite{schilling}.

\begin{proposition}\label{harmonicreprprop}
Let $f(x)$ be a representing function of a matrix mean in $\mathfrak{M}$. Then
\begin{equation}\label{harmonicrepr}
f(x)=\int_{[0,1]}[(1-s)+sx^{-1}]^{-1}d\nu(s)
\end{equation}
where $\nu$ is a probability measure over the closed interval $[0,1]$.
\end{proposition}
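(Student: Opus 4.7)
The plan is to transform the Kubo–Ando integral representation \eqref{kuboint} into the claimed weighted harmonic form by a change of variable. The key observation is that the Kubo–Ando integrand already \emph{is} a weighted harmonic mean of $1$ and $x$ in disguise: setting $s=\lambda/(1+\lambda)\in(0,1)$, equivalently $\lambda=s/(1-s)$, one computes
\begin{equation*}
\frac{(1+\lambda)x}{x+\lambda}=\frac{x}{(1-s)x+s}=\bigl[(1-s)+sx^{-1}\bigr]^{-1}.
\end{equation*}
So the substitution $\lambda\mapsto s$ pushes $m$ forward to a positive Borel measure on the open interval $(0,1)$ whose integrand is exactly the weighted harmonic mean appearing in \eqref{harmonicrepr}.

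Next I would account for the affine term $a+bx$ in \eqref{kuboint} by attaching point masses at the endpoints of $[0,1]$. Note that evaluating $[(1-s)+sx^{-1}]^{-1}$ at $s=0$ gives $1$, and at $s=1$ gives $x$. Hence if I define $\nu$ on $[0,1]$ to be the pushforward of $m$ under $\lambda\mapsto \lambda/(1+\lambda)$ on $(0,1)$, together with point masses $\nu(\{0\})=a$ and $\nu(\{1\})=b$, then
\begin{equation*}
\int_{[0,1]}\bigl[(1-s)+sx^{-1}\bigr]^{-1}d\nu(s)=a\cdot 1+b\cdot x+\int_{0}^{\infty}\frac{(1+\lambda)x}{x+\lambda}dm(\lambda)=f(x),
\end{equation*}
which is the desired identity.

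It remains to verify that $\nu$ is a probability measure, i.e. that $a,b\geq 0$ and $\nu([0,1])=1$. Positivity of $a$ and $b$ follows from $f>0$ on $(0,\infty)$: by dominated convergence the integral term in \eqref{kuboint} tends to $0$ as $x\to 0^+$, giving $a=\lim_{x\to 0^+}f(x)\geq 0$; similarly $b=\lim_{x\to\infty}f(x)/x\geq 0$. Finally, using the normalization $f(1)=1$ and the identity $\tfrac{(1+\lambda)\cdot 1}{1+\lambda}=1$,
\begin{equation*}
\nu([0,1])=a+b+m\bigl((0,\infty)\bigr)=f(1)=1.
\end{equation*}

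The argument is essentially a reparametrization, so no serious obstacle is expected; the only thing one must be a little careful about is handling the boundary behaviour cleanly (the affine part $a+bx$ corresponds to the boundary of the parameter interval after the substitution), and making sure that measurability/integrability under the pushforward is justified, which is automatic because $\lambda\mapsto\lambda/(1+\lambda)$ is a homeomorphism $(0,\infty)\to(0,1)$.
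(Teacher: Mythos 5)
Your proposal is correct and follows essentially the same route as the paper: both start from the Kubo--Ando representation \eqref{kuboint}, apply the substitution $\lambda=s/(1-s)$ to turn the integrand into the weighted harmonic mean $[(1-s)+sx^{-1}]^{-1}$, absorb the affine part $a+bx$ as point masses at the endpoints $s=0$ and $s=1$, and use the normalization $f(1)=1$ to see that $\nu$ is a probability measure. Your extra verification that $a,b\geq 0$ is already built into the Kubo--Ando characterization, but it does no harm.
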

\begin{proof}
Let us begin with the integral characterization \eqref{kuboint} and let $\lambda=\frac{s}{1-s}$. Then $\lambda\in[0,\infty]$ if and only if $s\in[0,1]$ and the mapping is a bijection. With $m(\{0\})=a$ and $m(\{\infty\})=b$ we have
\begin{equation*}
\begin{split}
f(x)&=\int_{[0,\infty]}\frac{(1+\lambda)x}{x+\lambda}dm(\lambda)=\int_{[0,1]}\frac{1}{s}\left(\frac{1-s}{s}+x^{-1}\right)^{-1}dm\left(\frac{s}{1-s}\right)\\
&=\int_{[0,1]}[(1-s)+sx^{-1}]^{-1}d\nu(s)
\end{split}
\end{equation*}
where $d\nu(s)=dm\left(\frac{s}{1-s}\right)$ is a positive Borel measure on $[0,1]$. Now since $f(1)=1$ we have that
\begin{equation*}
1=f(1)=\int_{[0,1]}d\nu(s).
\end{equation*}

\end{proof}

\begin{remark}
The above results gives us that all matrix means are uniquely represented as convex combinations of weighted harmonic means, since the normalized operator monotone function
\begin{equation*}
f_t(x)=[(1-t)+tx^{-1}]^{-1}
\end{equation*}
is the representing function of the weighted harmonic mean, see Lemma~\ref{maxmininp}. Also the extreme points of this set are these weighted harmonic means.
\end{remark}

There are two degenerate cases of matrix means induced by a $\nu$ which are supported only over the single points ${0}$ or ${1}$. One of them is the left trivial mean
\begin{equation*}
l(x)=1
\end{equation*}
with represented matrix mean $M(A,B)=A$ and the right trivial mean
\begin{equation*}
r(x)=x
\end{equation*}
with represented matrix mean $M(A,B)=B$.

\begin{proposition}\label{inPt}
Let $M\in\mathfrak{M}$ with representing function $f(x)$. Then $0\leq f'(1)\leq 1$. Moreover if $M$ is not the left or right trivial mean ($f(x)\neq 1$ or $x$), then $f\in\mathfrak{P}(t)$.
\end{proposition}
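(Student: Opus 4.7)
The plan is to exploit the integral representation established in Proposition~\ref{harmonicreprprop}. Writing $g_s(x)=[(1-s)+sx^{-1}]^{-1}$, the representing function takes the form $f(x)=\int_{[0,1]}g_s(x)\,d\nu(s)$ for some probability measure $\nu$ on $[0,1]$. A direct computation gives $g_s'(1)=s$, and I would then differentiate under the integral sign to obtain
\begin{equation*}
f'(1)=\int_{[0,1]}s\,d\nu(s),
\end{equation*}
which automatically lies in $[0,1]$ since $s\in[0,1]$ and $\nu$ is a probability measure. The interchange of derivative and integral will be justified by a uniform bound on $g_s'(x)=sx^{-2}[(1-s)+sx^{-1}]^{-2}$: on a compact neighbourhood of $x=1$ the denominator $(1-s)+sx^{-1}$ is bounded away from zero uniformly in $s\in[0,1]$, so $|g_s'(x)|$ is dominated by a constant, and dominated convergence applies because $\nu$ is finite.

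Next I would pin down the boundary cases. If $f'(1)=0$, then $\int s\,d\nu(s)=0$ and non-negativity of the integrand forces $\nu$ to be concentrated at $\{0\}$, hence $\nu=\delta_0$, yielding $f\equiv g_0\equiv 1$, the left trivial mean. If $f'(1)=1$, then $\int(1-s)\,d\nu(s)=0$ forces $\nu=\delta_1$, yielding $f(x)=g_1(x)=x$, the right trivial mean. Hence whenever $M$ is neither the left nor the right trivial mean, the parameter $t:=f'(1)$ necessarily lies in the open interval $(0,1)$.

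To conclude $f\in\mathfrak{P}(t)$, I would verify each defining property: operator monotonicity on $(0,\infty)$ and the normalization $f(1)=1$ are built into membership in $\mathfrak{M}$, and strict positivity $f(x)>0$ on $(0,\infty)$ is immediate from Proposition~\ref{harmonicreprprop}, since each kernel $g_s(x)$ is strictly positive on $(0,\infty)$ for every $s\in[0,1]$ while $\nu$ is a probability measure. The only genuine analytic subtlety in the whole argument is the differentiation under the integral, and the uniform bound above dispatches it; there is no deeper obstacle, as the proposition reduces essentially to first-moment identities for the representing measure $\nu$ supplied by Proposition~\ref{harmonicreprprop}.
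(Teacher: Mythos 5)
Your proposal is correct and follows essentially the same route as the paper: both start from the representation $f(x)=\int_{[0,1]}[(1-s)+sx^{-1}]^{-1}\,d\nu(s)$ of Proposition~\ref{harmonicreprprop}, justify differentiating under the integral (the paper via dominated convergence on the difference quotient, you via a uniform bound on $g_s'$ near $x=1$, which is the same estimate), and read off $f'(1)=\int_{[0,1]}s\,d\nu(s)$ as the first moment of the probability measure $\nu$. Your explicit identification of the boundary cases $f'(1)=0,1$ with $\nu=\delta_0,\delta_1$ and your verification of positivity for membership in $\mathfrak{P}(t)$ merely spell out details the paper leaves implicit.
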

\begin{proof}
Using Proposition~\ref{harmonicreprprop} we have that
\begin{equation*}
M(A,B)=\int_{[0,1]}[(1-s)A^{-1}+sB^{-1}]^{-1}d\nu(s)
\end{equation*}
where $\nu$ is a probability measure on $[0,1]$. This means that
\begin{equation*}
f(x)=\int_{[0,1]}(1-s+sx^{-1})^{-1}d\nu(s)
\end{equation*}
and
\begin{equation*}
f'(1)=\lim_{h\to 0}\left.\int_{[0,1]}\frac{(1-s+s(x+h)^{-1})^{-1}-(1-s+sx^{-1})^{-1}}{h}d\nu(s)\right|_{x=1}.
\end{equation*}
By Lebesgue's dominated convergence theorem we have
\begin{equation*}
\begin{split}
f'(1)&=\int_{[0,1]}\lim_{h\to 0}\left.\frac{(1-s+s(x+h)^{-1})^{-1}-(1-s+sx^{-1})^{-1}}{h}\right|_{x=1}d\nu(s)\\
&=\int_{[0,1]}sd\nu(s).
\end{split}
\end{equation*}
Since $\nu$ is a probability measure on $[0,1]$, this means that for its expectation $f'(1)$ (the value of the above integral) we have the bound $0\leq f'(1)\leq 1$. If $\nu$ is supported over a larger set then the single point sets $\{0\}$ or $\{1\}$, then clearly its expectation $f'(1)\in(0,1)$.

\end{proof}

\begin{proposition}\label{positivemapthm}
Let $\Phi$ be a positive unital linear map and $M\in\mathfrak{M}$. Then
\begin{equation*}
\Phi(M(A,B))\leq M(\Phi(A),\Phi(B))
\end{equation*}
for $A,B>0$.
\end{proposition}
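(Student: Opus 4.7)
The plan is to reduce the inequality for a general matrix mean $M$ to the corresponding inequality for the weighted harmonic means
\begin{equation*}
h_s(A,B) := [(1-s)A^{-1}+sB^{-1}]^{-1},
\end{equation*}
via the integral representation of Proposition~\ref{harmonicreprprop}. That proposition yields $M(A,B) = \int_{[0,1]} h_s(A,B)\,d\nu(s)$ for a probability measure $\nu$ depending only on the representing function of $M$. A positive unital linear map on $S(E)$ is automatically norm-bounded (with $\|\Phi\|=1$), and the family $s\mapsto h_s(A,B)$ is uniformly bounded in $s$, so $\Phi$ commutes with this integral; applying this representation to both $M(A,B)$ and $M(\Phi(A),\Phi(B))$ reduces the claim to the pointwise inequality
\begin{equation*}
\Phi(h_s(A,B))\le h_s(\Phi(A),\Phi(B)), \qquad s\in[0,1].
\end{equation*}

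The endpoint cases $s\in\{0,1\}$ are equalities. For $s\in(0,1)$, I would rewrite $h_s(A,B) = \frac{A}{1-s}:\frac{B}{s}$ as a rescaled parallel sum $X:Y:=(X^{-1}+Y^{-1})^{-1}$ and use the Schur-complement identity $X:Y = X - X(X+Y)^{-1}X$. Expanding
\begin{equation*}
\Phi(X:Y) = \Phi(X) - \Phi\bigl(X(X+Y)^{-1}X\bigr), \qquad \Phi(X):\Phi(Y) = \Phi(X) - \Phi(X)\Phi(X+Y)^{-1}\Phi(X),
\end{equation*}
the desired $\Phi(X:Y) \leq \Phi(X):\Phi(Y)$ is equivalent to
\begin{equation*}
\Phi\bigl(X(X+Y)^{-1}X\bigr) \ge \Phi(X)\,\Phi(X+Y)^{-1}\,\Phi(X).
\end{equation*}
This is the operator-Jensen inequality associated to the jointly operator-convex map $(Z,X)\mapsto XZ^{-1}X$ of Lieb and Kubo--Ando, applied with $Z = X+Y > 0$ and $X$ self-adjoint.

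The main obstacle is this final Jensen step: the inequality $\Phi(XZ^{-1}X)\geq \Phi(X)\Phi(Z)^{-1}\Phi(X)$ for self-adjoint $X$ and positive $Z$ is classical under $2$-positivity of $\Phi$, which is the standing regularity condition in the Kubo--Ando framework underlying this paper. If one wishes to weaken the hypothesis to bare positive unitality, the required harmonic-mean inequality can instead be extracted from Ando's variational characterization
\begin{equation*}
\langle(A:B)\xi,\xi\rangle = \inf_{\xi=\eta+\zeta}\{\langle A\eta,\eta\rangle + \langle B\zeta,\zeta\rangle\},
\end{equation*}
via duality against vector states, and Proposition~\ref{harmonicreprprop} then delivers the general case.
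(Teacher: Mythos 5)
Your overall architecture is exactly the paper's: use Proposition~\ref{harmonicreprprop} to write $M(A,B)=\int_{[0,1]}[(1-s)A^{-1}+sB^{-1}]^{-1}d\nu(s)$, interchange $\Phi$ with the integral (the paper does this by Krein--Milman approximation with finitely supported measures plus linearity; your boundedness argument accomplishes the same thing), and reduce to the single inequality $\Phi(h_s(A,B))\le h_s(\Phi(A),\Phi(B))$. The paper disposes of that last step by citing Theorem 4.1.5 of \cite{bhatia2}, which asserts it for positive unital linear maps. So the only substantive difference is that you try to prove the cited lemma, and that is where the argument has a genuine gap.

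Your primary route -- the Schur-complement identity $X:Y=X-X(X+Y)^{-1}X$ followed by $\Phi(XZ^{-1}X)\ge\Phi(X)\Phi(Z)^{-1}\Phi(X)$ -- is sound only under $2$-positivity, and your assertion that $2$-positivity is ``the standing regularity condition in the Kubo--Ando framework'' is not correct: Kubo--Ando theory, and the proposition as stated, assume only a positive unital linear map. Your proposed repair via Ando's variational formula and ``duality against vector states'' does not close this gap as sketched, because the dualization runs the wrong way. Testing against a vector state $\omega_\xi=\langle\Phi(\cdot)\xi,\xi\rangle$ and optimizing over scalar decompositions $\xi=(1-t)\xi+t\xi$ yields $\langle\Phi(A\!:\!B)\xi,\xi\rangle\le\omega_\xi(A)\!:\!\omega_\xi(B)$, the \emph{scalar} parallel sum of the diagonal entries; but the same variational formula shows $\langle(\Phi(A)\!:\!\Phi(B))\xi,\xi\rangle\le\langle\Phi(A)\xi,\xi\rangle\!:\!\langle\Phi(B)\xi,\xi\rangle$ as well, i.e.\ the quantity you can bound $\langle\Phi(A\!:\!B)\xi,\xi\rangle$ by is an \emph{upper} bound for the quadratic form you need to dominate it with, so no operator inequality follows. (The general decompositions $\xi=\eta+\zeta$ do not help either, since $\langle\Phi(\cdot)\eta,\eta\rangle+\langle\Phi(\cdot)\zeta,\zeta\rangle$ differs from $\langle\Phi(\cdot)\xi,\xi\rangle$ by an uncontrolled cross term.) The lemma you need is true for merely positive unital maps -- that is precisely what the paper cites -- but your derivation of it either over-assumes or does not go through; the cleanest fix is simply to invoke the reference, as the paper does.
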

\begin{proof}
Using Proposition~\ref{harmonicreprprop} we have that
\begin{equation*}
M(A,B)=\int_{[0,1]}[(1-s)A^{-1}+sB^{-1}]^{-1}d\nu(s)
\end{equation*}
where $\nu$ is a probability measure on $[0,1]$. By Theorem 4.1.5 in \cite{bhatia2} we have that
\begin{equation*}
\Phi([(1-s)A^{-1}+sB^{-1}]^{-1})\leq [(1-s)\Phi(A)^{-1}+s\Phi(B)^{-1}]^{-1}.
\end{equation*}
Using the fact that $\nu$ can be approximated by finitely supported measures and the linearity of $\Phi$, we get from the above that
\begin{equation*}
\begin{split}
\Phi(\int_{[0,1]}[(1-s)A^{-1}+sB^{-1}]^{-1}d\nu(s))=\int_{[0,1]}\Phi([(1-s)A^{-1}+sB^{-1}]^{-1})d\nu(s)\\
\leq \int_{[0,1]}[(1-s)\Phi(A)^{-1}+s\Phi(B)^{-1}]^{-1}d\nu(s).
\end{split}
\end{equation*}

\end{proof}

In \cite{kubo} Kubo and Ando defined the transpose of a matrix mean $M(A,B)$ as
\begin{equation}\label{transpose}
M'(A,B)=M(B,A).
\end{equation}
By Proposition~\ref{harmonicreprprop} it is clear that for an
\begin{equation*}
M(A,B)=\int_{[0,1]}[(1-s)A^{-1}+sB^{-1}]^{-1}d\nu(s)
\end{equation*}
we have that
\begin{equation*}
M'(A,B)=M(B,A)=\int_{[0,1]}[(1-s)B^{-1}+sA^{-1}]^{-1}d\nu(s).
\end{equation*}
So if $M'(A,B)$ has corresponding measure $\nu'$, then $d\nu'(s)=d\nu(1-s)$. Similarly for the representing functions we have $f'(x)=xf(1/x)$. Also symmetric means $M(A,B)=M'(A,B)$ have corresponding probability measures $\nu$ such that $d\nu(s)=d\nu(1-s)$ and vice versa.
\begin{corollary}\label{chracterizesymmetric}
The property $d\nu(s)=d\nu(1-s)$ characterizes symmetric means.
\end{corollary}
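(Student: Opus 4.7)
The plan is to read the corollary as an ``if and only if'' and handle the two directions separately, with the uniqueness of the representing measure $\nu$ doing all the real work.

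For the easy direction, suppose $d\nu(s)=d\nu(1-s)$. Starting from the computation already carried out in the paragraph immediately preceding the corollary, we have
\begin{equation*}
M'(A,B)=\int_{[0,1]}\bigl[(1-s)B^{-1}+sA^{-1}\bigr]^{-1}d\nu(s).
\end{equation*}
Performing the change of variables $s\mapsto 1-s$ in this integral and invoking the hypothesis $d\nu(s)=d\nu(1-s)$, the integrand becomes $[(1-s)A^{-1}+sB^{-1}]^{-1}$ integrated against $d\nu(s)$, which is exactly $M(A,B)$. Hence $M'=M$, i.e.\ $M$ is symmetric.

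For the converse, suppose $M$ is symmetric, so $M=M'$. The representing function $f$ of $M$ is uniquely determined by $M$ via the Kubo-Ando correspondence \eqref{mean}, and in turn the integral representation \eqref{kuboint} determines the measure $m$ on $(0,\infty)$ uniquely (this is the standard uniqueness in the Herglotz/Loewner integral characterization \eqref{integralchar}). Since the change of variables $\lambda=s/(1-s)$ used inside the proof of Proposition~\ref{harmonicreprprop} is a bijection $[0,\infty]\to[0,1]$, the probability measure $\nu$ in \eqref{harmonicrepr} is likewise uniquely determined by $M$. But now both $\nu$ and the measure $\nu'$ defined by $d\nu'(s)=d\nu(1-s)$ represent $M$ in the form \eqref{harmonicrepr} (the latter representing $M'=M$ by the computation in the forward direction). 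Uniqueness forces $d\nu(s)=d\nu(1-s)$.

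The only substantive step is the uniqueness of $\nu$; the rest is just unwinding the definitions. That uniqueness is not stated explicitly as a separate proposition in the text, so in the writeup I would make a brief remark invoking the uniqueness of the Kubo-Ando representing function together with the uniqueness of $m$ in \eqref{kuboint}, transported across the bijection $\lambda\leftrightarrow s$. No new computation is needed beyond what already appears in the proof of Proposition~\ref{harmonicreprprop}.
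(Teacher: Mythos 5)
Your proof is correct and follows essentially the same route as the paper: the paper gives no separate proof for this corollary, relying instead on the paragraph preceding it, which computes that the transpose $M'$ has representing measure $d\nu'(s)=d\nu(1-s)$ and then identifies symmetry $M=M'$ with $\nu=\nu'$. Your writeup simply makes explicit the uniqueness of $\nu$ (via the uniqueness of the Kubo--Ando representing function and of the measure in \eqref{kuboint}) that the paper's ``and vice versa'' leaves implicit, which is a reasonable addition but not a different argument.
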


In order to advance further in the understanding of affine matrix means, we should be able to grasp more geometrical structure related to the affinely connected manifolds corresponding to affine matrix means. In the next section we will study the general situation of affinely connected manifolds given with a geodesic dividing point operation. We will see that in this case we can reconstruct the exponential map and its inverse, the logarithm map from the geodesic dividing point operation.

\section{The reconstruction of the exponential map}
In this section we reconstruct the exponential map of an arbitrary affinely connected differentiable manifold based first on its midpoint map. Without loss of generality we fix a base point $p$ as the starting point of the geodesics. The basics of the exponential map of a manifold can be found for example in Chapter I. paragraph 6 \cite{helgason}.

\begin{theorem}\label{symspc}
Let $M$ be an affinely connected smooth manifold diffeomorphically embedded into a vector space $V$. Suppose that the midpoint map $m(p,q)=\exp_p(1/2\log_p(q))$ is known in every normal neighborhood where the exponential map $\exp_p(X)$ is a diffeomorphism. Then in these normal neighborhoods the inverse of the exponential map $\log_p(q)$ can be fully reconstructed from the midpoint map in the form
\begin{equation}
\log_p(q)=\lim_{n\to \infty}\frac{m(p,q)^{\circ n}-p}{\frac{1}{2^{n}}}\text{,}
\end{equation}
where we use the notation $m(p,q)^{\circ n}\equiv m\left(p,m(p,q)^{\circ (n-1)}\right)$.
\end{theorem}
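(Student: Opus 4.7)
The plan is to reduce the statement to the standard fact that $d(\exp_p)_0 = \mathrm{id}_{T_pM}$, after first observing that the iterated midpoints $m(p,q)^{\circ n}$ are nothing other than points on the single geodesic from $p$ through $q$, landing at parameter $2^{-n}$. Since the manifold is diffeomorphically embedded in the ambient vector space $V$, the Fr\'echet difference quotient appearing in the statement is exactly the ordinary differential of $\exp_p$ at the origin evaluated on the relevant tangent vector, which is the vector itself.

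First I would set $X:=\log_p(q)$ and let $\gamma(s):=\exp_p(sX)$ be the unique geodesic from $p$ to $q$. Since $q$ lies in a normal neighborhood on which $\exp_p$ is a diffeomorphism, the whole segment $\gamma([0,1])$ is contained in that normal neighborhood, and for every $s\in[0,1]$ the point $\gamma(s)$ is still within the injectivity radius at $p$. I would then prove by induction on $n$ that
\begin{equation*}
m(p,q)^{\circ n} = \exp_p(X/2^{n}).
\end{equation*}
The base case is just the definition of the midpoint map. For the induction step, assuming $m(p,q)^{\circ n}=\exp_p(X/2^n)$, the point lies on $\gamma$ and hence in the normal neighborhood of $p$, so $\log_p$ may be applied and gives $X/2^n$; therefore
\begin{equation*}
m(p,q)^{\circ(n+1)} = \exp_p\!\left(\tfrac{1}{2}\log_p\bigl(\exp_p(X/2^n)\bigr)\right) = \exp_p(X/2^{n+1}).
\end{equation*}

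Next I would invoke the elementary fact (see Helgason, Ch.~I \S 6) that $d(\exp_p)_0$ is the identity on $T_pM$. Viewing $\exp_p$ as a smooth map into $V$, this yields the Fr\'echet expansion
\begin{equation*}
\exp_p(sX) - p = sX + o(s)\qquad\text{as }s\to 0^+.
\end{equation*}
Substituting $s=1/2^n$ and dividing by $1/2^n$ gives
\begin{equation*}
\frac{m(p,q)^{\circ n}-p}{1/2^n} = \frac{\exp_p(X/2^n)-p}{1/2^n} = X + o(1) \to X = \log_p(q),
\end{equation*}
which is the desired identity.

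There is no genuine obstacle here; the only point requiring care is the bookkeeping in the induction, namely that each iterate $m(p,q)^{\circ n}$ still lies in a neighborhood on which $\log_p$ is defined. This is automatic because the iterates retreat monotonically along $\gamma$ back toward $p$, so they remain inside the original normal neighborhood throughout. The theorem is in essence a repackaging of the defining property $d(\exp_p)_0=\mathrm{id}$ combined with the halving semigroup structure of the midpoint operation on a single geodesic.
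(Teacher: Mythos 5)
Your proof is correct and follows essentially the same route as the paper: both arguments rest on the identity $m(p,q)^{\circ n}=\exp_p(X/2^n)$ together with the fact that the differential of $\exp_p$ at the origin is the identity, read through the ambient vector space embedding. Your version merely makes explicit the induction establishing the iterated-midpoint identity, which the paper leaves implicit.
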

\begin{proof}
We will use some basic properties of the differential of the exponential map to construct the inverse of it, the logarithm map. Since in small enough normal neighborhoods the exponential map is a diffeomorphism, it can be given as the inverse of the logarithm map $\log_p(q)$.

By the basic properties of the exponential map we have
\begin{equation*}
\left.\frac{\partial\exp_p(Xt)}{\partial t}\right|_{t=0}=\lim_{t\to 0}\frac{\exp_p(Xt)-p}{t}=X\text{,}
\end{equation*}
where $X\in T_pM$. Here we used the fact that we have an embedding into a vector space. Suppose $\exp_p(X)=q$ is in the normal neighborhood. We are going to provide the limit on the right hand side of the above equation. The limit clearly exists in the normal neighborhood so
\begin{equation*}
\lim_{t\to 0}\frac{\exp_p(Xt)-p}{t}=\lim_{n\to \infty}\frac{\exp_p\left(X\frac{1}{2^{n}}\right)-p}{\frac{1}{2^{n}}}=\lim_{n\to \infty}\frac{m(p,q)^{\circ n}-p}{\frac{1}{2^{n}}}\text{.}
\end{equation*}
Here we use the notation $m(p,q)^{\circ n}\equiv m\left(p,m(p,q)^{\circ (n-1)}\right)$. We are in a normal neighborhood so the exponential map has an inverse, the logarithm map, so the limit can be written as
\begin{equation*}
X=\lim_{t\to 0}\frac{\exp_p(Xt)-p}{t}=\lim_{n\to \infty}\frac{m(p,q)^{\circ n}-p}{\frac{1}{2^{n}}}=\log_p(q)\text{.}
\end{equation*}

\end{proof}

In the above assertion we used the midpoint map to reconstruct the exponential map, but we can use arbitrary dividing point operation that yields a point, other then the ending points on the geodesic connecting two points in the normal neighborhood. This is summarized in the following proposition.

\begin{proposition}\label{constructexp}
Let $M$ be an affinely connected smooth manifold diffeomorphically embedded into a vector space $V$. In every normal neighborhood $N$ let $\gamma_{a,b}(t)$ denote the geodesic connecting $a,b\in N$ with parametrization $\gamma_{a,b}(0)=a$ and $\gamma_{a,b}(1)=b$. Suppose that the map $m(a,b)_{t_0}=\gamma_{a,b}(t_0)=\exp_p(t_0\log_p(q))$ is known for a $t_0\in (0,1)$ in every normal neighborhood $N$ where the exponential map is a diffeomorphism and $a,b\in N$. Then in these normal neighborhoods the logarithm map can be fully reconstructed as
\begin{equation*}
\log_p(q)=\lim_{n\to \infty}\frac{m(p,q)_{t_0}^{\circ n}-p}{t_0^{n}}\text{,}
\end{equation*}
with the notation $m(p,q)_{t_0}^{\circ n}\equiv m\left(p,m(p,q)_{t_0}^{\circ (n-1)}\right)_{t_0}$. We also obtain the exponential map by inverting $\log_p(q)$.
\end{proposition}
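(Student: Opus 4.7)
The plan is to mirror the proof of \thmref{symspc}, replacing the scaling factor $1/2$ by $t_0$ throughout, and to reduce the statement to the basic fact that the differential of the exponential map at the origin of $T_pM$ is the identity.

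First I would verify, by induction on $n$, the identity
\begin{equation*}
m(p,q)_{t_0}^{\circ n}=\exp_p\!\left(t_0^{\,n}\log_p(q)\right).
\end{equation*}
The base case $n=1$ is just the definition of $m(p,q)_{t_0}$. For the inductive step, assuming the formula for $n-1$, the point $m(p,q)_{t_0}^{\circ(n-1)}$ lies on the geodesic through $p$ with initial velocity $\log_p(q)$, at parameter value $t_0^{\,n-1}$. Applying the dividing-point operation once more gives a point on the same geodesic at parameter value $t_0\cdot t_0^{\,n-1}=t_0^{\,n}$, which is $\exp_p(t_0^{\,n}\log_p(q))$. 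One needs $m(p,q)_{t_0}^{\circ(n-1)}$ to stay inside a normal neighborhood where the formula $m(a,b)_{t_0}=\exp_a(t_0\log_a(b))$ applies; since $t_0\in(0,1)$, the iterates contract along the geodesic toward $p$, and a normal neighborhood of $p$ can be chosen so that the entire segment from $p$ to $q$ lies inside, so all iterates remain in $N$.

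Next I would use the defining property of the exponential map, namely that for $X\in T_pM$
\begin{equation*}
\left.\frac{\partial\exp_p(Xt)}{\partial t}\right|_{t=0}=\lim_{t\to 0}\frac{\exp_p(Xt)-p}{t}=X,
\end{equation*}
which makes sense because $M$ is embedded in the ambient vector space $V$. Setting $X=\log_p(q)$ and evaluating this limit along the sequence $t=t_0^{\,n}\to 0$, which is valid since the full limit exists, yields
\begin{equation*}
\log_p(q)=\lim_{n\to\infty}\frac{\exp_p(t_0^{\,n}\log_p(q))-p}{t_0^{\,n}}=\lim_{n\to\infty}\frac{m(p,q)_{t_0}^{\circ n}-p}{t_0^{\,n}},
\end{equation*}
where in the last step I substitute the identity from the induction above. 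Inverting the reconstructed $\log_p$ then gives $\exp_p$ on the appropriate neighborhood in $T_pM$.

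I do not anticipate a serious obstacle: the only delicate point is the book-keeping to guarantee that the iterates $m(p,q)_{t_0}^{\circ n}$ stay in a domain where the composition law for the dividing-point map holds, and this is automatic because $t_0\in(0,1)$ forces the iterates to move monotonically toward $p$ along a single geodesic segment, which can be kept inside any preassigned normal neighborhood of $p$. Everything else is a one-line substitution into the proof of \thmref{symspc}.
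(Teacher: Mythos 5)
Your proof is correct and follows essentially the same route as the paper, which leaves this proposition without a separate proof precisely because it is Theorem~\ref{symspc} with the factor $1/2$ replaced by $t_0$: identify $m(p,q)_{t_0}^{\circ n}$ with $\exp_p(t_0^{\,n}\log_p(q))$ and evaluate the difference quotient defining $d\exp_p$ at $0$ along the sequence $t=t_0^{\,n}$. Your explicit induction and the remark about the iterates staying in the normal neighborhood only make precise what the paper leaves implicit.
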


We are going to use this construction in the next sections to characterize affine matrix means.

\section{The exponential map of affine matrix means}
Based on the idea of reconstruction given by Proposition~\ref{constructexp} we are going to formally take the limits for matrix means in $\mathfrak{P}(t)$. The following result will show that if a matrix mean is affine then the exponential map of the corresponding smooth manifold has a special structure. The construction will be based on Proposition~\ref{constructexp} and is closely related to Schr\"oder's functional equation and its solution by Koenigs function as we will see later. We will use similarly the notation $M(A,B)^{\circ n}=M\left(A,M(A,B)^{\circ (n-1)}\right)$ as before in the previous section. The following result is similar to Theorem 11.6.1 in \cite{kuczma}, actually some parts of it can be derived from that theorem, however we give a slightly more general proof here for the sake completeness and further usage.

\begin{theorem}\label{uniformconv}
Let $M(A,B)$ be a matrix mean with representing function $f\in\mathfrak{P}(t)$. Then
\begin{equation}
\lim_{n\to \infty}\frac{M(A,B)^{\circ n}-A}{f'(1)^{n}}=A^{1/2}\log_I\left(A^{-1/2}BA^{-1/2}\right)A^{1/2}
\end{equation}
where the limit exists and is uniform for all $A,B\in \textit{P}(n,\mathbb{C})$ and $\log_I(x)$ is an operator monotone function which fulfills the functional equation
\begin{equation}\label{funcequ}
\log_I(f(x))=f'(1)\log_I(x)
\end{equation}
on the interval $(0,\infty)$.
\end{theorem}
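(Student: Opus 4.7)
The plan is to reduce the operator limit to a scalar iteration of Koenigs type, then lift back via the spectral calculus. An easy induction using the Kubo-Ando representation \eqref{mean} shows
$$M(A,B)^{\circ n} = A^{1/2}\, f^{\circ n}\!\bigl(A^{-1/2} B A^{-1/2}\bigr)\, A^{1/2},$$
so, writing $X := A^{-1/2} B A^{-1/2}$ and $t := f'(1) \in (0,1)$, the claim reduces to showing that the scalar sequence
$$\sigma_n(x) := \frac{f^{\circ n}(x) - 1}{t^n}$$
converges, uniformly on compact subsets of $(0,\infty)$, to an operator monotone function $\log_I(x)$ satisfying $\log_I \circ f = t\,\log_I$, and then transferring the limit by the continuous functional calculus and conjugating by $A^{1/2}$.

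For the scalar limit I would invoke Koenigs' classical construction. By Lemma~\ref{fixedpoint} every point of $(0,\infty)$ lies in the basin of attraction of the attractive fixed point $1$ with multiplier $t \in (0,1)$, and the sandwich in Lemma~\ref{maxmininp} yields a geometric decay estimate $|f^{\circ n}(x) - 1| \le C_K\, t^n$ uniformly on each compact $K \subset (0,\infty)$. Using the analyticity $f(y) = 1 + t(y-1) + O((y-1)^2)$ near $1$, one computes
$$\sigma_{n+1}(x) - \sigma_n(x) = \frac{f(f^{\circ n}(x)) - 1 - t(f^{\circ n}(x) - 1)}{t^{n+1}} = O\!\left(\frac{(f^{\circ n}(x) - 1)^2}{t^{n+1}}\right) = O(t^{n-1})$$
uniformly on $K$. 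Hence $\{\sigma_n\}$ is uniformly Cauchy on compacts and converges to an analytic function $\log_I$, consistent with Theorem 11.6.1 of \cite{kuczma}. The functional equation \eqref{funcequ} then follows by letting $n \to \infty$ in the identity $\sigma_n(f(x)) = t\,\sigma_{n+1}(x)$.

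Operator monotonicity of $\log_I$ I would argue as follows: $f$ extends to a holomorphic self-map of the upper half plane, so every iterate $f^{\circ n}$ does too and is therefore a Pick function; hence each $\sigma_n = t^{-n}(f^{\circ n} - 1)$ is operator monotone on $(0,\infty)$, and finite pointwise limits of operator monotone functions are operator monotone. Lifting to the matrix level by the functional calculus then gives the stated limit, with uniformity meaning uniform convergence in operator norm on sets where the spectrum of $A^{-1/2} B A^{-1/2}$ stays in a fixed compact subset of $(0,\infty)$. The main obstacle I anticipate is precisely this global uniformity: at the scalar level the constant $C_K$ degenerates as $K$ approaches $0$ or $\infty$, so one cannot expect uniformity over the whole cone, and the correct interpretation is locally uniform convergence controlled by the spectral sandwich of Lemma~\ref{maxmininp}.
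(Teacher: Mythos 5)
Your proof is correct, and it follows the same overall strategy as the paper: reduce to the scalar Koenigs iteration via the Kubo--Ando representation and the functional calculus, show that $\sigma_n = t^{-n}(f^{\circ n}-1)$ is uniformly Cauchy on compacts, and read off the functional equation from $\sigma_n\circ f = t\,\sigma_{n+1}$. Where you genuinely differ is in the mechanism of the Cauchy estimate. The paper shifts the fixed point to $0$, writes $g^{\circ n}(x)=\prod_{i=1}^n g'(t_i)\,x$ by the mean value theorem, and controls the normalized iterates through the convergence of the infinite product $\prod_i g'(t_i)/g'(0)$, bounded via a Taylor expansion of $g'$ and geometric decay of the $t_i$. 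You instead telescope: the identity $\sigma_{n+1}-\sigma_n = t^{-(n+1)}\bigl(f(y)-1-t(y-1)\bigr)$ with $y=f^{\circ n}(x)$, combined with $|f^{\circ n}(x)-1|\le C_K t^n$ (which does follow from iterating the sandwich of Lemma~\ref{maxmininp}, using monotonicity of $f$ and of the bounding arithmetic/harmonic representing functions), gives $O(t^{n-1})$ summable increments. The two estimates are of comparable depth, but yours is shorter and avoids the product bookkeeping. You are also more explicit than the paper on two points it treats tersely: operator monotonicity of the limit (each $\sigma_n$ is an increasing affine rescaling of the operator monotone iterate $f^{\circ n}$, and pointwise limits of operator monotone functions are operator monotone), and the meaning of \emph{uniform}. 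Both your argument and the paper's in fact yield uniformity only where the spectrum of $A^{-1/2}BA^{-1/2}$ stays in a fixed compact subset of $(0,\infty)$ --- the paper's own proof works on closed bounded intervals --- so your reading of the statement is the right one.
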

\begin{proof}
We will prove the convergence to a continuous function $\log_I(t)$ in a more general setting. The operator monotonicity in the matrix mean case will be a particularization.

First of all note that by the repeated usage of \eqref{mean} we can reduce the above problem to the right hand side of the following formula:
\begin{equation*}
\frac{M(A,B)^{\circ n}-A}{f'(1)^{n}}=A^{1/2}\frac{f\left(A^{-1/2}BA^{-1/2}\right)^{\circ n}-I}{f'(1)^{n}}A^{1/2}\text{.}
\end{equation*}
From now on we will explicitly use the notation $g(x)^{\circ n}=g\left(g(x)^{\circ (n-1)}\right)$ for arbitrary function $g(t)$ where this notation is straightforward.

Due to the above formula it is enough to prove the assertion for a single operator monotone function $f(x)$. By operator monotonicity of $f(x)$ this is just the special case of the problem considered for arbitrary concave, analytic functions $f(x)$ given in the following form
\begin{equation}\label{construct}
\lim_{n\to \infty}\frac{f(X)^{\circ n}-I}{f'(1)^{n}}\text{,}
\end{equation}
for $X\in \textit{P}(n,\mathbb{C})$. As every operator monotone function which maps $(0,\infty)$ to $(0,\infty)$, is analytic on $(0,\infty)$ and has an analytic continuation to the complex upper half-plane across $(0,\infty)$, we can consider the functional calculus for Hermitian matrices in the above equations. Therefore we can further reduce the problem to the set of the positive reals by diagonalizing $X$ and considering the convergence for every distinct diagonal element separately. For an extensive study on operator monotone functions one may refer to Chapter V in \cite{bhatia}. 

Without loss of generality we may shift the function $f(x)$ by $1$ so it is enough to show the assertion for
\begin{equation*}
\lim_{n\to \infty}\frac{g(x)^{\circ n}}{g'(0)^{n}}\text{,}
\end{equation*}
where $g(x)=f(x+1)-1$ and so $g(x)^{\circ n}=f(x+1)^{\circ n}-1$. From now on we will be considering the shifted problem. At this point we must emphasize the fact that the function $g$ must have $0$ as an attractive and only fixed point on the interval of interest $(-1,\infty)$. In the unshifted case this is equivalent to $f$ having $1$ as the only attractive fixed point on the interval $(0,\infty)$, which is the case by Lemma~\ref{fixedpoint}. So we can also assume that $0<g'(0)<1$. The rest of the argument will be based on the claim that the above limit of analytic functions of the form $g(x)^{\circ n}/g'(0)^{n}$ is uniform Cauchy therefore the limit function exists and is continuous.

First of all we have $0$ as the attractive and only fixed point of $g$, so for arbitrary $x\in (-1,\infty)$ the sequence $x_n=g(x)^{\circ n}$ converges to $0$. We have $g(0)=0$ and by the mean value theorem we have
\begin{equation*}
x_n=g(x)^{\circ n}=g'(t_n)g(x)^{\circ (n-1)}=\prod_{i=1}^{n}g'(t_i)x\text{,}
\end{equation*}
where $t_i\in \left[0,g(x)^{\circ (i-1)}\right]$ if $x\geq 0$ or $t_i\in \left[g(x)^{\circ (i-1)},0\right]$ if $x<0$, since $g$ is a concave function on $(-1,\infty)$. As $x_n\to 0$ for arbitrary $x$ we have $g'(t_i)\to g'(0)$. Now we have to obtain a suitable upper bound on
\begin{equation}\label{difference}
\left|\frac{g(x)^{\circ n}}{g'(0)^n}-\frac{g(x)^{\circ m}}{g'(0)^m}\right|\text{.}
\end{equation}
We argue as follows
\begin{equation*}
\begin{split}
&\left|\frac{g(x)^{\circ n}}{g'(0)^n}-\frac{g(x)^{\circ m}}{g'(0)^m}\right|=\frac{\left|g(x)^{\circ n}-g'(0)^{n-m}g(x)^{\circ m}\right|}{g'(0)^n}\leq\\
&\leq\frac{\left|\prod_{i=m+1}^{n}g'(t_i)-g'(0)^{n-m}\right|\left|\prod_{i=1}^{m}g'(t_i)\right|}{g'(0)^n}|x|=\\
&=\left|\prod_{i=m+1}^{n}\frac{g'(t_i)}{g'(0)}-1\right|\left|\prod_{i=1}^{m}\frac{g'(t_i)}{g'(0)}\right||x|\text{.}
\end{split}
\end{equation*}
Now uniform convergence follows if $\left|\prod_{i=1}^{\infty}g'(t_i)/g'(0)\right|<\infty$ because then the tail $\prod_{i=m+1}^{\infty}g'(t_i)/g'(0)\to 1$ so \eqref{difference} can be arbitrarily small on any closed interval in $(-1,\infty)$ by choosing a uniform $m$. By the continuity of $g'(t)$ and $x_n\to 0$ we have $g'(t_i)\to g'(0)$ and by assumption $0<g'(0)<1$, therefore there exists $N$ and $q$ such that for all $i>N$ we have $0<g'(t_i)\leq q<1$. What follows here is that $\exists K_1,K_2<\infty$ such that $|t_N|\leq K_1$ and $|g''(t_i)|\leq K_2$ for all $i>N$. This yields the bound $|t_i|\leq K_1q^{i-N}$ for all $i>N$. Considering the Taylor expansion of $g'$ around $0$ we get
\begin{equation*}
\frac{g'(t_i)}{g'(0)}=\frac{g'(0)+g''(t'_i)t_i}{g'(0)}
\end{equation*}
for $0<t'_i<t_i$. What follows from this is that
\begin{equation*}
\left|\prod_{i=N}^{\infty}\frac{g'(t_i)}{g'(0)}\right|\leq \prod_{i=N}^{\infty}\left(1+\frac{K_1K_2}{g'(0)}q^{i-N}\right)\text{.}
\end{equation*}
The infinite product on the right hand side converges because $\sum_{j=0}^{\infty}\frac{K_1K_2}{g'(0)}q^j$ converges hence $\left|\prod_{i=1}^{\infty}g'(t_i)/g'(0)\right|<\infty$ for all $x$ in the closed interval.

At this point we can easily establish the convergence for normalized operator monotone functions because they are concave functions by Theorem V.2.5 in \cite{bhatia}, so $f''(t)\leq 0$ and they have only one fixed point which is $1$. The fact that the limit is operator monotone function in this case follows from the operator monotonicity of the generating $f(t)$.

The functional equation \eqref{funcequ} is the consequence of the following:
\begin{equation*}
\begin{split}
\log_I(f(x))&=\lim_{n\to \infty}\frac{f(f(x))^{\circ n}-1}{f'(1)^{n}}=\lim_{n\to \infty}\frac{f(x)^{\circ (n+1)}-1}{f'(1)^{n}}\\
&=\lim_{n\to \infty}f'(1)\frac{f(x)^{\circ (n+1)}-1}{f'(1)^{n+1}}=f'(1)\log_I(x).
\end{split}
\end{equation*}

\end{proof}

\begin{remark}
The above result is formulated for $A,B\in\mathbb{P}(n,\mathbb{C})$, but invoking the properties of the functional calculus for bounded self-adjoint operators on a Hilbert space, it holds more generally for $A,B\in\mathbb{P}$ as well.
\end{remark}

Actually the proof of Theorem~\ref{uniformconv} works for a larger class of functions then the family of normalized operator monotone functions. The limit in \eqref{construct} exists and it is a continuous function if the twicely differentiable function $f(x)$ has $1$ as the only attractive fixed point and the derivative $0<f'(x)<1$. This is not a coincidence: 

\begin{remark}
The functional equation \eqref{funcequ} was first studied by Schr\"oder for holomorphic functions on the unit disk in \cite{schroeder} long ago. Later Koenigs suggested in \cite{koenigs} the iterative construction given above in Theorem~\ref{uniformconv} to provide a solution to the functional equation on the unit disk. Usually in this setting the function $\log_I$ is said to be a Koenigs eigenfunction for function composition as an operator acting on a certain Hardy space of holomorphic functions on the complex unit disk. He proved also that the rate of convergence of the iteration to $\log_I$ is geometric, moreover that Koenigs function is the unique solution of the functional equation in the class of analytic functions. See also \cite{cowen,shapiro,szekeres} for other results in this setting.
\end{remark}

The next example shows how to calculate the limit function explicitly.
\begin{example}\label{ex1}
Consider the one parameter family of functions
\begin{equation*}
f_q(x)=\left[(1-t)+tx^{q}\right]^{1/q}
\end{equation*}
for $t\in(0,1)$. These are in $\mathfrak{P}(t)$ if and only if $q\in [-1,1]$, because for other values of $q$ the function is not operator monotone, see exercise 4.5.11 in \cite{bhatia2}. It is easy to see that
\begin{equation*}
f_q(x)^{\circ n}=\left[t^{n}x^{q}+\sum_{k=0}^{n-1}t^{k}(1-t)\right]^{1/q}=\left[t^{n}x^{q}-t^{n}+1\right]^{1/q}.
\end{equation*}
In this case we can easily calculate the limit function $\log_{I,f_q}(x)$ by turning the limit into a derivative:
\begin{equation*}
\begin{split}
\log_{I,f_q}(x)=&\lim_{n\to\infty}\frac{(t^{n}x^{q}-t^{n}+1)^{1/q}-1}{t^{n}}=\lim_{s\to 0}\frac{(sx^{q}-s+1)^{1/q}-1}{s}\\
=&\left.\frac{\partial}{\partial s}(sx^q-s+1)^{1/q}\right|_{s=0}=\frac{x^q-1}{q}.
\end{split}
\end{equation*}
The limit functions indeed are operator monotone again if and only if $q\in [-1,1]$. This family has a singularity at $q=0$ but it is easy to verify that it is a removable singularity, so in fact we have
\begin{equation*}
\begin{split}
f_0(x)&=x^t\\
\log_{I,f_0}(x)&=\log(x),
\end{split}
\end{equation*}
where $\log(x)$ and $x^t$ are also well known to be operator monotone. Particularly $x^t$ as a representing function corresponds to the weighted geometric mean.
\end{example}

\begin{proposition}\label{injective}
The limit function $\log_I(x)$ in Theorem~\ref{uniformconv} satisfies the following:
\begin{itemize}
\item[(i)] $\log_I(x)$ maps $\textit{P}(n,\mathbb{C})$ to $\textit{H}(n,\mathbb{C})$ injectively,
\item[(ii)] $1-x^{-1}\leq\log_I(x)\leq x-1$ for all $x>0$,
\item[(iii)] If $\log_{I,f}(x)$ and $\log_{I,g}(x)$ are the corresponding limit functions for $f,g\in\mathfrak{P}(t)$ such that $f(x)\leq g(x)$ for all $x>0$, then $\log_{I,f}(x)\leq \log_{I,g}(x)$ for all $x>0$,
\item[(iv)] $\log_I(1)=0$ and $\log_I'(1)=1$.
\end{itemize}
\end{proposition}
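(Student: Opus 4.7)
The plan is to handle the four claims roughly in the order (iv), (ii), (iii), (i), since each later part can draw on earlier ones. Throughout I will write $t=f'(1)\in(0,1)$ and work with the scalar limit
$$\log_I(x)=\lim_{n\to\infty}\frac{f^{\circ n}(x)-1}{t^n}$$
on $(0,\infty)$; the matrix statements then follow by functional calculus since operator monotone functions admit an analytic continuation across $(0,\infty)$.

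For (iv) I would first note that $f(1)=1$ implies $f^{\circ n}(1)=1$ for every $n$, so every term in the defining sequence is $0$, giving $\log_I(1)=0$. For the derivative at $1$, the chain rule yields $(f^{\circ n})'(x)=\prod_{i=0}^{n-1}f'(f^{\circ i}(x))$; evaluating at $x=1$ gives $f'(1)^n=t^n$. The uniform convergence established in Theorem~\ref{uniformconv} (and the analyticity of $\log_I$) lets me differentiate termwise at $1$, so $\log_I'(1)=\lim_n t^n/t^n=1$. For (ii), I will invoke Lemma~\ref{maxmininp}, which sandwiches $f$ between $h(x)=((1-t)+tx^{-1})^{-1}$ and $a(x)=(1-t)+tx$. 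A quick induction shows that $a^{\circ n}(x)=(1-t^n)+t^n x$ and $h^{\circ n}(x)=x/((1-t^n)x+t^n)$; because $f$ is monotone increasing, the same sandwich propagates to $h^{\circ n}(x)\leq f^{\circ n}(x)\leq a^{\circ n}(x)$. Dividing through by $t^n$ and passing to the limit gives exactly
$$1-x^{-1}\leq \log_I(x)\leq x-1.$$

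For (iii) I would again propagate the inequality by induction. Assume $f^{\circ n}\leq g^{\circ n}$ pointwise. Then using $f\leq g$ and monotonicity of $g$,
$$f^{\circ(n+1)}(x)=f(f^{\circ n}(x))\leq g(f^{\circ n}(x))\leq g(g^{\circ n}(x))=g^{\circ(n+1)}(x).$$
Since $f,g\in\mathfrak{P}(t)$ share the same $f'(1)=g'(1)=t$, dividing by the positive quantity $t^n$ preserves the inequality, and taking the limit provided by Theorem~\ref{uniformconv} gives $\log_{I,f}(x)\leq\log_{I,g}(x)$.

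For (i) I will combine the previous parts with the analyticity of operator monotone functions. Operator monotonicity already gives $\log_I$ monotone increasing on $(0,\infty)$; (ii) together with (iv) shows it is non-constant, and since it extends analytically to the upper half-plane (via the integral representation \eqref{integralchar}), a vanishing derivative on any subinterval would force $\log_I'\equiv 0$, contradicting $\log_I'(1)=1$ from (iv). Hence $\log_I$ is strictly increasing, hence bijective onto its image in $\mathbb{R}$. Injectivity on $\textit{P}(n,\mathbb{C})$ then follows from functional calculus: if $\log_I(A)=\log_I(B)$ then the two self-adjoint matrices have a common spectral decomposition, and strict monotonicity of $\log_I$ on the spectra recovers the original eigenvalues uniquely, forcing $A=B$. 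The main obstacle I anticipate is the justification at step (iv) that one may interchange limit and derivative, as well as ensuring in (i) that the real-variable strict monotonicity is enough to guarantee matrix injectivity on all of $\textit{P}(n,\mathbb{C})$ rather than merely on commuting pairs; the uniform convergence in Theorem~\ref{uniformconv} and the spectral theorem, respectively, resolve both points.
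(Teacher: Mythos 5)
Your parts (ii), (iii) and (i) are in substance the paper's own argument: the paper also deduces (iii) by iterating the pointwise inequality, obtains (ii) by sandwiching $f$ between the weighted arithmetic and harmonic representing functions (it simply cites the closed-form iterates already computed in Example~\ref{ex1} rather than re-deriving $a^{\circ n}$ and $h^{\circ n}$ as you do, and then applies (iii)), and gets (i) from nonconstancy plus monotonicity plus the functional calculus. Your observation that a monotone analytic function whose derivative is not identically zero must be strictly increasing is a correct and slightly more explicit version of the paper's terse "nonconstant, hence injective".

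The one genuine gap is in (iv). You compute $(f^{\circ n})'(1)/t^n=1$ for every $n$ and then assert that "the uniform convergence established in Theorem~\ref{uniformconv} \ldots lets me differentiate termwise at $1$". Uniform convergence of a sequence of functions on a real interval does \emph{not} imply convergence of their derivatives; Theorem~\ref{uniformconv} gives uniform convergence of $\bigl(f^{\circ n}-1\bigr)/t^n$ on closed subintervals of $(0,\infty)$, not uniform convergence of the derivatives, nor locally uniform convergence on an open complex neighbourhood of $1$ (which is what Weierstrass's theorem on analytic limits would require). So as written the step from $(f^{\circ n})'(1)/t^n=1$ to $\log_I'(1)=1$ is unjustified. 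The repair is immediate and is exactly what the paper does: once you have (ii) and $\log_I(1)=0$, squeeze the difference quotient,
\begin{equation*}
\frac{1-(1+h)^{-1}}{h}\;\leq\;\frac{\log_I(1+h)-\log_I(1)}{h}\;\leq\;\frac{(1+h)-1}{h},
\end{equation*}
and let $h\to 0$; both bounds tend to $1$. (Alternatively, one can appeal to the fact that pointwise convergence of differentiable concave functions forces convergence of derivatives at any point where the limit is differentiable -- all your $g_n=(f^{\circ n}-1)/t^n$ are concave -- but that is a separate theorem, not a consequence of uniform convergence.) Note this also means you should prove (ii) before the derivative half of (iv), i.e.\ reverse the order you propose for those two items.
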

\begin{proof}
(iii): Since $f(x)\leq g(x)$ by monotonicity we have $f(x)^{\circ n}\leq g(x)^{\circ n}$. From this it follows that
\begin{equation*}
\frac{f(x)^{\circ n}-1}{f'(1)^{n}}\leq \frac{g(x)^{\circ n}-1}{g'(1)^{n}},
\end{equation*}
and the inequality is also preserved in the limit.

(ii): By Lemma~\ref{maxmininp} we have
\begin{equation*}
\left((1-t)+tx^{-1}\right)^{-1}\leq f(x)\leq (1-t)+tx
\end{equation*}
where on the left hand side we have the function $f_{-1}(x)$ and on the right hand side we have $f_{1}(x)$ from Example~\ref{ex1}. In Example~\ref{ex1} we calculated the corresponding limit functions, so these combined with the previous property (iii) proves property (ii).

(i): By property (ii) it follows that $\log_I(x)$ is nonconstant on $(0,\infty)$. Also $\log_I(x)$ is operator monotone there, so it is strictly concave, therefore injective and real valued. This combined with the functional calculus for matrix functions proves the property.

(iv): $\log_I(1)=0$ follows from (ii). Using this and (ii) again we have
\begin{equation*}
\frac{1-(1+h)^{-1}}{h}\leq \frac{\log_I(1+h)-\log_I(1)}{h}\leq \frac{(1+h)-1}{h}.
\end{equation*}
Taking the limit $h\to 0$ we get derivatives on the left and right hand sides are $1$, so also $\log_I'(1)=1$.

\end{proof}

Since $\log_I(x)$ is operator monotone on $(0,\infty)$, it is also analytic there, so it has an analytic inverse $\exp_I(x)$ by Lagrange's inversion theorem, since its derivative is nonzero due to Proposition~\ref{injective}. It is also easy to see that $\exp'_I(0)=1$ and $\exp_I(0)=1$. By these considerations we have just arrived at the following

\begin{proposition}\label{meanswithexp}
Let $f\in\mathfrak{P}(t)$. Then
\begin{equation}
f(x)=\exp_I\left(f'(1)\log_I(x)\right),
\end{equation}
where $\log_I\in\mathfrak{L}$ is the unique solution of the functional equation \eqref{funcequ} in the wider class of continuously differentiable and invertible functions on $(0,\infty)$ which vanish at $1$ and have derivative $1$ at $1$.
\end{proposition}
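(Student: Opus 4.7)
The plan is to establish the formula and the uniqueness separately, with almost all the work already packaged in Theorem~\ref{uniformconv} and Proposition~\ref{injective}. First I would observe that the representation $f(x)=\exp_I(f'(1)\log_I(x))$ is just a rewriting of the Schr\"oder equation \eqref{funcequ}. Theorem~\ref{uniformconv} supplies an operator monotone $\log_I$ on $(0,\infty)$ satisfying $\log_I(f(x))=f'(1)\log_I(x)$; Proposition~\ref{injective} gives injectivity, $\log_I(1)=0$, and $\log_I'(1)=1$. Since operator monotone functions on $(0,\infty)$ are analytic there, Lagrange inversion yields a local analytic inverse $\exp_I$ near $0$ with $\exp_I(0)=1$ and $\exp_I'(0)=1$, which extends across $\log_I((0,\infty))$ by the global injectivity. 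Applying $\exp_I$ to both sides of the functional equation gives the displayed formula.

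The substantive step is uniqueness inside the broader class of continuously differentiable invertible functions $h$ on $(0,\infty)$ with $h(1)=0$ and $h'(1)=1$ satisfying $h(f(x))=f'(1)h(x)$. Iterating the functional equation $n$ times gives
\[
h(x)=\frac{h(f(x)^{\circ n})}{f'(1)^{n}}.
\]
By Lemma~\ref{fixedpoint}, $1$ is the only fixed point of $f$ in $(0,\infty)$ and it is attractive, so $f(x)^{\circ n}\to 1$ pointwise, and uniformly on compact subsets of $(0,\infty)$ by monotonicity of $f$. The $C^{1}$ hypothesis together with $h(1)=0$, $h'(1)=1$ gives the first-order expansion $h(y)=(y-1)(1+\rho(y))$ with $\rho$ continuous on $(0,\infty)$ and $\rho(1)=0$. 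Substituting $y=f(x)^{\circ n}$ yields
\[
h(x)=\frac{f(x)^{\circ n}-1}{f'(1)^{n}}\bigl(1+\rho(f(x)^{\circ n})\bigr).
\]
Theorem~\ref{uniformconv} asserts that the first factor converges to $\log_I(x)$, while $\rho(f(x)^{\circ n})\to 0$ by continuity of $\rho$ at $1$. Passing to the limit forces $h=\log_I$, which establishes uniqueness.

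The hard part is really only this uniqueness argument, and even there the only delicate point is ensuring the Taylor remainder $\rho(f(x)^{\circ n})$ can be handled along the orbit; but this is immediate from pointwise convergence $f(x)^{\circ n}\to 1$ from Lemma~\ref{fixedpoint} combined with the mere continuity of $\rho$. No compactness or uniform-remainder estimate on $h$ is needed because both limits are taken pointwise in $x$. Everything else is bookkeeping of the previously established facts: existence of $\log_I$ and \eqref{funcequ} from Theorem~\ref{uniformconv}, injectivity and the normalization $\log_I'(1)=1$ from Proposition~\ref{injective}, and the existence of the analytic inverse $\exp_I$ from Lagrange inversion as already noted in the text preceding the proposition.
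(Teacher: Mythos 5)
Your proof is correct, and its uniqueness step is organized differently from the paper's. The paper proves uniqueness by passing to the inverse: from the iterated functional equation it writes $f(x)^{\circ n}=\exp_{I,2}\left(f'(1)^n\log_{I,2}(x)\right)$ and recognizes $\lim_{n\to\infty}\frac{f(x)^{\circ n}-1}{f'(1)^n}$ as the derivative $\left.\frac{\partial}{\partial s}\exp_{I,2}(s\log_{I,2}(x))\right|_{s=0}=\exp_{I,2}'(0)\log_{I,2}(x)=\log_{I,2}(x)$, so the candidate solution must coincide with the Koenigs limit. You instead keep the candidate $h$ on the outside, writing $h(x)=f'(1)^{-n}h(f(x)^{\circ n})=\frac{f(x)^{\circ n}-1}{f'(1)^n}\bigl(1+\rho(f(x)^{\circ n})\bigr)$ and letting the Taylor remainder $\rho$ die along the orbit $f(x)^{\circ n}\to 1$ (Lemma~\ref{fixedpoint}), with Theorem~\ref{uniformconv} identifying the surviving factor as $\log_I(x)$. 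The two computations are mirror images of one another and both rest on exactly the same ingredients (the iterated Schr\"oder equation, the first-order normalization at the fixed point, and the Koenigs limit), but yours buys a small strengthening: it never uses invertibility or differentiability of $h$ away from $1$, so it establishes uniqueness in the class of continuous functions that are merely differentiable at $1$ with $h(1)=0$, $h'(1)=1$, whereas the paper's argument genuinely needs the inverse $\exp_{I,2}$ to exist near $0$. The existence half of your argument (apply $\exp_I$ to \eqref{funcequ}, noting $f'(1)\log_I(x)=\log_I(f(x))$ lies in the range of $\log_I$) matches the paper's one-line justification via invertibility of $\log_I$ on $(0,\infty)$.
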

\begin{proof}
The first part of the assertion follows from the invertibility of $\log_I(x)$ on $(0,\infty)$ by Theorem~\ref{uniformconv}. For the 
second uniqueness part note that if $\log_{I,2}(x)$ is an invertible continuously differentiable solution of the functional equation \eqref{funcequ} and also $\log_{I,2}(1)=0$ and $\log_{I,2}'(1)=1$, then its inverse $\exp_{I,2}(x)$ exists, $\exp_{I,2}(0)=1$ and $\exp_{I,2}'(0)=1$. Moreover by Theorem~\ref{uniformconv}
\begin{eqnarray*}
\log_I(x)&=&\lim_{n\to\infty}\frac{f(x)^{\circ n}-1}{f'(1)^{n}}=\lim_{n\to\infty}\frac{\exp_{I,2}(f'(1)^n\log_{I,2}(x))-\exp_{I,2}(0)}{f'(1)^{n}}\\
&=&\lim_{s\to 0}\frac{\exp_{I,2}(s\log_{I,2}(x))-\exp_I(0)}{s}=\left.\frac{\partial}{\partial s}\exp_{I,2}(s\log_{I,2}(x))\right|_{s=0}\\
&=&\log_{I,2}(x),
\end{eqnarray*}
so the functions $\log_I$ and $\log_{I,2}$ are identical.

\end{proof}

The above propositions put some restrictions on the possible functions $\log_I(x)$ that can occur as limits in Theorem~\ref{uniformconv}. Therefore we will use the notation $\mathfrak{L}$ to denote the set of operator monotone functions $g(x)$ on $(0,\infty)$ such that $g(1)=0$ and $g'(1)=1$. By Proposition~\ref{meanswithexp} it is clear, that for each $f\in\mathfrak{P}(t)$ we have a unique corresponding $\log_I(x)$ in $\mathfrak{L}$. 
\begin{definition}[Exponential and logarithm maps]
We say that for an $f\in\mathfrak{P}(t)$ the corresponding unique solution $\log_I(x)$ in $\mathfrak{L}$ of the functional equation \eqref{funcequ} is the logarithm map corresponding to $f(x)$, while its inverse $\exp_I(x)$ is the exponential map corresponding to $f(x)$.
\end{definition}


In the following section we will go the other way around and see whether the function
\begin{equation*}
f(x)=\exp_I\left(t\log_I(x)\right)
\end{equation*}
is in $\mathfrak{P}(t)$ for all $\log_I\in\mathfrak{L}$ and $t\in(0,1)$.

\section{Semigroups of representing functions induced by logarithm maps}
In the previous section we established that for every $f\in\mathfrak{P}(t)$ there exists a unique function $\log_I\in\mathfrak{L}$ such that it fulfills the functional equation \eqref{funcequ}. In order to see whether an element $\log_I\in\mathfrak{L}$ also induces a representing function $f\in\mathfrak{P}(t)$ with the generalized functional equation
\begin{equation*}
\log_I(f(x))=t\log_I(x)
\end{equation*}
for all $t\in(0,1)$, we must extend our investigations into the upper complex half-plane $\mathbb{H}^{+}=\left\{z\in\mathbb{C}:\Im z>0\right\}$.

First of all let us recall Nevanlinna's representation \cite{bhatia} of holomorphic functions $f:\mathbb{H}^{+}\to\mathbb{H}^{+}$. By Nevanlinna's theorem each such $f$ can uniquely be written as
\begin{equation}\label{nevanlinna}
f(z)=\alpha+\beta z+\int_{-\infty}^{\infty}\frac{\lambda z+1}{\lambda-z}d\nu(\lambda),
\end{equation}
where $\alpha\in\mathbb{R}, \beta\geq 0$ and $\nu$ is a positive measure with support in $(-\infty,\infty)$. It is well known that $f$ can be extended to the lower half-plane $\mathbb{H}^{-}=\left\{z\in\mathbb{C}:\Im z<0\right\}$ as well by Schwarz reflection $\overline{f(\overline{z})}$ for all $z\in\mathbb{H}^{-}$. Therefore also if this extension is by analytic continuation over an interval $(a,b)$, then $\nu$ vanishes on the interval \cite{bhatia}. Similarly if $\nu$ vanishes on a real interval, then $f$ is holomorphic on the interval as well and can be analyticly continued to the lower half-plane.

Representation \eqref{nevanlinna} will be useful for studying functions in $\mathfrak{L}$. For example Nevanlinna's representation yields that all $f\in\mathfrak{L}$ can be represented as 
\begin{equation}\label{nevanlinna2}
f(z)=\alpha+\beta z+\int_{-\infty}^{0}\frac{\lambda z+1}{\lambda-z}d\nu(\lambda),
\end{equation}
where $\alpha\in\mathbb{R}, \beta\geq 0$ and $\nu$ is a positive measure with support in $(-\infty,0)$. This is due to the required holomorphicity of $f$ on $(0,\infty)$. Next let us find the maximal and minimal elements in $\mathfrak{L}$.

\begin{lemma}\label{maxmininl}
For all $\log_I\in\mathfrak{L}$ we have
\begin{equation}
1-x^{-1}\leq \log_I(x)\leq x-1.
\end{equation}
\end{lemma}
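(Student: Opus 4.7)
The plan is to mirror the argument used for Lemma~\ref{maxmininp}, exploiting the fact that every operator monotone function on $(0,\infty)$ is operator concave (Theorem V.2.5 in \cite{bhatia}), together with the normalization conditions $\log_I(1)=0$ and $\log_I'(1)=1$ which are built into the definition of $\mathfrak{L}$.

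For the upper bound, I would note that by operator concavity $\log_I$ is in particular concave as a real function on $(0,\infty)$, so it lies below its tangent line at any point. The tangent line at $x=1$ is
\begin{equation*}
\log_I(1)+\log_I'(1)(x-1)=x-1,
\end{equation*}
which immediately yields $\log_I(x)\leq x-1$ for all $x>0$.

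For the lower bound, my plan is to pass to the transformed function $h(x):=-\log_I(x^{-1})$ and argue that $h\in\mathfrak{L}$, then apply the upper bound to $h$. The operator monotonicity of $h$ follows from composing the order-reversing map $x\mapsto x^{-1}$ on $\mathbb{P}$ with $\log_I$ and then negating: if $0<A\leq B$ then $B^{-1}\leq A^{-1}$, so $\log_I(B^{-1})\leq \log_I(A^{-1})$ by operator monotonicity of $\log_I$, hence $h(A)\leq h(B)$. The normalizations are immediate from $h(1)=-\log_I(1)=0$ and a quick chain-rule computation $h'(1)=\log_I'(1)\cdot 1=1$. Thus $h\in\mathfrak{L}$, and the already-established upper bound gives $h(x)\leq x-1$, i.e.\ $-\log_I(x^{-1})\leq x-1$, so $\log_I(x^{-1})\geq 1-x$. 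Substituting $x\mapsto x^{-1}$ produces $\log_I(x)\geq 1-x^{-1}$, as desired.

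There is essentially no serious obstacle here; the only point that needs verification is that $h\in\mathfrak{L}$, which is a direct consequence of the order-reversing property of matrix inversion combined with the operator monotonicity of $\log_I$. As a sanity check, the extremal functions $1-x^{-1}$ and $x-1$ are themselves the bounding operator monotone elements of $\mathfrak{L}$ (they are operator monotone on $(0,\infty)$ with the correct value and derivative at $1$), so the inequalities are sharp, exactly paralleling the role of the weighted harmonic and arithmetic representing functions in Lemma~\ref{maxmininp}.
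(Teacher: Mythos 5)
Your proposal is correct and follows essentially the same route as the paper's proof: the upper bound from operator concavity plus the tangent line at $x=1$, and the lower bound by observing that $-\log_I(x^{-1})$ is again in $\mathfrak{L}$ and applying the upper bound to it. No gaps.
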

\begin{proof}
Since every operator monotone function is operator concave, therefore we must have $\log_I(x)\leq x-1$ by concavity and the normalization conditions on elements of $\mathfrak{L}$. Since the map $x^{-1}$ and $-x$ is order reversing on hermitian matrices, we have that if $\log_I(x)\in\mathfrak{L}$ then also $-\log_I(x^{-1})\in\mathfrak{L}$. So again by concavity
\begin{eqnarray*}
-\log_I(x^{-1})&\leq& x-1\\
\log_I(x)&\geq& 1-x^{-1}.
\end{eqnarray*}
Clearly $x-1$ and $1-x^{-1}$ are also in $\mathfrak{L}$.

\end{proof}

\begin{proposition}\label{logrepr}
Let $\log_I\in\mathfrak{L}$. Then
\begin{equation}\label{logreprharm}
\log_I(x)=\int_{[0,1]}\left[1-2s+\frac{sx-(1-s)}{(1-s)x+s}\right]\frac{d\nu(s)}{(1-s)^2+s^2}
\end{equation}
where $\nu$ is a probability measure over the closed interval $[0,1]$.
\end{proposition}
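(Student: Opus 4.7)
My proof plan is to reduce the claimed formula to a cleaner equivalent form and then derive it from Nevanlinna's representation \eqref{nevanlinna2}. First I would observe the algebraic identity
\begin{equation*}
1-2s+\frac{sx-(1-s)}{(1-s)x+s}=\frac{[(1-s)^2+s^2](x-1)}{(1-s)x+s},
\end{equation*}
obtained by putting the left side over the common denominator $(1-s)x+s$ and expanding. Dividing by $(1-s)^2+s^2$, the claimed formula is therefore equivalent to
\begin{equation*}
\log_I(x)=\int_{[0,1]}\frac{x-1}{(1-s)x+s}\,d\nu(s),
\end{equation*}
and it is this cleaner form that I would establish; the stated version is then immediate.

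Next I would apply \eqref{nevanlinna2}, writing
\begin{equation*}
\log_I(z)=\alpha+\beta z+\int_{-\infty}^{0}\frac{\lambda z+1}{\lambda-z}\,d\mu(\lambda),
\end{equation*}
and then use $\log_I(1)=0$ to solve for $\alpha$ and substitute back. A short computation puts the result in the form
\begin{equation*}
\log_I(z)=\beta(z-1)+\int_{-\infty}^{0}\frac{(z-1)(\lambda^{2}+1)}{(\lambda-z)(\lambda-1)}\,d\mu(\lambda).
\end{equation*}
Now I would perform the change of variable $\lambda=-s/(1-s)$, which is a bijection $(-\infty,0]\to[0,1)$ (mirroring the substitution used in Proposition~\ref{harmonicreprprop}). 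Routine algebra gives
\begin{equation*}
\frac{\lambda^{2}+1}{(\lambda-z)(\lambda-1)}=\frac{(1-s)^{2}+s^{2}}{(1-s)z+s},
\end{equation*}
so after pushing $\mu$ forward to a measure $\tilde\mu$ on $[0,1)$ the integral becomes $\int_{[0,1)}\frac{(z-1)[(1-s)^{2}+s^{2}]}{(1-s)z+s}\,d\tilde\mu(s)$.

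The linear term $\beta(z-1)$ can then be absorbed: at $s=1$ the kernel $\frac{z-1}{(1-s)z+s}$ equals $z-1$, so defining
\begin{equation*}
d\nu(s)=[(1-s)^{2}+s^{2}]\,d\tilde\mu(s)\text{ on }[0,1),\qquad \nu(\{1\})=\beta,
\end{equation*}
yields the desired representation $\log_I(z)=\int_{[0,1]}\frac{z-1}{(1-s)z+s}\,d\nu(s)$. Finally I would verify that $\nu$ is a probability measure by computing the derivative of the kernel at $x=1$ (which equals $1$ independently of $s$) and invoking Proposition~\ref{injective}(iv), namely $\log_I'(1)=1$, together with the dominated convergence argument used in Proposition~\ref{inPt}. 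I expect the only real care to be needed in the change-of-variable bookkeeping and in handling the $\beta$-contribution at the boundary point $s=1$; everything else is a direct translation of the corresponding step in the proof of Proposition~\ref{harmonicreprprop}.
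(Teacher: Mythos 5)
Your proof is correct and follows essentially the same route as the paper: both start from the Nevanlinna representation \eqref{nevanlinna2}, move the support to $[0,1]$ via the substitution $\lambda\leftrightarrow s/(1-s)$, absorb the linear term $\beta$ as a point mass at the endpoint, and use $\log_I(1)=0$ and $\log_I'(1)=1$ (with dominated convergence) to fix $\alpha$ and normalize $\nu$. Your preliminary algebraic identity merely rewrites the paper's kernel in the cleaner form $\frac{x-1}{(1-s)x+s}$, which is a cosmetic, not a structural, difference.
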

\begin{proof}
By \eqref{nevanlinna2} and the transformation of the integral we have that
\begin{equation}\label{nevanlinna3}
\log_I(z)=\alpha+\beta z+\int_{0}^{\infty}\frac{\lambda z-1}{\lambda+z}d\mu(\lambda),
\end{equation}
where $\alpha\in\mathbb{R}, \beta\geq 0$ and $\mu$ is a positive measure with support in $(0,\infty)$. The mapping $\lambda=\frac{s}{1-s}$ is a bijection from $[0,1]$ to $[0,\infty]$. Then by change of variables we have
\begin{equation*}
\log_I(z)=\alpha+\beta z+\int_{0}^{1}\frac{sz-(1-s)}{(1-s)z+s}d\mu\left(\frac{s}{1-s}\right).
\end{equation*}
Letting $\mu(\{\infty\})=\beta$, we have
\begin{equation*}
\log_I(z)=\alpha+\int_{[0,1]}\frac{sz-(1-s)}{(1-s)z+s}d\mu\left(\frac{s}{1-s}\right).
\end{equation*}
Since $\log_I(1)=0$ we have
\begin{equation*}
\alpha=\int_{[0,1]}1-2sd\mu\left(\frac{s}{1-s}\right).
\end{equation*}
Using Lemma~\ref{maxmininl} and Lebesgue's dominated convergence theorem we get
\begin{equation*}
\log_I'(1)=1=\int_{[0,1]}(1-s)^2+s^2d\mu\left(\frac{s}{1-s}\right)
\end{equation*}
which means that the measure
\begin{equation*}
d\nu(s)=[(1-s)^2+s^2]d\mu\left(\frac{s}{1-s}\right)
\end{equation*}
is a probability measure on $[0,1]$, so \eqref{logreprharm} follows.

\end{proof}

At this point let us refer again to the functional equation \eqref{funcequ} in the previous section. By the above considerations we can generalize \eqref{funcequ} by analytic continuation.
\begin{proposition}\label{proplog}
Let $f\in\mathfrak{P}(t)$. Then the function $\log_I(x)$ given in Theorem~\ref{uniformconv} admits analytic continuation to $\mathbb{H}^{+}$ and also to $\mathbb{H}^{-}$ across $(0,\infty)$ by relfection, moreover it fulfills the functional equation
\begin{equation}\label{funcequ2}
\log_I(f(z))=f'(1)\log_I(z)
\end{equation}
for all $z\in\mathbb{C}\backslash(-\infty,0]$.
\end{proposition}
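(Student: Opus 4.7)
The strategy is to upgrade the identity \eqref{funcequ} from the positive real axis to the slit plane via the identity theorem for holomorphic functions, after first establishing that both sides make sense as analytic functions on $\mathbb{C}\setminus(-\infty,0]$.

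First I would verify that $\log_I(z)$ extends analytically to $\mathbb{C}\setminus(-\infty,0]$. Since $\log_I\in\mathfrak{L}$ is operator monotone on $(0,\infty)$, it admits the Nevanlinna-type representation~\eqref{nevanlinna2}, whose representing measure is supported in $(-\infty,0)$. The integrand $\frac{\lambda z+1}{\lambda-z}$ is holomorphic in $z$ for every $\lambda$ in the support and for every $z\in\mathbb{C}\setminus(-\infty,0]$, and the growth bound from Lemma~\ref{maxmininl} lets one differentiate under the integral sign (or apply Morera's theorem), yielding holomorphicity on all of $\mathbb{H}^{+}$, with reflection $\overline{\log_I(\overline{z})}$ giving the extension to $\mathbb{H}^{-}$. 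The same argument, applied to the representation~\eqref{kuboint}, shows that $f\in\mathfrak{P}(t)$ itself extends holomorphically to $\mathbb{C}\setminus(-\infty,0]$.

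Next I would check that the composition $\log_I\circ f$ is well-defined on this slit plane. The Nevanlinna representation gives the Pick property $f(\mathbb{H}^{+})\subseteq\mathbb{H}^{+}$, and since $f$ is real and positive on $(0,\infty)$, Schwarz reflection yields $f(\mathbb{H}^{-})\subseteq\mathbb{H}^{-}$. Combined with $f((0,\infty))\subseteq(0,\infty)$, this shows
\[
f\bigl(\mathbb{C}\setminus(-\infty,0]\bigr)\subseteq \mathbb{H}^{+}\cup\mathbb{H}^{-}\cup(0,\infty)=\mathbb{C}\setminus(-\infty,0],
\]
so $\log_I\circ f$ is holomorphic on $\mathbb{C}\setminus(-\infty,0]$, as is $f'(1)\log_I$. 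By Theorem~\ref{uniformconv} the two holomorphic functions agree on the positive real axis, which has accumulation points inside the connected open domain $\mathbb{C}\setminus(-\infty,0]$. The identity theorem for holomorphic functions then forces equality everywhere on $\mathbb{C}\setminus(-\infty,0]$, which is \eqref{funcequ2}.

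The only delicate point I anticipate is ensuring the composition $\log_I\circ f$ is actually defined on the whole slit plane; this is why the invariance $f(\mathbb{C}\setminus(-\infty,0])\subseteq\mathbb{C}\setminus(-\infty,0]$ is the key verification, and it in turn follows from combining the Pick property with positivity on $(0,\infty)$. Everything else is a routine application of Nevanlinna representation, Schwarz reflection, and the identity theorem.
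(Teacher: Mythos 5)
Your proof is correct, but it takes a genuinely different route from the paper's. The paper also begins by continuing $f$ and $\log_I$ to $\mathbb{C}\setminus(-\infty,0]$ via their integral representations, but it then passes to the inverse: it forms $F(z)=\exp_I\left(f'(1)\log_I(z)\right)$, observes that $F$ is meromorphic and coincides with $f$ on $(0,\infty)$, and concludes $F=f$ on the slit plane by uniqueness of meromorphic continuation. You instead work directly with the composition $\log_I\circ f$: the key verification is that $f$ maps the slit plane into itself (Pick property on $\mathbb{H}^{+}$, Schwarz reflection on $\mathbb{H}^{-}$, positivity on $(0,\infty)$), after which both sides of \eqref{funcequ2} are honestly holomorphic on $\mathbb{C}\setminus(-\infty,0]$ and the identity theorem finishes the argument. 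Your version is arguably the more careful of the two: the paper's appeal to a globally meromorphic inverse $\exp_I$ is delicate, since $\log_I$ need not be injective off the real axis (the paper itself studies ramification points of $\log_I$ in $\mathbb{H}^{+}$ in Theorem~\ref{nobranchthm}), whereas your argument never needs $\exp_I$ at all, and it proves the functional equation in exactly the form stated rather than in the inverted form $f(z)=\exp_I(f'(1)\log_I(z))$. The one small point to make explicit is that $f$ is non-constant (because $f'(1)=t\in(0,1)$), so the Pick property holds in the strict sense $f(\mathbb{H}^{+})\subseteq\mathbb{H}^{+}$ rather than only $f(\mathbb{H}^{+})\subseteq\overline{\mathbb{H}^{+}}$; with that noted, the invariance of the slit plane and the identity-theorem step are complete.
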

\begin{proof}
Since analytic continuation of $f(x)$ and $\log_I(x)$ can be performed using the integral characterizations \eqref{integralchar} and \eqref{nevanlinna2} respectively, we end up with holomorphic functions living on $\mathbb{C}\backslash(-\infty,0]$. Since $\log_I(z)$ is a holomorphic function, it has a meromorphic inverse $\exp_I(z)$. So we have
\begin{equation*}
F(z)=\exp_I\left(f'(1)\log_I(z)\right),
\end{equation*}
a meromorphic function that is identical to $f(z)$ everywhere on the domain $(0,\infty)$. Therefore by uniqueness of meromorphic and analytic continuation we must have $F(z)=f(z)$ everywhere on the domain $\mathbb{C}\backslash(-\infty,0]$.

\end{proof}

The above result tells us, that for a given $\log_I\in\mathfrak{L}$ we should consider the generalized functional equation
\begin{equation}\label{funcequ3}
\log_I(f_t(z))=t\log_I(z)
\end{equation}
to define a representing function $f_t(z)$ for all $t\in(0,1)$ corresponding to $\log_I(z)$ which was itself obtained by analyitc continuation using representation \eqref{nevanlinna2}. The obvious question that arizes here is whether every $\log_I(z)$ in $\mathfrak{L}$ has a corresponding $f_t\in\mathfrak{P}(t)$? We need the following:

\begin{definition}[Radial convexity]
Let $S\subseteq \mathbb{C}$ be such that $0\in S$. We will say that $S$ is radially convex if and only if for all $z\in S$ also $tz\in S$ for all $t\in[0,1]$.
\end{definition}

\begin{proposition}\label{starlikeimage}
Let $\log_I\in\mathfrak{L}$. Then $\log_I$ maps $\mathbb{H}^{+}$ to a radially convex set in $\mathbb{H}^{+}$.
\end{proposition}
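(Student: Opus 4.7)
The plan is to begin from the factored form of the integral representation in Proposition~\ref{logrepr}. A short computation on the kernel gives
\[
1 - 2s + \frac{sz - (1-s)}{(1-s)z + s} = \frac{\bigl((1-s)^2 + s^2\bigr)(z-1)}{(1-s)z + s},
\]
so
\[
\log_I(z) = (z-1)\,G(z), \qquad G(z) = \int_{[0,1]} \frac{d\nu(s)}{(1-s)z + s},
\]
where $G$ is holomorphic on $\mathbb{C}\setminus(-\infty,0]$ and $G(1) = 1$. For $z \in \mathbb{H}^{+}$ and $s \in [0,1)$, $(1-s)z + s \in \mathbb{H}^{+}$, so $1/[(1-s)z+s] \in \mathbb{H}^{-}$; at $s=1$ the integrand equals $1$. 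Hence $G(z) \in \overline{\mathbb{H}^{-}}$, and combined with $z-1 \in \mathbb{H}^{+}$ this reproves that $\log_I$ is a Pick function, i.e. $\log_I(\mathbb{H}^{+}) \subseteq \mathbb{H}^{+}$.

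For radial convexity, fix $z_0 \in \mathbb{H}^{+}$, set $w_0 = \log_I(z_0) \in \mathbb{H}^{+}$, and for each $t \in (0,1]$ I would construct a preimage $z_t \in \mathbb{H}^{+}$ of $tw_0$ by analytic continuation of a local inverse of $\log_I$ along the segment $\{t w_0 : t \in (0,1]\}$. The base of the continuation is at $t \to 0^{+}$: since $\log_I'(1) = 1$, $\log_I$ is a local biholomorphism at $1$, so it maps a half-neighborhood of $1$ in $\mathbb{H}^{+}$ conformally onto a half-neighborhood of $0$ in $\mathbb{H}^{+}$. Therefore for small $t>0$ the unique preimage $z_t$ close to $1$ lies in $\mathbb{H}^{+}$, and $z_t \to 1$ as $t \to 0^{+}$. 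I then continue this branch from small $t$ upward to $t=1$, where by construction $z_1 = z_0$.

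The essential step, and the main obstacle, is to show the continuation cannot leave $\mathbb{H}^{+}$. Exit through $(0,\infty) \subset \partial \mathbb{H}^{+}$ is ruled out immediately because $\log_I$ is real-valued on $(0,\infty)$ while $tw_0 \in \mathbb{H}^{+}$; exit through $(-\infty,0)$ is excluded by inspecting the upper boundary values of $\log_I$ there via the Nevanlinna representation \eqref{nevanlinna3}, where the imaginary part of the boundary trace is determined by the representing measure and cannot match a positive multiple of $w_0$ except possibly at isolated points which can be circumvented. A further subtlety is that $\log_I$ need not be univalent on $\mathbb{H}^{+}$ (e.g.\ $\log_I(z) = (z - z^{-1})/2 \in \mathfrak{L}$ is two-to-one with a branch point at $z=i$), so the local inverse branch must be tracked carefully; when the segment $\{tw_0\}$ passes through a critical value of $\log_I$, I would perturb the path slightly inside $\mathbb{H}^{+}$ to avoid the (isolated) critical set, after which the specified branch extends continuously and the endpoint $z_1$ is recovered up to monodromy without leaving $\mathbb{H}^{+}$. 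This produces $z_t \in \mathbb{H}^{+}$ with $\log_I(z_t) = t w_0$ for every $t \in (0,1]$, which proves that $\log_I(\mathbb{H}^{+})$ is radially convex with respect to $0$.
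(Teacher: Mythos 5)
Your opening factorization $\log_I(z)=(z-1)G(z)$ with $G(z)=\int_{[0,1]}\frac{d\nu(s)}{(1-s)z+s}$ is correct and is the same algebra that underlies the kernel in Proposition~\ref{logrepr}; the paper, by contrast, never inverts $\log_I$ at all -- it approximates $\nu$ by finitely supported measures (Krein--Milman) and uses that each extreme kernel $h_s$ is a real M\"obius transformation carrying $\mathbb{H}^{+}$ onto $\mathbb{H}^{+}$, deducing radial convexity of the image from that structure. One small error before the main point: from $z-1\in\mathbb{H}^{+}$ and $G(z)\in\overline{\mathbb{H}^{-}}$ you cannot conclude $(z-1)G(z)\in\mathbb{H}^{+}$ (e.g.\ $(1+i)(1-2i)=3-i$); the inference can be repaired with the sharper estimates $\arg(z-1)>\arg z$ and $\arg G(z)\geq-\arg z$, but as written it is not valid.

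The genuine gap is that the entire content of the proposition sits in the step you yourself label ``the essential step,'' and what you offer there is an assertion, not a proof. To make the continuation argument work you must show that the set $T=\{t\in(0,1]:tw_0\in\log_I(\mathbb{H}^{+})\}$ is closed in $(0,1]$, i.e.\ that if $\log_I(z_n)=t_nw_0$ with $t_n\to t^{*}$ then $(z_n)$ has a subsequence staying in a compact subset of $\mathbb{H}^{+}$. You correctly exclude escape through $(0,\infty)$, but escape through $(-\infty,0)$, through $0$, and through $\infty$ are all left open: a Pick function need not have boundary values everywhere on $(-\infty,0)$, and where it does the imaginary part is governed by the representing measure and can take any value in $[0,\infty]$, so there is no a priori reason it avoids $\Im(t^{*}w_0)$; the possibilities $z_n\to 0$ and $|z_n|\to\infty$ are not addressed at all. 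The treatment of critical values has the same defect: once you perturb the path you obtain preimages of the perturbed points rather than of $tw_0$, so an additional limiting argument is required, and the critical values of $\log_I$ on the segment need not be avoidable by a perturbation that you then undo. (The claim that the continued branch ends at $z_1=z_0$ is also false in general -- for $\log_I(z)=(z-z^{-1})/2$ the continuation from $1$ can land on the other preimage $-1/z_0$ -- though this particular slip is harmless, since any preimage in $\mathbb{H}^{+}$ would do.) As it stands, the proof establishes only the easy parts and defers the actual difficulty to unproved claims, so it does not constitute a proof of the proposition.
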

\begin{proof}
By Proposition~\ref{logrepr} we have that
\begin{equation*}
\log_I(z)=\int_{[0,1]}\left[1-2s+\frac{sx-(1-s)}{(1-s)x+s}\right]\frac{d\nu(s)}{(1-s)^2+s^2}
\end{equation*}
where $\nu$ is a probability measure on $[0,1]$. Since the set of probability measures on $[0,1]$ is weak-$*$ compact, by the Krein-Milman theorem there exists a net $\nu_l$ of finitely supported probability measures converging to $\nu$. Therefore the functions
\begin{equation*}
\log_{I,l}(z)=\int_{[0,1]}\left[1-2s+\frac{sx-(1-s)}{(1-s)x+s}\right]\frac{d\nu_l(s)}{(1-s)^2+s^2}
\end{equation*}
converge pointwisely to $\log_I(z)$. Since $\nu_l$ is finitely supported, $\log_{I,l}(z)$ is a finite convex combinations of functions of the form
\begin{equation*}
h_{s}(z)=\frac{1-2s}{(1-s)^2+s^2}+\frac{1}{(1-s)^2+s^2}\frac{sx-(1-s)}{(1-s)x+s}.
\end{equation*}
If we have $h_{s}(z)=w$ for $z\in\mathbb{H}^{+}$ and $s\neq 1$, then after some calculation we get that
\begin{equation*}
z=\frac{(1-s)^2+s^2}{[(1-s)^4+s^2(1-s)^2]w+2s(1-s)^2-(1-s)}-\frac{s}{1-s}
\end{equation*}
which means that if $h_{s}(z)=w\in\mathbb{H}^{+}$, then for all $a\in(0,1)$ there exists $z_a\in\mathbb{H}^{+}$ such that $h_{\lambda}(z_a)=aw$. Similar argument proves this in the case of $s=1$.

Now if we consider any convex combination of such functions $h_{s}(z)$, the resulting function will still have a radially convex image of $\mathbb{H}^{+}$. The reason for this is that if $x_i\in S_i\subseteq\mathbb{H}^{+}$ where $S_i$ are radially convex sets, then $ax_i\in S_i$ for all $a\in(0,1)$. Therefore if $S_i$ are the images of $\mathbb{H}^{+}$ under the mappings $K_ih_{s_i}(z)$ for some $K_i>0$ and $s_i$, then the image $S$ of $\mathbb{H}^{+}$ under the function that we get as the sum of the functions $K_ih_{s_i}(z)$, is radially convex, since every element of it can be written as a sum of some $x_i\in S_i$. So we also have that the sum of $ax_i$ is in $S$ too by the convexity of each $S_i$. Therefore $S$ must be radially convex. Now sums of $K_ih_{s_i}(z)$ give each $\log_{I,l}(z)$ that converge to $\log_I$. Since sums of $K_ih_{s_i}(z)$ have radially convex image, so does $\log_I$.

\end{proof}

\begin{theorem}\label{nobranchthm}
Let $\log_I\in\mathfrak{L}$. Then $f_t\in\mathfrak{P}(t)$ for all $t\in(0,1)$ if and only if $log_I(z)$ has no ramification point in $\mathbb{H}^{+}$.
\end{theorem}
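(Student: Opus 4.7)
My plan is to analyze the candidate $f_t(z) := \exp_I(t \log_I(z))$, which the functional equation~\eqref{funcequ3} forces, and to determine precisely when this candidate yields a bona fide Pick function on $\mathbb{H}^+$. The key geometric input is Proposition~\ref{starlikeimage}: the image $\Omega := \log_I(\mathbb{H}^+)$ is a radially convex (star-shaped with respect to $0$) subset of $\mathbb{H}^+$, hence simply connected, and the scaled set $t\Omega$ lies in $\Omega$ for every $t \in (0,1]$.

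For the $(\Leftarrow)$ direction, assume $\log_I'(z) \neq 0$ throughout $\mathbb{H}^+$. Then $\log_I$ is a local biholomorphism, so the inverse $\exp_I$ has no branch points in $\Omega$. Starting from the local inverse near $0 = \log_I(1)$ normalized by $\exp_I(0) = 1$, the monodromy theorem on the simply connected region $\Omega$ produces a single-valued holomorphic inverse $\exp_I : \Omega \to \mathbb{H}^+$. Since $t\Omega \subseteq \Omega$, the composition $f_t = \exp_I \circ (t \log_I)$ is a holomorphic map $\mathbb{H}^+ \to \mathbb{H}^+$, real-valued on $(0,\infty)$, with $f_t(1) = 1$ and $f_t'(1) = t$ by the chain rule applied at $z = 1$. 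Thus $f_t \in \mathfrak{P}(t)$.

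For the $(\Rightarrow)$ direction, I argue by contradiction: suppose $z_0 \in \mathbb{H}^+$ is a ramification point, so that $w_0 := \log_I(z_0) \in \Omega$ is a branch point of $\exp_I$. Because $\Omega$ is open and $w_0 \in \Omega$, one may choose $t \in (0,1)$ close enough to $1$ so that $w_0/t$ lies in $\Omega$; by discreteness of the critical set of $\log_I$, one may additionally arrange that the point $z' \in \mathbb{H}^+$ satisfying $\log_I(z') = w_0/t$ is not itself a ramification point. A small loop $\gamma$ around $z'$ in $\mathbb{H}^+$ then maps under $\log_I$ to a loop around $w_0/t$ in $\Omega$ (since $\log_I$ is a local biholomorphism at $z'$) and hence under $t \log_I$ to a loop around $w_0$ in $t\Omega$. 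Analytic continuation of $\exp_I$ around this loop produces nontrivial monodromy, because $w_0$ is a branch point of $\exp_I$. Consequently $\exp_I \circ (t \log_I)$ is multivalued along $\gamma$, and by Proposition~\ref{proplog} the functional equation uniquely determines $f_t$ as this expression on $\mathbb{H}^+$; no single-valued Pick function can satisfy it, so $f_t \notin \mathfrak{P}(t)$, a contradiction.

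The principal obstacle I foresee is the careful execution of the monodromy steps. In the $(\Leftarrow)$ direction, one must verify that analytic continuation of the local inverse along any path in $\Omega$ never leaves $\mathbb{H}^+$; this follows because at every point of $\Omega$ there is a preimage in $\mathbb{H}^+$ at which $\log_I$ is a local biholomorphism, giving a consistent local inverse into $\mathbb{H}^+$. In the $(\Rightarrow)$ direction, the delicate step is ensuring the loop $t \log_I \circ \gamma$ truly encircles $w_0$; the star-shaped geometry of $\Omega$ and the ability to place $w_0$ strictly inside $t\Omega$ for $t$ near $1$ make this possible, while genericity of $t$ handles the requirement that $z'$ be a regular point of $\log_I$.
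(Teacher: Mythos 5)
Your proposal follows essentially the same route as the paper's: both directions rest on Proposition~\ref{starlikeimage} (radial convexity of $\Omega=\log_I(\mathbb{H}^{+})$, hence $t\Omega\subseteq\Omega$), with the forward direction producing a single-valued holomorphic inverse $\exp_I$ on $\Omega$ so that $f_t=\exp_I\circ(t\log_I)$ maps $\mathbb{H}^{+}$ into itself, and the converse deriving a contradiction from the branch point of $\exp_I$ at the image of the ramification point. Your write-up is in fact more explicit than the paper's at the monodromy steps (the paper simply asserts univalence of the inverse and does not specify the value of $t$ or the point at which $f_t$ fails to be holomorphic), but the underlying argument is the same.
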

\begin{proof}
First of all since $\log_I\in\mathfrak{L}$, it follows that $\log_I(x)$ is invertible on $(0,\infty)$ because it is nonconstant monotone increasing there, also it is invertible in a neighborhood of $(0,\infty)$ and its inverse $\exp_I(z)$ is holomorphic in that neighborhood and $f_t(x)\in(0,\infty)$ and meromorphic in $\mathbb{H}^{+}$.

Suppose that $log_I(z)$ has no ramification point in $\mathbb{H}^{+}$. Then by the previous Proposition~\ref{starlikeimage} it maps $\mathbb{H}^{+}$ to a radially convex set. Since $log_I(z)$ has no ramification point, it has a univalent holomorphic inverse $\exp_I(z)$, so
\begin{equation*}
f_t(z)=\exp_I(t\log_I(z))
\end{equation*}
is a well defined holomorphic function on $\mathbb{H}^{+}$. Moreover $f_t(z)$ is real valued over $(0,\infty)$. Since the image $\log_I(\mathbb{H}^{+})$ of $\mathbb{H}^{+}$ under the map $\log_I(z)$ is radially convex, we have that for any $s\in\log_I(\mathbb{H}^{+})$ also $ts\in\log_I(\mathbb{H}^{+})$. Therefore $t\log_I(\mathbb{H}^{+})\subseteq\log_I(\mathbb{H}^{+})$, so also $\exp_I(t\log_I(\mathbb{H}^{+}))\subseteq \mathbb{H}^{+}$.

Now for the only if part suppose on the contrary that $\log_I$ has a ramification point in $\mathbb{H}^{+}$. Then its inverse $\exp_I$ has a branch point at the image of the ramification point under $\log_I$ which means that $f_t(z)$ is not holomorphic there, but this contradicts $f_t\in\mathfrak{P}(t)$.

\end{proof}

What happens if $\log_I$ has a ramification point in $\mathbb{H}^{+}$? What can then be said about $f_t(z)$?

\begin{proposition}\label{ftinPt}
Let $\log_I\in\mathfrak{L}$ be induced by an $f_{t_0}\in\mathfrak{P}(t_0)$ using Proposition~\ref{meanswithexp}. Then $f_t\in\mathfrak{P}(t)$ for all $0<t\leq t_0$.
\end{proposition}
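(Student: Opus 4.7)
The plan is to define $f_t(z):=\exp_I(t\log_I(z))$ by analytic continuation along the radial segment $s\mapsto s\log_I(z)$ in $U:=\log_I(\mathbb{H}^+)$, and to show the result is a Pick function (hence in $\mathfrak{P}(t)$) whenever $t\in(0,t_0]$.

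First, I would note that by the hypothesis together with the integral characterization~\eqref{integralchar}, $f_{t_0}$ extends to a holomorphic map $\mathbb{H}^+\to\mathbb{H}^+$, and by Proposition~\ref{proplog} the Koenigs function $\log_I$ extends holomorphically to $\mathbb{H}^+$ and maps it onto $U\subseteq\mathbb{H}^+$, which is radially convex by Proposition~\ref{starlikeimage}. The local inverse $\exp_I$ of $\log_I$ at $0$ satisfies $\exp_I(0)=1$ and $\exp_I'(0)=1$, and $f_{t_0}(z)=\exp_I(t_0\log_I(z))$ on $\mathbb{H}^+$ by Proposition~\ref{meanswithexp} together with the uniqueness of holomorphic continuation.

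Next, for fixed $z\in\mathbb{H}^+$ I consider the segment $\sigma_z(s)=s\log_I(z)$, $s\in[0,t_0]$, which lies in $U$ by radial convexity. Since $f_{t_0}$ is single-valued holomorphic on the simply connected $\mathbb{H}^+$, the analytic continuation of $\exp_I$ along $\sigma_z$ starting from $\exp_I(0)=1$ must be defined for all $s\in[0,t_0]$ and produce $f_{t_0}(z)$ at $s=t_0$; this rests on homotopy invariance inside the radially convex (hence simply connected) set $t_0U$ together with the absence of branch points of $\exp_I$ on $\sigma_z([0,t_0])$, both of which are forced by the hypothesis $f_{t_0}\in\mathfrak{P}(t_0)$. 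Restricting to any sub-segment $\sigma_z|_{[0,t]}$ with $t\in(0,t_0]$ then yields a well-defined value $f_t(z):=\exp_I(t\log_I(z))$, holomorphic in $z$.

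Finally, I would verify that $f_t\in\mathfrak{P}(t)$. On $(0,\infty)$ the formula gives real, positive, strictly increasing values via the real inverse of $\log_I$ (Proposition~\ref{injective}). For $z\in\mathbb{H}^+$, suppose $f_t(z_0)\in\mathbb{H}^-$ for some $z_0$; by Schwarz reflection (since $\log_I$ is real on $(0,\infty)$ and maps $\mathbb{H}^+$ into $\mathbb{H}^+$ strictly by the open mapping theorem applied to Proposition~\ref{starlikeimage}) one has $\log_I(f_t(z_0))\in\mathbb{H}^-$, contradicting the functional equation $\log_I(f_t(z_0))=t\log_I(z_0)\in\mathbb{H}^+$. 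Hence $f_t(\mathbb{H}^+)\cap\mathbb{H}^-=\emptyset$; since $f_t$ is non-constant holomorphic its image is open and thus cannot meet $\mathbb{R}$ either, giving $f_t(\mathbb{H}^+)\subseteq\mathbb{H}^+$. By Loewner's theorem this makes $f_t$ operator monotone on $(0,\infty)$, and the normalizations $f_t(1)=1$, $f_t'(1)=t$ follow from $\log_I(1)=0$ and $\log_I'(1)=1$ by differentiating the functional equation. The main obstacle is the second step, namely showing that the radial-segment continuation of $\exp_I$ selects the same holomorphic branch as the given Pick function $f_{t_0}$; the radial convexity (hence simple connectivity) of $t_0U$ supplied by Proposition~\ref{starlikeimage} is precisely what makes this possible, and it also yields the automatic propagation to all $t\in(0,t_0]$.
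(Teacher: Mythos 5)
Your proposal is correct and follows essentially the same route as the paper: the hypothesis $f_{t_0}\in\mathfrak{P}(t_0)$ forces the absence of branch points of $\exp_I$ over $t_0\log_I(\mathbb{H}^{+})$, and radial convexity (Proposition~\ref{starlikeimage}) gives $t\log_I(\mathbb{H}^{+})\subseteq t_0\log_I(\mathbb{H}^{+})$, so $f_t$ is singularity-free for $0<t\leq t_0$. The paper's own proof is just these two sentences; your additional verification that the continued $f_t$ actually maps $\mathbb{H}^{+}$ into $\mathbb{H}^{+}$ and is normalized is a worthwhile filling-in of what the paper leaves implicit via the argument of Theorem~\ref{nobranchthm}.
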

\begin{proof}
By Proposition~\ref{meanswithexp} we have that there is no image of a ramification point of $\log_I$ in the domain $t_0\log_I(\mathbb{H}^{+})\subseteq\mathbb{H}^{+}$, otherwise $f_{t_0}(z)$ would have a singularity in $\mathbb{H}^{+}$. But since for all $0<t\leq t_0$ we have that $t\log_I(\mathbb{H}^{+})\subseteq t_0\log_I(\mathbb{H}^{+})$, therefore $f_t(z)$ is singularity free as well.

\end{proof}

\begin{remark}
In general one can assure that if for a given $\log_I\in\mathfrak{L}$ with ramification points $t\log_I(\mathbb{H}^{+})$ avoids the image of the ramification points (of $\log_I$) under $\log_I$ in $\mathbb{H}^{+}$, then $f_t\in\mathfrak{P}(t)$.
\end{remark}

Considering only real $x>0$ it makes sense to talk about $f_t(x)$ for all $t\in[0,1]$, even if $f_t(z)$ has some singularities, since there are no positive real singularities. Then one can prove a general upper bound on $f_t(x)$ and also a monotonicity result.
\begin{proposition}\label{p:propbound1}
Let $f_{t_0}\in\mathfrak{P}(t_0)$. Then for all $t\in[0,1]$
\begin{equation*}
f_t(x)\leq (1-t)+tx.
\end{equation*}
Moreover $1\geq s\geq t$ implies $f_s(x)\geq f_t(x)$ for all $x>0$.
\end{proposition}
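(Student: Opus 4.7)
The approach is to transport both inequalities through the strictly increasing function $\log_I$ and exploit its concavity together with the defining functional equation $\log_I(f_t(x))=t\log_I(x)$ from \eqref{funcequ3}. Since we are working only on the positive real axis, we may bypass the complex-analytic subtleties that prevent $f_t$ from being a Pick function: on $(0,\infty)$, $\log_I$ is real-valued, strictly increasing, and invertible onto an interval of $\mathbb{R}$ containing $0$, and the segment $\{r\log_I(x):r\in[0,1]\}$ stays in this interval, so $\exp_I$ is well-defined there and produces a value in $(0,\infty)$.

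For the upper bound $f_t(x)\leq (1-t)+tx$, I would apply $\log_I$ to the candidate bound $(1-t)\cdot 1+t\cdot x$. Since $\log_I$ is operator monotone on $(0,\infty)$ it is in particular concave there, so combined with $\log_I(1)=0$ from Proposition~\ref{injective}(iv),
\[
\log_I\bigl((1-t)+tx\bigr)\;\geq\;(1-t)\log_I(1)+t\log_I(x)\;=\;t\log_I(x)\;=\;\log_I(f_t(x)),
\]
the last equality being the functional equation. Strict monotonicity of $\log_I$ on $(0,\infty)$ (Proposition~\ref{injective}(i),(ii)) then inverts the inequality to yield $(1-t)+tx\geq f_t(x)$.

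For the monotonicity claim, I would write $f_r(x)=\exp_I(r\log_I(x))$ for $r\in\{s,t\}$. Since $\exp_I$ is the inverse of the strictly increasing $\log_I$, it is itself strictly increasing on its domain, so the inequality $f_s(x)\geq f_t(x)$ is equivalent to $s\log_I(x)\geq t\log_I(x)$, i.e.\ $(s-t)\log_I(x)\geq 0$. With $s\geq t$, this reduces to a sign analysis of $\log_I(x)$, which by Proposition~\ref{injective}(ii) has the sign of $x-1$.

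The main obstacle is conceptual rather than technical: one must be confident that the construction of $f_t(x)$ via $\exp_I\circ(t\cdot)\circ\log_I$ still makes sense on the real axis in the regime where $t>t_0$ and the complex function $f_t(z)$ develops singularities off the real line. Once this is granted from the preceding discussion, both parts reduce to elementary manipulations with the concave and strictly increasing function $\log_I$, using only properties already collected in Proposition~\ref{injective} and Lemma~\ref{maxmininl}.
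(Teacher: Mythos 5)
Your proof of the upper bound is correct and takes a genuinely different route from the paper's. The paper compares derivatives: it shows $f_t'(x)\le t$ for $x\ge 1$ and $f_t'(x)\ge t$ for $0<x<1$ using the chain rule, the identity $\exp_I'(\log_I(x))\log_I'(x)=1$, and the monotonicity of $\exp_I'$, and then concludes from the common value $f_t(1)=1$ via the mean value theorem. You instead apply $\log_I$ to the affine bound and use concavity of $\log_I$ together with $\log_I(1)=0$ and the functional equation; this is shorter, avoids differentiating $\exp_I$, and needs only that $\log_I$ is concave and strictly increasing on $(0,\infty)$. Your preliminary remark that $t\log_I(x)$ stays in the range of $\log_I|_{(0,\infty)}$ (an interval containing $0$) is exactly what is needed for $f_t(x)$ to be well defined on the positive reals for all $t\in[0,1]$, so that part is in order.

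For the monotonicity claim your reduction is an equivalence, and you should carry it to its conclusion: $(s-t)\log_I(x)\ge 0$ with $s>t$ holds precisely when $\log_I(x)\ge 0$, i.e.\ when $x\ge 1$, and the inequality reverses for $0<x<1$. So your argument does not prove the statement as written --- and it cannot, because that statement is false for $x<1$: take $f_{t_0}(x)=x^{t_0}$, so that $\log_I(x)=\log x$ and $f_t(x)=x^t$; then $s=1$, $t=1/2$, $x=1/4$ give $f_s(x)=1/4<1/2=f_t(x)$. The paper's own proof of this part is likewise flawed, since it asserts $f_t'(x)\ge f_s'(x)$ for $0<x<1$, which is impossible near $x=1$ because $f_t'(1)=t<s=f_s'(1)$. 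The correct assertion is $f_s(x)\ge f_t(x)$ for $x\ge 1$ and $f_s(x)\le f_t(x)$ for $0<x\le 1$, which your sign analysis delivers immediately; the downstream use of the proposition (the positivity $f_u(x)>0$ invoked in the proof of Theorem~\ref{inducedconv}) survives, since the two-sided monotonicity gives $f_u(x)\ge\min\{1,x\}>0$. You should state the outcome of the sign analysis explicitly rather than leaving it implicit; as written, the reader cannot tell whether you believe you have established the claimed inequality for all $x>0$.
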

\begin{proof}
By definition $f_t(x)=\exp_I(t\log_I(x))$ and this is well defined for real $x>0$, since $\log_I\in\mathfrak{L}$, so $\log_I$ is strictly monotone and holomorphic. By Lemma~\ref{maxmininl} we have $\log_I(x)\leq x-1$ and since $\log_I\in\mathfrak{L}$, therefore $\log_I$ is also concave, i.e. $\log_I''(x)\leq 0$. By simple calculation
\begin{equation}\label{eq:propbound1}
f_t'(x)=\frac{\partial}{\partial x}f_t(x)=\exp_I'(t\log_I(x))t\log_I'(x)
\end{equation}
also since $x=\exp_I(\log_I(x))$ therefore
\begin{equation}\label{eq:propbound2}
1=\frac{\partial}{\partial x}\exp_I(\log_I(x))=\exp_I'(\log_I(x))\log_I'(x).
\end{equation}
Notice that $\frac{\partial}{\partial x}(1-t)+tx=t$ and also $f_t(1)=1=\left.(1-t)+tx\right|_{x=1}$ and $f_t'(1)=t=\left.\partial/\partial x(1-t)+tx\right|_{x=1}$. So to conclude the assertion it suffices to prove by the Mean value theorem that $f_t'(x)\leq t$ for $x\geq 1$ and $f_t'(x)\geq t$ for $0<x<1$. Now $\log_I$ is concave and $\log_I(1)=0$, $\log_I'(1)=1$, therefore $\exp'_I(x)$ is increasing and $\exp'_I(0)=1$. This means that $\exp_I'(tx)\leq \exp_I'(x)$ for $x\geq 0$ and $\exp_I'(tx)\geq \exp_I'(x)$ for $x<0$. This combined with \eqref{eq:propbound2} and \eqref{eq:propbound1} yields that $f_t'(x)\leq t$ for $x\geq 1$ and $f_t'(x)\geq t$ for $0<x<1$.

The second monotonicity part of the assertion follows from a similar argument leading to $f_t'(x)\leq f_s'(x)$ for $x\geq 1$ and $f_t'(x)\geq f_s'(x)$ for $0<x<1$.
\end{proof}

\begin{remark}
By Proposition~\ref{ftinPt} if $f_{t_0}\in\mathfrak{P}(t_0)$, then for all $0<s,t\leq t_0$ we have
\begin{equation*}
f_{st}=f_s\circ f_t=f_t\circ f_s
\end{equation*}
and $f_{st}\in\mathfrak{P}(st)$. I.e. $f_t$ is a semigroup of holomorphic functions with respect to function composition, see \cite{cowen,kuczma}. Actually this semigroup property is still true if we consider $0\leq s,t\leq 1$, but then we have possible singularities as well.
\end{remark}

\begin{proposition}
Let $f_t\in\mathfrak{P}(t)$ and $\log_I\in\mathfrak{L}$ its corresponding logarithm map such that it fulfills the functional equation \eqref{funcequ}. Then $z_0$ is a ramification point of $f_t$ if and only if it is a ramification point of $\log_I$.
\end{proposition}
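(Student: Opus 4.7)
The proof rests on the differentiated version of the semi-conjugacy $\log_I(f_t(z))=t\log_I(z)$. Applying $d/dz$ to this identity, which is valid on $\mathbb{H}^{+}$ by Proposition~\ref{proplog}, yields the key equation
\begin{equation*}
\log_I'(f_t(z))\,f_t'(z)=t\,\log_I'(z).
\end{equation*}
Both implications will be read off from this identity.

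The forward direction is one line. If $z_0$ is a ramification point of $f_t$, then $f_t'(z_0)=0$, so the left-hand side vanishes; since $t>0$ this forces $\log_I'(z_0)=0$, making $z_0$ a ramification point of $\log_I$.

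For the converse, assume $\log_I'(z_0)=0$ and use the factorization $f_t=\exp_I\circ(t\log_I)$ supplied by Proposition~\ref{meanswithexp}, where $\exp_I$ denotes the local analytic inverse of $\log_I$. Differentiating the identity $\log_I(\exp_I(w))=w$ gives $\exp_I'(w)\,\log_I'(\exp_I(w))=1$, so $\exp_I'$ never vanishes on its domain of holomorphicity; it can only cease to be finite at the branch points of $\exp_I$, which are precisely the values $\log_I(z_1)$ for $z_1$ a ramification point of $\log_I$. The chain rule applied to the factorization gives
\begin{equation*}
f_t'(z)=\exp_I'(t\log_I(z))\cdot t\cdot\log_I'(z),
\end{equation*}
so at $z_0$ the product collapses to $f_t'(z_0)=\exp_I'(t\log_I(z_0))\cdot t\cdot 0=0$ as soon as the factor $\exp_I'(t\log_I(z_0))$ is finite.

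The main obstacle, and the one place where the hypothesis $f_t\in\mathfrak{P}(t)$ is genuinely used, is to verify that $t\log_I(z_0)$ is not a branch point of $\exp_I$. If $t\log_I(z_0)$ coincided with some $\log_I(z_1)$ for a ramification $z_1$ of $\log_I$, then $\exp_I$ would fail to be single-valued in any neighborhood of $t\log_I(z_0)$, and the composition $\exp_I(t\log_I(z))$ could not extend to a single-valued holomorphic function near $z_0$; this would contradict the holomorphicity of $f_t$ on $\mathbb{H}^{+}$ that is built into membership in $\mathfrak{P}(t)$ (compare the Remark following Proposition~\ref{ftinPt}). With this finiteness secured, the chain-rule identity closes the converse and completes the proof.
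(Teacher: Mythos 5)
Your proof is correct and follows essentially the same route as the paper's: both hinge on the factorization $f_t=\exp_I\circ(t\log_I)$ and the chain-rule identity $f_t'(z)=t\,\exp_I'(t\log_I(z))\,\log_I'(z)$, together with the observation that holomorphicity of $f_t$ on $\mathbb{H}^{+}$ forces $\exp_I'(t\log_I(z))$ to be a finite, nonvanishing holomorphic factor. The one genuine (small) variation is your forward direction: by differentiating the Schroeder equation $\log_I(f_t(z))=t\log_I(z)$ directly you get $\log_I'(f_t(z))\,f_t'(z)=t\log_I'(z)$, which yields $f_t'(z_0)=0\Rightarrow\log_I'(z_0)=0$ using only that $\log_I'$ is finite on $\mathbb{H}^{+}$, whereas the paper's single-identity argument needs the nonvanishing of $\exp_I'(t\log_I(z))$ for that implication; your version of this half is marginally more economical. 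For the converse both you and the paper rest on the same assertion -- that $t\log_I(z_0)$ is not a branch point of $\exp_I$, i.e.\ that $\exp_I'(t\log_I(z_0))$ is finite -- and your branch-point justification is at the same level of rigor as the paper's, so nothing is lost or gained there.
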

\begin{proof}
By Proposition~\ref{proplog} $f_t(z)=\exp_I(t\log_I(z))$. So
\begin{equation*}
f_t'(z)=t\exp_I'(t\log_I(z))\log_I'(z).
\end{equation*}
Since $f_t(z)=\exp_I(t\log_I(z))$ is holomorphic on $\mathbb{H}^{+}$ also $\exp_I'(t\log_I(z))$ is holomorphic, moreover $\exp_I'(t\log_I(z))\neq 0$ since the inverse $\exp_I^{-1}(z)=\log_I(z)$ exists and is holomorphic on the whole $\mathbb{H}^{+}$ and $f_t(\mathbb{H}^{+})\subseteq\mathbb{H}^{+}$. Therefore if $f_t'(z_0)=0$ then also $\log_I'(z_0)=0$ and vice versa.

\end{proof}

According to Theorem~\ref{nobranchthm} we need to find members of $\mathfrak{L}$ without ramification points. In other words we are looking for mappings that are univalent (schlicht) holomorphic functions on $\mathbb{H}^{+}$ mapping $\mathbb{H}^{+}$ into itself. Such mappings are characterized by FitzGerald in the classical article \cite{fitzgerald}.

\begin{theorem}[FitzGerald]
Suppose $f(x)$ is a twice continuously differentiable, real-valued function
with positive first derivative on $(a,b)$. Suppose the origin is in $(a,b)$ and $f(0)=0$.
A necessary and sufficient condition that $f$ can be continued to be a univalent
analytic function of $\mathbb{H}^{+}$ onto a subset of itself that is radially convex with
respect to the origin is that the function
\begin{equation*}
\eta(x)=-\frac{f(x)}{f'(x)}
\end{equation*}
be conditionally positive definite, i.e.
\begin{equation*}
\int_{a}^{b}\int_{a}^{b}\phi(s)\frac{\eta(s)-\eta(t)}{s-t}\phi(t)dsdt\geq 0
\end{equation*}
for all real continuous $\phi$ having compact support in $(a,b)$ and satisfying $\int_{a}^{b}\phi(s)ds=0$, where $\frac{\eta(s)-\eta(s)}{s-s}$ is identified with $\eta'(s)$.
\end{theorem}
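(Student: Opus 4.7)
The plan is to reduce the half-plane problem to a classical criterion for starlike univalent functions on the unit disk via a Cayley transform, and then identify the resulting analytic extension condition with conditional positive definiteness using Loewner theory.

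\textbf{Conformal reduction.} I would let $\Phi:\mathbb{D}\to\mathbb{H}^{+}$ be the Cayley transform $\Phi(w)=i(1-w)/(1+w)$, which sends the boundary point $1\in\partial\mathbb{D}$ to $0\in\partial\mathbb{H}^{+}$, and set $G(w)=f(\Phi(w))$. Extending $f$ to a univalent holomorphic function $\mathbb{H}^{+}\to\Omega\subseteq\mathbb{H}^{+}$ with $\Omega$ radially convex at $0$ is equivalent to extending $G$ to a univalent holomorphic function on $\mathbb{D}$ with boundary value $G(1)=0$ and image starlike with respect to $0$. This is exactly the class of univalent functions \emph{starlike with respect to a boundary point}, whose characterization (Hengartner-Schober and refinements) amounts to a half-plane condition on an auxiliary function built from $G'/G$.

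\textbf{Translation to $\eta$.} Applying the chain rule with $z=\Phi(w)$ and $\Phi'(w)=-2i/(1+w)^{2}$, the quotient $G'(w)/G(w)$ becomes, up to a factor proportional to $(1-w)^{2}$, a simple multiple of $f'(z)/f(z)=-1/\eta(z)$. Hence the Hengartner-Schober half-plane condition on $G$ translates into the statement that $\eta(z)=-f(z)/f'(z)$ admits a holomorphic continuation $\tilde\eta:\mathbb{H}^{+}\to\overline{\mathbb{H}^{+}}$, possibly modulo an additive real constant.

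\textbf{Loewner identification.} A real-analytic function on $(a,b)$ extends to a Pick function on $\mathbb{H}^{+}$ iff its Loewner kernel $(\eta(s)-\eta(t))/(s-t)$ is positive semidefinite. The relevant class for us is Pick \emph{modulo constants} (the additive real shift in $\tilde\eta$ is unconstrained), so it is enough that the Loewner kernel be positive semidefinite when tested against zero-sum vectors: $\sum c_{i}c_{j}(\eta(s_{i})-\eta(s_{j}))/(s_{i}-s_{j})\geq 0$ whenever $\sum c_{i}=0$. The integral condition of the theorem is exactly the continuous version of this discrete statement, passing from finite $\{s_{i},c_{i}\}$ to compactly supported $\phi$ with $\int\phi=0$, with the diagonal $s=t$ replaced by $\eta'(s)$.

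\textbf{Main obstacle.} The principal difficulty lies in the sufficiency direction: starting from conditional positive definiteness of $\eta$, one recovers $\tilde\eta$ via a Loewner-Herglotz representation, and then must integrate the ODE $F'/F=-1/\tilde\eta$ to a \emph{univalent} $F$ on all of $\mathbb{H}^{+}$ with image radially convex at $0$ and $F|_{(a,b)}=f$. Univalence and radial convexity come from running the radial Loewner flow $\dot w(t)=-\tilde\eta(w(t))/t$ backward from $t=1$ and using the Herglotz property of $\tilde\eta$ to ensure trajectories stay in $\mathbb{H}^{+}$ and do not cross. The delicate technical points are (i) the boundary behavior near $0\in\partial\mathbb{H}^{+}$, and (ii) fixing the additive constant in $\tilde\eta$ so that the initial condition $f(0)=0$ and the normalization $f'(0)$ are respected; these are exactly where the hypothesis $0\in(a,b)$ and $f(0)=0$ enter.
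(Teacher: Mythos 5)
First, a point of comparison: the paper does not prove this statement at all. It is quoted verbatim as a classical theorem of FitzGerald and imported from \cite{fitzgerald}; there is no internal proof to measure your argument against, so your proposal has to stand on its own.

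On its own terms, your sketch has a genuine gap in the step labelled ``Translation to $\eta$'' and in the ``Loewner identification'' that follows it. You reduce the problem to the claim that $\eta=-f/f'$ extends to a map $\mathbb{H}^{+}\to\overline{\mathbb{H}^{+}}$ ``modulo an additive real constant,'' and then assert that this class is characterized by conditional positive definiteness of the Loewner kernel. Both halves fail, and the simplest admissible example already breaks them: take $f(z)=z$, so that $f$ is univalent from $\mathbb{H}^{+}$ onto itself (a radially convex image) and $\eta(z)=-z$. Its Loewner kernel is the constant kernel $-1$, which is conditionally positive definite (it annihilates zero-sum test functions) but not positive definite; and $-z+c$ is a self-map of the \emph{lower} half-plane for every real $c$, so $\eta$ is not Pick modulo constants. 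More structurally: adding a real constant to $\eta$ does not change the kernel $\bigl(\eta(s)-\eta(t)\bigr)/(s-t)$ at all, so by Loewner's theorem ``Pick modulo additive constants'' is exactly equivalent to full positive semidefiniteness of the kernel --- strictly stronger than the conditional positive definiteness appearing in the statement. Restricting the test functions to those with $\int\phi=0$ is not bought by an additive constant in $\eta$; it corresponds to modifying the \emph{kernel} by terms of the form $a(s)+a(t)$, i.e.\ to a genuinely larger class of generators. Carried to completion, your argument would therefore prove a different (and false) equivalence.

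The object that actually is a Pick function here is not $\eta$ but the semigroup elements $g_t=f^{-1}(tf)$, $t\in(0,1]$, which radial convexity makes into holomorphic self-maps of $\mathbb{H}^{+}$ that are real on an interval; each $g_t$ has a positive semidefinite Loewner kernel, and since $g_t(s)=s+(1-t)\eta(s)+o(1-t)$, the kernel of $\eta$ appears only after subtracting the all-ones kernel coming from the identity --- which is precisely why one ends up with conditional positive definiteness rather than positive definiteness. The sufficiency direction then runs through the FitzGerald--Horn correspondence between conditionally positive definite kernels and infinitely divisible positive definite kernels ($e^{\lambda L}$ positive definite for all $\lambda>0$) to rebuild the semigroup and, from it, the univalent extension. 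This semigroup viewpoint is also the one the surrounding text of the paper is built on (Theorem~\ref{uniformconv}, Proposition~\ref{starlikeimage}, Theorem~\ref{nobranchthm}), so repairing your argument along these lines would align it with both FitzGerald's original proof and the paper's framework. Your remarks on the Cayley transform and on the delicacy of the boundary fixed point are reasonable but do not substitute for this missing mechanism.
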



To summarize the results from the point of view of affine matrix means, Theorem~\ref{uniformconv} with Proposition~\ref{constructexp} leads us to

\begin{proposition}\label{mappingsprop}
If a matrix mean $M(A,B)$ is affine, then the exponential map and its inverse, the logarithm map of the corresponding manifold $W$ with affine connection are given as
\begin{equation}\label{mappings1}
\begin{split}
\exp_p(X)=p^{1/2}\exp_I\left(p^{-1/2}Xp^{-1/2}\right)p^{1/2}\\
\log_p(X)=p^{1/2}\log_I\left(p^{-1/2}Xp^{-1/2}\right)p^{1/2}
\end{split}
\end{equation}
for $p\in \textit{P}(n,\mathbb{C})$, where $\exp_I(X)$ and $\log_I(X)$ are analytic functions such that $\exp_I:\textit{H}(n,\mathbb{C})\mapsto \textit{P}(n,\mathbb{C})$ and $\log_I(X)$ is its inverse and $\log'_I(I)=I, \exp'_I(0)=I, \log_I(I)=0, \exp_I(0)=I$.
\end{proposition}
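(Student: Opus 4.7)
The plan is to combine the reconstruction result of Proposition~\ref{constructexp} with the explicit limit computation of Theorem~\ref{uniformconv}. Since $M(A,B)$ is affine, by Definition~\ref{affinemean} there exists $t\in(0,1)$ and an affine connection $\nabla$ on some manifold $W\supseteq \textit{P}(n,\mathbb{C})$ with $M(A,B)=\exp_A(t\log_A(B))$. The preceding proposition then gives $f'(1)=t$ for the representing function $f$ of $M$, and Proposition~\ref{inPt} yields $f\in\mathfrak{P}(t)$, which is the setting in which Theorem~\ref{uniformconv} applies.

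First I would invoke Proposition~\ref{constructexp} with the dividing-point operation $m(p,q)_{t}=M(p,q)$: in every normal neighborhood of each $p\in \textit{P}(n,\mathbb{C})$, the logarithm map of $\nabla$ is expressed as
\begin{equation*}
\log_p(q)=\lim_{n\to\infty}\frac{M(p,q)^{\circ n}-p}{t^n}.
\end{equation*}
Next I would substitute the explicit evaluation of this same limit provided by Theorem~\ref{uniformconv}, which gives
\begin{equation*}
\lim_{n\to\infty}\frac{M(p,q)^{\circ n}-p}{f'(1)^n}=p^{1/2}\log_I\bigl(p^{-1/2}qp^{-1/2}\bigr)p^{1/2},
\end{equation*}
with $\log_I\in\mathfrak{L}$ the unique operator monotone Koenigs function satisfying the Schr\"oder equation \eqref{funcequ}. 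Equating these two expressions produces the asserted formula for $\log_p$ on the normal neighborhood. Since $\log_I$ is real-analytic and strictly increasing on $(0,\infty)$ with $\log_I(1)=0$ and $\log_I'(1)=1$ by Proposition~\ref{injective}(i) and (iv), Lagrange's inversion theorem furnishes an analytic inverse $\exp_I$ on a neighborhood of $0$ with $\exp_I(0)=1$ and $\exp_I'(0)=1$; inverting the displayed formula for $\log_p$ then gives the claimed expression for $\exp_p$.

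It remains to verify the mapping properties and normalizations stated at matrix level. Proposition~\ref{injective}(i) shows that $\log_I$ maps $\textit{P}(n,\mathbb{C})$ injectively into $\textit{H}(n,\mathbb{C})$ via functional calculus, so $\exp_I$ maps the image back onto $\textit{P}(n,\mathbb{C})$; conjugation by $p^{1/2}$ preserves both cones. The matrix-level normalizations $\log_I(I)=0$, $\exp_I(0)=I$ follow immediately by functional calculus from the scalar identities $\log_I(1)=0$, $\exp_I(0)=1$. Finally, $\log_I'(1)=1$ combined with the Fr\'echet-derivative computation in the proof of the preceding proposition (which showed $Df[I][K]=f'(1)K$ for analytic $f$) yields $D\log_I[I][K]=K$ for all $K\in\textit{H}(n,\mathbb{C})$, i.e. $\log_I'(I)=I$ as an endomorphism of the tangent space, and likewise $\exp_I'(0)=I$ by inversion.

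The only subtle point is the identification of the exponential map of $\nabla$ with the inverse of the scalar-functional-calculus expression: Proposition~\ref{constructexp} only reconstructs $\log_p$ on a normal neighborhood, whereas the formula \eqref{mappings1} is written on all of $\textit{P}(n,\mathbb{C})$. Since both sides are real-analytic in $q$ (respectively in $X$) and agree on an open neighborhood, the extension to the full injectivity domain of the exponential map follows by analytic continuation, and one may take $W$ to be the corresponding maximal open set on which the right-hand sides define a global diffeomorphism pair. This analytic-continuation step is the main technical check; everything else is a direct assembly of Proposition~\ref{constructexp}, Theorem~\ref{uniformconv}, and Proposition~\ref{injective}.
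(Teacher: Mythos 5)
Your proposal is correct and follows exactly the route the paper intends: the paper states Proposition~\ref{mappingsprop} as a direct consequence of combining Theorem~\ref{uniformconv} with Proposition~\ref{constructexp}, which is precisely the identification of the two limit expressions that you carry out. Your additional checks (the normalizations via Proposition~\ref{injective}, and the analytic-continuation remark for passing from the normal neighborhood to the full domain) are sensible elaborations of details the paper leaves implicit.
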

Note that by Weierstrass's approximation theorem we also have
\begin{equation}\label{mappings2}
\begin{split}
p^{1/2}\exp_I\left(p^{-1/2}Xp^{-1/2}\right)p^{1/2}=p\exp_I\left(p^{-1}X\right)\\
p^{1/2}\log_I\left(p^{-1/2}Xp^{-1/2}\right)p^{1/2}=p\log_I\left(p^{-1}X\right)\text{.}
\end{split}
\end{equation}

In some cases, to ensure easier reading, similarly as in the above formulas, we will denote matrices with uppercase letters which are elements of some tangent space, while at the same time we will use lowercase letters for denoting matrices which are points of a differentiable manifold.

\section{Construction of an invariant affine connection}
Let us recall the classical symmetric space $\textit{GL}(n,\mathbb{C})/\textit{U}(n,\mathbb{C})$, the cone of positive definite $n\times n$ matrices $\textit{P}(n,\mathbb{C})$ \cite{bridsonhaefliger}. This is a Lie group and the $\textit{K}=\textit{U}(n,\mathbb{C})$ isotropy group invariant inner product at the identity $I$ is $\left\langle U,V\right\rangle=Tr\left\{UV\right\}$. The tangent space, considering the Cartan decomposition of the Lie algebra, is the space of Hermitian matrices $\textit{H}(n,\mathbb{C})$. The action of the isometry group $\textit{GL}(n,\mathbb{C})$ on this manifold is $g(o)=gog^*$ and acting with left translations we can transport the inner product to any point $p$ on this manifold and we get the Riemannian metric $\left\langle U,V\right\rangle_p=Tr\left\{p^{-1}Up^{-1}V\right\}$. The exponential map is just the ordinary matrix exponential at the identity. The left invariant affine connection is
\begin{equation}
\nabla_{X_p}Y_p=DY[p][X_p]-\frac{1}{2}\left(X_pp^{-1}Y_p+Y_pp^{-1}X_p\right)\text{,}
\end{equation}
here $DY[p][X_p]$ denotes the Fr\'echet-differential of $Y$ at the point $p$ in the direction $X_p$. A well known property of this metric is that the midpoint map of the space $m(p,q)=exp_p(1/2log_p(q))$ is just the geometric mean of two positive matrices
\begin{equation}\label{geom}
G(A,B)=A^{1/2}\left(A^{-1/2}BA^{-1/2}\right)^{1/2}A^{1/2}\text{.}
\end{equation}

The question that can be asked at this point is that are there other symmetric matrix means which correspond to symmetric spaces as midpoint maps on $\textit{P}(n,\mathbb{C})$? Two other examples are known, these are the arithmetic mean $(A+B)/2$ and the harmonic mean $2(A^{-1}+B^{-1})^{-1}$. The symmetric spaces corresponding to these two means are Euclidean while the symmetric space corresponding to the geometric mean has nonpositive curvature. It has flat and negatively curved de Rham factors.

At this point we begin with the characterization of means that correspond to affine symmetric spaces in general. What we know at this point is that the two functions, which are of each others inverse, $\log_I(t)$ and $\exp_I(t)$ exist for all matrix means, as it was proved in Theorem~\ref{uniformconv}.

In \cite{guggenheimer} and \cite{helgason} there is an extensive study of affine connections on manifolds. A well known fact is that the affine connection on a manifold can be reconstructed by differentiating the parallel transport:
\begin{equation*}
\nabla_{X_p}Y_p=\lim_{t\to 0}\frac{\Gamma_t^0(\gamma)Y_{\gamma(t)}-Y_{\gamma(0)}}{t}\text{,}
\end{equation*}
where $\gamma(t)$ denotes an arbitrary smooth curve emanating from $p$ in the direction $X_p=\partial\gamma(t)/\partial t|_{t=0}$ and $\Gamma_t^s(\gamma)Y$ denotes the parallel transport of the vector field $Y$ along the curve $\gamma$ from $\gamma(t)$ to $\gamma(s)$, refer to \cite{guggenheimer,helgason}. The above limit does not depend on the curve itself, only on its initial direction vector and it depends on the vector field $Y$ in an open neighborhood of $p$. On affine symmetric spaces the parallel transport from one point to another along the connecting geodesic is given by the differential of the geodesic symmetries with a negative sign. The geodesic symmetry is given as
\begin{equation*}
S_p(q)=\exp_p(-\log_p(q))\text{.}
\end{equation*}
On affine symmetric spaces this map is an affine transformation so one can conclude that
\begin{equation}\label{paralleltransport}
\Gamma_1^0(\gamma)Y=-\left.\frac{\partial S_{\gamma(1/2)}(\exp_q(Yt))}{\partial t}\right|_{t=0}\text{,}
\end{equation}
where $\gamma(t)$ is a geodesic connecting $p=\gamma(0)$ and $q=\gamma(1)$.

We have already proved the following formulas for the exponential and logarithm maps at the end of the preceding section
\begin{equation}\label{mappings}
\begin{split}
\exp_p(X)=p^{1/2}\exp_I\left(p^{-1/2}Xp^{-1/2}\right)p^{1/2}=p\exp_I\left(p^{-1}X\right)\\
\log_p(X)=p^{1/2}\log_I\left(p^{-1/2}Xp^{-1/2}\right)p^{1/2}=p\log_I\left(p^{-1}X\right)\text{.}
\end{split}
\end{equation}
The above identities already specify the geodesic symmetries with the notation $S_I(X)=\exp_I(-\log_I(X))$ as
\begin{equation}\label{geodsymmetry}
S_p(q)=\exp_p(-\log_p(q))=p^{1/2}S_I\left(p^{-1/2}qp^{-1/2}\right)p^{1/2}=pS_I\left(p^{-1}q\right)\text{.}
\end{equation}

Now we are in position to prove the following
\begin{theorem}
Let $\textit{P}(n,\mathbb{C})$ be subset of an affine symmetric space with affine geodesic symmetries given as \eqref{geodsymmetry}. Then the invariant affine connection has the form
\begin{equation}\label{invconnection}
\nabla_{X_p}Y_p=DY[p][X_p]-\frac{\kappa}{2}\left(X_pp^{-1}Y_p+Y_pp^{-1}X_p\right)\text{,}
\end{equation}
where $\kappa=S_I''(1)/2$.
\end{theorem}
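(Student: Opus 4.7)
I will compute $\nabla_{X_p}Y_p$ directly from the parallel transport formula \eqref{paralleltransport}, exploiting the explicit form \eqref{geodsymmetry} of the geodesic symmetry. Since the limit defining the connection is independent of the choice of curve through $p$ with tangent $X_p$, I take $\gamma(t)=\exp_p(tX_p)$, which is a geodesic (so that \eqref{paralleltransport} is applicable) and whose first-order Taylor expansion in the ambient vector space is simply $\gamma(t)=p+tX_p+O(t^2)$.

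The first ingredient is the two-jet of $S_I$ at $1$. From $S_I(x)=\exp_I(-\log_I(x))$ together with $\log_I(1)=0$, $\log_I'(1)=1$, $\exp_I(0)=1$, $\exp_I'(0)=1$ (Proposition~\ref{injective} and Proposition~\ref{meanswithexp}), the chain rule gives $S_I(1)=1$ and $S_I'(1)=-1$; differentiating $\exp_I(\log_I(x))=x$ twice at $x=1$ yields $\exp_I''(0)=-\log_I''(1)$, so $S_I''(1)=-2\log_I''(1)=2\kappa$. Writing $r=\gamma(t/2)$, $q=\gamma(t)$, and using $\exp_q(V)=q\exp_I(q^{-1}V)$, $S_p(q)=pS_I(p^{-1}q)$ together with $\exp_I'(0)=1$, the inner $s$-derivative in \eqref{paralleltransport} simplifies (the $q$ inside $\exp_q$ cancels against the $q$ in $r^{-1}q$) to
\[
\Gamma_t^0(\gamma)\,Y_{\gamma(t)}=-\,r\cdot DS_I[r^{-1}q]\bigl[r^{-1}Y_{\gamma(t)}\bigr].
\]

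Taylor expanding to first order in $t$, with $Y_{\gamma(t)}=Y_p+t\,DY[p][X_p]+O(t^2)$, one computes
\[
r^{-1}q=I+\tfrac{t}{2}p^{-1}X_p+O(t^2),\quad r^{-1}Y_{\gamma(t)}=p^{-1}Y_p+t\bigl(p^{-1}DY[p][X_p]-\tfrac{1}{2}p^{-1}X_p p^{-1}Y_p\bigr)+O(t^2).
\]
For the Fr\'echet derivatives of $S_I$ about $I$, for any scalar analytic $f$ extended by functional calculus one has $Df[I][V]=f'(1)V$ and, by polarization from the commuting case, $D^2 f[I][B,V]=\tfrac{f''(1)}{2}(BV+VB)$; applied to $S_I$ this gives $DS_I[I][V]=-V$ and $D^2 S_I[I][B,V]=\kappa(BV+VB)$. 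Collecting the first-order term of $DS_I[r^{-1}q][r^{-1}Y_{\gamma(t)}]$ via the chain rule in the first argument and linearity in the second, then multiplying by $-r=-(p+\tfrac{t}{2}X_p)+O(t^2)$, the two contributions proportional to $X_p p^{-1}Y_p$ (one from the variation of $r^{-1}Y_{\gamma(t)}$, one from the $\tfrac{t}{2}X_p$ factor of $r$) cancel precisely, leaving
\[
\nabla_{X_p}Y_p=DY[p][X_p]-\tfrac{\kappa}{2}\bigl(X_p p^{-1}Y_p+Y_p p^{-1}X_p\bigr),
\]
which is the claimed formula.

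The main obstacle I expect is the careful handling of the second Fr\'echet derivative of $S_I$ at $I$ for non-commuting $B,V$, where the symmetrized form $\tfrac{S_I''(1)}{2}(BV+VB)$ must be justified; this is done by approximating $S_I$ by polynomials near $1$ (where the formula is verified directly) followed by polarization of the quadratic expression. Everything else reduces to disciplined first-order bookkeeping of the expansions in $t$.
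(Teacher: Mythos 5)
Your computation is correct and follows essentially the same route as the paper: both start from the parallel-transport formula \eqref{paralleltransport} with the geodesic symmetry $S_p(q)=pS_I(p^{-1}q)$, reduce to $-\gamma(t/2)\,DS_I[\gamma(t/2)^{-1}\gamma(t)][\gamma(t/2)^{-1}Y_{\gamma(t)}]$, and exploit that at the identity the Fr\'echet derivatives of $S_I$ collapse to $S_I'(1)V=-V$ and $\tfrac{S_I''(1)}{2}(BV+VB)$. The only cosmetic difference is that the paper obtains these jet formulas by carrying the Cauchy-integral (holomorphic functional calculus) representation of $DS_I$ through the whole calculation, whereas you extract them up front via polynomial approximation and then do the first-order Taylor bookkeeping; the cancellations and the final coefficient $\kappa=S_I''(1)/2$ come out identically.
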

\begin{proof}
We are going to use \eqref{paralleltransport} to obtain the connection \eqref{invconnection}. We make the assumption that the geodesic symmetries are of the form \eqref{geodsymmetry}. The functions $\exp_p(X)$ and $\log_p(X)$ are of the form \eqref{mappings}, where $\exp_I(t)$ and $\log_I(t)$ are analytic functions on a disk centered around $0$ and $1$ respectively. We also have that $\log_I(1)=0$, $\exp_I(0)=1$ and furthermore
\begin{equation*}
\left.\frac{\partial\exp_I(t)}{\partial t}\right|_{t=0}=1\text{.}
\end{equation*}

First of all we have to differentiate the map $S_p(q)$ given in \eqref{geodsymmetry} to obtain $\Gamma_1^0(\gamma)Y=T_{q\to p}Y$, where $\gamma(t)$ is a geodesic connecting $p=\gamma(0)$ and $q=\gamma(1)$.
\begin{equation}\label{diffSI}
\begin{split}
&\left.\frac{\partial S_{p}(\exp_q(Yt))}{\partial t}\right|_{t=0}=\left.\frac{\partial pS_{I}(p^{-1}\exp_q(Yt))}{\partial t}\right|_{t=0}=\\
&=pDS_{I}\left[p^{-1}q\right]\left[p^{-1}Y\right]
\end{split}
\end{equation}
We used the fact that $\partial\exp_q(Yt)/\partial t|_{t=0}=Y$ which is a consequence of $\exp'_I(0)=1$.

Now we are going to differentiate the parallel transport as given by \eqref{paralleltransport} to get back the connection. We use the holomorphic functional calculus to express the Fr\'echet-differential in \eqref{diffSI} as
\begin{equation*}
DS_{I}[X][U]=\frac{1}{2\pi i}\int_g S_{I}(z)[zI-X]^{-1}U[zI-X]^{-1}dz\text{.}
\end{equation*}
It also easy to see that $DS_{I}[I][I]=S'_{I}(1)=-1$, so we may express the limit \eqref{paralleltransport} by the following differential
\begin{equation*}
\begin{split}
&\nabla_{\gamma'(0)}Y_{\gamma(0)}=-\left.\frac{\partial \gamma(t/2)DS_{I}\left[\gamma(t/2)^{-1}\gamma(t)\right]\left[\gamma(t/2)^{-1}Y_{\gamma(t)}\right]}{\partial t}\right|_{t=0}=
\end{split}
\end{equation*}
we massage this further by using the holomorphic functional calculus
\begin{equation*}
\begin{split}
&=-\frac{\partial}{\partial t}\gamma(t/2)\frac{1}{2\pi i}\int_g S_{I}(z)[zI-\gamma(t/2)^{-1}\gamma(t)]^{-1}\gamma(t/2)^{-1}Y_{\gamma(t)}\times\\
&\left.[zI-\gamma(t/2)^{-1}\gamma(t)]^{-1}dz\right|_{t=0}=-\frac{1}{2}\gamma'(0)\gamma(0)^{-1}Y_{\gamma(0)}DS_I[I][I]-\\
&-\gamma(0)\frac{1}{2\pi i}\int_g S_{I}(z)\left\{[zI-I]^{-1}\frac{1}{2}\gamma(0)^{-1}\gamma'(0)[zI-I]^{-1}\gamma(0)^{-1}Y_{\gamma(0)}[zI-I]^{-1}\right.+\\
&+[zI-I]^{-1}\gamma(0)^{-1}Y_{\gamma(0)}[zI-I]^{-1}\frac{1}{2}\gamma(0)^{-1}\gamma'(0)[zI-I]^{-1}+\\
&+[zI-I]^{-1}\left[-\gamma(0)^{-1}\frac{1}{2}\gamma'(0)\gamma(0)^{-1}Y_{\gamma(0)}+\gamma(0)^{-1}DY[\gamma(0)][\gamma'(0)]\right]\times\\
&\left.[zI-I]^{-1}\right\}dz=
\end{split}
\end{equation*}
by using the fact that $DS_I[I][I]$ and $[zI-I]^{-1}$ commutes with every matrix we get
\begin{equation*}
\begin{split}
&=-\frac{DS_I[I][I]}{2}\gamma'(0)\gamma(0)^{-1}Y_{\gamma(0)}-\\
&-\gamma(0)\frac{1}{2\pi i}\int_g \frac{S_{I}(z)dz}{(z-1)^3}\frac{1}{2}\gamma(0)^{-1}\gamma'(0)\gamma(0)^{-1}Y_{\gamma(0)}-\\
&-\gamma(0)\frac{1}{2\pi i}\int_g \frac{S_{I}(z)dz}{(z-1)^3}\frac{1}{2}\gamma(0)^{-1}Y_{\gamma(0)}\gamma'(0)\gamma(0)^{-1}-\\
&-\gamma(0)\frac{1}{2\pi i}\int_g \frac{S_{I}(z)dz}{(z-1)^2}\left[-\frac{1}{2}\gamma(0)^{-1}\gamma'(0)\gamma(0)^{-1}Y_{\gamma(0)}+\gamma(0)^{-1}DY[\gamma(0)][\gamma'(0)]\right]
\end{split}
\end{equation*}
at this point we use the integral representation
\begin{equation*}
S_I^{(j)}(1)=\frac{j!}{2\pi i}\int_g \frac{S_{I}(z)}{(z-1)^{j+1}}dz
\end{equation*}
to further simplify the above.
\begin{equation*}
\begin{split}
&\nabla_{\gamma'(0)}Y_{\gamma(0)}=-\frac{S_I''(1)}{4}\left[\gamma'(0)\gamma(0)^{-1}Y_{\gamma(0)}+Y_{\gamma(0)}\gamma(0)^{-1}\gamma'(0)\right]-\\
&-\frac{S_I'(1)}{2}\gamma'(0)\gamma(0)^{-1}Y_{\gamma(0)}-\frac{S_I'(1)}{2}\left[-\gamma'(0)\gamma(0)^{-1}Y_{\gamma(0)}+2DY[\gamma(0)][\gamma'(0)]\right]=\\
&=-S_I'(1)DY[\gamma(0)][\gamma'(0)]-\frac{S_I''(1)}{4}\left[\gamma'(0)\gamma(0)^{-1}Y_{\gamma(0)}+Y_{\gamma(0)}\gamma(0)^{-1}\gamma'(0)\right]\text{.}
\end{split}
\end{equation*}
So we have that $\kappa=S_I''(1)/2$.

\end{proof}

The above clearly tells us that all symmetric spaces occuring in such a way that their geodesic division maps are matrix means, have invariant affine connections in the form \eqref{invconnection}. We are going to study these connections as $\kappa$ being a parameter. We will find out later for which values of $\kappa$ are these spaces symmetric. Also for arbitrary real $\kappa$ \eqref{invconnection} defines an affine connection with corresponding exponential and logarithm map which are of the form \eqref{mappings} as we will see later. We will also determine if these connections are metric or not.

\section{Properties of these affine connections}
In this section we study the connections
\begin{equation}\label{invconnection2}
\nabla_{X_p}Y_p=DY[p][X_p]-\frac{\kappa}{2}\left(X_pp^{-1}Y_p+Y_pp^{-1}X_p\right)
\end{equation}
for $p\in\textit{P}(n,\mathbb{C})$ and vector fields $X_p,Y_p\in\textit{H}(n,\mathbb{C})$ on the smooth manifold $\textit{P}(n,\mathbb{C})$ with tangent bundle $\textit{H}(n,\mathbb{C})$. It is easy to see that indeed these connections are affine and analytic for real $\kappa$.

We can fix a coordinate frame by taking the basis $E_i\in\textit{H}(n,\mathbb{C})$, where $i$ indices over the set of distinct Hermitian matrices which have zero entries, excluding exactly the entry $[E_i]_{kl}=1$ and its transpose $[E_i]_{lk}=1$. If we equip $\textit{H}(n,\mathbb{C})$ with the inner product $\left\langle X,Y\right\rangle=Tr\left\{XY\right\}$, then the $E_i$ form an orthonormal basis of $\textit{H}(n,\mathbb{C})$. The dimension of $\textit{H}(n,\mathbb{C})$ is $n(n+1)/2$ such as the dimension of the smooth manifold $\textit{P}(n,\mathbb{C})$. In this coordinate frame the Christoffel symbols are given as
\begin{equation}\label{christoffel}
\Gamma^{k}_{ij}E_k=-\frac{\kappa}{2}\left(E_ip^{-1}E_j+E_jp^{-1}E_i\right),
\end{equation}
where we used the Einstein summation convention for repeated covariant and contravariant indices. Given an arbitrary connection $\nabla$ the geodesic equations corresponding to it are given as
\begin{equation}
\nabla_{\dot{\gamma}(t)}\dot{\gamma}(t)=0
\end{equation}
with given initial conditions $\gamma(0)$ and $\dot{\gamma}(0)$, for all $t\in[0,T)$. I.e. the curve $\gamma$ must be parallel along itself.

\begin{theorem}\label{geodesics}
The geodesic equations corresponding to the affine connections \eqref{invconnection2} are
\begin{equation}\label{geodesiceq}
\ddot{\gamma}=\kappa\dot{\gamma}\gamma^{-1}\dot{\gamma}\text{.}
\end{equation}
The solutions of these equations with initial conditions $\gamma(0)=p$, $\dot{\gamma}(0)=X$ are the folowing one parameter family of functions
\begin{equation}
\gamma(t)=\exp_p(Xt)=p^{1/2}\exp_I\left(p^{-1/2}Xp^{-1/2}t\right)p^{1/2}
\end{equation}
where
\begin{equation}
\exp_I(X)=
    \begin{cases}
    [(1-\kappa)X+1]^{\frac{1}{1-\kappa}}&\text{if $\kappa\neq1$,}\\
    \exp(X)&\text{else.}
    \end{cases}
\end{equation}
\end{theorem}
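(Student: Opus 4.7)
The plan is to split the theorem into three logically separate pieces: derivation of the geodesic ODE from the connection, reduction of the initial point to $I$ by an equivariance argument, and explicit integration at $I$ via functional calculus, followed by a uniqueness remark that packages everything.

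First, I would derive the ODE directly from the definition $\nabla_{\dot\gamma}\dot\gamma = 0$. Viewing $\dot\gamma$ as the vector field along $\gamma$, the chain rule identifies $D\dot\gamma[\gamma(t)][\dot\gamma(t)] = \ddot\gamma(t)$, so substituting $X_p = Y_p = \dot\gamma$ into \eqref{invconnection2} gives
\begin{equation*}
0 = \ddot\gamma - \frac{\kappa}{2}\bigl(\dot\gamma\gamma^{-1}\dot\gamma + \dot\gamma\gamma^{-1}\dot\gamma\bigr) = \ddot\gamma - \kappa\dot\gamma\gamma^{-1}\dot\gamma,
\end{equation*}
which is exactly \eqref{geodesiceq}.

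Next, I would exploit the congruence equivariance of the ODE. A short calculation shows that if $\gamma(t)$ solves \eqref{geodesiceq}, then so does $g\gamma(t)g^{*}$ for any invertible $g$, because $(g\dot\gamma g^{*})(g\gamma g^{*})^{-1}(g\dot\gamma g^{*}) = g(\dot\gamma\gamma^{-1}\dot\gamma)g^{*}$ while the second derivative transforms the same way. Choosing $g = p^{-1/2}$ reduces the initial value problem with $(\gamma(0),\dot\gamma(0)) = (p,X)$ to the one with $(I, p^{-1/2}Xp^{-1/2})$. Thus it suffices to produce, for each Hermitian $A$, the unique solution $\tilde\gamma(t)$ of $\ddot{\tilde\gamma} = \kappa\dot{\tilde\gamma}\tilde\gamma^{-1}\dot{\tilde\gamma}$ with $\tilde\gamma(0) = I$, $\dot{\tilde\gamma}(0) = A$, because then $\gamma(t) = p^{1/2}\tilde\gamma(t)p^{1/2}$ gives the stated formula.

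The main step is then to solve this reduced problem. I would use the ansatz that $\tilde\gamma(t)$ lies in the commutative algebra generated by $A$; since the right hand side preserves this subalgebra and since initial data do, Picard--Lindel\"of (the vector field $(\gamma,\dot\gamma)\mapsto(\dot\gamma,\kappa\dot\gamma\gamma^{-1}\dot\gamma)$ is real-analytic on $\mathbb{P}\times\textit{H}(n,\mathbb{C})$) produces a unique maximal solution that automatically stays in that subalgebra. Diagonalizing $A$ reduces the matrix ODE to a family of scalar problems $\ddot f = \kappa\dot f^{2}/f$ with $f(0)=1$, $\dot f(0)=a$. For $\kappa\neq 1$, writing $\dot f = af^{\kappa}$ and separating variables yields $f(t) = \bigl[(1-\kappa)at + 1\bigr]^{1/(1-\kappa)}$; for $\kappa=1$ the equation is $\dot f = af$ with solution $f(t) = e^{at}$. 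Re-assembling through the functional calculus produces exactly $\exp_I(At)$ with $\exp_I$ as stated.

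The only genuine obstacle is justifying the commutative ansatz, and this is done cleanly via uniqueness: once I verify by direct differentiation (in functional calculus) that the proposed $\exp_I(At)$ satisfies $\ddot\gamma = \kappa\dot\gamma\gamma^{-1}\dot\gamma$ with the prescribed initial data — a routine computation using that $\dot\gamma$ and $\gamma$ commute because both are analytic functions of $A$ — Picard--Lindel\"of forces this to be the unique solution, and the congruence equivariance from the second step extends it to all base points $p\in\textit{P}(n,\mathbb{C})$.
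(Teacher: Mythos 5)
Your proposal is correct and follows the same overall architecture as the paper's proof: derive $\ddot{\gamma}=\kappa\dot{\gamma}\gamma^{-1}\dot{\gamma}$ from \eqref{invconnection2}, reduce to a scalar ODE, integrate it, and lift the answer back through the functional calculus by a commutativity check. The two middle steps are carried out differently, and the comparison is worth recording. For the scalar equation $\ddot f=\kappa\dot f^{2}f^{-1}$ the paper passes to the inverse function and solves the separable equation $\log_I''(t)=-\kappa\log_I'(t)t^{-1}$, then inverts to recover $\exp_I$; you instead observe the first integral $\dot f=af^{\kappa}$ and separate variables directly, which is slightly more economical and treats an arbitrary initial slope $a$ in one stroke (the paper normalizes to $a=1$ and absorbs the general slope into the argument $Xt$). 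For the general base point the paper substitutes $\gamma(t)=p^{1/2}\exp_I\left(p^{-1/2}Xp^{-1/2}t\right)p^{1/2}$ into the matrix ODE and verifies it by hand, whereas you first establish congruence equivariance of the ODE and then appeal to Picard--Lindel\"of; your route has the merit of making explicit the uniqueness assertion hidden in the phrase ``the solutions \ldots are'' (the paper only checks that the stated curves \emph{are} solutions), at the small cost of noting that the vector field $(\gamma,\dot\gamma)\mapsto(\dot\gamma,\kappa\dot\gamma\gamma^{-1}\dot\gamma)$ is real-analytic, hence locally Lipschitz, on $\mathbb{P}\times\textit{H}(n,\mathbb{C})$. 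Your ``commutative ansatz'' is then correctly discharged by exhibiting the explicit solution and invoking uniqueness, exactly as you say. Neither argument discusses the maximal interval of existence when $\kappa\neq 1$ (the formula requires $(1-\kappa)p^{-1/2}Xp^{-1/2}t+I$ to stay positive definite), but that is not part of the statement being proved.
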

\begin{proof}
For the connections \eqref{invconnection2} it is easy to see that the corresponding $\nabla_{\dot{\gamma}(t)}\dot{\gamma}(t)=0$ geodesic equations are \eqref{geodesiceq}.

Let us first consider the case when $\gamma(0)=p=I=\dot{\gamma}(0)=X$. Then
it is enough to solve the equation \eqref{geodesiceq} for real numbers. Therefore the equation takes the form
\begin{equation}\label{geodesiceq2}
\exp_I''(t)=\kappa\exp_I'(t)^2\exp_I(t)^{-1}\text{.}
\end{equation}
If we transform the equation to the inverse function of $\exp_I(t)$ which will be the logarithm map $\log_I(t)$, then we get a separable first order differential equation of the form 
\begin{equation*}
\log_I''(t)=-\kappa\log_I'(t)t^{-1}\text{.}
\end{equation*}
Solving the above we get the logarithm map as
\begin{equation*}
\log_I(X)=
    \begin{cases}
    \frac{X^{1-\kappa}-1}{1-\kappa}&\text{if $\kappa\neq1$,}\\
    \log(X)&\text{else.}
    \end{cases}
\end{equation*}
From this by inverting the above function we get the assertion for real numbers.

Now we check by substitution into \eqref{geodesiceq} that the curve
\begin{equation*}
\gamma(t)=p^{1/2}\exp_I\left(p^{-1/2}Xp^{-1/2}t\right)p^{1/2}
\end{equation*}
is also a solution of the equations \eqref{geodesiceq}, since the function $\exp_I$ is analytic. Indeed
\begin{equation*}
\begin{split}
\dot{\gamma}(t)&=Xp^{-1/2}\exp_I'\left(p^{-1/2}Xp^{-1/2}t\right)p^{1/2}=p^{1/2}\exp_I'\left(p^{-1/2}Xp^{-1/2}t\right)p^{-1/2}X\\
\ddot{\gamma}(t)&=Xp^{-1/2}\exp_I''\left(p^{-1/2}Xp^{-1/2}t\right)p^{-1/2}X
\end{split}
\end{equation*}
and after substitution we get
\begin{equation*}
\begin{split}
Xp^{-1/2}\exp_I''\left(p^{-1/2}Xp^{-1/2}t\right)p^{-1/2}X=\kappa Xp^{-1/2}\exp_I'\left(p^{-1/2}Xp^{-1/2}t\right)\times\\
\exp_I\left(p^{-1/2}Xp^{-1/2}t\right)^{-1}\exp_I'\left(p^{-1/2}Xp^{-1/2}t\right)p^{-1/2}X
\end{split}
\end{equation*}
which is fulfilled since
\begin{equation*}
\exp_I''\left(p^{-1/2}Xp^{-1/2}t\right)=\kappa\exp_I'\left(p^{-1/2}Xp^{-1/2}t\right)^2\exp_I\left(p^{-1/2}Xp^{-1/2}t\right)^{-1}
\end{equation*}
holds by the functional calculus for $\exp_I$ and its derivatives and \eqref{geodesiceq2}.

\end{proof}

\begin{corollary}
The exponential and logarithm map for the affine connections \eqref{invconnection2} are given in the form
\begin{equation}\label{mappingsaff}
\begin{split}
\exp_p(X)=p^{1/2}\exp_I\left(p^{-1/2}Xp^{-1/2}\right)p^{1/2}\\
\log_p(X)=p^{1/2}\log_I\left(p^{-1/2}Xp^{-1/2}\right)p^{1/2},
\end{split}
\end{equation}
where
\begin{equation}
\begin{split}
\exp_I(X)&=
    \begin{cases}
    [(1-\kappa)X+1]^{\frac{1}{1-\kappa}}&\text{if $\kappa\neq1$,}\\
    \exp(X)&\text{else,}
    \end{cases}
\\
\log_I(X)&=
    \begin{cases}
    \frac{X^{1-\kappa}-1}{1-\kappa}&\text{if $\kappa\neq1$,}\\
    \log(X)&\text{else.}
    \end{cases}
\end{split}
\end{equation}
The affine matrix means which induce these affinely connected manifolds are
\begin{equation}\label{means}
\begin{split}
M_t(X,Y)=&\exp_X\left(t\log_X(Y)\right)=\\
    =&\begin{cases}
    X^{1/2}\left[(1-t)I+t\left(X^{-1/2}YX^{-1/2}\right)^{1-\kappa}\right]^{\frac{1}{1-\kappa}}X^{1/2}&\text{if $\kappa\neq1$,}\\
    X^{1/2}\left(X^{-1/2}YX^{-1/2}\right)^tX^{1/2}&\text{else,}
    \end{cases}
\end{split}
\end{equation}
if $\kappa\in [0,2]$, for other values of $\kappa$ the functions \eqref{means} are not matrix means.
\end{corollary}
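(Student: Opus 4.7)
The first two assertions are essentially bookkeeping on top of Theorem~\ref{geodesics}. The plan is to extract $\exp_I$ and $\log_I$ directly from the solution of the geodesic equation already produced. Setting $p=I$, $X=I$ in the formula for $\gamma(t)$ and evaluating at $t=1$ yields $\exp_I$ in the claimed closed form. The expression for $\log_I$ is obtained by inverting $\exp_I$ via the functional calculus (equivalently, by integrating the first-order separable ODE $\log_I''(t) = -\kappa\log_I'(t)t^{-1}$ that appeared in the proof of Theorem~\ref{geodesics}). The formulas \eqref{mappingsaff} for $\exp_p$ and $\log_p$ at an arbitrary base point then follow from the general form \eqref{mappings} established in Proposition~\ref{mappingsprop}.

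For the formula \eqref{means} describing the induced affine matrix mean, I would invoke Definition~\ref{affinemean}, which writes $M_t(X,Y) = \exp_X(t\log_X(Y))$. Substituting \eqref{mappingsaff} and using the functional calculus identity
\begin{equation*}
\exp_I\bigl(t\log_I(Z)\bigr) =
\begin{cases}
\bigl[(1-t) + tZ^{1-\kappa}\bigr]^{1/(1-\kappa)} & \text{if }\kappa\neq 1,\\
Z^{t} & \text{if }\kappa=1,
\end{cases}
\end{equation*}
applied with $Z = X^{-1/2}YX^{-1/2}$, gives precisely the displayed formula for $M_t(X,Y)$. At this stage everything is a routine computation using the representation \eqref{mean}, with representing function
\begin{equation*}
f_t(x) = \bigl[(1-t) + t x^{1-\kappa}\bigr]^{1/(1-\kappa)} \quad (\kappa\neq 1), \qquad f_t(x) = x^t \quad (\kappa=1).
\end{equation*}

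For the final claim, the key observation is that the representing function $f_t$ is exactly the family studied in Example~\ref{ex1} under the change of parameter $q = 1-\kappa$. Example~\ref{ex1} records that $f_q \in \mathfrak{P}(t)$ if and only if $q \in [-1,1]$, citing exercise~4.5.11 in \cite{bhatia2} on the operator monotonicity of $x\mapsto x^q$. Translating the condition $q\in[-1,1]$ back through $q=1-\kappa$ yields $\kappa\in[0,2]$ precisely. Thus for $\kappa\in[0,2]$ the function $f_t$ is operator monotone on $(0,\infty)$ with $f_t(1)=1$ and $f_t'(1)=t$, so by the Kubo-Ando characterization \eqref{mean} the map $M_t(X,Y)$ is a genuine matrix mean; and for $\kappa \notin [0,2]$ the corresponding $f_t$ fails to be operator monotone, so \eqref{means} cannot arise as a matrix mean in $\mathfrak{M}$.

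The only substantive content beyond bookkeeping is the operator monotonicity characterization in the final step, and this is already handled by the cited exercise in \cite{bhatia2}; so I expect no real obstacle, but care must be taken to verify that the inversion of $\exp_I$ to obtain $\log_I$ is valid on the full cone (which is immediate since $1-\kappa \in [-1,1]$ and $x\mapsto x^{1-\kappa}$ is analytic and strictly monotone on $(0,\infty)$).
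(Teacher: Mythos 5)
Your proposal is correct and follows essentially the same route as the paper: the maps $\exp_I,\log_I$ are read off from the geodesic solution in Theorem~\ref{geodesics}, the mean formula comes from $M_t(X,Y)=\exp_X(t\log_X(Y))$ via the functional calculus, and the restriction $\kappa\in[0,2]$ is exactly the translation $q=1-\kappa$ of the operator-monotonicity condition $q\in[-1,1]$ recorded in Example~\ref{ex1}. The paper's own proof is just a terser version of this same argument.
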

\begin{proof}
The first part of the assertion is clear, the second part follows from the fact that \eqref{means} are matrix means if and only if $\kappa\in [0,2]$, refer to Example~\ref{ex1}.

\end{proof}

\begin{remark}
The one parameter family of affine matrix means \eqref{means} seems to have a singularity at $\kappa=1$, however it is known that the singularity is removable and indeed as $\kappa\to 1$ we get the matrix geometric mean as the limit. This phenomenon has already been investigated in \cite{limpalfia}. In that paper the same one parameter family of matrix means were considered under the name of matrix power means.

If $\kappa=0$ we get back the arithmetic mean as the midpoint operation, and the weighted arithmetic mean
\begin{equation}\label{arithmetic}
A_t(A,B)=(1-t)A+tB
\end{equation}
is the geodesic line connecting $A$ and $B$ with respect to the metric $\left\langle X,Y\right\rangle_p=Tr\left\{X^{*}Y\right\}$. If $\kappa=2$ we get back the harmonic mean as the midpoint operation, and the weighted harmonic mean
\begin{equation}\label{harmonic}
H_t(A,B)=\left((1-t)A^{-1}+tB^{-1}\right)^{-1}
\end{equation}
is also a geodesic with respect to the metric $\left\langle X,Y\right\rangle_p=Tr\left\{p^{-2}Xp^{-2}Y\right\}$. We have already mentioned that the second metric is isometric to the first one, so it is also Euclidean.

In the case when $\kappa=1$ the midpoint is the geometric mean and the geodesics are given by the weighted geometric mean
\begin{equation}\label{geometric}
G_t(A,B)=A^{1/2}\left(A^{-1/2}BA^{-1/2}\right)^tA^{1/2}\text{.}
\end{equation}
The corresponding Riemannian metric is $\left\langle X,Y\right\rangle_p=Tr\left\{p^{-1}Xp^{-1}Y\right\}$. This manifold, which is the symmetric space $\textit{GL}(n,\mathbb{C})/\textit{U}(n,\mathbb{C})$, is nonpositively curved while the other two has zero curvature.
\end{remark}

In the paper \cite{limpalfia} matrix power means $P_s(w_1,\ldots,w_k;A_1,\ldots,A_k)$ are defined as the unique positive definite solution of the equations
\begin{equation}\label{powermeanequ}
X=\sum_{i=1}^{k}w_{i}G_s(X,A_i)
\end{equation}
where $s\in[-1,1], w_i>0, \sum_{i=1}^kw_i=1$ and $A_i\in\textit{P}(n,\mathbb{C})$. Existence and uniqueness of the solutions follow from the fact that the function
\begin{equation*}
f(X)=\sum_{i=1}^{k}w_{i}G_s(X,A_i)
\end{equation*}
is a strict contraction for $s\in[-1,1],s\neq 0$ with respect to Thompson's part metric \cite{limpalfia}. In the case $k=2$ we get back the affine matrix means \eqref{means} with $s=\kappa-1$ and $t=w_2$.

\begin{corollary}
With the identification $s=\kappa-1$, the two-variable matrix power means $P_s(w_1,w_2;A_1,A_2)$ are geodesic lines, with $w_2$ being the arc-length parameter, of the affinely connected spaces with affine connections \eqref{invconnection2}.
\end{corollary}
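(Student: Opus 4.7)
The plan is to identify the explicit two-variable geodesic formula \eqref{means} with the two-variable matrix power mean, verify the defining fixed-point equation, and invoke uniqueness. Concretely, by the preceding Corollary together with \thmref{geodesics}, for $\kappa\in[0,2]$ the curve $w_2\mapsto M_{w_2}(A_1,A_2)$ given by \eqref{means} is already known to be the geodesic of the affine connection \eqref{invconnection2} joining $A_1$ (at $w_2=0$) to $A_2$ (at $w_2=1$), with $w_2$ as the canonical affine parameter. So the task reduces to showing $M_{w_2}(A_1,A_2)=P_s(w_1,w_2;A_1,A_2)$ for $s=\kappa-1$.

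Since $P_s$ is defined as the unique positive definite solution of \eqref{powermeanequ}, it suffices to check that $X:=M_{w_2}(A_1,A_2)$ satisfies $X=w_1 G_s(X,A_1)+w_2 G_s(X,A_2)$. My approach is to exploit the congruence-equivariance of both sides: both the left-hand side and the right-hand side transform in the same way under $Y\mapsto C^{*}YC$ for $C\in\textit{GL}(n,\mathbb{C})$, as is immediate from the explicit form \eqref{means} and from the analogous property of $G_s$ inherited from the weighted geometric mean. Choosing $C=A_1^{-1/2}$ normalizes $A_1$ to $I$, and in this normalized setting $M_{w_2}(I,A_2)$ is, by \eqref{means}, a single-variable functional calculus of $A_2$, namely $f_{1-\kappa}(A_2)$ in the notation of \exref{ex1}. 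In particular $X$, $I$, and $A_2$ mutually commute.

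With this commutativity in hand, every $G_s(X,A_i)$ collapses to a scalar functional-calculus expression in the joint eigenvalues, and the matrix fixed-point equation reduces to the scalar identity
\begin{equation*}
f_{1-\kappa}(x)=w_1 G_s\bigl(f_{1-\kappa}(x),1\bigr)+w_2 G_s\bigl(f_{1-\kappa}(x),x\bigr)
\end{equation*}
on $x\in(0,\infty)$. Under the identification $s=\kappa-1$, this is exactly the closed-form expression $f_q(x)^{\circ 1}=f_q(x)$ derived in \exref{ex1} for $q=1-\kappa$ (equivalently $q=-s$), and it is verified by direct substitution of the explicit formula for $f_q$. Uniqueness of the positive solution of \eqref{powermeanequ}, established in \cite{limpalfia} via the strict-contraction property on Thompson's part metric, then forces $P_s(w_1,w_2;A_1,A_2)=M_{w_2}(A_1,A_2)$, and the corollary follows from Step~1.

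The main obstacle I anticipate is not analytic but bookkeeping: one must check that the sign/convention for $G_s$ appearing in \eqref{powermeanequ} (as used in \cite{limpalfia}) matches the geodesic formula \eqref{means} precisely under $s=\kappa-1$ rather than $s=1-\kappa$; the boundary cases $\kappa=0$ (arithmetic, $s=-1$) and $\kappa=2$ (harmonic, $s=1$) serve as useful consistency checks, since in both cases $G_s(X,A)$ has a closed form ($A$ and $XA^{-1}X$ respectively), making the fixed-point verification transparent. Once conventions are pinned down, the verification in the commuting case reduces to elementary algebra on a single scalar variable.
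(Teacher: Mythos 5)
Your strategy is the right one, and it is essentially the paper's own: the paper gives no proof of this corollary beyond the assertion, in the preceding paragraph, that the $k=2$ case of \eqref{powermeanequ} returns the means \eqref{means}, so your plan --- reduce to $A_1=I$ by congruence equivariance, note that the candidate $X=M_{w_2}(I,A_2)$ is a functional-calculus expression in $A_2$ and hence commutes with everything in sight, verify a scalar identity, and invoke uniqueness of the solution of \eqref{powermeanequ} --- supplies exactly the computation the paper omits, and the appeal to Theorem~\ref{geodesics} and its corollary for the geodesic interpretation is correct.

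The substance, however, sits entirely in the ``bookkeeping'' you defer, and your tentative resolution of it is where the argument breaks. Carrying out your own scalar check: with $A_1=I$ and everything commuting, $X=w_1G_s(X,I)+w_2G_s(X,A_2)$ reads $x=w_1x^{1-s}+w_2x^{1-s}a^{s}$, i.e.\ $x=(w_1+w_2a^{s})^{1/s}$, whereas \eqref{means} gives $M_{w_2}(I,A_2)=\left[(1-w_2)I+w_2A_2^{1-\kappa}\right]^{1/(1-\kappa)}$, i.e.\ $x=(w_1+w_2a^{1-\kappa})^{1/(1-\kappa)}$. These agree precisely when $s=1-\kappa$, not $s=\kappa-1$. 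Your proposed consistency checks confirm this once the pairings are straightened out: $G_{1}(X,A)=A$ produces the arithmetic mean, which is the $\kappa=0$ geodesic (so $s=1=1-\kappa$), while $G_{-1}(X,A)=XA^{-1}X$ produces the harmonic mean, the $\kappa=2$ geodesic (so $s=-1=1-\kappa$); your parenthetical instead attaches $s=-1$ and the closed form $A$ to $\kappa=0$, which crosses the arithmetic and harmonic cases. The same conclusion is forced by the paper's own proof of Proposition~\ref{powerkarcher}, where $(x^{s}-1)/s$ has to coincide with $\log_I(x)=(x^{1-\kappa}-1)/(1-\kappa)$ from \eqref{mappingsaff}. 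So: your method is sound and, done carefully, proves the corollary --- but with the identification $s=1-\kappa$; as stated (here and in the surrounding text of the paper), $s=\kappa-1$ swaps the two flat endpoints of the family, and the verification you propose would fail for every $\kappa\neq 1$.
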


The arithmetic \eqref{arithmetic}, harmonic \eqref{harmonic} and geometric \eqref{geometric} means have nice chracterizations and extensions to several variables as the center of mass or Karcher mean of the corresponding manifolds \cite{palfia2,palfia3,bhatiaholbrook,limpalfia}. I.e.
\begin{equation}\label{karchermean}
\Lambda(w_1,\ldots,w_k;A_{1},\dots,A_{k})=\underset{X\in \textit{P}(n,\mathbb{C})}{\argmin}\sum_{i=1}^n w_id^2(X,A_i),
\end{equation}
where $d(\cdot,\cdot)$ is a Riemannian metric given as
\begin{equation}\label{riemmetric}
d^2(X,Y)=\left\langle \log_X(Y),\log_X(Y)\right\rangle_X
\end{equation}
where $\left\langle \cdot,\cdot\right\rangle_X$ is one of the corresponding metrics given above for the arithmetic \eqref{arithmetic}, harmonic \eqref{harmonic} and geometric \eqref{geometric} means, and $\log_X(Y)$ are the corresponding logarithm maps \eqref{mappingsaff} (for $\kappa=0,2,1$ respectively). It is well known that in geodesically convex neighborhoods on a Riemannian manifold \eqref{karchermean} has a unique solution \cite{karcher,palfia2}. The solution can be expressed by taking the gradient of the cost function on the right hand side of \eqref{karchermean} \cite{karcher} and then one arrives at
\begin{equation}\label{karcherequ}
\sum_{i=1}^{n}w_i\log_X(A_{i})=0.
\end{equation}
The unique solution of this equation can be expressed in closed form in the case of the arithmetic and harmonic means, since the corresponding manifolds are Euclidean. The solutions are just the multivariable weighted arithmetic $\sum_{i=1}^kw_iA_i$ and harmonic means $\left(\sum_{i=1}^kw_iA_i^{-1}\right)^{-1}$ \cite{palfia3}. These functions are monotone in their variables with respect to the positive definite order and have some other desirable properties \cite{limpalfia}. These two cases are well known and of less interest, however the same situation is of much more interest in the case of the geometric mean. In this case the unique solution of the minimization problem \eqref{karchermean} cannot be expressed easily in closed form since the corresponding Riemannian manifold is no longer flat. The corresponding equation for the gradient \eqref{karcherequ} is given in the form
\begin{equation}\label{karcherequ2}
\sum_{i=1}^{n}w_i\log(X^{1/2}A_{i}^{-1}X^{1/2})=0
\end{equation}
and usually this equation is called the Karcher equation \cite{limpalfia} and the corresponding unique positive definite solution \eqref{karchermean} the Karcher mean. Several properties of this mean were open problems, for example its monotonicity with respect to the positive definite order, however this and other key properties of the mean were proved by using different techniques \cite{lawsonlim,limpalfia,bhatiakarandikar}. One of the techniques given in \cite{limpalfia} is based on the matrix power means $P_s(w_1,\ldots,w_k;A_1,\ldots,A_k)$. These means are given as the unique positive definite solutions of \eqref{powermeanequ}. The following result provides a geometric characterization of matrix power means.

\begin{proposition}\label{powerkarcher}
The matrix power means $P_s(w_1,\ldots,w_k;A_1,\ldots,A_k)$ for $s\in[-1,1]$ are the unique positive definite solutions of the Karcher equations
\begin{equation}\label{karcherequ3}
\sum_{i=1}^{n}w_i\log_X(A_{i})=0
\end{equation}
where $\log_X(A_{i})$ are the logarithm maps \eqref{mappingsaff} corresponding to the affine family \eqref{invconnection2} with parameter identification $s=\kappa-1$.
\end{proposition}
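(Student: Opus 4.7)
The plan is to reduce the generalized Karcher equation $\sum_{i=1}^{n} w_i \log_X(A_i) = 0$ algebraically to the defining matrix equation \eqref{powermeanequ} for the matrix power mean, and then invoke the existence and uniqueness results already established in \cite{limpalfia}.

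First, for $\kappa \neq 1$ (equivalently $s \neq 0$), I would substitute the closed-form expression $\log_I(Y) = (Y^{1-\kappa} - I)/(1-\kappa)$ into the conjugation formula $\log_X(A) = X^{1/2}\log_I(X^{-1/2}AX^{-1/2})X^{1/2}$. This immediately yields
\begin{equation*}
\log_X(A) = \frac{1}{1-\kappa}\bigl(G_{1-\kappa}(X, A) - X\bigr),
\end{equation*}
where I write $G_q(X, A) := X^{1/2}(X^{-1/2}AX^{-1/2})^q X^{1/2}$ for the weighted geometric mean with parameter $q$. Multiplying the Karcher equation by the nonzero scalar $1-\kappa$ and using $\sum_i w_i = 1$ produces the equivalent equation $X = \sum_{i=1}^{n} w_i G_{1-\kappa}(X, A_i)$, which is precisely \eqref{powermeanequ} at the parameter $1-\kappa$. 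Under the identification $s = \kappa - 1$ this is the matrix power mean $P_s$ (invoking the duality $P_s(w; A) = P_{-s}(w; A^{-1})^{-1}$ used in \cite{limpalfia} to extend the definition to negative parameter values if necessary). Existence of a unique positive definite solution then follows from the fact, proved in \cite{limpalfia}, that the map $X \mapsto \sum_i w_i G_{1-\kappa}(X, A_i)$ is a strict contraction in Thompson's part metric on $\textit{P}(n,\mathbb{C})$.

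The border case $\kappa = 1$ (equivalently $s = 0$) requires a separate remark, because here $\log_I$ is the removable-singularity limit $\log Y$. The generalized Karcher equation then reduces to the classical trace-metric Karcher equation \eqref{karcherequ2}, whose unique positive definite solution is the Karcher mean $\Lambda$. The identification $\Lambda = P_0 := \lim_{s \to 0+} P_s$ is exactly the main convergence theorem of \cite{limpalfia}, closing the $s = 0$ slot of the family.

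The only nontrivial step is therefore the short algebraic identity $(1-\kappa)\log_X(A) = G_{1-\kappa}(X, A) - X$; the rest is essentially quotation from \cite{limpalfia}. The main pitfall will be sign bookkeeping among the parameters $\kappa$, $1-\kappa$, and $s$, which is why I would present the reduction in the intrinsic $q = 1-\kappa$ form first and only afterwards translate back into the $P_s$ notation.
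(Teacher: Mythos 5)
Your proposal is correct and is essentially the paper's own proof run in the opposite direction: the paper starts from the fixed-point equation $X=\sum_i w_i G_s(X,A_i)$, subtracts $X$, divides by the nonzero scalar, and recognizes the result as the Karcher equation via \eqref{mappingsaff}, while you start from the Karcher equation and multiply back up — the two are trivially equivalent, and both quote \cite{limpalfia} for existence and uniqueness via the Thompson-metric contraction. Your parenthetical appeal to the duality $P_s(\omega;{\Bbb A})=P_{-s}(\omega;{\Bbb A}^{-1})^{-1}$ is unnecessary (and would not by itself convert $P_{1-\kappa}$ of ${\Bbb A}$ into $P_{\kappa-1}$ of ${\Bbb A}$): with the paper's definition of $P_q$ as the solution of $X=\sum_i w_i G_q(X,A_i)$ for all $q\in[-1,1]\setminus\{0\}$, your reduction directly identifies the solution as the power mean at parameter $1-\kappa$, and the residual mismatch with the stated identification $s=\kappa-1$ is a sign-convention issue already present in the paper's own statement and proof.
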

\begin{proof}
The defining equation \eqref{powermeanequ} of matrix power means with $s\neq 0$ is equivalent to
\begin{eqnarray*}
\sum_{i=1}^{k}w_{i}(G_s(X,A_i)-X)&=&0\\
\sum_{i=1}^{n}w_{i}X^{1/2}\left[\left(X^{-1/2}A_{i}X^{-1/2}\right)^s-I\right]X^{1/2}&=&0\\
\sum_{i=1}^{n}w_{i}X^{1/2}\frac{\left(X^{-1/2}A_{i}X^{-1/2}\right)^s-I}{s}X^{1/2}&=&0,
\end{eqnarray*}
which is by \eqref{mappingsaff} equivalent to
\begin{equation*}
\sum_{i=1}^{n}w_{i}\log_X(A_i)=0.
\end{equation*}
The case $s=0$ is just the case \eqref{karcherequ2}.

\end{proof}

\begin{remark}
By the continuity of fixed points of pointwise continuous families of strict contractions \cite{limpalfia}, the unique solution of \eqref{karcherequ3} varies continuously with respect to the parameter $s$. The singularity at $s=0$ is known to be removable and the limit is the Karcher mean \cite{limpalfia}.
\end{remark}

Now since the Karcher equations \eqref{karcherequ3} admit unique positive definite solutions the obvious question arises whether there are Riemannian metrics corresponding to other values of $\kappa$? Also for what other values of $\kappa$ is the manifold $\textit{P}(n,\mathbb{C})$ with affine connection \eqref{invconnection2} a symmetric space? The full solution of these questions requires the study of the curvature tensors and holonomy groups which is postponed to the last section. At this point we prove some other results which gets us closer to this metrizability problem of the affine connections \eqref{invconnection2}. First of all we compute the parallel transport over a geodesic connecting an arbitrary point and the identity. The parallel transport of a vector $Y_{\gamma(0)}$ given in the tangent space at $\gamma(0)$ with respect to the connection $\nabla$ along the curve $\gamma(t)$ is defined to be the vector field $Y_{\gamma(t)}$ which is the solution of the ODE
\begin{equation*}
\nabla_{\dot{\gamma}(t)}Y=0.
\end{equation*}

\begin{proposition}
Let $c(t)$ be a geodesic with respect to the connection \eqref{invconnection2} and $c(0)=I,c(1)=p$. Then the unique solution of $\nabla_{\dot{c}(t)}Y=0$ with respect to the connection \eqref{invconnection2} and the initial condition $Y_{c(0)}=Y_0$ is the vector field
\begin{equation}
Y(t)=c(t)^{\frac{\kappa}{2}}Y_0c(t)^{\frac{\kappa}{2}}\text{.}
\end{equation}
\end{proposition}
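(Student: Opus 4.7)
The plan is to verify the claimed formula by substitution and then invoke uniqueness of linear ODEs. First I write out the parallel transport equation: by the definition of the connection \eqref{invconnection2}, $\nabla_{\dot c(t)} Y = 0$ becomes
\begin{equation*}
\dot Y(t) \;=\; \frac{\kappa}{2}\bigl(\dot c(t)\, c(t)^{-1}\, Y(t) + Y(t)\, c(t)^{-1}\, \dot c(t)\bigr).
\end{equation*}
Since this is a linear first-order ODE in $Y(t)$ with a smooth coefficient (note $c(t) \in \textit{P}(n,\mathbb{C})$ so $c(t)^{-1}$ is bounded on $[0,1]$), the solution with prescribed initial data $Y(0)=Y_0$ is unique. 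Thus it suffices to check that $Y(t) := c(t)^{\kappa/2} Y_0 c(t)^{\kappa/2}$ satisfies both the ODE and the initial condition.

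The initial condition is immediate: $c(0)=I$ gives $Y(0) = I \cdot Y_0 \cdot I = Y_0$. The key structural observation I will exploit is that, since $c(0)=I$, the formula from \thmref{geodesics} simplifies to $c(t) = \exp_I(Xt)$ where $X = \dot c(0) \in \textit{H}(n,\mathbb{C})$. Consequently $c(t)$, $c(t)^\alpha$ for any real $\alpha$, $\dot c(t)$, and $X$ are all obtained by holomorphic functional calculus applied to the single Hermitian matrix $X$, hence they pairwise commute. This makes the ensuing computation purely scalar.

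Next I compute $\dot c(t)$ in a usable form. From the scalar identity $\exp_I(s) = ((1-\kappa)s+1)^{1/(1-\kappa)}$ (and $\exp(s)$ in the limiting case $\kappa=1$) one checks directly that $\exp_I'(s) = \exp_I(s)^\kappa$, so $\dot c(t) = c(t)^\kappa X$, and therefore $c(t)^{-1}\dot c(t) = \dot c(t) c(t)^{-1} = c(t)^{\kappa-1}X$. Using commutativity and the power rule,
\begin{equation*}
\frac{d}{dt} c(t)^{\kappa/2} \;=\; \frac{\kappa}{2}\, c(t)^{\kappa/2 - 1}\, \dot c(t) \;=\; \frac{\kappa}{2}\, c(t)^{3\kappa/2 - 1}\, X.
\end{equation*}
Differentiating $Y(t) = c(t)^{\kappa/2} Y_0 c(t)^{\kappa/2}$ by the Leibniz rule and substituting this yields
\begin{equation*}
\dot Y(t) \;=\; \frac{\kappa}{2}\Bigl( c(t)^{3\kappa/2-1} X\, Y_0\, c(t)^{\kappa/2} + c(t)^{\kappa/2}\, Y_0\, c(t)^{3\kappa/2-1} X\Bigr),
\end{equation*}
which one recognizes as $\tfrac{\kappa}{2}\bigl(\dot c(t)c(t)^{-1} Y(t) + Y(t) c(t)^{-1}\dot c(t)\bigr)$ upon inserting $\dot c(t) c(t)^{-1} = c(t)^{\kappa-1}X$ and again using that $X$ commutes with every power of $c(t)$. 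This is exactly the parallel transport ODE, so $Y(t)$ solves it.

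The only mildly delicate point will be the $\kappa=1$ case, which has to be handled separately because the formula for $\exp_I$ has a removable singularity there; however the identity $\exp_I'(s)=\exp_I(s)^\kappa$ degenerates to $\exp'(s)=\exp(s)$, so the same computation goes through verbatim with $c(t) = \exp(Xt)$. Apart from tracking the commutativity carefully there is no real obstacle; the statement reduces, because $c(0)=I$, to a one-parameter commutative functional calculus problem.
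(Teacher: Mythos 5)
Your proof is correct and follows essentially the same route as the paper's: both reduce the parallel-transport equation $\dot Y=\tfrac{\kappa}{2}(\dot c\,c^{-1}Y+Y c^{-1}\dot c)$ to a commutative scalar computation by observing that $c(t)=\exp_I(Xt)$ and all matrices involved are functions of the single Hermitian matrix $X=\dot c(0)$. The only difference is cosmetic — the paper derives $f(c)=c^{\kappa/2}$ from the ansatz $Y=f(c)Y_0f(c)$ via the separable equation $f'(c)f(c)^{-1}=\tfrac{\kappa}{2}c^{-1}$, whereas you verify the stated formula directly using the identity $\exp_I'=\exp_I^{\kappa}$ and uniqueness for linear ODEs, which if anything makes the uniqueness assertion in the statement more explicit.
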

\begin{proof}
We have to integrate the equation $\nabla_{c'(t)}Y_{c(t)}=0$. This is equivalent to
\begin{equation*}
DY[c(t)][c'(t)]-\frac{\kappa}{2}\left(c'(t)c(t)^{-1}Y_{c(t)}+Y_{c(t)}c(t)^{-1}c'(t)\right)=0\text{.}
\end{equation*}
We are looking for the solution $Y_{c(t)}=Y(t)$ in the form
\begin{equation*}
Y(t)=f(c(t))Y_0f(c(t))\text{,}
\end{equation*}
for some analytic function $f(x)$. We have for the Fr\'echet-differential
\begin{equation*}
DY[c(t)][c'(t)]=\frac{dY(t)}{dt}=\frac{df(c(t))}{dt}Y_0f(c(t))+f(c(t))Y_0\frac{df(c(t))}{dt}\text{.}
\end{equation*}
Now substituting into the equation of the parallel transport above, we get
\begin{equation*}
\frac{df(c(t))}{dt}Y_0f(c(t))+f(c(t))Y_0\frac{df(c(t))}{dt}=\frac{\kappa}{2}\left(c'(t)c(t)^{-1}Y_{c(t)}+Y_{c(t)}c(t)^{-1}c'(t)\right)\text{.}
\end{equation*}
Since $c(t)=\exp_I(t\log_I(p))$, it has a power series expansion, as has $f(x)$, so we have by commutativity that
\begin{equation*}
\frac{\kappa}{2}c'(t)c(t)^{-1}f(c(t))=\frac{df(c(t))}{dt}=Df[c(t)][c'(t)]=f'(c)c'(t)\text{.}
\end{equation*}
Since everything on the left and right hand side commutes with one another, we arrive at the following separable differential equation
\begin{equation*}
\frac{\kappa}{2}c^{-1}=f'(c)f(c)^{-1}\text{,}
\end{equation*}
which has its solution in the form $f(c)=c^{\kappa/2}$.

\end{proof}

On a Riemannian manifold the length of vectors are left invariant by parallel transports with respect to the Levi-Civita connection due to the Fundamental Theorem of Riemannian geometry \cite{kobayashi1}. It is easy to check that the connections \eqref{invconnection2} are symmetric and torsion free so any of them can possibly be a Levi-Civita connection of a Riemannian manifold. So by the above proposition we should look for the Riemannian metrics in the form
\begin{equation}\label{ptransportproduct}
\left\langle p^{-\kappa/2}Xp^{-\kappa/2},p^{-\kappa/2}Yp^{-\kappa/2}\right\rangle_{\kappa}
\end{equation}
for some positive definite bilinear forms $\left\langle \cdot,\cdot \right\rangle_{\kappa}$ given on the tangent space at $I$. Later in the last sections we prove that all affine matrix means are actually matrix power means, i.e. we do not have to look for other connections than \eqref{invconnection2}.

\section{Contraction property of operator means}
This section we prove further properties of matrix means using explicitly the integral characterizations given in Proposition~\ref{harmonicreprprop}. We will use the results given in this section to generalize the construction of matrix power means to all possible matrix means in later sections.

Let $E$ be a Hilbert space, $\mathfrak{B}(E)$ denote the Banach space of bounded linear operators, $S(E)$ denote the Banach space of bounded linear self-adjoint operators and $\mathbb{P}\subseteq S(E)$ the cone of positive definite operators. On $\mathbb{P}$ we have the positive definite order similarly to the finite dimensional case which means that $A\leq B$ for $A,B\in\mathbb{P}$ if and only if $B-A\in\mathbb{P}$. It is also easy to see that if for $A,B\in S(E)$ and $0\leq A\leq B$ then also $\left\|A\right\|\leq \left\|B\right\|$ \cite{lawsonlim1}. We will use the notation $[A,B]$ for the order interval generated by $A\leq B$, i.e. $[A,B]=\{X\in\mathbb{P}:A\leq X\leq B\}$. We also have that $\mathbb{P}=\bigcup_{k=1}^{\infty}\left[\frac{1}{k}I,kI\right]$. On $\mathbb{P}$ this partial ordering induces a complete metric space structure \cite{thompson}. The Thompson or part metric is defined as
\begin{equation}\label{thompsonmetric}
d_\infty(A,B)=\max\left\{\log M(A/B),M(B/A)\right\}
\end{equation}
for any $A,B\in\mathbb{P}$, where $M(A/B)=\inf\{\alpha:A\leq\alpha B\}$. The metric space $(\mathbb{P},d_\infty)$ is complete and has some several other nice properties.
\begin{lemma}[Lemma 10.1 in \cite{lawsonlim0}]\label{thompsonproperties}
\begin{enumerate}
	\item $d_\infty(rA,rB)=d_\infty(A,B)$ for any $r>0$,
	\item $d_\infty(A^{-1},B^{-1})=d_\infty(A,B)$,
	\item $d_\infty(MAM^*,MBM^*)=d_\infty(A,B)$ for all $M\in \mathrm{GL}(E)$ where $\mathrm{GL}(E)$ denotes the Banach-Lie group of all invertible bounded linear operators on $E$,
	\item $d_\infty(\sum_{i=1}^kt_iA_i,\sum_{i=1}^kt_iB_i)\leq \max_{1\leq i\leq k}d_\infty(A_i,B_i)$ where $t_i>0$,
	\item $e^{-d_\infty(A,B)}B\leq A\leq e^{d_\infty(A,B)}B$ and $e^{-d_\infty(A,B)}A\leq B\leq e^{d_\infty(A,B)}A$.
\end{enumerate}
\end{lemma}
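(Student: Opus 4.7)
The plan is to work directly from the definition
\[
d_\infty(A,B)=\max\{\log M(A/B),\,\log M(B/A)\},\qquad M(A/B)=\inf\{\alpha>0:A\le \alpha B\},
\]
and to treat each of the five assertions as a short routine check. I will verify them in the order (1), (2), (3), (5), (4), because (4) is most cleanly deduced once (5) is in hand.

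For (1), I would simply observe that $rA\le\alpha\,rB$ iff $A\le\alpha B$, so $M(rA/rB)=M(A/B)$ and the same for the swapped ratio, giving equality of the distances. For (2), I use the order-reversing property of inversion on $\mathbb{P}$: $A^{-1}\le\alpha B^{-1}$ iff $\alpha^{-1}B\le A$ iff $B\le\alpha A$, so $M(A^{-1}/B^{-1})=M(B/A)$ and vice versa, and the $\max$ is unchanged. For (3), I use that if $M\in\mathrm{GL}(E)$ then $X\mapsto MXM^*$ is an order-preserving bijection of $\mathbb{P}$; hence $MAM^*\le\alpha MBM^*$ iff $A\le\alpha B$, giving $M(MAM^*/MBM^*)=M(A/B)$ and the same after swapping.

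For (5), the bound $M(A/B)\le e^{d_\infty(A,B)}$ is immediate from the definition of $d_\infty$ and the fact that the set $\{\alpha:A\le\alpha B\}$ is closed from the right (if $A\le\alpha_n B$ with $\alpha_n\downarrow M(A/B)$, then passing to the limit in the norm/strong topology on $S(E)$ preserves positive semidefiniteness). Hence $A\le e^{d_\infty(A,B)}B$; symmetrically $B\le e^{d_\infty(A,B)}A$, and the lower bounds follow by rearranging these. Now for (4), set $d:=\max_i d_\infty(A_i,B_i)$. By (5) each $A_i$ satisfies $e^{-d}B_i\le A_i\le e^{d}B_i$; since $t_i>0$, multiplying and summing preserves the order, giving $e^{-d}\sum t_iB_i\le \sum t_iA_i\le e^{d}\sum t_iB_i$, which by the definition of $M(\cdot/\cdot)$ yields $d_\infty(\sum t_iA_i,\sum t_iB_i)\le d$.

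The only genuine point of care is the closedness argument used in (5) to conclude that the infimum defining $M(A/B)$ is attained, i.e.\ that $A\le M(A/B)\cdot B$; this is what actually converts the infimum-based definition of $d_\infty$ into the useful sandwich inequality needed for (4). Everything else is a direct translation of the operator-order manipulations into the definition of $M(\cdot/\cdot)$, so I expect no further obstacle.
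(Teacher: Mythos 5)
Your proof is correct. Note that the paper does not prove this lemma at all: it is quoted verbatim as Lemma~10.1 of Lawson--Lim \cite{lawsonlim0}, so there is no internal argument to compare against. Your verification is the standard one and is complete: items (1)--(3) are immediate from the invariance of the order relation $A\leq\alpha B$ under scaling, inversion (which swaps the roles of $M(A/B)$ and $M(B/A)$, leaving the maximum unchanged), and congruence; and you correctly identify the only non-trivial point, namely that the infimum defining $M(A/B)$ is attained because the positive cone is closed, which is exactly what is needed to get the sandwich inequality (5) with the non-strict constant $e^{d_\infty(A,B)}$ and hence to deduce (4) by multiplying by $t_i>0$ and summing.
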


Property 4. in Lemma~\ref{thompsonproperties} is important for us, but we need a refined, weighted version of it.

\begin{proposition}\label{weightedthompson}
Let $A_i,B_i\in\mathbb{P}$, $1\leq i\leq k$ and suppose that $d_\infty(A_m,B_m)\geq d_\infty(A_i,B_i)$. Then we have
\begin{equation*}
\begin{split}
e^{d_{\infty}\left(\sum_{i=1}^kA_i,\sum_{i=1}^kB_i\right)}\leq&\max\left\{\frac{\sum_{i=1}^ke^{d_\infty(A_i,B_i)}e^{-d_{\infty}(A_m,A_i)}}{\sum_{i=1}^ke^{-d_{\infty}(A_m,A_i)}},\right.\\
&\left.\frac{\sum_{i=1}^ke^{d_\infty(A_i,B_i)}e^{-d_{\infty}(B_m,B_i)}}{\sum_{i=1}^ke^{-d_{\infty}(B_m,B_i)}}\right\}.
\end{split}
\end{equation*}
\end{proposition}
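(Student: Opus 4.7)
The plan is to unpack the Thompson metric on the left via the identity $e^{d_{\infty}(X,Y)} = \max\{M(X/Y), M(Y/X)\}$ from \eqref{thompsonmetric}, so that the proposition reduces to the two operator inequalities $\sum A_i \le \max(\alpha_A,\alpha_B)\sum B_i$ and $\sum B_i \le \max(\alpha_A,\alpha_B)\sum A_i$. The second is obtained from the first by interchanging $A$ and $B$ (the right-hand side is symmetric under this swap), so I would concentrate on the first.

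The starting point is Lemma~\ref{thompsonproperties}(5), which gives $A_i \le e^{d_i}B_i$, writing $d_i := d_{\infty}(A_i,B_i)$, and hence $\sum A_i \le \sum e^{d_i}B_i$. Applied at the pivot, the same property yields the two-sided bounds $c_i^B B_m \le B_i \le (c_i^B)^{-1}B_m$ with $c_i^B := e^{-d_{\infty}(B_m,B_i)}$. Because $\alpha_B$ is calibrated so that $\sum(\alpha_B - e^{d_i})c_i^B = 0$, writing $B_i = c_i^B B_m + (B_i - c_i^B B_m)$ and inserting into $\sum e^{d_i}B_i$ produces the algebraic identity
$$\alpha_B\sum B_i \;-\; \sum e^{d_i}B_i \;=\; \sum_{i\neq m}(\alpha_B - e^{d_i})\bigl(B_i - c_i^B B_m\bigr),$$
where the $i=m$ term drops out because $c_m^B = 1$, and each difference $B_i - c_i^B B_m$ is non-negative.

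The principal obstacle is that the scalar coefficients $(\alpha_B - e^{d_i})$ have mixed signs: indices with $e^{d_i} \le \alpha_B$ contribute non-negatively, but those with $e^{d_i} > \alpha_B$ contribute negatively and must be absorbed elsewhere. For these I would replace $B_i - c_i^B B_m$ by its upper bound $((c_i^B)^{-1} - c_i^B)B_m$, transferring the negative excess onto a scalar multiple of $B_m$ alone. This multiple is then controlled through the companion chain $B_m \le e^{d_m}A_m$ (from Lemma~\ref{thompsonproperties}(5) applied to $A_m$ and $B_m$) together with the $A$-pivot lower bound $c_i^A A_m \le A_i$, which sums to $A_m \le (\sum c_i^A)^{-1}\sum A_i$. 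This chain is precisely what produces $\alpha_A$ on the right-hand side, and rearranging the resulting inequality (which contains $\sum A_i$ on both sides) yields $\sum A_i \le \max(\alpha_A,\alpha_B)\sum B_i$. The maximality hypothesis $d_m \ge d_i$ enters through $\alpha_B \le e^{d_m}$, which keeps the correction bounded and makes the bookkeeping close. Verifying that the cross-pivot substitution never incurs more than the maximum of the two weighted averages, rather than merely one of them individually, is the technical heart of the argument.
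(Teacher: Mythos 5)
Your setup through the decomposition identity is sound and parallels the paper's opening moves: both arguments start from $\sum A_i\le\sum e^{d_i}B_i$ via Lemma~\ref{thompsonproperties}(5), and both reduce matters to a scalar condition by comparing each $B_i$ to the pivot $B_m$. The divergence, and the gap, lies in how the negative coefficients are handled. The paper never lets more than one coefficient go negative: it restricts a priori to $\beta\ge\alpha_i:=e^{d_i}$ for all $i\ne m$ (harmless, since property (4) of Lemma~\ref{thompsonproperties} already gives $\beta\le\alpha_m$), so the only negative term sits on $B_m$ itself, where $c_m^B=1$, and then only the \emph{lower} pivot bounds $B_i\ge c_i^BB_m$ are ever used, always on terms with nonnegative coefficients. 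The first operator inequality is thus settled entirely inside the $B$'s, and the $A$-average enters only through the symmetric second inequality $\sum B_i\le\beta\sum A_i$. You instead allow every index with $\alpha_i>\alpha_B$ to contribute negatively, bound $B_i-c_i^BB_m$ \emph{above} by $\bigl((c_i^B)^{-1}-c_i^B\bigr)B_m$, and try to charge the resulting deficit $CB_m$ to $\sum A_i$ via $B_m\le e^{d_m}A_m\le\bigl(\sum_jc_j^A\bigr)^{-1}e^{d_m}\sum A_i$. That step cannot close: the coefficient $Ce^{d_m}/\sum_jc_j^A$ is not less than $1$ in general; it is unbounded.

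Concretely, take $k=3$ with scalars $B=(1,T,1)$ and $A=(10,9T,1)$ for large $T$, so that $\alpha=(10,9,1)$, $m=1$, $c^B=(1,1/T,1)$ and $\alpha_B\to 11/2$. Here $\sum A_i=9T+11$ while $\alpha_B\sum B_i\approx\tfrac{11}{2}T$, so the inequality $\sum A_i\le\alpha_B\sum B_i$ that you are trying to establish up to an absorbable error is simply \emph{false}: the deficit is $\approx\tfrac{7}{2}T$, of the same order as the main term, and your transfer gives $Ce^{d_m}/\sum_jc_j^A\approx 32T\to\infty$, so the rearrangement ``$(1-\cdots)\sum A_i\le\alpha_B\sum B_i$'' has a negative coefficient on the left and yields nothing. (The proposition still holds in this example, but only because $\alpha_A\to 101/11>9$.) The step you yourself flag as ``the technical heart'' is therefore not a bookkeeping matter but the point where the argument breaks, and it cannot be verified as stated; a repair has to abandon the upper pivot bound $B_i\le(c_i^B)^{-1}B_m$ altogether and, as the paper does, constrain $\beta$ so that only the $i=m$ coefficient is negative, deriving each weighted average from its own operator inequality rather than cross-subsidizing one with the other.
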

\begin{proof}
Let $\alpha_i=e^{d_\infty(A_i,B_i)}$. We seek the infimum of all $\beta\geq 0$ such that both
\begin{equation*}
\begin{split}
\sum_{i=1}^kA_i\leq \beta\sum_{i=1}^kB_i\\
\sum_{i=1}^kB_i\leq \beta\sum_{i=1}^kA_i
\end{split}
\end{equation*}
are satisfied. We also have that 
\begin{equation*}
\begin{split}
\sum_{i=1}^kA_i\leq \sum_{i=1}^k\alpha_iB_i\\
\sum_{i=1}^kB_i\leq \sum_{i=1}^k\alpha_iA_i.
\end{split}
\end{equation*}
From this it follows that the infimum of the seeked $\beta$ are bounded above by the infimum of all $\beta\geq 0$ satisfying both
\begin{equation}\label{weightedthompson:eq1}
\begin{split}
\sum_{i=1}^kA_i\leq \sum_{i=1}^k\alpha_iB_i\leq \beta\sum_{i=1}^kB_i\\
\sum_{i=1}^kB_i\leq \sum_{i=1}^k\alpha_iA_i\leq \beta\sum_{i=1}^kA_i.
\end{split}
\end{equation}
The first inequality above is equivalent to
\begin{equation*}
0\leq \sum_{i=1}^k(\beta-\alpha_i)B_i.
\end{equation*}
Now by Property 4. in Lemma~\ref{thompsonproperties} we have the natural bound $\beta\leq \max_{1\leq i\leq n}\alpha_i$. So we may try to find a better bound by assuming that $\beta\geq \alpha_i$ for $2\leq i\leq n$, where without loss of generality $\alpha_1$ is assumed to be the maximal of all $\alpha_i$. Using the assumption on $\beta$ we get that we seek the infimum of all $\beta\geq 0$ such that
\begin{equation*}
\sum_{i=2}^k(\beta-\alpha_i)B_i\geq (\alpha_1-\beta)B_1.
\end{equation*}
The infimum here is therefore bounded above again by the infimum of all $\beta\geq 0$ such that
\begin{equation*}
\sum_{i=2}^k(\beta-\alpha_i)e^{-d_{\infty}(B_1,B_i)}\geq (\alpha_1-\beta).
\end{equation*}
This is equivalent to
\begin{equation*}
\sum_{i=1}^k(\beta-\alpha_i)e^{-d_{\infty}(B_1,B_i)}\geq 0,
\end{equation*}
in other words we have that
\begin{equation*}
\beta\geq \frac{\sum_{i=1}^k\alpha_ie^{-d_\infty(B_1,B_i)}}{\sum_{i=1}^ke^{-d_\infty(B_1,B_i)}}=\frac{\sum_{i=1}^ke^{d_\infty(A_i,B_i)}e^{-d_\infty(B_1,B_i)}}{\sum_{i=1}^ke^{-d_\infty(B_1,B_i)}}.
\end{equation*}
Doing the same calculation ($A_i$ in place of $B_i$) by starting with the second inequality in \eqref{weightedthompson:eq1} we get
\begin{equation*}
\beta\geq \frac{\sum_{i=1}^ke^{d_\infty(A_i,B_i)}e^{-d_\infty(A_1,A_i)}}{\sum_{i=1}^ke^{-d_\infty(A_1,A_i)}}.
\end{equation*}
This means that
\begin{equation*}
\begin{split}
e^{d_{\infty}\left(\sum_{i=1}^kA_i,\sum_{i=1}^kB_i\right)}\leq&\max\left\{\frac{\sum_{i=1}^ke^{d_\infty(A_i,B_i)}e^{-d_{\infty}(A_1,A_i)}}{\sum_{i=1}^ke^{-d_{\infty}(A_1,A_i)}},\right.\\
&\left.\frac{\sum_{i=1}^ke^{d_\infty(A_i,B_i)}e^{-d_{\infty}(B_1,B_i)}}{\sum_{i=1}^ke^{-d_{\infty}(B_1,B_i)}}\right\}.
\end{split}
\end{equation*}
which is what we wanted to prove.

\end{proof}

Let $\overline{B}_A(r)=\{X\in\mathbb{P}:d_{\infty}(A,X)\leq r\}$.


\begin{lemma}\label{arithmeticcontraction}
Let $a,b>0$ be real numbers. Then the mappings $h^+_{a,b,A}(B)=aA+bB$ and $h^-_{a,b,A}(B)=(aA^{-1}+bB^{-1})^{-1}$ are strict contractions on every $\overline{B}_A(r)$ for all $r<\infty$, i.e. for all $X,Y\in\overline{B}_A(r)$
\begin{equation*}
d_\infty(h^\pm_{a,b,A}(X),h^\pm_{a,b,A}(Y))\leq \rho d_\infty(X,Y)
\end{equation*}
where
\begin{equation*}
\rho=\frac{\log\frac{e^{-r-|\log a-\log b|}+e^{2r}}{e^{-r-|\log a-\log b|}+1}}{2r}.
\end{equation*}
\end{lemma}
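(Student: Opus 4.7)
The plan is to handle $h^+$ first by invoking the weighted Thompson inequality (Proposition~\ref{weightedthompson}) on the two-term sums $aA + bX$ and $aA + bY$, and then reduce $h^-$ to the $h^+$ case using the isometry property of inversion (Lemma~\ref{thompsonproperties}(2)).

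For the arithmetic case, I would set $A_1 = B_1 = aA$, $A_2 = bX$, $B_2 = bY$. Then $d_\infty(A_1,B_1) = 0$ and, by scale-invariance, $d_\infty(A_2,B_2) = d_\infty(X,Y) =: s$, so the index achieving the maximal component-wise distance is $m = 2$. Proposition~\ref{weightedthompson} thus gives
\begin{equation*}
e^{d_\infty(h^+_{a,b,A}(X),\,h^+_{a,b,A}(Y))} \;\leq\; \max\!\left\{\frac{e^{-d_\infty(bX,aA)} + e^{s}}{e^{-d_\infty(bX,aA)} + 1},\;\frac{e^{-d_\infty(bY,aA)} + e^{s}}{e^{-d_\infty(bY,aA)} + 1}\right\}.
\end{equation*}
The triangle inequality combined with Lemma~\ref{thompsonproperties}(1) bounds each of $d_\infty(aA,bX)$ and $d_\infty(aA,bY)$ by $|\log a - \log b| + r$, since $X,Y \in \overline{B}_A(r)$. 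A one-line calculus check shows that $u \mapsto (u+e^{s})/(u+1)$ is nonincreasing in $u \geq 0$, so writing $\mu := e^{-r-|\log a - \log b|}$, both expressions on the right are at most $(\mu + e^{s})/(\mu + 1)$, giving the cleaner bound $d_\infty(h^+_{a,b,A}(X),h^+_{a,b,A}(Y)) \leq \log\frac{\mu + e^{s}}{\mu + 1}$.

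To convert this to a linear contraction estimate, I would study $f(s) := \log\frac{\mu + e^{s}}{\mu + 1}$. We have $f(0) = 0$ and $f'(s) = e^{s}/(\mu + e^{s})$, which is increasing in $s$; so $f$ is convex, and consequently $f(s)/s$ is non-decreasing on $(0,\infty)$. Since $s = d_\infty(X,Y) \leq d_\infty(X,A) + d_\infty(A,Y) \leq 2r$, the supremum of $f(s)/s$ over the allowed range is attained at $s = 2r$, yielding exactly the advertised constant $\rho$. Strictness ($\rho < 1$) reduces to $\frac{\mu + e^{2r}}{\mu + 1} < e^{2r}$, which simplifies to $\mu(e^{2r}-1) > 0$; this holds whenever $r > 0$.

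For the harmonic-type map $h^-$, Lemma~\ref{thompsonproperties}(2) says $X \mapsto X^{-1}$ is a $d_\infty$-isometry, so
\begin{equation*}
d_\infty(h^-_{a,b,A}(X),\,h^-_{a,b,A}(Y)) \;=\; d_\infty\!\bigl(aA^{-1} + bX^{-1},\,aA^{-1} + bY^{-1}\bigr) \;=\; d_\infty\!\bigl(h^+_{a,b,A^{-1}}(X^{-1}),\,h^+_{a,b,A^{-1}}(Y^{-1})\bigr),
\end{equation*}
and because $X^{-1}, Y^{-1} \in \overline{B}_{A^{-1}}(r)$ with $d_\infty(X^{-1},Y^{-1}) = d_\infty(X,Y)$, the $h^+$ bound just proved (applied with $A$ replaced by $A^{-1}$, which leaves $a,b,r$ and hence $\rho$ unchanged) transfers verbatim. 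The only delicate point I anticipate is step two — justifying that the \emph{exact} constant $\rho$ claimed in the statement arises, which requires both the sharp triangle-inequality estimate $d_\infty(aA,bX) \leq |\log a - \log b| + r$ and the convexity-based observation that $f(s)/s$ peaks at $s = 2r$; everything else is essentially bookkeeping on top of Proposition~\ref{weightedthompson} and Lemma~\ref{thompsonproperties}.
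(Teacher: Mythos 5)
Your proposal is correct and follows essentially the same route as the paper: apply Proposition~\ref{weightedthompson} to the two-term sums, bound $d_\infty(aA,bX)$ by $r+|\log a-\log b|$, and maximize $s\mapsto\log\bigl(\tfrac{\mu+e^{s}}{\mu+1}\bigr)/s$ over $(0,2r]$ to extract $\rho$, with the harmonic case reduced to the arithmetic one by inversion invariance. The only cosmetic difference is that the paper first normalizes to $A=I$ via congruence invariance, whereas you keep a general $A$; your convexity argument for why $f(s)/s$ peaks at $s=2r$ is in fact a cleaner justification of the monotonicity the paper merely asserts.
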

\begin{proof}
It suffices to prove the above for $h^+_{a,b,A}(B)$, since then the same follows for $h^-_{a,b,A}(B)$ by the inversion invariancy of the metric $d_\infty$. Also by property 3 in Lemma~\ref{thompsonproperties} it is enough to prove for the case when $A=I$. Let $X,Y\in\overline{B}_I(r)$. By Proposition~\ref{weightedthompson} we have that
\begin{equation*}
\begin{split}
e^{d_\infty(h^+_{a,b,I}(X),h^+_{a,b,I}(Y))}\leq\max\left\{\frac{e^{-d_\infty(bX,aI)}+e^{d_\infty(X,Y)}}{e^{-d_\infty(bX,aI)}+1},\frac{e^{-d_\infty(bY,aI)}+e^{d_\infty(X,Y)}}{e^{-d_\infty(bY,aI)}+1}\right\}\\
=\max\left\{\frac{e^{-d_\infty((b/a)X,I)}+e^{d_\infty(X,Y)}}{e^{-d_\infty((b/a)X,I)}+1},\frac{e^{-d_\infty((b/a)Y,I)}+e^{d_\infty(X,Y)}}{e^{-d_\infty((b/a)Y,I)}+1}\right\}.
\end{split}
\end{equation*}
Since $X,Y\in\overline{B}_A(r)$ we have that
\begin{equation*}
\begin{split}
e^{-r}I\leq X,&Y\leq e^rI\\
\frac{b}{a}e^{-r}I\leq \frac{b}{a}X,&\frac{b}{a}Y\leq \frac{b}{a}e^rI,
\end{split}
\end{equation*}
which means that
\begin{equation*}
d_\infty((b/a)X,I),d_\infty((b/a)Y,I)\leq r+\left|\log\frac{a}{b}\right|.
\end{equation*}
With $R=r+\left|\log\frac{a}{b}\right|$ this means that
\begin{equation*}
e^{d_\infty(h^+_{a,b,I}(X),h^+_{a,b,I}(Y))}\leq\frac{e^{-R}+e^{d_\infty(X,Y)}}{e^{-R}+1}.
\end{equation*}
So we seek $0<\rho<1$ such that
\begin{equation*}
d_\infty(h^+_{a,b,I}(X),h^+_{a,b,I}(Y))\leq\log\left(\frac{e^{-R}+e^{d_\infty(X,Y)}}{e^{-R}+1}\right)\leq \rho d_\infty(X,Y)
\end{equation*}
for all $X,Y\in\overline{B}_I(r)$, i.e. $d_\infty(X,Y)\leq 2r$. It therefore suffices to find the maximum of the function
\begin{equation*}
f(x)=\frac{\log\left(\frac{e^{-R}+e^{x}}{e^{-R}+1}\right)}{x}
\end{equation*}
on the interval $(0,2r]$. Since $f'(x)>0$ on the interval $(0,2r]$, the maximum is $f(2r)$, so
\begin{equation*}
\rho=\frac{\log\left(\frac{e^{-R}+e^{2r}}{e^{-R}+1}\right)}{2r}
\end{equation*}
suffices.

\end{proof}

\begin{remark}
By Proposition~\ref{weightedthompson} it is clear that the functions $h^\pm_{a,b,A}(X)$ for all $a,b\geq 0$ are nonexpansive on the whole $\mathbb{P}$, i.e.
\begin{equation*}
d_\infty(h^\pm_{a,b,A}(X),h^\pm_{a,b,A}(Y))\leq d_\infty(X,Y).
\end{equation*}
\end{remark}

%

\begin{remark}
By Lemma~\ref{arithmeticcontraction} it follows that the weighted arithmetic $(1-s)A+sB$ and harmonic $((1-s)A^{-1}+sB^{-1})^{-1}$ means are strict contractions on $\overline{B}_A(r)$ for all $r<\infty$ and $s\in(0,1)$. The contraction coefficients $\rho$ are striclty less then $1$ for all $s\in(0,1)$, in general $\rho$ monotonically increases as $s\geq 1/2$ increases and $\rho\to 1$ as $s\to 1-$. Similarly as $s\leq 1/2$ decreases, $\rho$ monotonically increases and $\rho\to 1$ as $s\to 0+$. The cases $s=0,1$ are degenerate, $s=1$ gives the right trivial mean $M(A,B)=B$ that is nonexpansive, i.e.
\begin{equation*}
d_\infty(M(A,X),M(A,Y))\leq d_\infty(X,Y),
\end{equation*}
while $s=0$ is the left trivial mean $M(A,B)=A$ and it has contraction coefficient $0$ on all of $\mathbb{P}$.
\end{remark}

These preliminary results yield the following main result.

\begin{theorem}\label{contract}
Let $M\in\mathfrak{M}$ and $f(X)=M(A,X)$. If $M$ is not the right trivial mean (i.e. $M(A,B)\neq B$) then the mapping $f(X)$ is a strict contraction on $\overline{B}_A(r)$ for all $r<\infty$, i.e. there exists $0<\rho_r<1$ such that
\begin{equation*}
d_\infty(f(X),f(Y))\leq\rho_r d_\infty(X,Y)
\end{equation*}
for all $X,Y\in\overline{B}_A(r)$.

If $M$ is the right trivial mean (i.e. $M(A,B)=B$) then $f(X)$ is nonexpansive on $\mathbb{P}$, that is
\begin{equation*}
d_\infty(f(X),f(Y))\leq d_\infty(X,Y)
\end{equation*}
for all $A,X,Y\in\mathbb{P}$.
\end{theorem}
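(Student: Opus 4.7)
The plan is to split on whether $M$ is the right trivial mean, and otherwise to integrate the strict contraction estimate of Lemma~\ref{arithmeticcontraction} against the harmonic representation of Proposition~\ref{harmonicreprprop} via a measure-theoretic analogue of Proposition~\ref{weightedthompson}. If $M$ is the right trivial mean then $f(X)=X$ and $d_\infty(f(X),f(Y))=d_\infty(X,Y)$ on all of $\mathbb{P}$, which is the asserted nonexpansiveness. Henceforth I assume $M$ is not the right trivial mean, so that by Proposition~\ref{harmonicreprprop}
\[
M(A,B)=\int_{[0,1]}H_s(B)\,d\nu(s),\qquad H_s(B):=\bigl[(1-s)A^{-1}+sB^{-1}\bigr]^{-1},
\]
where $\nu$ is a Borel probability measure on $[0,1]$ satisfying $\nu([0,1))>0$.

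Each $H_s$ is precisely the map $h^-_{1-s,s,A}$ in the notation of Lemma~\ref{arithmeticcontraction}, so that lemma supplies a strict contraction coefficient $\rho_{s,r}<1$ on $\overline{B}_A(r)$ for every $s\in(0,1)$, while $H_0\equiv A$ has coefficient zero and $H_1=\mathrm{id}$ has coefficient one. Specialising the same lemma to $Y=A$ gives $d_\infty(H_s(X),A)\leq r$, so $H_s(\overline{B}_A(r))\subseteq\overline{B}_A(r)$, and in particular $d_\infty(X,H_s(X)),d_\infty(Y,H_s(Y))\leq 2r$ uniformly in $s\in[0,1]$ and $X,Y\in\overline{B}_A(r)$.

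Next I would establish a measure-theoretic version of Proposition~\ref{weightedthompson}: for any Borel probability measure $\mu$ on a space $S$, measurable families $A_s,B_s\in\mathbb{P}$, and any $s_0\in S$ where $d_\infty(A_s,B_s)$ essentially attains its supremum,
\[
e^{d_\infty(\int A_s\,d\mu,\int B_s\,d\mu)}\leq\max\!\left\{\frac{\int e^{d_\infty(A_s,B_s)}e^{-d_\infty(A_{s_0},A_s)}\,d\mu}{\int e^{-d_\infty(A_{s_0},A_s)}\,d\mu},\,\frac{\int e^{d_\infty(A_s,B_s)}e^{-d_\infty(B_{s_0},B_s)}\,d\mu}{\int e^{-d_\infty(B_{s_0},B_s)}\,d\mu}\right\}.
\]
This follows either by repeating the scalar argument of Proposition~\ref{weightedthompson} with integrals in place of finite sums, or by weak-$*$ approximation of $\mu$ by finitely supported probability measures together with Lebesgue dominated convergence.

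Applying this bound to $A_s=H_s(X)$, $B_s=H_s(Y)$, $\mu=\nu$, $s_0=1$, and writing $c:=d_\infty(X,Y)\in(0,2r]$, the pointwise estimate $e^{d_\infty(A_s,B_s)}\leq e^{\rho_{s,r}c}$ together with the uniform bounds $e^{-2r}\leq e^{-d_\infty(X,H_s(X))}\leq 1$ (and likewise for $Y$) displays each of the two right-hand ratios as a convex combination of the quantities $e^{\rho_{s,r}c}$. Since $\nu([0,1))>0$ there exists $s^*\in(0,1)$ with $\delta:=\nu([0,s^*])>0$, and the weight fraction contributed by $[0,s^*]$ in each convex combination is at least $\lambda:=\delta e^{-2r}>0$; on that set $\rho_{s,r}\leq\rho_{s^*,r}<1$. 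Using that such a convex combination is monotonically decreasing in the weight of the smaller value, one obtains
\[
e^{d_\infty(M(A,X),M(A,Y))}\leq\lambda e^{\rho_{s^*,r}c}+(1-\lambda)e^{c}=e^{c}\bigl(1-\lambda\bigl(1-e^{(\rho_{s^*,r}-1)c}\bigr)\bigr),
\]
and a one-variable analysis of $c\mapsto c^{-1}\log\bigl(1-\lambda(1-e^{(\rho_{s^*,r}-1)c})\bigr)$ on $(0,2r]$ (continuous, with negative value $-\lambda(1-\rho_{s^*,r})$ at $c=0^+$) shows this function is bounded above by a strictly negative constant, yielding $d_\infty(M(A,X),M(A,Y))\leq\rho_r\,d_\infty(X,Y)$ for a constant $\rho_r=\rho_r(\nu,r)<1$ depending only on $\nu$ and $r$. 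The main obstacle is the measure-theoretic lift of Proposition~\ref{weightedthompson} together with the uniformity in $X,Y\in\overline{B}_A(r)$ of the resulting coefficient; it is precisely at this point that the boundedness of $\overline{B}_A(r)$ and the hypothesis $\nu\neq\delta_1$ are both used essentially.
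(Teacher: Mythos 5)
Your overall strategy --- writing $M(A,\cdot)=\int_{[0,1]}H_s(\cdot)\,d\nu(s)$ via Proposition~\ref{harmonicreprprop}, feeding the pointwise estimates of Lemma~\ref{arithmeticcontraction} into a weighted Thompson-metric bound of the type in Proposition~\ref{weightedthompson}, and extracting a uniform coefficient from the portion of $\nu$ where the $H_s$ are uniformly strictly contractive --- is the same as the paper's. But there is a genuine error in the step where you claim $\rho_{s,r}\le\rho_{s^*,r}<1$ for all $s\in[0,s^*]$. The coefficient supplied by Lemma~\ref{arithmeticcontraction} is $\rho_{s,r}=\frac{1}{2r}\log\frac{e^{-R_s}+e^{2r}}{e^{-R_s}+1}$ with $R_s=r+\bigl|\log\frac{1-s}{s}\bigr|$, and this expression is increasing in $R_s$; since $R_s\to\infty$ as $s\to 0^{+}$, one has $\rho_{s,r}\to 1$, so on $(0,s^*]$ the map $s\mapsto\rho_{s,r}$ is \emph{decreasing} and $\sup_{s\in(0,s^*]}\rho_{s,r}=1$. (The lemma's bound degenerates near both endpoints of $[0,1]$, even though $H_0\equiv A$ itself has coefficient $0$.) Hence the interval $[0,s^*]$ carries positive mass but no uniform strict-contraction constant is available on it from the lemma you cite, and the claimed bound $\lambda e^{\rho_{s^*,r}c}+(1-\lambda)e^{c}$ does not follow. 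The repair is exactly the paper's device: split $[0,1]$ into $[0,a)$, $[a,1-a]$, $(1-a,1]$ with $a$ chosen so that $\nu([a,1-a])>0$, use only nonexpansiveness on the two outer pieces, and use the uniform coefficient $\rho_{a,r}<1$ on the middle piece (there the monotonicity you want does hold, since $R_s$ is maximized at $s=a$ and $s=1-a$). The degenerate case $\operatorname{supp}\nu\subseteq\{0\}\cup\{1\}$ must then be handled separately; it is a weighted arithmetic mean and is covered directly by Lemma~\ref{arithmeticcontraction}.

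A second, lesser issue: you assert an integral version of Proposition~\ref{weightedthompson} without proof. Its verification is not a verbatim transcription of the finite-sum argument, because that argument isolates the single maximal index (an atom), which has no direct analogue for non-atomic $\nu$, and the approximation route needs care since the maximizing atoms of the approximants need not converge. The paper sidesteps this entirely by reducing to a sum of \emph{three} operators (the integrals over the three subintervals) and applying the finite Proposition~\ref{weightedthompson} to those. Alternatively, the full integral inequality is unnecessary for your purposes: on $\overline{B}_A(r)$ one has $H_s(Y)\ge e^{-2r}M(A,Y)$ for every $s$, so writing $e^{\rho_{s,r}c}=e^{c}-\bigl(e^{c}-e^{\rho_{s,r}c}\bigr)$ inside $\int e^{\rho_{s,r}c}H_s(Y)\,d\nu(s)$ yields your displayed convex-combination bound directly, once the coefficient issue above is fixed.
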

\begin{proof}
The case of the right trivial mean is just the preceding remark, so assume that $M(A,B)$ is not the right trivial mean. Again by property 3 in Lemma~\ref{thompsonproperties} it is enough to prove for the case when $A=I$. By Proposition~\ref{harmonicreprprop} the mean $M\in\mathfrak{M}$ is represented as
\begin{equation}\label{contractint}
M(I,X)=\int_{[0,1]}[(1-s)I+sX^{-1}]^{-1}d\nu(s)=\int_{[0,1]}h_s(X)d\nu(s)
\end{equation}
So let $X,Y\in\overline{B}_I(r)$. There are other simple cases when the probability measure is supported only over the two points $\{0\},\{1\}$. These cases include the weighted arithmetic mean with $s\in(0,1)$ and the case of the left trivial mean which is covered in the preceding remark and are clearly strict contractions on $\overline{B}_I(r)$.

For the remaining cases we split the integral in \eqref{contractint} to the sum of integrals over the mutually disjoint intervals $I_1=[0,a)$, $I_2=[a,1-a]$, $I_3=(1-a,1]$ for some $a\in(0,1/2)$ such that $\nu$ has nonzero mass on the interval $I_2$. Such an $a$ clearly exists since we have just exlcuded the cases when the measure $\nu$ is supported only on the points $\{0\},\{1\}$. Let $J_i=1$ if $\nu$ is supported on $I_i$ and $J_i=0$ if it is not. By assumption $J_2=1$ always. We have that
\begin{equation*}
f_{i}(X)=\int_{I_i}[(1-s)I+sX^{-1}]^{-1}d\nu(s).
\end{equation*}
Moreover due to the weak-$*$ compactness of the convex cone of probability measures on the compact interval $[0,1]$, the integral in \eqref{contractint} can be approximated by finite convex combinations (this is the Krein-Milman theorem) of the form
\begin{equation*}
\sum_{i}[(1-s_i)I+s_iX^{-1}]^{-1}K_{s_i}
\end{equation*}
where $s_i\in[0,1]$, $K_{s_i}>0$. More precisely there exists a net of finitely supported probability measures $\nu_k$ on $[0,1]$, such that the net
\begin{equation}\label{convexcomb}
\int_{[0,1]}[(1-s)I+sX^{-1}]^{-1}d\nu_k(s)
\end{equation}
converges to $f(X)=M(I,X)$. Now let
\begin{equation*}
f_{i,k}(X)=\int_{I_i}[(1-s)I+sX^{-1}]^{-1}d\nu_k(s)
\end{equation*}
for $i=1,2,3$, i.e. $f(X)=\lim_{k}f_{1,k}(X)+f_{2,k}(X)+f_{3,k}(X)$. The functions $f_{1,k}$ and $f_{3,k}$ are nonexpansive due to property 4 in Lemma~\ref{thompsonproperties} and the preceding remarks, i.e.
\begin{equation*}
d_\infty(f_{i,k}(X),f_{i,k}(Y))\leq \rho_i d_\infty(X,Y)
\end{equation*}
where $\rho_i=1$ for $i=1,3$. Again due to property 4 in Lemma~\ref{thompsonproperties}, the preceding remarks and Lemma~\ref{arithmeticcontraction} we have that
\begin{equation*}
\rho_2=\frac{\log\frac{e^{-r-|\log a-\log (1-a)|}+e^{2r}}{e^{-r-|\log a-\log (1-a)|}+1}}{2r}
\end{equation*}
since it is easy to see that by Lemma~\ref{arithmeticcontraction} the contraction coefficient $\rho$ corresponding to a $h_s(X)$ with $s\in[a,1-a]$ is bounded above by the contraction coefficient $\rho$ corresponding to $h_a(X)$ or equivalently $h_{1-a}(X)$. Moreover it is easy to see that these $\rho_i$ are uniform for all $k$, so taking the limit $k\to\infty$ we get that
\begin{equation*}
d_\infty(f_{i}(X),f_{i}(Y))\leq \rho_i d_\infty(X,Y).
\end{equation*}

Now by Proposition~\ref{weightedthompson} we have
\begin{equation*}
\begin{split}
e^{d_\infty(f(X),f(Y))}&\leq\max\left\{\frac{\sum_{i=1}^{3}e^{d_\infty(f_i(X),f_i(Y))}J_ie^{-d_\infty(f_m(X),f_i(X))}}{\sum_{i=1}^{3}J_ie^{-d_\infty(f_m(X),f_i(X))}},\right.\\
&\left.\frac{\sum_{i=1}^{3}e^{d_\infty(f_i(X),f_i(Y))}J_ie^{-d_\infty(f_m(Y),f_i(Y))}}{\sum_{i=1}^{3}J_ie^{-d_\infty(f_m(Y),f_i(Y))}}\right\}\\
&\leq\max\left\{\frac{\sum_{i=1}^{3}e^{\rho_id_\infty(X,Y)}J_ie^{-d_\infty(f_m(X),f_i(X))}}{\sum_{i=1}^{3}J_ie^{-d_\infty(f_m(X),f_i(X))}},\right.\\
&\left.\frac{\sum_{i=1}^{3}e^{\rho_id_\infty(X,Y)}J_ie^{-d_\infty(f_m(Y),f_i(Y))}}{\sum_{i=1}^{3}J_ie^{-d_\infty(f_m(Y),f_i(Y))}}\right\}
\end{split}
\end{equation*}
where $m$ is such that $d_\infty(f_m(X),f_m(Y))\geq d_\infty(f_i(X),f_i(Y))$, $i=1,2,3$. To obtain the second inequality above we used the monotonicity of the functions $e^x$ and the weighted arithmetic mean $\frac{\sum_{i=1}^3w_ix_i}{\sum_{i=1}^3w_i}$, with weights of the form $w_i=e^{-d_\infty(f_m(Y),f_i(Y))}$. The next step is to see that $d_\infty(f_j(Y),f_i(Y))$ is bounded for $i,j=1,2,3$. Indeed if $f_{i}$ is nonzero then it is a continuous (in fact analytic) function on $\overline{B}_I(r)$ and since $\overline{B}_I(r)$ is bounded, the image of it under $f_{i}$ is also bounded, so 
\begin{equation*}
\max_{i,j=1,2,3}\sup_{X\in\overline{B}_I(r)}d_\infty(f_j(X),f_i(X))
\end{equation*}
is bounded as well. Let this bound be $L$. Then since $\rho_1=\rho_3=1$ we have that
\begin{equation*}
\begin{split}
e^{d_\infty(f(X),f(Y))}&\leq\max\left\{\frac{\sum_{i=1}^{3}e^{\rho_id_\infty(X,Y)}J_ie^{-d_\infty(f_m(X),f_i(X))}}{\sum_{i=1}^{3}J_ie^{-d_\infty(f_m(X),f_i(X))}},\right.\\
&\left.\frac{\sum_{i=1}^{3}e^{\rho_id_\infty(X,Y)}J_ie^{-d_\infty(f_m(Y),f_i(Y))}}{\sum_{i=1}^{3}J_ie^{-d_\infty(f_m(Y),f_i(Y))}}\right\}\\
&\leq\frac{e^{d_\infty(X,Y)}(J_1+J_3)+e^{\rho_2d_\infty(X,Y)}e^{-L}}{e^{-L}+J_1+J_3}.
\end{split}
\end{equation*}
Similarly as in the end of the proof of Lemma~\ref{arithmeticcontraction} we seek some $0<\rho<1$ such that
\begin{equation*}
\log\left(\frac{e^{d_\infty(X,Y)}(J_1+J_3)+e^{\rho_2d_\infty(X,Y)}e^{-L}}{e^{-L}+J_1+J_3}\right)\leq \rho d_\infty(X,Y)
\end{equation*}
for all $X,Y\in\overline{B}_I(r)$, i.e. $d_\infty(X,Y)\leq 2r$. By the same argument as in the end of the proof of Lemma~\ref{arithmeticcontraction} we see that
\begin{equation*}
\rho=\frac{\log\left(\frac{e^{2r}(J_1+J_3)+e^{\rho_22r}e^{-L}}{e^{-L}+J_1+J_3}\right)}{2r}
\end{equation*}
suffices and clearly $\rho<1$.

\end{proof}

\begin{remark}
In \cite{lawsonlim0} Lawson and Lim provided an extension of the geometric, logarithmic and some other iterated means to several variables over $\mathbb{P}$ relying on the Ando-Li-Mathias construction provided in \cite{ando}. They established the above contractive property for these means. Our Theorem~\ref{contract} shows that in fact the construction is applicable to all matrix means, hence providing multivariable extensions which work in the possibly infinite dimensional setting of $\mathbb{P}$. This were only known in the finite dimensional setting so far which case was proved in \cite{palfia3}.
\end{remark}

The further importance of Theorem~\ref{contract} will be apparent in the following sections, when we consider matrix (in fact operator) equations similarly to the case of the matrix power means. We close the section with a general nonexpansive property.
\begin{proposition}\label{nonexpansivemean}
Let $M:\mathbb{P}^k\to \mathbb{P}$ be such that
\begin{enumerate}
	\item if $A_i\leq B_i$ for all $1\leq i\leq k$, then $M(A_1,\ldots,A_k)\leq M(B_1,\ldots,B_k)$,
	\item if $t>0$, then $M(tA_1,\ldots,tA_k)=tM(A_1,\ldots,A_k)$,
\end{enumerate}
then
\begin{equation*}
d_\infty(M(A_1,\ldots,A_k),M(B_1,\ldots,B_k))\leq \max_{1\leq i\leq k}d_\infty(A_i,B_i)
\end{equation*}
for all $A_i,B_i\in\mathbb{P}$.
\end{proposition}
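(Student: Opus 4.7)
The plan is to reduce everything to property (5) of Lemma~\ref{thompsonproperties}, which translates the Thompson metric into two-sided multiplicative estimates, and then exploit monotonicity (1) and positive homogeneity (2) of $M$ to transport these estimates through $M$.

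First I would set $r=\max_{1\leq i\leq k}d_\infty(A_i,B_i)$ and, using property (5) of Lemma~\ref{thompsonproperties}, observe that for each $i$
\begin{equation*}
e^{-r}B_i\;\leq\;e^{-d_\infty(A_i,B_i)}B_i\;\leq\;A_i\;\leq\;e^{d_\infty(A_i,B_i)}B_i\;\leq\;e^{r}B_i.
\end{equation*}
Applying the monotonicity hypothesis (1) coordinatewise, these inequalities pass through $M$, giving
\begin{equation*}
M(e^{-r}B_1,\ldots,e^{-r}B_k)\;\leq\;M(A_1,\ldots,A_k)\;\leq\;M(e^{r}B_1,\ldots,e^{r}B_k).
\end{equation*}
By the positive homogeneity hypothesis (2), the outer expressions equal $e^{-r}M(B_1,\ldots,B_k)$ and $e^{r}M(B_1,\ldots,B_k)$ respectively.

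Writing $A=M(A_1,\ldots,A_k)$ and $B=M(B_1,\ldots,B_k)$, we therefore obtain $e^{-r}B\leq A\leq e^{r}B$, and by the symmetric argument (interchanging the roles of $A_i$ and $B_i$) also $e^{-r}A\leq B\leq e^{r}A$. From the definition \eqref{thompsonmetric} of $d_\infty$ this yields $M(A/B)\leq e^{r}$ and $M(B/A)\leq e^{r}$, hence $d_\infty(A,B)\leq r=\max_{1\leq i\leq k}d_\infty(A_i,B_i)$, which is the claimed inequality.

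There is no real obstacle here; the only mild point of care is the use of operator monotonicity for scalar multiples (which is immediate since $e^{-r}B_i\leq A_i\leq e^r B_i$ is a genuine inequality of self-adjoint operators) and the observation that scalar homogeneity is enough (no joint homogeneity in the several arguments is needed because we scale all entries by the same factor $e^{\pm r}$). The argument does not use continuity of $M$ nor any integral representation, only the two listed properties; in particular it applies well beyond the class $\mathfrak{M}$ of Kubo--Ando means.
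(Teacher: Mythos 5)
Your proof is correct and follows essentially the same route as the paper: take $r=\max_i d_\infty(A_i,B_i)$, use the two-sided bounds $e^{-r}B_i\leq A_i\leq e^{r}B_i$ from property (5) of Lemma~\ref{thompsonproperties}, and push them through $M$ via monotonicity and homogeneity. If anything, your writeup is slightly more careful than the paper's, which writes the scaling factor as $t=\max_i d_\infty(A_i,B_i)$ where it should be $e^{t}$.
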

\begin{proof}
Let $t=\max_{1\leq i\leq k}d_\infty(A_i,B_i)$. Then $A_i\leq tB_i$ and $B_i\leq tA_i$ for all $1\leq i\leq k$, so by property 1 and 2
\begin{equation*}
\begin{split}
M(A_1,\ldots,A_k)\leq M(tB_1,\ldots,tB_k)=tM(B_1,\ldots,B_k)\\
M(B_1,\ldots,B_k)\leq M(tA_1,\ldots,tA_k)=tM(A_1,\ldots,A_k),
\end{split}
\end{equation*}
i.e. 
\begin{equation*}
\begin{split}
M(A_1,\ldots,A_k)\leq \max_{1\leq i\leq k}d_\infty(A_i,B_i)M(B_1,\ldots,B_k)\\
M(B_1,\ldots,B_k)\leq \max_{1\leq i\leq k}d_\infty(A_i,B_i)M(A_1,\ldots,A_k).
\end{split}
\end{equation*}

\end{proof}

\section{Extension of operator means via contraction principle}
Let $\Delta_{n}$ denote the convex set of positive probability vectors, i.e. if $\omega=(w_{1},\dots,w_{n})\in\Delta_{n}$, then $w_i>0$ and $\sum_{i=1}^nw_i=1$. We will use the following notations:

For ${\Bbb A}=(A_{1},\dots,A_{k})\in {\Bbb P}^{k}$, $M\in
\mathrm{GL}(E),{\bf a}=(a_{1},\dots,a_{k})\in (0,\infty)^{k},
\omega=(w_{1},\dots,w_{k})\in \Delta_{k}$, $f:(0,\infty)\to (0,\infty)$, and for a
permutation $\sigma$ on $k$-letters let
\begin{eqnarray*}
M{\Bbb A}M^{*}&=&(MA_{1}M^{*},\dots,MA_{k}M^{*}),\ \
{\Bbb A}_{\sigma}=(A_{\sigma(1)},\dots,A_{\sigma(k)}),\\
{\Bbb A}^{(n)}&=&(\underbrace{{\Bbb A},\dots,{\Bbb A}}_{n})\in {\Bbb
P}^{nk},\ \
{\omega}^{(n)}=\frac{1}{n}(\underbrace{\omega,\dots,\omega}_{n})\in
\Delta_{nk},\\
f({\bf a})&=&(f(a_{1}),\dots,f(a_{k})),\ \ {\omega}\odot {\bf
a}=\frac{1}{\sum_{i=1}^{k}w_{i}a_{i}}(w_{1}a_{1},\dots,w_{k}a_{k})\in
\Delta_{k},\\
 {\hat\omega}&=&\frac{1}{1-w_{k}}(w_{1},\dots,w_{k-1})\in
\Delta_{k-1},\ \ \ \ {\bf a}\cdot {\Bbb A}=(a_{1} A_{1},\dots,a_{k}
A_{k}) .
\end{eqnarray*}
In \cite{limpalfia} Lim and P\'alfia defined the one parameter family of matrix power means $P_s(\omega;{\Bbb A})$ as the unique positive definite solution of the equations
\begin{equation}\label{powermeanequ2}
X=\sum_{i=1}^{k}w_{i}G_s(X,A_i)
\end{equation}
where $s\in[-1,1], w_i>0, \sum_{i=1}^kw_i=1$ and $A_i\in\mathbb{P}$ and
\begin{equation*}
G_s(A,B)=A^{1/2}\left(A^{-1/2}BA^{-1/2}\right)^{s}A^{1/2}
\end{equation*}
is again the weighted geometric mean. Existence and uniqueness of the solution of \eqref{powermeanequ2} follow from the fact that the function
\begin{equation*}
f(X)=\sum_{i=1}^{k}w_{i}G_s(X,A_i)
\end{equation*}
is a strict contraction for $s\in[-1,1],s\neq 0$ with respect to Thompson's part metric \cite{limpalfia}. In the case $k=2$ we get back the affine matrix means \eqref{means} with $s=\kappa-1$ and $t=w_2$ as we have seen earlier.

Now we will study the generalized form of \eqref{powermeanequ2}.
\begin{lemma}\label{powercontract}
Let $\omega\in\Delta_k$ and $A_i\in\mathbb{P}$, $1\leq i\leq k$ and $M\in\mathfrak{M}$ which is not the right trivial mean. Then the function
\begin{equation}\label{meanequf}
f_M(X)=\sum_{i=1}^{k}w_{i}M(X,A_i)
\end{equation}
is a strict contraction with respect to the Thompson metric $d_\infty(\cdot,\cdot)$ on every bounded $S\subseteq\mathbb{P}$ such that $A_i\in S$ for all $1\leq i\leq k$.
\end{lemma}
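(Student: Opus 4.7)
The plan is to view $f_M$ as a convex combination of strict contractions and invoke Theorem~\ref{contract} together with property (4) of Lemma~\ref{thompsonproperties}. The only twist is that Theorem~\ref{contract} is stated for the map $X \mapsto M(A,X)$ (strict contraction in the \emph{second} slot), whereas in $f_M(X) = \sum_i w_i M(X, A_i)$ the variable $X$ sits in the \emph{first} slot. This is resolved by passing to the transpose mean $M'(A,B) := M(B,A)$, which is again a Kubo--Ando mean in $\mathfrak{M}$ (with representing function $f'(x)=xf(1/x)$), so that $M(X, A_i) = M'(A_i, X)$ is exactly of the form to which Theorem~\ref{contract} applies.

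First I would fix $r := \sup_{X,Y \in S} d_\infty(X,Y) < \infty$, which is finite because $S$ is bounded; since each $A_i \in S$, this gives $S \subseteq \overline{B}_{A_i}(r)$ for every $i$. Next, for each $i$ I apply Theorem~\ref{contract} to $M'$: under the hypothesis that $M$ is not a trivial mean making $g_i(X) := M(X,A_i) = M'(A_i, X)$ reduce to the identity, $M'$ is not the right trivial mean in the sense of Theorem~\ref{contract}, and therefore each $g_i$ is a strict contraction on $\overline{B}_{A_i}(r)$ with some constant $\rho_i \in (0,1)$ depending only on $r$ and $A_i$. Set $\rho := \max_{1 \le i \le k} \rho_i$, which is strictly less than $1$ because the maximum is taken over finitely many indices.

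Finally I assemble the convex combination via property (4) of Lemma~\ref{thompsonproperties}:
$$
d_\infty\bigl(f_M(X), f_M(Y)\bigr) = d_\infty\Big(\sum_{i=1}^k w_i g_i(X), \sum_{i=1}^k w_i g_i(Y)\Big) \le \max_{1 \le i \le k} d_\infty\bigl(g_i(X), g_i(Y)\bigr) \le \rho\, d_\infty(X,Y),
$$
valid for all $X, Y \in S$. This gives strict contractivity of $f_M$ on $S$ with ratio $\rho < 1$, which is the claim.

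The substantive work is already done in Theorem~\ref{contract}; the remaining argument is a short assembly. The only real bookkeeping obstacle is keeping straight which of the two trivial means must be excluded so that each $g_i$ is genuinely contractive and not merely the identity $X \mapsto X$ — via the transpose this corresponds to $M(X, A_i) \not\equiv X$, which one needs to match against the stated hypothesis. A second minor point is that the $\rho_i$ produced by Theorem~\ref{contract} a priori depend on $A_i$, but once the uniform ball of radius $r$ is fixed the maximum over finitely many $i$ is still $< 1$, so no further uniformity argument is needed.
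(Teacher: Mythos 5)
Your proof is correct and follows essentially the same route as the paper's: fix a Thompson ball containing $S$, obtain a strict contraction coefficient $\rho_i<1$ for each $g_i(X)=M(X,A_i)$ from Theorem~\ref{contract}, take the maximum over the finitely many indices, and combine the summands with property (4) of Lemma~\ref{thompsonproperties}. The paper's own proof does exactly this, except that it applies Theorem~\ref{contract} to $X\mapsto M(X,A_i)$ without comment on the slot mismatch, whereas you correctly route the application through the transpose mean $M'(A,B)=M(B,A)$.

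The bookkeeping point you leave dangling at the end is worth resolving, and it is a defect of the statement rather than of your argument. The trivial mean that makes $g_i(X)=M(X,A_i)$ reduce to the identity is the \emph{left} trivial mean $M(A,B)=A$ (equivalently, $M'$ is the right trivial mean), and for that choice $f_M(X)=\sum_i w_i X=X$ is plainly not a strict contraction on any $S$ with more than one point --- so the hypothesis ``not the right trivial mean,'' read literally, cannot be the one that makes the lemma true. By contrast the right trivial mean causes no trouble here: it gives the constant map $X\mapsto\sum_i w_iA_i$, which is contractive with coefficient $0$ (and is exactly the case the subsequent proposition treats separately). What your transposition argument actually establishes, and what is needed downstream, is the lemma with ``left'' in place of ``right''; you should simply record that the excluded case is $M(A,B)=A$, i.e.\ $M(X,A_i)\equiv X$, and then your proof is complete as written.
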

\begin{proof}
Let $A\in\mathbb{P}$ and $r<\infty$ given such that $S\subseteq\overline{B}_A(r)$ and for all $X,Y\in S$ the functions $g_i(X)=M(X,A_i)$ are strict contractions for all $1\leq i\leq k$. Clearly by the boundedness of $S$ and the set $\{A_1,\ldots,A_k\}$ and Theorem~\ref{contract} such $A\in\mathbb{P}$ and $r<\infty$ exists. Suppose the largest contraction coefficient for the functions $g_i(X)=M(X,A_i)$ for $1\leq i\leq k$ is $\rho_m$ on $\overline{B}_A(r)$. Then by property 4 in Lemma~\ref{thompsonproperties} we have that
\begin{equation*}
d_\infty(f_M(X),f_M(Y))\leq \rho_m d_\infty(X,Y).
\end{equation*}

\end{proof}

\begin{proposition}
Let $\omega\in\Delta_k$ and $A_i\in\mathbb{P}$, $1\leq i\leq k$ and $M\in\mathfrak{M}$. Then the equation
\begin{equation}\label{meanequ2}
X=\sum_{i=1}^{k}w_{i}M(X,A_i)
\end{equation}
has a unique positive definite solution in $\mathbb{P}$.
\end{proposition}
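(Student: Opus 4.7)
The plan is to obtain the solution as the unique fixed point of the self-map $f_M(X)=\sum_{i=1}^{k}w_{i}M(X,A_{i})$, using Banach's contraction principle together with the strict contraction property already proved in Lemma~\ref{powercontract}. First I would dispose of the two degenerate choices of $M$: if $M(A,B)=B$ (right trivial) then $f_M(X)=\sum_i w_i A_i$ is constant and the unique solution is simply $\sum_i w_i A_i$; if $M(A,B)=A$ (left trivial) then $f_M$ is the identity on $\mathbb{P}$, and this case is tacitly excluded (it is precisely the case in the \emph{first} slot that is \emph{not} covered by the contraction statement underlying Lemma~\ref{powercontract}), since otherwise every $X\in\mathbb{P}$ would trivially be a ``solution''.

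In the remaining case, I would build a closed, bounded, $f_M$-invariant order interval. Choose scalars $m,M'>0$ such that $mI\leq A_i\leq M'I$ for every $i$, and set
\begin{equation*}
S:=\{X\in\mathbb{P}:mI\leq X\leq M'I\}.
\end{equation*}
The joint monotonicity of the operator mean (Definition~\ref{symmean}(ii)) then gives, for any $X\in S$ and each $i$, the sandwich
\begin{equation*}
mI=M(mI,mI)\leq M(X,A_{i})\leq M(M'I,M'I)=M'I,
\end{equation*}
and averaging with the weights $w_i$ yields $f_M(S)\subseteq S$. Because $(\mathbb{P},d_\infty)$ is complete and $S$ is $d_\infty$-closed and bounded, $(S,d_\infty)$ is itself a complete metric space; Lemma~\ref{powercontract} (applied with this particular bounded set containing all the $A_i$) tells me that $f_M\big|_S$ is a strict contraction, so Banach's fixed point theorem delivers a unique $X_\star\in S$ with $f_M(X_\star)=X_\star$.

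To upgrade uniqueness from $S$ to all of $\mathbb{P}$, suppose $X_1,X_2\in\mathbb{P}$ are both solutions. I would enlarge the constants to $m',M''>0$ so that the order interval $S':=[m'I,M''I]$ additionally contains $X_1,X_2$, and apply Lemma~\ref{powercontract} on this new bounded set: the map $f_M\big|_{S'}$ is a strict contraction with some ratio $\rho<1$, so
\begin{equation*}
d_\infty(X_1,X_2)=d_\infty(f_M(X_1),f_M(X_2))\leq \rho\, d_\infty(X_1,X_2)
\end{equation*}
forces $d_\infty(X_1,X_2)=0$, i.e.\ $X_1=X_2$.

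I do not anticipate a genuine obstacle here, since the analytic work has been absorbed into Lemma~\ref{powercontract} (which in turn rests on Theorem~\ref{contract}). The only two points requiring a little care are verifying the invariance $f_M(S)\subseteq S$, which is immediate from joint operator monotonicity once one notices $M(\alpha I,\alpha I)=\alpha I$, and remembering that the contraction constant from Lemma~\ref{powercontract} depends on the chosen bounded set, so global uniqueness must be argued by inflating the bounded set to contain any two hypothetical solutions rather than by contracting on all of $\mathbb{P}$ at once.
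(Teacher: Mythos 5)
Your proposal follows the same route as the paper's own proof: both reduce the statement to Banach's fixed point theorem via the strict contraction property of $f_M(X)=\sum_{i=1}^k w_iM(X,A_i)$ supplied by Lemma~\ref{powercontract}, and both dispose of the right trivial mean by observing that $f_M$ is then the constant map with value $\sum_i w_iA_i$. You are, however, more careful than the paper on two points. First, you exhibit an explicit $f_M$-invariant complete order interval $[mI,M'I]$ before invoking Banach; the paper only asserts that $f_M$ is a strict contraction on every bounded $S$ and immediately concludes existence of a fixed point on $S$, skipping the self-map requirement (the uniqueness half of the "arbitrary bounded $S$" argument is fine and is exactly your inflation argument, but existence needs the invariant set you construct). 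Second, and more substantively, you correctly isolate the left trivial mean $M(A,B)=A$: there $M(X,A_i)=X$, so $f_M$ is the identity and every $X\in\mathbb{P}$ solves the equation, meaning the proposition as literally stated fails in that case and it must be excluded. The paper excludes only the right trivial mean; this is a slip inherited from Theorem~\ref{contract}, where the contracted variable sits in the \emph{second} slot of $M$, whereas in $f_M$ it sits in the \emph{first} slot, so the degenerate case is the transposed one. Your remark that the left trivial case is "tacitly excluded" identifies a genuine (if minor) gap in the paper rather than in your own argument.
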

\begin{proof}
Suppose that $M$ is not the right trivial mean. Then by Lemma~\ref{powercontract} for every bounded subset $S\subseteq\mathbb{P}$ the function $f_M(X)$ given in \eqref{meanequf} is a strict contraction on $S$, so by Banach's fixed point theorem $f_M(X)$ has a unique fixed point on $S$, so the equation \eqref{meanequ2} has a unique positive definite solution on $S$. Since $S$ was arbitrary bounded subset of $\mathbb{P}$, it follows that the same holds on all of $\mathbb{P}$.

If $M$ is the right trivial mean, then \eqref{meanequ2} is equivalent to
\begin{equation*}
X=\sum_{i=1}^{k}w_{i}A_i,
\end{equation*}
i.e. the unique solution is the weighted arithmetic mean.

\end{proof}

\begin{definition}[Induced Operator Mean]
Let $M(\cdot,\cdot)\in\mathfrak{M}$, ${\Bbb A}=(A_{1},\dots,A_{k})\in {\Bbb P}^{k}$ and $\omega\in \Delta_{k}$.
We denote by $M(\omega;{\Bbb A})$ the unique solution of the equation
\begin{equation}\label{meanequ3}
X=\sum_{i=1}^{k}w_{i}M(X,A_i).
\end{equation}
We call $M(\omega;{\Bbb A})$ the $\omega$-weighted induced operator mean of $M\in\mathfrak{M}$ of $A_{1},\dots,A_{n}$.
\end{definition}

\begin{remark}\label{monotone}
Let $f_M(X)$ be defined by \eqref{meanequf}. Then by the monotonicity of $M$, $f_M$ is monotone: $X\leq Y$ implies that $f_M(X)\leq f_M(Y)$.
\end{remark}

\begin{remark}
The one parameter family of matrix power means $P_s(\omega;{\Bbb A})$ is the unique positive definite solution of the equations
\begin{equation*}
X=\sum_{i=1}^{k}w_{i}G_s(X,A_i)
\end{equation*}
where $s\in[-1,1]$. These means are induced means for $s\in(0,1]$ and the inducing mean is the weighted geometric mean $G_s(A,B)$.
\end{remark}

\begin{proposition}\label{MPro}
Let ${\Bbb A}=(A_{1},\dots,A_{k}), {\Bbb
B}=(B_{1},\dots,B_{k})\in {\Bbb P}^{k},\omega\in \Delta_{k}$ and $M,N\in\mathfrak{M}$ and $M(\omega;{\Bbb A})$, $N(\omega;{\Bbb A})$ the corresponding induced operator means. Then
\begin{itemize}
 \item[(1)] $M(\omega;{\Bbb A})=A$ if $A_i=A$ for all $1\leq i\leq k$;
 \item[(2)] $M(\omega_{\sigma};{\Bbb A}_{\sigma})=M(\omega;{\Bbb A})$ for any
 permutation $\sigma;$
 \item[(3)] $M(\omega;{\Bbb A})\leq M(\omega;{\Bbb B})$ if $A_{i}\leq
 B_{i}$ for all $i=1,2,\dots,k;$
 \item[(4)] if $M(A,B)\leq N(A,B)$ for all $A,B\in{\Bbb P}$ then $M(\omega;{\Bbb A})\leq N(\omega;{\Bbb A})$;
 \item[(5)] $M(\omega;X{\Bbb A}X^{*})=XM(\omega;{\Bbb
 A})X^{*}$ for any $X\in \mathrm{GL}(E);$
 \item[(6)] $(1-u)M(\omega;{\Bbb A})+uM(\omega;{\Bbb B})\leq
 M(\omega;(1-u){\Bbb A}+u{\Bbb B})$ for any $u\in [0,1];$
 \item[(7)]  $d_{\infty}(M(\omega;{\Bbb A}), M(\omega;{\Bbb
 B}))\leq \max_{1\leq i\leq k}\{d_{\infty}(A_{i},B_{i})\};$
 \item[(8)] $M(\omega^{(n)};{\Bbb A}^{(n)})=M(\omega;{\Bbb A})$
 for any $n\in {\Bbb N};$
 \item[(9)]
$M(\omega;A_{1},\dots,A_{k-1},X)=X$ if and only if
$X=M({\hat \omega};A_{1},\dots,A_{k-1}).$ In particular,
$M(A_{1},\dots,A_{k},X)=X$ if and only if
$X=M(A_{1},\dots,A_{k});$
\item[(10)]$\Phi(M(\omega;{\Bbb A}))\leq
M(\omega;\Phi({\Bbb A}))$ for any  positive unital linear map
$\Phi,$ where $\Phi({\Bbb A})=(\Phi(A_{1}),\dots,\Phi(A_{k})).$
\end{itemize}
\end{proposition}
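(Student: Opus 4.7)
The strategy throughout is to exploit the uniqueness of the positive definite fixed point of the iteration map $g(X)=\sum_{i=1}^{k} w_i M(X,A_i)$ guaranteed by Lemma~\ref{powercontract}, together with the monotonicity of $g$ noted in Remark~\ref{monotone}. For the identity-type assertions (1), (2), (8), (9) I would exhibit a candidate fixed point and invoke uniqueness. Property (1) is immediate from $\sum_i w_i M(A,A)=A$ since $M(A,A)=A^{1/2}f(I)A^{1/2}=A$; (2) follows from $\sum_i w_{\sigma(i)}M(X,A_{\sigma(i)})=\sum_i w_i M(X,A_i)$; (8) reduces to the same equation after collecting repeated terms, since $\sum_{j=1}^{n} \sum_{i=1}^{k} \frac{w_i}{n}M(X,A_i)=\sum_{i=1}^{k} w_i M(X,A_i)$; and for (9), the identity $M(X,X)=X$ rewrites $M(\omega;A_1,\dots,A_{k-1},X)=X$ as $(1-w_k)X=\sum_{i=1}^{k-1}w_i M(X,A_i)$, which is exactly the defining equation of $M(\hat\omega;A_1,\dots,A_{k-1})$.

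For the congruence invariance (5) I would apply the transformer equality $M(XAX^*,XBX^*)=XM(A,B)X^*$ valid for invertible $X$ (which follows from \eqref{mean} after writing $(XAX^*)^{-1/2}XA^{1/2}$ as a unitary and using the unitary equivariance of the functional calculus) to both sides of the fixed point equation, producing
\begin{equation*}
XM(\omega;{\Bbb A})X^* = \sum_{i=1}^{k} w_i M(XM(\omega;{\Bbb A})X^*,XA_iX^*);
\end{equation*}
uniqueness of the fixed point for $X{\Bbb A}X^*$ then forces $XM(\omega;{\Bbb A})X^*=M(\omega;X{\Bbb A}X^*)$. For the monotonicity in the variables (3) and the dominance (4), I would run the Picard iteration $X_{n+1}=g(X_n)$ from a common seed and use induction, exploiting monotonicity of $M$ in both arguments for (3) and the combination $M(X,A_i)\leq N(X,A_i)$ with the monotonicity of $N(\cdot,A_i)$ for (4); the inequality $X_n\leq Y_n$ then passes to the Thompson-metric limit, since Thompson convergence implies norm convergence and norm limits preserve the positive semidefinite order. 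For (7) I would verify that $M(\omega;\cdot)$ is monotone via (3) and positively homogeneous via (5) applied with $X=\sqrt{t}I$, and then directly invoke Proposition~\ref{nonexpansivemean}.

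The main technical obstacle is the joint concavity (6) and the positive-map inequality (10), both of which follow the same template: produce a candidate that lies below the desired fixed point after \emph{one} application of the iteration map, then iterate. For (6), set $Z=(1-u)M(\omega;{\Bbb A})+uM(\omega;{\Bbb B})$. Substituting the respective fixed point equations and invoking the joint concavity of the Kubo-Ando mean $M$ gives
\begin{equation*}
Z\leq \sum_{i=1}^{k} w_i M\bigl(Z,(1-u)A_i+uB_i\bigr)=g_u(Z),
\end{equation*}
where $g_u$ is the iteration map whose unique fixed point is $M(\omega;(1-u){\Bbb A}+u{\Bbb B})$. By monotonicity of $g_u$ the sequence $g_u^{\circ n}(Z)$ is nondecreasing; since $g_u$ is a strict contraction on any bounded order region of $\mathbb{P}$ by Lemma~\ref{powercontract}, this sequence converges in Thompson metric to its unique fixed point, yielding $Z\leq M(\omega;(1-u){\Bbb A}+u{\Bbb B})$. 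For (10) the same template applies: apply $\Phi$ to the fixed point equation for ${\Bbb A}$ and invoke Proposition~\ref{positivemapthm} termwise to obtain $\Phi(M(\omega;{\Bbb A}))\leq g(\Phi(M(\omega;{\Bbb A})))$, where $g$ is now the iteration map associated with $\Phi({\Bbb A})$; monotone iteration then pushes $\Phi(M(\omega;{\Bbb A}))$ below the unique fixed point $M(\omega;\Phi({\Bbb A}))$. The subtle point in both cases is ensuring that the monotone orbits actually live in a common bounded region so that the strict contraction drives convergence, which is handled by sandwiching the orbit between the starting candidate and the target fixed point using the monotonicity of $g$ and the order-preserving nature of the Thompson limit.
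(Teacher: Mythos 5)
Your proposal is correct and follows essentially the same route as the paper: identity-plus-uniqueness for (1), (2), (5), (8), (9); Picard iteration from a common seed with monotonicity for (3) and (4); the joint concavity of Kubo--Ando means, respectively Proposition~\ref{positivemapthm}, followed by monotone iteration toward the unique fixed point for (6) and (10); and Proposition~\ref{nonexpansivemean} combined with monotonicity and congruence invariance for (7). Your explicit remark about keeping the monotone orbits in a bounded region is a welcome clarification of a point the paper leaves implicit in its appeal to the Banach fixed point theorem.
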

\begin{proof}
(1) By \eqref{meanequ3} we have $X=\sum_{i=1}^kw_iM(X,A)$ and using that $M(A,A)=A$ we see that $X=A$ is a, and by uniqueness, the solution to \eqref{meanequ3}.

\vspace{2mm}

(2) Follows from the defining equation \eqref{meanequ3}.

\vspace{2mm}

(3) Suppose that $A_{i}\leq B_{i}$ for all $i=1,2,\dots,k$. Define $ f_M(X)=\sum_{i=1}^{k}w_{i}M(X,A_{i})$ and $
g_M(X)=\sum_{i=1}^{k}w_{i}M(X,B_{i}). $ Then $M(\omega;{\Bbb
A})=\lim_{l\to \infty}f_M^{l}(X)$ and $M(\omega;{\Bbb
B})=\lim_{l\to \infty}g_M^{l}(X)$ for any $X\in {\Bbb P}$, by the
Banach fixed point theorem. By the monotonicity of $M\in\mathfrak{M}$,
$f_M(X)\leq g_M(X)$ for all $X\in {\Bbb P},$ and $f_M(X)\leq f_M(Y),
g_M(X)\leq g_M(Y)$ whenever $X\leq Y.$ Let $X_{0}>0.$ Then $f_M(X_{0})\leq
g_M(X_{0})$ and $f_M^{2}(X_{0})=f_M(f_M(X_{0}))\leq g_M(f_M(X_{0}))\leq
g_M^{2}(X_{0}).$ Inductively, we have $f_M^{l}(X_{0})\leq g_M^{l}(X_{0})$
for all $l\in {\Bbb N}.$ Therefore, $M(\omega;{\Bbb
A})=\lim_{l\to\infty}f_M^{l}(X_{0})\leq \lim_{l\to\infty}
g_M^{l}(X_{0})=M(\omega;{\Bbb B}).$

\vspace{2mm}

(4) Define $ f(X)=\sum_{i=1}^{k}w_{i}M(X,A_{i})$ and $g(X)=\sum_{i=1}^{k}w_{i}N(X,A_{i}). $ Then $M(\omega;{\Bbb
A})=\lim_{l\to \infty}f^{l}(X)$ and $N(\omega;{\Bbb A})=\lim_{l\to \infty}g^{l}(X)$ for any $X\in {\Bbb P}$, by Banach's fixed point theorem. Since $M\leq N$, $f(X)\leq g(X)$ for all $X\in {\Bbb P},$ and $f(X)\leq f(Y), g(X)\leq g(Y)$ whenever $X\leq Y.$ Let $X_{0}>0.$ Then $f(X_{0})\leq
g(X_{0})$ and $f^{\circ 2}(X_{0})=f(f(X_{0}))\leq g(f(X_{0}))\leq
g^{\circ 2}(X_{0}).$ Inductively, we have $f^{l}(X_{0})\leq g^{l}(X_{0})$
for all $l\in {\Bbb N}.$ Therefore, $M(\omega;{\Bbb
A})=\lim_{l\to\infty}f^{l}(X_{0})\leq \lim_{l\to\infty}
g^{l}(X_{0})=N(\omega;{\Bbb A}).$

 \vspace{2mm}

 (5) It follows from the defining equation of $M(\omega;{\Bbb
A})$ and the uniqueness of the positive definite solution.


\vspace{2mm}

 (6) Let $X=M(\omega;{\Bbb A})$ and $Y=M(\omega;{\Bbb B}).$ For $u\in [0,1],$ we set $Z_{u}=(1-u)X+uY.$ Let $f_M(Z)=\sum_{i=1}^{n}w_{i}M(Z,((1-u)A_{i}+uB_{i})).$ Then by the joint concavity of two-variable operator means (Theorem 3.5 \cite{kubo})
\begin{eqnarray*}
Z_{u}&=&(1-u)X+uY=\sum_{i=1}^{n}w_{i}[(1-u)M(X,A_{i})+uM(Y,B_{i})]\\
&\leq&\sum_{i=1}^{n}w_{i}M(((1-u)X+uY),((1-u)A_{i}+uB_{i}))=f(Z_{u}).
\end{eqnarray*}
Inductively, $Z_{u}\leq f_M^{l}(Z_{u})$ for all $l\in {\Bbb N}.$
Therefore, $ (1-u)M(\omega;{\Bbb A})+uM(\omega;{\Bbb
B})=Z_{u}\leq M(\omega; (1-u){\Bbb A}+u{\Bbb B}).$

\vspace{2mm}

 (7) Follows from Proposition~\ref{nonexpansivemean} using property (4) and (5).

\vspace{2mm}

 (8) Let $X=M(\omega; {\Bbb
 A}).$ Then
 $$X=\sum_{i=1}^{k}w_{i}M(X,A_{i})=\frac{1}{n}(\underbrace{\sum_{i=1}^{k}w_{i}M(X,A_{i})+\cdots+\sum_{i=1}^{k}w_{i}M(X,A_{i})}_{k})$$
 and therefore $X=M(\omega^{(n)};{\Bbb A}^{(n)}).$

 \vspace{2mm}

 (9) We have $ M(\omega;A_{1},\dots,A_{k-1},X)=X$
if and  only if $X=\sum_{i=1}^{k-1}w_{i}M(X,A_{i})+w_{k}X$ if
and only if $X=\frac{1}{1-w_{k}}\sum_{i=1}^{k-1}w_{i}M(X,A_{i})$
if and only if $X=M({\hat \omega};A_{1},\dots,A_{k-1}).$

\vspace{2mm}

 (10) Note that $\Phi(M(A,B))\leq
M(\Phi(A),\Phi(B))$ for any $A,B>0$ by Proposition~\ref{positivemapthm}. Then
\begin{eqnarray}\label{E:L1}
\Phi(M(\omega;{\Bbb A}))=\sum_{i=1}^{k}w_{i}\Phi(M(M(\omega;{\Bbb A}),A_{i}))\leq
\sum_{i=1}^{k}w_{i}M(\Phi(M(\omega;{\Bbb A})),\Phi(A_{i})).
\end{eqnarray}
Define $f_M(X)=\sum_{i=1}^{k}w_{i}M(X,\Phi(A_{i})).$ Then
$\lim_{l\to\infty}f_M^{l}(X)=M(\omega;\Phi({\Bbb A}))$ for any
$X>0.$ By \eqref{E:L1}, $f_M(\Phi(M(\omega;{\Bbb A})))\geq \Phi(M(\omega;{\Bbb A})).$ Since $f$
is monotonic, $f_M^{l}(\Phi(X_{t}))\geq \Phi(M(\omega;{\Bbb A}))$ for all $l\in
{\Bbb N}.$ Thus
\begin{eqnarray*}
M(\omega;\Phi({\Bbb A}))=\lim_{l\to\infty}f_M^{l}(\Phi(M(\omega;{\Bbb A})))\geq
\Phi(M(\omega;{\Bbb A}))=\Phi(M(\omega;{\Bbb A})).
\end{eqnarray*}

\end{proof}

\begin{corollary}\label{cor:opmean}
If $k=2$, $M(w_1,w_2;A,B)\in\mathfrak{M}$ is an operator mean (induced by another mean $M(A,B)\in\mathfrak{M}$).
\end{corollary}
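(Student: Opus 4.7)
The strategy is to verify the four defining axioms (i)--(iv) of Definition~\ref{symmean} for the two-variable map $N(A,B) := M(w_1,w_2;A,B)$ (with fixed weights), using Proposition~\ref{MPro} specialized to $k=2$; most of the work has already been done there.

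Axiom (i), $N(I,I)=I$, is Proposition~\ref{MPro}(1) with $A_1=A_2=I$. Axiom (ii), joint monotonicity, is Proposition~\ref{MPro}(3). For axiom (iii), the transformer inequality $CN(A,B)C\leq N(CAC,CBC)$ for Hermitian $C$: Proposition~\ref{MPro}(5) gives equality when $C$ is invertible. For non-invertible Hermitian $C$ I approximate $C$ by $C_n:=C+\epsilon_n I$ with $\epsilon_n\downarrow 0$ chosen so that $-\epsilon_n\notin\sigma(C)$ (possible since $\sigma(C)$ is compact in $\mathbb{R}$); then each $C_n$ is invertible and Hermitian, and (5) yields $C_n N(A,B)C_n = N(C_n A C_n, C_n B C_n)$. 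For any fixed $\delta>0$ and all sufficiently large $n$, norm estimates give $C_n A C_n\leq CAC+\delta I$ and $C_n B C_n\leq CBC+\delta I$, so by (ii),
\[
C_n N(A,B) C_n \leq N(CAC+\delta I,\, CBC+\delta I).
\]
Sending $n\to\infty$ (with norm convergence of the left side to $CN(A,B)C$) and then $\delta\downarrow 0$, together with the standard monotone extension $N(X,Y):=\lim_{\delta\downarrow 0}N(X+\delta I,Y+\delta I)$ to positive semidefinite inputs, delivers axiom (iii).

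The most substantive step is axiom (iv), joint SOT upper semi-continuity. Suppose $A_n\downarrow A$ and $B_n\downarrow B$ in the strong operator topology. By axiom (ii) just established, $X_n:=N(A_n,B_n)$ is a monotone decreasing net bounded below by $N(A,B)$, hence SOT-converges to some $Y\geq N(A,B)$. Axiom (iv) for the underlying operator mean $M\in\mathfrak{M}$ applied in both slots simultaneously then gives $M(X_n,A_n)\downarrow M(Y,A)$ and $M(X_n,B_n)\downarrow M(Y,B)$. Passing to the SOT limit in the defining fixed-point equation
\[
X_n = w_1 M(X_n,A_n) + w_2 M(X_n,B_n)
\]
yields $Y = w_1 M(Y,A) + w_2 M(Y,B)$, and uniqueness of the positive definite fixed point of the contraction in Lemma~\ref{powercontract} forces $Y=N(A,B)$.

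The main obstacle is axiom (iii) when $C$ is not invertible: one must leave $\mathbb{P}$, extend $N$ to positive semidefinite inputs by a monotone limit, and pull the inequality back through the resulting double limit. Everything else is either direct from Proposition~\ref{MPro} (the invertible case of (iii), plus (i) and (ii)) or is a straightforward application of the fixed-point stability at the heart of the induced-mean construction (axiom (iv)).
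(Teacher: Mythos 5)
Your proposal is correct, but it travels through the Kubo--Ando correspondence in the opposite direction from the paper. The paper does not verify the axioms of Definition~\ref{symmean} one by one: it uses Proposition~\ref{MPro}(5) and (1) to reduce to $h(C):=M(w_1,w_2;I,C)$, shows via the uniform convergence of the analytic iterates $f^{\circ l}(I)$ and the functional calculus that $h$ is a normalized positive operator monotone function on $(0,\infty)$ (monotonicity coming from Proposition~\ref{MPro}(3)), and then invokes Theorem~3.2 of Kubo--Ando to conclude that the map represented by $h$ is an operator mean. You instead check axioms (i)--(iv) directly: (i) and (ii) from Proposition~\ref{MPro}(1),(3), the transformer inequality (iii) by perturbing a singular Hermitian $C$ to $C+\epsilon_n I$ and extending $N$ to positive semidefinite arguments by a monotone limit, and (iv) by a fixed-point stability argument that passes the defining equation $X_n=w_1M(X_n,A_n)+w_2M(X_n,B_n)$ to the strong limit and uses uniqueness of the positive definite solution. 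Both routes are sound. The paper's route has the advantage of producing the representing function explicitly (which is what Proposition~\ref{P:2variable} then computes) and of never leaving $\mathbb{P}$; its delicate point is the identification of the operator-valued limit with a scalar function applied through the functional calculus. Your route is more elementary at the level of the axioms and your treatment of (iv) via the contraction principle is clean and correct; its cost is that the singular case of (iii) forces you to extend $N$ off $\mathbb{P}$, and you still need the representation half of Kubo--Ando's theorem at the very end to place $N$ in $\mathfrak{M}$ as the paper defines it, so the appeal to \cite{kubo} is not avoided, only relocated.
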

\begin{proof}
By property (5) in Proposition~\ref{MPro} it follows that $$M(w_1,w_2;A,B)=A^{1/2}M(w_1,w_2;I,A^{-1/2}BA^{-1/2})A^{1/2}$$ and property (1) yields that $M(w_1,w_2;I,I)=I$. By Lemma~\ref{powercontract} we have that
\begin{equation*}
\lim_{l\to \infty}f^{\circ l}(X)=M(w_1,w_2;I,A^{-1/2}BA^{-1/2})
\end{equation*}
for all $X\in\mathbb{P}$ with $f(X)=w_1M(X,I)+w_2M(X,A^{-1/2}BA^{-1/2})$. So we can choose $X=I$ and then
\begin{equation*}
\lim_{l\to \infty}f^{\circ l}(I)=M(w_1,w_2;I,C)
\end{equation*}
where $C=A^{-1/2}BA^{-1/2}$. Also by simple calculation we have that $$f(X)=w_1Xg(X^{-1})+w_2Xg(X^{-1}C)$$ where $g$ is the representing function of $M$. Let $h(C):=M(w_1,w_2;I,C)$. Then $h(C)=\lim_{l\to \infty}f^{\circ l}(I)$ and by property (3) in Proposition~\ref{MPro}, $h$ is operator monotone.  Moreover $f^{\circ l}(I)$ is an analytic real map in the single variable $C$ for all $l$, moreover the net $f^{\circ l}(I)$ converges uniformly on bounded subsets of $\mathbb{P}$ due to the strict contraction property of $f(X)$. Hence the pointwise limit $\lim_{l\to\infty}f^{\circ l}(1)$ for positive real (scalar) $C$ is a continuous real map as well and is identical to $h$ by the properties of the functional calculus of self-adjoint operators, since the net $f^{\circ l}(I)$ converges in norm for all $C$ (the topology generated by the metric $d_\infty$ agrees with the relative Banach space topology \cite{thompson}). It is also easy to see that $h$ is positive on $(0,\infty)$ and $h(1)=1$, hence $h$ is an operator monotone function in $\mathfrak{m}$. So by Theorem 3.2 in \cite{kubo} we get that $M(w_1,w_2;A,B)$ is an operator mean in the sense of Definition~\ref{symmean}.

\end{proof}

\begin{proposition}\label{P:2variable}
Let $\omega\in\Delta_2$, $A,B\in\mathbb{P}$ and $M\in\mathfrak{M}$ with representing function $f$. Then
\begin{eqnarray*}
M(w_1,w_2;A,B)=A^{1/2}g\left(A^{-1/2}BA^{-1/2}\right)A^{1/2}
\end{eqnarray*}
where
\begin{eqnarray}\label{ginverse}
g^{-1}(x)=xf^{-1}\left(\frac{1-w_1f(x^{-1})}{w_2}\right).
\end{eqnarray}
\end{proposition}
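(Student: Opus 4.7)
The plan is to reduce to the case $A=I$, then exploit the Kubo–Ando representation of $M$ together with the fact that the representing function of an induced operator mean must commute with its argument. Specifically, by property (5) of Proposition~\ref{MPro},
\begin{equation*}
M(w_1,w_2;A,B)=A^{1/2}M(w_1,w_2;I,A^{-1/2}BA^{-1/2})A^{1/2},
\end{equation*}
so it suffices to identify $g$ with $g(C)=M(w_1,w_2;I,C)$. By Corollary~\ref{cor:opmean} this induced mean is itself a Kubo–Ando operator mean, so $g$ is an operator monotone function applied via the functional calculus to $C$; in particular $g(C)$ commutes with $C$.

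Next, I would insert $X=g(C)$ into the defining fixed-point equation
\begin{equation*}
X=w_1M(X,I)+w_2M(X,C)
\end{equation*}
and use the representation $M(X,Y)=X^{1/2}f(X^{-1/2}YX^{-1/2})X^{1/2}$ of Kubo–Ando. Since $X=g(C)$ commutes with $C$ (and of course with $I$), the two terms simplify to
\begin{equation*}
M(g(C),I)=g(C)f(g(C)^{-1}),\qquad M(g(C),C)=g(C)f(g(C)^{-1}C).
\end{equation*}
Multiplying the fixed-point equation on the left by $g(C)^{-1}$ gives
\begin{equation*}
I=w_1f(g(C)^{-1})+w_2f(g(C)^{-1}C),
\end{equation*}
and solving for the second term,
\begin{equation*}
f(g(C)^{-1}C)=\frac{1-w_1f(g(C)^{-1})}{w_2}.
\end{equation*}

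Applying $f^{-1}$ (which is well defined on the relevant interval because every nonzero $f\in\mathfrak{m}$ is strictly increasing on $(0,\infty)$), one gets $g(C)^{-1}C=f^{-1}\!\left(\frac{1-w_1f(g(C)^{-1})}{w_2}\right)$, hence
\begin{equation*}
C=g(C)\,f^{-1}\!\left(\frac{1-w_1f(g(C)^{-1})}{w_2}\right).
\end{equation*}
Specializing to positive scalars $x>0$ and then substituting $y=g(x)$, i.e.\ $x=g^{-1}(y)$, yields the desired formula \eqref{ginverse}.

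The only genuinely delicate points are (i) verifying that the substitution $X=g(C)$ really commutes with $C$, which is justified by appealing to Corollary~\ref{cor:opmean} so that $g(C)$ is obtained from $C$ by functional calculus, and (ii) ensuring that the scalar argument of $f^{-1}$ indeed lies in the range of $f$, which follows from the fact that $w_1f(g(x)^{-1})+w_2f(g(x)^{-1}x)=1$ forces $f(g(x)^{-1}x)=\tfrac{1-w_1f(g(x)^{-1})}{w_2}$ to be a positive real in the range of $f$. Both are routine once the commutativity observation has been made, so the real content of the proof is the reduction to the scalar functional equation via the representing function of the induced mean.
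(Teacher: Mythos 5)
Your proof is correct and follows essentially the same route as the paper: substitute the ansatz $X=A^{1/2}g(A^{-1/2}BA^{-1/2})A^{1/2}$ into the defining fixed-point equation, reduce to the scalar relation $1=w_1f(u^{-1})+w_2f(u^{-1}w)$ with $u=g(w)$, and solve for $g^{-1}$. The only cosmetic difference is that you first pass to $A=I$ and invoke commutativity of $g(C)$ with $C$ via Corollary~\ref{cor:opmean}, whereas the paper carries out the same cancellation non-commutatively using the transpose $f'(x)=xf(1/x)$; both arguments also handle the domain issue for $f^{-1}$ in the same way, by noting that $g$ is a priori an operator monotone representing function.
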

\begin{proof}
By \eqref{meanequ3} we have
\begin{eqnarray*}
X=w_1X^{1/2}f(X^{-1/2}AX^{-1/2})X^{1/2}+w_2X^{1/2}f(X^{-1/2}BX^{-1/2})X^{1/2}
\end{eqnarray*}
which, with $f'(x)=xf(1/x)$ denoting the transpose of $f$, is equivalent to
\begin{eqnarray*}
&&A^{-1/2}XA^{-1/2}=w_1f'(A^{-1/2}XA^{-1/2})\\
&&+w_2A^{-1/2}X^{1/2}f(X^{-1/2}A^{1/2}A^{-1/2}BA^{-1/2}A^{1/2}X^{-1/2})X^{1/2}A^{-1/2}\\
&&=w_1f'(A^{-1/2}XA^{-1/2})\\
&&+w_2A^{-1/2}XA^{-1/2}A^{1/2}X^{-1/2}f(X^{-1/2}A^{1/2}A^{-1/2}BA^{-1/2}A^{1/2}X^{-1/2})X^{1/2}A^{-1/2}\\
&&=w_1f'(A^{-1/2}XA^{-1/2})+w_2A^{-1/2}XA^{-1/2}f(A^{1/2}X^{-1}A^{1/2}A^{-1/2}BA^{-1/2})\\
&&=w_1f'(U)+w_2Uf(U^{-1}W)=w_1Uf(U^{-1})+w_2Uf(U^{-1}W)=U,
\end{eqnarray*}
where $U=A^{-1/2}XA^{-1/2}$ and $W=A^{-1/2}BA^{-1/2}$. From this we get that $I=w_1f(U^{-1})+w_2f(U^{-1}W)$, i.e. $W=Uf^{-1}\left(\frac{I-w_1f(U^{-1})}{w_2}\right)=g^{-1}(U)$. This means $X=A^{1/2}g\left(A^{-1/2}BA^{-1/2}\right)A^{1/2}$. Note that \eqref{ginverse} may only be well defined in a small neighborhood, but we know by Corollary~\ref{cor:opmean}, that $g$ can be analytically continued to the whole $(0,\infty)$, since $M(w_1,w_2;A,B)$ is an operator mean with representing operator monotone function $g$.

\end{proof}

\begin{remark}
Since the induced two-variable operator mean $M(w_1,w_2;A,B)$ is uniquely determined by the inducing $M\in\mathfrak{M}$, therefore the induced means $M(\omega;\mathbb{A})$ can be regarded as an extension of the operator mean $M(w_1,w_2;A,B)$ to more then two variables. Formula \eqref{ginverse} uniquely determines the representing function of $M(w_1,w_2;A,B)$ in terms of the representing function $f$ of $M\in\mathfrak{M}$, the mapping $ind:\mathfrak{M}\mapsto\mathfrak{M}$ given as $M(\cdot,\cdot)\mapsto M(w_1,w_2;\cdot,\cdot)$ gives a self mapping of the set $\mathfrak{M}$. It is easy to see that the weighted arithmetic and harmonic means are inducing themselves, so they are fixed points of the map $ind$.
\end{remark}

\section{Generalized Karcher equations and one parameter families of operator means}
In this section we generalize the results of \cite{limpalfia,lawsonlim1} which were given for the one parameter family of power means. We will provide solutions of nonlinear operator equations that are given in the following
\begin{definition}[Generalized Karcher equation]\label{generalizedkarcherequ}
Let $\log_I\in\mathfrak{L}$ and $\log_X(A)=X^{1/2}\log_I(X^{-1/2}AX^{-1/2})X^{1/2}$.
The generalized Karcher equation induced by $\log_I$ is the operator equation
\begin{equation*}
\sum_{i=1}^kw_i\log_X(A_i)=0
\end{equation*}
where $X,A_i\in\mathbb{P}$.
\end{definition}

By Proposition~\ref{inPt} we have that if $M\in\mathfrak{M}$ is not the left or right trivial mean, then $f\in\mathfrak{P}(t)$, $t=f'(1)$. The results in section 4 and 5, in particular Theorem~\ref{uniformconv} ensures us, that all $f\in\mathfrak{P}(t_0)$ can be uniquely written as
\begin{equation}\label{oneparamf}
f(x)=\exp_I(t_0\log_I(x))
\end{equation}
where $\log_I\in\mathfrak{L}$ is the unique logarithm map corresponding to $f$ and $\exp_I$ is the inverse of $\log_I$. We also have by Theorem~\ref{nobranchthm} that if $\log_I$ has no ramification points in the upper half-plane $\mathbb{H}^{+}$, then the one parameter family $f_t(x)=\exp_I(t\log_I(x))$ is in $\mathfrak{P}(t)$ for all $t\in(0,1)$. In the general situation of ramification points if a given $\log_I\in\mathfrak{L}$ is induced by an $f\in\mathfrak{P}(t_0)$, then by Proposition~\ref{ftinPt} $f_t\in\mathfrak{P}(t)$ for all $0<t\leq t_0$. This makes it possible to consider one parameter families of induced operator means, similarly to the case of the matrix power means $P_s(\omega;{\Bbb A})$ where $s\in[-1,1]$ \eqref{powermeanequ2}. 

Throughout this section we suppose that $M\in\mathfrak{M}$ with representing function $f(x)$ given as \eqref{oneparamf} and $f(x)$ is not the left or right trivial mean. This means that $f\in\mathfrak{P}(t_0)$ and also then
\begin{equation*}
f_t(x)=\exp_I(t\log_I(x))
\end{equation*}
is well defined for $0<t\leq t_0$, i.e. $f_t\in\mathfrak{P}(t)$ and $M_t(A,B)$ denotes its corresponding mean in $\mathfrak{M}$. Also we assume that $A_i\in\mathbb{P}$ for all $1\leq i\leq k$ and that $\omega\in\Delta_k$.

\begin{proposition}\label{oneparameterfam}
The one parameter family of induced operator means $M_t(\omega;{\Bbb A})$ induced by the $M_t(A,B)\in\mathfrak{M}$ with representing function $f_t(x)$ is continuous for $t\in(0,t_0]$ on any bounded set $S\subseteq\mathbb{P}$.
\end{proposition}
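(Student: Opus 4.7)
The strategy is to realize $X_t := M_t(\omega;{\Bbb A})$ as the unique fixed point of the strict contraction $F_t(X) := \sum_{i=1}^k w_i M_t(X,A_i)$ from Lemma~\ref{powercontract}, and then invoke the standard continuity theorem for fixed points of parameter-dependent contractions. Three ingredients are needed: an a priori bound placing every $X_t$ into one common Thompson ball, continuity of $t \mapsto F_t(X)$ at each fixed $X$, and uniform strict contractivity of $F_t$ on that ball as $t$ ranges over a small neighborhood of any $t^* \in (0,t_0]$.

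For the a priori bound, fix any $A \in \mathbb{P}$ and let $r := \max_i d_\infty(A_i,A)$. By Proposition~\ref{MPro}(1) we have $M_t(\omega;(A,\dots,A)) = A$, and by the non-expansive property (Proposition~\ref{MPro}(7)) it follows that $d_\infty(X_t,A) \leq r$; thus every $X_t$ lies in $\overline{B}_A(r)$, independently of $t$. For pointwise continuity of $t \mapsto F_t(X)$, recall that $f_t(z) = \exp_I(t\log_I(z))$ where $\exp_I,\log_I$ are analytic on $(0,\infty)$ by Theorem~\ref{uniformconv}; hence $f_t \to f_{t^*}$ uniformly on compact subsets of $(0,\infty)$. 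Since $X^{-1/2}A_i X^{-1/2}$ is a bounded self-adjoint operator whose spectrum lies in a fixed compact subinterval of $(0,\infty)$, the continuous functional calculus then yields $M_t(X,A_i) \to M_{t^*}(X,A_i)$ in operator norm, and hence $F_t(X) \to F_{t^*}(X)$.

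The main obstacle is the uniform contraction step. By Proposition~\ref{harmonicreprprop}, each $f_t$ is encoded by a probability measure $\nu_t$ on $[0,1]$. The pointwise convergence $f_t \to f_{t^*}$ on $(0,\infty)$, combined with the fact that the continuous kernels $\{[(1-s)+sx^{-1}]^{-1} : x>0\}$ together with constants separate points of $[0,1]$ and hence (via Stone--Weierstrass) generate a dense subalgebra of $C([0,1])$, forces $\nu_t \rightharpoonup \nu_{t^*}$ weak-$*$. Since $f_{t^*} \in \mathfrak{P}(t^*)$ with $0 < t^* \leq t_0 < 1$, the mean $M_{t^*}$ is neither the left nor the right trivial mean, so we may choose $a \in (0,1/2)$ with $\nu_{t^*}\bigl((a,1-a)\bigr) > 0$. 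The Portmanteau theorem then furnishes $\epsilon > 0$ and a neighborhood $U$ of $t^*$ with $\nu_t\bigl([a,1-a]\bigr) \geq \epsilon$ for all $t \in U$. Running the proof of Theorem~\ref{contract} with these uniform constants $a,\epsilon,r$ (and the uniform bound $L$ on $d_\infty(f_j(Y),f_i(Y))$ over $Y \in \overline{B}_A(r)$ and $t \in U$, which is finite because the integrands are uniformly bounded in the norm topology there) delivers a single $\rho < 1$ dominating every contraction coefficient of $F_t$ on $\overline{B}_A(r)$ for $t \in U$.

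With these ingredients in hand, the triangle inequality and contraction give
\begin{equation*}
d_\infty(X_t,X_{t^*}) \leq \rho\, d_\infty(X_t,X_{t^*}) + d_\infty\bigl(F_t(X_{t^*}),F_{t^*}(X_{t^*})\bigr),
\end{equation*}
so that $d_\infty(X_t,X_{t^*}) \leq (1-\rho)^{-1} d_\infty(F_t(X_{t^*}),F_{t^*}(X_{t^*})) \to 0$ as $t \to t^*$ by the pointwise continuity above. Joint continuity in $(t,{\Bbb A})$ with ${\Bbb A}$ varying in a bounded set then follows by combining this with the $1$-Lipschitz bound of Proposition~\ref{MPro}(7). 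The hard part is genuinely the uniform contractivity: it is where one must translate pointwise behavior of the representing functions $f_t$ into a quantitative statement about the representing measures $\nu_t$ that is stable under perturbation of $t$.
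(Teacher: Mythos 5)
The paper itself disposes of this proposition in two sentences, by realizing $M_t(\omega;{\Bbb A})$ as the fixed point of the strict contraction of Lemma~\ref{powercontract} and citing \cite{neeb} for the continuity of fixed points of pointwise continuous families of strict contractions; your proof tries to make that citation self-contained, and its architecture (a priori localization in one Thompson ball, pointwise continuity of $t\mapsto F_t(X)$, uniform contractivity near $t^*$, then the $(1-\rho)^{-1}$ estimate) is the right one, with the first two ingredients carried out correctly. The gap is in the uniform contractivity step, precisely where you assert that ``running the proof of Theorem~\ref{contract} with these uniform constants'' yields a single $\rho<1$ for all $t$ near $t^*$. The constant $L$ in that proof bounds $\sup_X d_\infty(f_{i,t}(X),f_{j,t}(X))$ for the three pieces $f_{i,t}(X)=\int_{I_i}[(1-s)+sX^{-1}]^{-1}\,d\nu_t(s)$; since $f_{i,t}(X)$ lies in the order interval $[\nu_t(I_i)e^{-r}I,\,\nu_t(I_i)e^{r}I]$, the best available bound is $d_\infty(f_{i,t}(X),f_{j,t}(X))\le 2r+\left|\log\nu_t(I_i)-\log\nu_t(I_j)\right|$, and this is the true order of magnitude. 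Your Portmanteau argument bounds $\nu_t(I_2)$ from below but gives no control on $\nu_t(I_1)$ and $\nu_t(I_3)$, which may be positive yet tend to $0$ as $t\to t^*$; in that regime $L\to\infty$, the weight $e^{-L}$ carried by the contractive middle piece in the weighted-average bound collapses, and the resulting $\rho_t\to 1$. ``The integrands are uniformly bounded in norm'' does not address this, because the Thompson distance between the pieces is scale sensitive and degenerates logarithmically in the masses. The step is repairable---for instance, split off a piece of fixed mass $\epsilon$ from the middle, writing $f_t=\epsilon\tilde g_t+(f_t-\epsilon\tilde g_t)$ with $\tilde g_t$ the normalized restriction of the integral to $[a,1-a]$; then the two summands have masses bounded below by $\epsilon$ and $1-\epsilon$, their mutual Thompson distance is at most $2r+|\log\epsilon-\log(1-\epsilon)|$ uniformly in $t$, and Proposition~\ref{weightedthompson} applied to this two-term decomposition does produce a uniform $\rho<1$---but as written the argument does not go through.

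Two smaller defects in the same step. First, weak-$*$ convergence $\nu_t\rightharpoonup\nu_{t^*}$ does not follow from Stone--Weierstrass as you invoke it: convergence of $\int h_x\,d\nu_t$ for each $x$ says nothing about integrals of products $h_xh_y$, so density of the generated \emph{algebra} is unusable. Either argue that the \emph{linear span} of the kernels is dense (a Hahn--Banach plus analytic-continuation argument on the Cauchy transform), or, more cheaply, use weak-$*$ compactness of the probability measures on $[0,1]$ together with the uniqueness of the representing measure in Proposition~\ref{harmonicreprprop} to identify every subnet limit with $\nu_{t^*}$. Second, ``$M_{t^*}$ is neither the left nor the right trivial mean'' does not imply $\nu_{t^*}\bigl((a,1-a)\bigr)>0$: the weighted arithmetic mean is a counterexample. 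For the family $f_t=\exp_I(t\log_I)$ this degenerate case occurs exactly when $\log_I(x)=x-1$, in which case every $f_t$ is a weighted arithmetic mean and Lemma~\ref{arithmeticcontraction} already supplies a contraction constant uniform for $t$ in compact subsets of $(0,1)$; but this case has to be separated out explicitly rather than excluded by the non-triviality of $M_{t^*}$.
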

\begin{proof}
The induced operator means $M_t(\omega;{\Bbb A})$ are fixed points of mappings $f_{M_t}(x)$ given in \eqref{meanequf} which are strict contractions on any bounded subset of $S\subseteq\mathbb{P}$ according to Lemma~\ref{powercontract}. Therefore on every bounded set $S\subseteq\mathbb{P}$ $M_t(\omega;{\Bbb A})$ varies continuously with respect to $t$ due to the continuity of fixed points of pointwisely continuous families of strict contractions \cite{neeb}.

\end{proof}

\begin{lemma}\label{maxmininduced}
For $t\in(0,t_0]$ we have
\begin{equation*}
\left(\sum_{i=1}^kw_iA_i^{-1}\right)^{-1}\leq M_t(\omega;{\Bbb A})\leq \sum_{i=1}^kw_iA_i.
\end{equation*}
\end{lemma}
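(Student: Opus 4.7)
The plan is to exploit the defining fixed point equation $X=g(X):=\sum_{i=1}^k w_i M_t(X,A_i)$ and sandwich its fixed point between the weighted harmonic and arithmetic means. The two-sided estimate from Lemma~\ref{maxmininp} applied to the representing function $f_t\in\mathfrak{P}(t)$ yields pointwise operator bounds on the mean itself,
\begin{equation*}
\bigl((1-t)X^{-1}+tA_i^{-1}\bigr)^{-1}\leq M_t(X,A_i)\leq (1-t)X+tA_i,
\end{equation*}
which will be the workhorse.

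For the upper bound, set $U=\sum_{i=1}^k w_iA_i$. I would use the arithmetic bound above and the fact $\sum_i w_i=1$ to compute
\begin{equation*}
g(U)\leq \sum_{i=1}^k w_i\bigl[(1-t)U+tA_i\bigr]=(1-t)U+tU=U.
\end{equation*}
Since $g$ is monotone by Remark~\ref{monotone}, the iterates $g^{\circ n}(U)$ form a decreasing sequence below $U$. By Lemma~\ref{powercontract} the map $g$ is a strict contraction on any bounded Thompson ball containing $U$ and the $A_i$'s, so $g^{\circ n}(U)\to M_t(\omega;\mathbb{A})$ in the Thompson metric and hence in the order sense; passing to the limit gives $M_t(\omega;\mathbb{A})\leq U$.

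For the lower bound, set $L=\bigl(\sum_{i=1}^kw_iA_i^{-1}\bigr)^{-1}$, so $L^{-1}=\sum_i w_iA_i^{-1}$. The harmonic lower bound gives
\begin{equation*}
g(L)\geq \sum_{i=1}^kw_i\bigl[(1-t)L^{-1}+tA_i^{-1}\bigr]^{-1}.
\end{equation*}
Here I would invoke operator convexity of the map $Y\mapsto Y^{-1}$ on $\mathbb{P}$ (Theorem~4.1.5 of \cite{bhatia2}, already used in Proposition~\ref{positivemapthm}), which gives
\begin{equation*}
\sum_{i=1}^kw_i\bigl[(1-t)L^{-1}+tA_i^{-1}\bigr]^{-1}\geq \Bigl[\sum_{i=1}^kw_i\bigl((1-t)L^{-1}+tA_i^{-1}\bigr)\Bigr]^{-1}=\bigl[(1-t)L^{-1}+tL^{-1}\bigr]^{-1}=L.
\end{equation*}
Thus $g(L)\geq L$, and monotonicity of $g$ makes $g^{\circ n}(L)$ an increasing sequence above $L$; it converges to $M_t(\omega;\mathbb{A})$ by the same contraction argument, yielding $L\leq M_t(\omega;\mathbb{A})$.

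The main (and essentially only non-routine) obstacle is the step for the lower bound, where one needs the operator Jensen inequality for the inverse map to collapse the weighted sum of harmonic means back to $L$; the upper bound is a direct one-step estimate. Everything else is bookkeeping with the monotonicity of $g$ and the contraction principle established in the previous section.
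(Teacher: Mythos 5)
Your proof is correct, but it is not the paper's argument, and the difference matters. The paper's proof claims that substituting the two-variable weighted arithmetic and harmonic means into the defining equation \eqref{meanequ3} produces the multivariable arithmetic and harmonic means as the unique solutions, and then invokes the comparison property (4) of Proposition~\ref{MPro}. For the arithmetic mean this is immediate ($X=\sum_i w_i((1-t)X+tA_i)$ collapses to $X=\sum_i w_iA_i$), and your upper bound is essentially the same computation packaged as $g(U)\leq U$. For the harmonic mean, however, the paper's claim is not literally true: the unique solution of $X=\sum_i w_i\bigl((1-t)X^{-1}+tA_i^{-1}\bigr)^{-1}$ is in general not $\bigl(\sum_i w_iA_i^{-1}\bigr)^{-1}$ (already for $k=2$, $w_1=w_2=t=1/2$ and scalars $A_1=1$, $A_2=4$ the fixed point is $2$, the geometric mean, rather than $8/5$). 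Your sub-solution argument --- $g(L)\geq L$ via the operator convexity of $Y\mapsto Y^{-1}$ --- supplies exactly the step the paper elides, so your route to the lower bound is the more robust one. One small point worth making explicit: strict contractivity of $g$ is only asserted on bounded sets, so to conclude that the monotone iterates converge to $M_t(\omega;{\Bbb A})$ you should note that $L\leq U$ together with $g(L)\geq L$, $g(U)\leq U$ and monotonicity keeps all iterates inside the Thompson-bounded order interval $[L,U]$, on which $g$ has a unique fixed point; this is the same level of care the paper itself uses in the proof of Proposition~\ref{MPro}.
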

\begin{proof}
By Lemma~\ref{maxmininp} we have that
\begin{equation*}
[(1-t)A^{-1}+tB^{-1}]^{-1}\leq M_t(A,B)\leq (1-t)A+tB.
\end{equation*}
By substituting into the defining equations \eqref{meanequ3} with the two-variable weighted harmonic and arithmetic mean we get back the corresponding multivariable versions $\left(\sum_{i=1}^kw_iA_i^{-1}\right)^{-1}$ and $\sum_{i=1}^kw_iA_i$ respectively. Then by property (4) in Proposition~\ref{MPro} we get the assertion.

\end{proof}

Let us recall the strong topology on $\mathbb{P}$. The strong topology is the topology of pointwise convergence which means that $A_n\to A$ if for all $x\in E$ we have $\left\langle x,A_nx\right\rangle\to \left\langle x,Ax\right\rangle$. The positive definite partial order $\leq$ is strongly closed, so if $A_n\to A$, $B_n\to B$ and $A_n\leq B_n$ then $A\leq B$. Also if $A_n$ is a monotonically decreasing net in $\mathbb{P}$ with respect to $\leq$ and it is bounded from below, then it converges strongly to the infimum of $A_n$. Similarly if $B_n$ is monotonically increases and bounded from above, then $B_n$ converges strongly to its supremum \cite{weidman}.

\begin{theorem}\label{inducedconv}
There exists $X_0\in\mathbb{P}$ such that
\begin{equation*}
\lim_{t\to 0+}M_t(\omega;{\Bbb A})=X_0.
\end{equation*}
Furthermore for $0<t\leq s\leq t_0$ we have
\begin{equation*}
X_0\leq M_t(\omega;{\Bbb A})\leq M_s(\omega;{\Bbb A})\leq M_{t_0}(\omega;{\Bbb A}).
\end{equation*}
\end{theorem}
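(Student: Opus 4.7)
The plan is to prove the chain of inequalities first and then extract the strong limit from bounded monotone convergence. The crucial ingredient is already in hand: Proposition~\ref{p:propbound1} gives scalar monotonicity of the one-parameter family of representing functions, namely $f_s(x) \geq f_t(x)$ for every $x > 0$ whenever $0 < t \leq s \leq t_0$. Applying the functional calculus to the operator $A^{-1/2}BA^{-1/2}$ in the Kubo--Ando representation \eqref{mean} immediately lifts this to the two-variable inequality $M_t(A,B) \leq M_s(A,B)$ for all $A,B \in \mathbb{P}$.

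With pointwise monotonicity of $\{M_t\}_{t\in(0,t_0]}$ in $\mathfrak{M}$ established, I would invoke property (4) of Proposition~\ref{MPro}, which transfers inequalities between two-variable operator means into inequalities between the corresponding induced multivariable means. This yields
\begin{equation*}
M_t(\omega;{\Bbb A}) \leq M_s(\omega;{\Bbb A}) \leq M_{t_0}(\omega;{\Bbb A})
\end{equation*}
for $0 < t \leq s \leq t_0$, giving the middle and rightmost inequalities in the asserted chain.

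For the limit, the net $\{M_t(\omega;{\Bbb A})\}_{t\in(0,t_0]}$ is monotone decreasing as $t \to 0+$ by the previous step, and Lemma~\ref{maxmininduced} shows it is uniformly bounded below by the positive operator $\left(\sum_{i=1}^k w_i A_i^{-1}\right)^{-1} \in \mathbb{P}$. Invoking the standard fact (recalled in the paragraph preceding the theorem) that every bounded monotone net of bounded self-adjoint operators converges in the strong operator topology to its infimum, I define
\begin{equation*}
X_0 := \inf_{t\in(0,t_0]} M_t(\omega;{\Bbb A}) = \lim_{t\to 0+} M_t(\omega;{\Bbb A})
\end{equation*}
in the strong topology. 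Since $X_0 \geq \left(\sum_{i=1}^k w_i A_i^{-1}\right)^{-1} > 0$, we have $X_0 \in \mathbb{P}$, and closedness of the order $\leq$ under strong limits provides $X_0 \leq M_t(\omega;{\Bbb A})$ for every $t \in (0,t_0]$, completing the chain.

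There is no real obstacle here: the argument is a clean assembly of earlier results. The only point requiring care is to confirm that scalar monotonicity in $t$ of the representing functions propagates all the way through (i) the Kubo--Ando formula for the two-variable means, and (ii) the induced-mean construction via property (4) of Proposition~\ref{MPro}. Both steps are immediate from the cited results, so the proof reduces essentially to assembling monotonicity plus strong monotone convergence.
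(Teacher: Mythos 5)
Your proof is correct, but the route you take to the key inequality $M_t(\omega;{\Bbb A})\leq M_s(\omega;{\Bbb A})$ is genuinely different from the paper's. You derive it from the scalar monotonicity $f_t(x)\leq f_s(x)$ in the second part of Proposition~\ref{p:propbound1}, lift it to $M_t(A,B)\leq M_s(A,B)$ by the functional calculus applied to $A^{-1/2}BA^{-1/2}$ in \eqref{mean}, and then invoke property (4) of Proposition~\ref{MPro}; the paper instead avoids comparing $M_t$ and $M_s$ directly and works with the semigroup identity $M_t(A,B)=M_{t/s}(A,M_s(A,B))$ together with the arithmetic-mean bound $M_u(A,B)\leq(1-u)A+uB$, showing that $X_s=M_s(\omega;{\Bbb A})$ satisfies $f_M(X_s)\leq X_s$ for the defining contraction $f_M$ at parameter $t$, whence the Banach iterates started at $X_s$ decrease to $M_t(\omega;{\Bbb A})$. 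Your argument is shorter and makes the monotone structure of the family $\{M_t\}$ explicit, at the cost of leaning on the second assertion of Proposition~\ref{p:propbound1}, whose proof the paper only sketches ("follows from a similar argument"); the paper's route needs only the first, fully proved, bound of that proposition plus the exponential/logarithm semigroup relation, so it is somewhat more self-contained. The remainder of your argument --- the lower bound from Lemma~\ref{maxmininduced}, strong convergence of a bounded decreasing net to its infimum, and strong closedness of the order giving $X_0\leq M_t(\omega;{\Bbb A})$ --- coincides with the paper's conclusion of the proof and is correct.
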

\begin{proof}
First of all since $f_t(x)\in\mathfrak{P}(t)$, by Proposition~\ref{p:propbound1} we have $f_u(x)\leq (1-u)+ux$ for all $u\in[0,1]$ and $x>0$ real numbers and also $f_u(x)>0$ by the monotonicity property in $u$. Then these with simple considerations yield that
\begin{equation*}
M_u(A,B):=A^{1/2}f_u(A^{-1/2}BA^{-1/2})A^{1/2}\leq (1-u)A+uB.
\end{equation*}
Moreover $M_t(A,B)=M_{\frac{t}{s}}(A,M_s(A,B))$ by the properties of $\exp_I$ and $\log_I$. Let $f_M(X)=\sum_{i=1}^kw_iM_t(X,A_i)$. By Banach's fixed point theorem we have $M_t(\omega;{\Bbb A})=\lim_{l\to\infty}f_M^l(X)$ for all $X\in\mathbb{P}$. From the above we have
\begin{equation*}
\begin{split}
f_M(X)&=\sum_{i=1}^kw_iM_t(X,A_i)=\sum_{i=1}^kw_iM_{\frac{t}{s}}(X,M_s(X,A_i))\\
&\leq \sum_{i=1}^kw_i\left(1-\frac{t}{s}\right)X+\frac{t}{s}M_s(X,A_i)=\left(1-\frac{t}{s}\right)X+\frac{t}{s}\sum_{i=1}^kw_iM_s(X,A_i).
\end{split}
\end{equation*}
Let $X_s=M_s(\omega;{\Bbb A})$. Then by the above we have
\begin{equation*}
f_M(X_s)\leq \left(1-\frac{t}{s}\right)X_s+\frac{t}{s}\sum_{i=1}^kw_iM_s(X_s,A_i)=\left(1-\frac{t}{s}\right)X_s+\frac{t}{s}X_s=X_s.
\end{equation*}
By Remark~\ref{monotone} $f_M(X)$ is monotone, so $f_M^{l+1}(X_s)\leq f_M^{l}(X_s)\leq\cdots\leq f_M(X_s)\leq X_s$ for all $l\geq 0$, so
\begin{equation*}
M_t(\omega;{\Bbb A})=\lim_{l\to\infty}f_M^l(X_s)\leq X_s=M_s(\omega;{\Bbb A}).
\end{equation*}
By Lemma~\ref{maxmininduced} the monotonically decreasing net $M_t(\omega;{\Bbb A})$ is bounded from below (and above) so has a strong limit as $t\to 0+$.

\end{proof}

\begin{definition}
Let $\Lambda_M(\omega;\mathbb{A})=\lim_{t\to 0+}M_t(\omega;{\Bbb A})$ and call it the $\omega$-weighted lambda extension of $M\in\mathfrak{M}$.
\end{definition}

\begin{remark}
If we take the one parameter family of matrix power means $P_t(\omega;{\Bbb A})$, then it is known that $\lim_{t\to 0}P_t(\omega;{\Bbb A})$ is the Karcher mean $\Lambda(\omega;\mathbb{A})$ given as \eqref{karchermean}.
\end{remark}

\begin{theorem}\label{lambdaprop}
Let ${\Bbb A}=(A_{1},\dots,A_{k}), {\Bbb
B}=(B_{1},\dots,B_{k})\in {\Bbb P}^{k},\omega\in \Delta_{k}$ and $M,N\in\mathfrak{M}$ and $\Lambda_M(\omega;{\Bbb A})$, $\Lambda_N(\omega;{\Bbb A})$ the corresponding lambda extensions. Then
\begin{itemize}
 \item[(1)] $\Lambda_M(\omega;{\Bbb A})=A$ if $A_i=A$ for all $1\leq i\leq k$;
 \item[(2)] $\Lambda_M(\omega_{\sigma};{\Bbb A}_{\sigma})=\Lambda_M(\omega;{\Bbb A})$ for any
 permutation $\sigma;$
 \item[(3)] $\Lambda_M(\omega;{\Bbb A})\leq \Lambda_M(\omega;{\Bbb B})$ if $A_{i}\leq
 B_{i}$ for all $i=1,2,\dots,k;$
 \item[(4)] if $M(A,B)\leq N(A,B)$ for all $A,B\in{\Bbb P}$ then $\Lambda_M(\omega;{\Bbb A})\leq \Lambda_N(\omega;{\Bbb A})$;
 \item[(5)] $\Lambda_M(\omega;X{\Bbb A}X^{*})=X\Lambda_M(\omega;{\Bbb
 A})X^{*}$ for any $X\in \mathrm{GL}(E);$
 \item[(6)] $(1-u)\Lambda_M(\omega;{\Bbb A})+u\Lambda_M(\omega;{\Bbb B})\leq
 \Lambda_M(\omega;(1-u){\Bbb A}+u{\Bbb B})$ for any $u\in [0,1];$
 \item[(7)]  $d_{\infty}(\Lambda_M(\omega;{\Bbb A}), \Lambda_M(\omega;{\Bbb
 B}))\leq \max_{1\leq i\leq k}\{d_{\infty}(A_{i},B_{i})\};$
 \item[(8)] $\Lambda_M(\omega^{(n)};{\Bbb A}^{(n)})=\Lambda_M(\omega;{\Bbb A})$
 for any $n\in {\Bbb N};$
\item[(9)]
$\Phi(\Lambda_M(\omega;{\Bbb A}))\leq
\Lambda_M(\omega;\Phi({\Bbb A}))$ for any positive unital linear map
$\Phi,$ where $\Phi({\Bbb A})=(\Phi(A_{1}),\dots,\Phi(A_{k}));$
\item[(10)]
$\left(\sum_{i=1}^kA_i^{-1}\right)^{-1}\leq \Lambda_M(\omega;{\Bbb A})\leq \sum_{i=1}^kA_i.$
\end{itemize}
\end{theorem}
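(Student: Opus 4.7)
The plan is to derive each of the ten properties of the lambda extension $\Lambda_M(\omega;\mathbb{A}) = \lim_{t \to 0+} M_t(\omega;\mathbb{A})$ from the corresponding property of the induced operator mean $M_t(\omega;\mathbb{A})$ established in Proposition~\ref{MPro} and Lemma~\ref{maxmininduced}, by passing to the strong operator topology limit. Two structural facts make this strategy work: first, the positive definite order $\leq$ on $\mathbb{P}$ is closed in the strong operator topology, so any inequality that holds for all sufficiently small $t > 0$ is inherited by the strong limit; second, by Theorem~\ref{inducedconv}, $t \mapsto M_t(\omega;\mathbb{A})$ is a monotonically decreasing net, bounded below by Lemma~\ref{maxmininduced}, so the strong limit automatically exists and equals the operator infimum of the net.

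Properties (1), (2), (8), and (10) are essentially immediate: each follows from the corresponding identity or bound for $M_t(\omega;\mathbb{A})$ together with the observation that the bounds in Lemma~\ref{maxmininduced} do not depend on $t$. For (3) and (6), the inequalities $M_t(\omega;\mathbb{A}) \leq M_t(\omega;\mathbb{B})$ and $(1-u) M_t(\omega;\mathbb{A}) + u M_t(\omega;\mathbb{B}) \leq M_t(\omega;(1-u)\mathbb{A}+u\mathbb{B})$ from Proposition~\ref{MPro} pass to the strong limit by the closedness of $\leq$. Property (5) follows from the identity $M_t(\omega;X\mathbb{A}X^*) = X M_t(\omega;\mathbb{A}) X^*$ together with strong operator continuity of the congruence $Y \mapsto XYX^*$ for fixed bounded $X \in \mathrm{GL}(E)$, a routine consequence of the boundedness of $X$ and $X^*$.

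For property (7) I would avoid any direct continuity statement for the Thompson metric and instead use its order-theoretic reformulation from Lemma~\ref{thompsonproperties}(5), namely that $d_\infty(A,B) \leq r$ is equivalent to the two-sided sandwich $e^{-r} B \leq A \leq e^r B$. Setting $r := \max_i d_\infty(A_i, B_i)$, the bound of Proposition~\ref{MPro}(7) gives $e^{-r} M_t(\omega;\mathbb{B}) \leq M_t(\omega;\mathbb{A}) \leq e^r M_t(\omega;\mathbb{B})$ for every $t$; taking strong limits on both sides recovers the same sandwich for the lambda extensions, which is precisely (7). Property (9) is handled by combining monotonicity in $t$ with positivity of $\Phi$: since $\Lambda_M(\omega;\mathbb{A}) \leq M_t(\omega;\mathbb{A})$ by Theorem~\ref{inducedconv}, positivity of $\Phi$ gives $\Phi(\Lambda_M(\omega;\mathbb{A})) \leq \Phi(M_t(\omega;\mathbb{A})) \leq M_t(\omega;\Phi(\mathbb{A}))$ via Proposition~\ref{MPro}(10), and letting $t \to 0+$ on the right-hand side (where convergence is strong) yields $\Phi(\Lambda_M(\omega;\mathbb{A})) \leq \Lambda_M(\omega;\Phi(\mathbb{A}))$. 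Notably, no strong continuity of $\Phi$ is required.

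The only step where real work is needed is property (4). The hypothesis $M(A,B) \leq N(A,B)$ for the inducing means does not immediately entail $M_t(A,B) \leq N_t(A,B)$ at every parameter $t$, since $M$ and $N$ generate a priori unrelated one-parameter families through different Koenigs logarithm maps. The natural route is to show that the ordering between the inducing means propagates to all smaller parameters, either by exploiting the semigroup relation $f_t = f_{t/s} \circ f_s$ from the discussion following Proposition~\ref{ftinPt} together with the monotonicity of $M_u(A,B)$ in $u$ (Proposition~\ref{p:propbound1}), or by reinterpreting the hypothesis as $M_t \leq N_t$ on the common parameter range and then applying Proposition~\ref{MPro}(4) at each $t$ before passing to the strong limit. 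This is the main obstacle; the remainder of the argument is a uniform application of the principle that strong limits preserve non-strict inequalities.
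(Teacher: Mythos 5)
Your overall strategy is exactly the paper's: the paper disposes of all ten items with the single sentence that they ``easily follow from Proposition~\ref{MPro} and Lemma~\ref{maxmininduced} by taking the limit $t\to 0+$'', and your treatment of (1)--(3) and (5)--(10) is a correct and in places more careful elaboration of that sentence (the order-theoretic reading of $d_\infty$ via the sandwich $e^{-r}B\leq A\leq e^{r}B$ for (7), and the way you sidestep any strong continuity of $\Phi$ in (9) by bounding $\Phi(\Lambda_M(\omega;{\Bbb A}))$ below the decreasing net $M_t(\omega;\Phi({\Bbb A}))$, are both sound).

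The one genuine gap is the one you flag yourself: property (4) is not actually proved. Neither of your two suggested routes closes it as stated --- route (b) simply assumes the needed comparison $M_t\leq N_t$, and route (a) invokes the semigroup law and Proposition~\ref{p:propbound1}, both of which only compare members of a \emph{single} one-parameter family, never the two families generated by $M$ and by $N$. The missing step can, however, be closed cheaply along a subsequence. First, $f\leq g$ together with $f(1)=g(1)=1$ forces $f'(1)=g'(1)=:t_0$ (compare the difference quotients at $1$ from the right and from the left), so both families live on the common parameter range $(0,t_0]$. Second, the semigroup property gives $f_{t_0^n}=f^{\circ n}$ and $g_{t_0^n}=g^{\circ n}$, and since $f\leq g$ are both increasing on $(0,\infty)$, induction yields $f^{\circ n}=f(f^{\circ(n-1)})\leq f(g^{\circ(n-1)})\leq g(g^{\circ(n-1)})=g^{\circ n}$; hence $M_{t_0^n}(A,B)\leq N_{t_0^n}(A,B)$ for all $A,B$ and all $n$. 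Proposition~\ref{MPro}(4) then gives $M_{t_0^n}(\omega;{\Bbb A})\leq N_{t_0^n}(\omega;{\Bbb A})$, and since both nets $M_t(\omega;{\Bbb A})$ and $N_t(\omega;{\Bbb A})$ are monotone in $t$ and converge strongly as $t\to 0+$, passing to the limit along $t=t_0^n$ yields $\Lambda_M(\omega;{\Bbb A})\leq\Lambda_N(\omega;{\Bbb A})$. Without some such device, the assertion that the ordering of the inducing means propagates to every smaller parameter is genuinely unjustified --- it amounts to a star-shapedness property of $\log_{I,g}\circ\exp_{I,f}$ that does not follow from $f\leq g$ alone --- so as written your proof of (4) is incomplete, even though the remaining nine items are in order.
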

\begin{proof}
Each of the properties easily follows from Proposition~\ref{MPro} and Lemma~\ref{maxmininduced} by taking the limit $t\to 0+$.

\end{proof}

Now we turn to the study of the generalized Karcher equation
\begin{equation}\label{karcherequx}
\sum_{i=1}^kw_i\log_X(A_i)=0,
\end{equation}
where $\log_X(A)=X^{1/2}\log_I\left(X^{-1/2}AX^{-1/2}\right)X^{1/2}$ corresponding to $\log_I\in\mathfrak{L}$. We denote by $Ks(\omega,\mathbb{A})$ the set of all solutions $X$ of \eqref{karcherequx} in $\mathbb{P}$.

\begin{lemma}
Operator multiplication is strongly continuous on any bounded set.
\end{lemma}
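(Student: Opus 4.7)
The plan is to use the standard decomposition
\begin{equation*}
A_n B_n x - A B x = A_n (B_n - B) x + (A_n - A) B x
\end{equation*}
for an arbitrary fixed $x \in E$, and show each summand tends to $0$ in norm. This reduces joint strong continuity of the product map $(A,B) \mapsto AB$ on bounded sets to two separate facts: pointwise convergence applied to a fixed vector, and uniform boundedness of one of the factors.

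First I would fix nets $A_n \to A$ and $B_n \to B$ in the strong operator topology, with the hypothesis that $\{A_n\}$ (equivalently, both nets) lies in some norm-bounded set, so that there is a constant $K < \infty$ with $\|A_n\| \leq K$ for all $n$. For the second summand $(A_n - A) B x$, note that $B x$ is a single fixed vector in $E$, and $A_n \to A$ strongly means precisely that $A_n y \to A y$ in norm for every $y \in E$; applying this with $y = B x$ gives $\|(A_n - A) B x\| \to 0$.

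For the first summand I would estimate
\begin{equation*}
\|A_n (B_n - B) x\| \leq \|A_n\| \cdot \|(B_n - B) x\| \leq K \cdot \|(B_n - B) x\|,
\end{equation*}
and since $B_n \to B$ strongly we have $\|(B_n - B) x\| \to 0$, so the whole term tends to $0$. Combining the two estimates yields $\|A_n B_n x - A B x\| \to 0$ for every $x$, which is exactly strong convergence $A_n B_n \to A B$.

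The only real subtlety, and the reason the uniform bound is essential, is that strong convergence of $B_n$ to $B$ does not by itself give any control on $\|A_n (B_n - B) x\|$ without a bound on $\|A_n\|$; on unbounded sets joint strong continuity fails. The uniform bound on $\{A_n\}$ supplied by the boundedness hypothesis (in the sense used throughout this section, e.g. order intervals $[\tfrac{1}{k} I, k I]$ or $d_\infty$-balls, which are norm-bounded) is exactly what neutralizes this obstacle. No deeper tools than the triangle inequality, the submultiplicativity of the operator norm, and the definition of the strong operator topology are needed.
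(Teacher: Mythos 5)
Your proof is correct and uses exactly the same decomposition $A_nB_nx - ABx = A_n(B_n-B)x + (A_n-A)Bx$ together with the uniform norm bound on the bounded set, which is precisely the argument the paper gives. Nothing to add.
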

\begin{proof}
Let $A_l\to A,B_l\to B$ strongly, and $\left\|A_l\right\|,\left\|B_l\right\|\leq K$. Then
\begin{equation*}
\left\|(A_lB_l-AB)x\right\|\leq \left\|A_l(B_l-B)x\right\|+\left\|(A_l-A)Bx\right\|\leq K\left\|(B_l-B)x\right\|+\left\|(A_l-A)Bx\right\|,
\end{equation*}
so $\left\|(A_lB_l-AB)x\right\|\to 0$ as well.

\end{proof}

\begin{lemma}
Let $Q$ be an open or closed subset of $\mathbb{R}$ and let $f:Q\to \mathbb{R}$ be continuous and bounded. Then $f$ is strong operator continuous on the set $S(E)$ of self adjoint operators with spectrum in $Q$.
\end{lemma}
\begin{proof}
Special case of Theorem 3.6 in \cite{kadison}.

\end{proof}

\begin{lemma}
Let $Q$ be an open or closed subset of $\mathbb{R}$ and let $f:Q\to \mathbb{R}$ be continuous and bounded. Then $f$ is strong operator continuous on the set $S(E)$ of self adjoint operators with spectrum in $Q$.
\end{lemma}
The consequece of the above is the following
\begin{lemma}
The functions
\begin{enumerate}
	\item $x^{-1}$,
	\item $\log_I(x)$ which is monotone,
	\item $f_t(x)=\exp_I(t\log_I(x))$ for $0\leq t\leq t_0$,
	\item the mean $M_t(A,B)$ for $0\leq t\leq t_0$,
\end{enumerate}
are strongly continuous on the order intervals $[e^{-m}I,e^{m}I]$ for any $m>0$.
\end{lemma}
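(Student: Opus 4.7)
The plan is to deduce each of the four items from the preceding lemma (continuous bounded real functions induce strongly operator continuous maps on the self-adjoint operators whose spectrum lies in a given set) combined with the strong continuity of operator multiplication on norm-bounded sets. Both of these inputs are already established in the excerpt just above the statement.

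For items (1), (2), and (3), each of the scalar functions $x^{-1}$, $\log_I(x)$, and $f_t(x) = \exp_I(t\log_I(x))$ is continuous on the compact interval $[e^{-m}, e^m] \subset (0,\infty)$, and therefore bounded there. Continuity of $\log_I$ on $(0,\infty)$ follows from its operator monotonicity (hence analyticity) established in Theorem~\ref{uniformconv}, and for $f_t$ it follows from $f_t \in \mathfrak{P}(t)$ (Proposition~\ref{ftinPt}). Restricting to an order interval $[e^{-m}I, e^m I]$ means we consider only self-adjoint operators whose spectrum is contained in $[e^{-m}, e^m]$, so the preceding lemma applies directly (extending each function to a continuous bounded function on $\mathbb{R}$ by a cutoff outside $[e^{-m}, e^m]$ if needed), giving strong operator continuity on the order interval.

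For item (4), the idea is to decompose $M_t(A,B) = A^{1/2} f_t\bigl(A^{-1/2} B A^{-1/2}\bigr) A^{1/2}$ into a composition of strongly continuous operations on bounded nets. Given nets $A_l \to A$ and $B_l \to B$ strongly with all members in $[e^{-m}I, e^m I]$, the scalar functions $x^{1/2}$ and $x^{-1/2}$ are continuous and bounded on $[e^{-m}, e^m]$, so by the previous lemma $A_l^{1/2} \to A^{1/2}$ and $A_l^{-1/2} \to A^{-1/2}$ strongly. All these nets are uniformly norm-bounded because their spectra lie in $[e^{-m/2}, e^{m/2}]$, so strong continuity of multiplication on bounded sets gives $A_l^{-1/2} B_l A_l^{-1/2} \to A^{-1/2} B A^{-1/2}$ strongly. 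A straightforward spectral estimate shows that $A_l^{-1/2} B_l A_l^{-1/2} \in [e^{-2m}I, e^{2m}I]$, hence item (3) (applied with the larger parameter $2m$) yields strong convergence of $f_t\bigl(A_l^{-1/2} B_l A_l^{-1/2}\bigr)$. Since $f_t$ is bounded on $[e^{-2m}, e^{2m}]$, the net $f_t\bigl(A_l^{-1/2} B_l A_l^{-1/2}\bigr)$ is norm-bounded, and a final application of strong continuity of multiplication on bounded sets — first multiplying on the right by $A_l^{1/2}$, then on the left — produces the desired strong convergence $M_t(A_l, B_l) \to M_t(A,B)$.

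The only real bookkeeping obstacle is the spectral tracking: at each intermediate step one must verify that the operators produced lie in an order interval on which the next scalar function applied is continuous and bounded, and that the nets involved remain uniformly norm-bounded so that multiplication survives passage to the strong limit. No deeper ingredient beyond the two preceding lemmas, the formula for $M_t$, and standard spectral bounds is needed.
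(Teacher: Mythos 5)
Your proposal is correct and follows exactly the route the paper intends: the paper offers no written proof, merely asserting the lemma as "the consequence of the above," i.e.\ of Kadison's theorem on strong operator continuity of bounded continuous functions and the lemma on strong continuity of multiplication on norm-bounded sets, which are precisely the two ingredients you combine. Your spectral bookkeeping (boundedness of each scalar function on $[e^{-m},e^{m}]$, the containment $A^{-1/2}BA^{-1/2}\in[e^{-2m}I,e^{2m}I]$, and the uniform norm bounds needed to pass products to the strong limit) correctly fills in the details the paper leaves implicit.
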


\begin{lemma}\label{lemstrongconv}
Let $V\in S(E)$. Then
\begin{equation}
\lim_{(s,U)\to(0,V)}\frac{\exp_I(sU)-I}{s}=V,
\end{equation}
in the strong operator topology.
\end{lemma}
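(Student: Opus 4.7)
The plan is to reduce the statement to a scalar Taylor expansion of $\exp_I$ at $0$ and then push the expansion through the holomorphic functional calculus, using strong continuity of multiplication on norm-bounded sets to finish.

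First I would use the properties of $\log_I \in \mathfrak{L}$ established earlier: since $\log_I$ is operator monotone on $(0,\infty)$ with $\log_I(1)=0$ and $\log_I'(1)=1$, its inverse $\exp_I$ exists by Lagrange inversion, is analytic in some disk $\{|z|<\delta\}$ centered at $0$, and satisfies $\exp_I(0)=1$ and $\exp_I'(0)=1$. Consequently there is a holomorphic function $h$ on $\{|z|<\delta\}$ with
\begin{equation*}
\exp_I(z) = 1 + z + z^2 h(z),
\end{equation*}
and setting $C := \sup_{|z|\leq \delta/2} |h(z)| < \infty$ by compactness.

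Next I would handle the operator side. In the limit $(s,U)\to(0,V)$ we may restrict to a bounded neighborhood of $V$, so without loss of generality assume $\|U\| \leq K$ for some fixed $K$ along the net (this is the standard interpretation and is forced in any case for sequences by the uniform boundedness principle, since $U\to V$ strongly). For $|s|$ small enough so that $\|sU\|\leq \delta/2$, the holomorphic functional calculus yields
\begin{equation*}
\exp_I(sU) = I + sU + s^2 U^2 h(sU),
\end{equation*}
and therefore
\begin{equation*}
\frac{\exp_I(sU)-I}{s} - V = (U-V) + s\, U^2\, h(sU).
\end{equation*}

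Finally I would verify strong convergence of each summand. For any $x \in E$, $(U-V)x \to 0$ by hypothesis. For the second term, the functional calculus gives $\|h(sU)\| \leq C$, so $\|s U^2 h(sU) x\| \leq |s|\,K^2 C\, \|x\| \to 0$. Adding these yields the claim. The only delicate point is the Taylor-style factorization at the operator level: this is not an obstacle because $h$ is holomorphic on a neighborhood of the spectrum of $sU$, so the equality $\exp_I(sU) = I + sU + s^2 U^2 h(sU)$ is just the functional calculus applied to the scalar identity $\exp_I(z)-1-z = z^2 h(z)$. The norm-boundedness of $U$ is the other subtle step, but it is either built into the intended topology or supplied by Banach–Steinhaus.
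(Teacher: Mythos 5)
Your proof is correct and follows essentially the same route as the paper's: a first-order Taylor expansion of $\exp_I$ at $0$ with the quadratic remainder controlled in operator norm (the paper bounds the power-series tail $\sum_{j\geq 2}$ directly, where you package it as $z^2h(z)$ with $\|h(sU)\|\leq C$), followed by the triangle inequality using $U\to V$ strongly. The norm-boundedness of $U$ along the net, which you flag and handle explicitly, is left implicit in the paper's proof but is likewise needed there and is supplied in the application by the operators living in a fixed order interval.
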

\begin{proof}
Since $\exp_I$ is the holomorphic inverse of $\log_I$ and $\log_I(1)=0,\log_I'(1)=1$ it follows that $\exp_I(0)=1$ and $\exp_I'(0)=1$. Thus there is a $0$ centered open disk $D$, on which $\exp_I$ is holomorphic, hence has a uniformly and absolutely convergent power series expansion on $D$ given as
\begin{equation*}
\exp_I(z)=\sum_{j=0}^{\infty}\frac{\exp_I^{(j)}}{j!}(0)z^{j}
\end{equation*}
This means that $\sum_{j=0}^{\infty}\left|\frac{\exp_I^{(j)}}{j!}(0)\right|\left|z\right|^{j}<\infty$ for all $z\in D$. Then for $U\in S(E)$ and $s\geq 0$ such that $s\left\|U\right\|\in D$ we have
\begin{equation*}
\begin{split}
\left\|\frac{\exp_I(sU)-I}{s}-U\right\|&=\left\|\frac{\sum_{j=0}^{\infty}\frac{\exp_I^{(j)}}{j!}(0)(sU)^j-I}{s}-U\right\|\\
&=\left\|\sum_{j=1}^{\infty}\frac{\exp_I^{(j)}}{j!}(0)s^{j-1}U^j-U\right\|\\
&\leq\sum_{j=2}^{\infty}\left|\frac{\exp_I^{(j)}}{j!}(0)\right|\left|s\right|^{j-1}\left\|U\right\|^j\\
&=\sum_{j=2}^{\infty}\left|\frac{\exp_I^{(j)}}{(j-1)!}(0)\right|(s\left\|U\right\|)^{j-1}\left\|U\right\|<\infty
\end{split}
\end{equation*}
since the derivative $\exp_I'(z)=\sum_{j=1}^{\infty}\frac{\exp_I^{(j)}}{(j-1)!}(0)z^{j-1}$ absolutely and uniformly converges on the same disk $D$, moreover as $s\to 0+$ the above also goes to $0$. Now let $x\in E$ and let $s\to 0$ and $U\to V$ strongly. Then by the above we have
\begin{equation*}
\begin{split}
\left\|\frac{\exp_I(sU)-I}{s}x-Vx\right\|&\leq\left\|\frac{\exp_I(sU)-I}{s}x-Ux\right\|+\left\|Ux-Vx\right\|\\
&\leq\sum_{j=2}^{\infty}\left|\frac{\exp_I^{(j)}}{j!}(0)\right|\left|s\right|^{j-1}\left\|U\right\|^j\left\|x\right\|+\left\|Ux-Vx\right\|,
\end{split}
\end{equation*}
as $s\to 0$ the first term in the last inequality goes to $0$ so as the second term, since also $U\to V$ strongly.

\end{proof}

\begin{theorem}\label{karchersatisfied}
The lambda extension $\Lambda_M(\omega;{\Bbb A})$ satisfies the generalized Karcher equation
\begin{equation*}
\sum_{i=1}^kw_i\log_X(A_i)=0,
\end{equation*}
where $\log_X(A)=X^{1/2}\log_I\left(X^{-1/2}AX^{-1/2}\right)X^{1/2}.$
\end{theorem}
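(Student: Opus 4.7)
The plan is to take the defining equation for $X_t := M_t(\omega;\mathbb{A})$, rewrite it in the form $\sum_i w_i(f_t(Y_i^t)-I)/t = 0$ where $Y_i^t := X_t^{-1/2}A_i X_t^{-1/2}$, and then pass to the limit $t\to 0+$ using Lemma~\ref{lemstrongconv}. Concretely, from $X_t = \sum_i w_i M_t(X_t,A_i)$ I conjugate by $X_t^{-1/2}$ to obtain $I = \sum_i w_i f_t(Y_i^t)$, and using $f_t(x) = \exp_I(t\log_I(x))$ this rearranges, for every $0<t\leq t_0$, to
\begin{equation*}
0 = \sum_{i=1}^k w_i\,\frac{\exp_I\bigl(t\log_I(Y_i^t)\bigr) - I}{t}.
\end{equation*}
Taking the strong limit as $t\to 0+$ should yield $\sum_i w_i \log_I(X_0^{-1/2}A_iX_0^{-1/2})=0$, where $X_0 = \Lambda_M(\omega;\mathbb{A})$, and conjugating by $X_0^{1/2}$ gives precisely the generalized Karcher equation.

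To justify the limit, I first use Theorem~\ref{inducedconv} together with Lemma~\ref{maxmininduced} to get that $X_t\downarrow X_0$ strongly and that every $X_t$ (and hence $X_0$) lies in the fixed order interval $\bigl[(\sum_i w_iA_i^{-1})^{-1},\ \sum_i w_i A_i\bigr]$. In particular $X_0 \in \mathbb{P}$ and the norms $\|X_t\|,\|X_t^{-1}\|$ are uniformly bounded in $t$. Then, invoking the strong continuity of the maps $x\mapsto x^{-1/2}$ on bounded spectral intervals and the strong continuity of operator multiplication on bounded sets (both stated earlier in this section), I conclude $X_t^{-1/2}\to X_0^{-1/2}$ strongly and hence $Y_i^t \to Y_i^0 := X_0^{-1/2}A_i X_0^{-1/2}$ strongly, with all spectra of $Y_i^t$ confined to some compact subinterval $[m,M]\subset(0,\infty)$ independent of $t$.

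The key analytic step is now to apply Lemma~\ref{lemstrongconv} with $s=t$, $U=\log_I(Y_i^t)$ and $V=\log_I(Y_i^0)$. Since $\log_I$ is strongly continuous on the common spectral interval $[m,M]$ (another application of the strong-continuity lemma stated above), we have $\log_I(Y_i^t)\to \log_I(Y_i^0)$ strongly, and the operator norms $\|\log_I(Y_i^t)\|$ are uniformly bounded by $\sup_{x\in[m,M]}|\log_I(x)|<\infty$, which is exactly the hypothesis needed for the joint-limit statement of Lemma~\ref{lemstrongconv}. That lemma therefore gives
\begin{equation*}
\frac{\exp_I\bigl(t\log_I(Y_i^t)\bigr)-I}{t} \longrightarrow \log_I(Y_i^0)
\end{equation*}
strongly as $t\to 0+$, for each $i$. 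Weighted summation and the identity above then yield $\sum_i w_i \log_I(Y_i^0)=0$; left- and right-multiplying by $X_0^{1/2}$ (valid since $X_0^{1/2}$ is a bounded operator) completes the argument.

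The main obstacle is not the algebraic manipulation but precisely the validity of the passage to the limit: one must be sure that the varying argument $\log_I(Y_i^t)$ can play the role of $U$ in Lemma~\ref{lemstrongconv}, which requires the uniform operator-norm bound along the net, and one must verify the strong convergence $Y_i^t\to Y_i^0$, which hinges on having the uniform two-sided spectral bounds on $X_t$ supplied by Lemma~\ref{maxmininduced}. Everything else is bookkeeping built on the already-established continuity properties of the functional calculus on order intervals.
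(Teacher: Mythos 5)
Your proposal is correct and follows essentially the same route as the paper's own proof: rewrite the fixed-point equation as $0=\sum_i w_i\,(f_t(X_t^{-1/2}A_iX_t^{-1/2})-I)/t$, use Theorem~\ref{inducedconv} and the order-interval bounds to get strong convergence $X_t^{-1/2}A_iX_t^{-1/2}\to X_0^{-1/2}A_iX_0^{-1/2}$ with uniform spectral bounds, and then invoke the strong continuity of $\log_I$ together with Lemma~\ref{lemstrongconv} to pass to the limit. Your explicit attention to the uniform norm bound needed for the joint limit in Lemma~\ref{lemstrongconv} is a point the paper leaves implicit, but the argument is the same.
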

\begin{proof}
For $0<t\leq t_0$ let $X_t=M_t(\omega;{\Bbb A})$ and $X_0=\Lambda_M(\omega;{\Bbb A})=\lim_{t\to 0+}M_t(\omega;{\Bbb A})$. By Theorem~\ref{inducedconv} $X_t\to X_0$ strongly monotonically as $t\to 0+$ and $X_0\leq X_t\leq X_{t_0}\leq \sum_{i=1}^kw_iA_i$.
Now choose $m$ such that $A_i,X_{t_0},X_0\in[e^{-m}I,e^{m}I]$ for all $i$. Then also $X_t\in[e^{-m}I,e^{m}I]$ for $0\leq t\leq t_0$. The order interval $[e^{-m}I,e^{m}I]$ is closed under inversion, also $1\leq x^{1/2}\leq x$ for $x\in[1,\infty)$ and $1\geq x^{1/2}\geq x$ for $x\in(0,1)$, so $X_t^{-1/2}A_iX_t^{-1/2}\in[e^{-m}I,e^{m}I]$. By the previous lemmas therefore $X_t^{-1/2}A_iX_t^{-1/2}\to X_0^{-1/2}A_iX_0^{-1/2}$ strongly. By the strong continuity of $\log_I$
\begin{equation*}
U_i:=\log_I(X_t^{-1/2}A_iX_t^{-1/2})\to V_i:=\log_I(X_0^{-1/2}A_iX_0^{-1/2}).
\end{equation*}
By Lemma~\ref{lemstrongconv} in the strong topology we have
\begin{equation}\label{eq1}
\lim_{t\to 0+}\frac{\exp_I(tU_i)-I}{t}=V_i=\log_I(X_0^{-1/2}A_iX_0^{-1/2})
\end{equation}
for all $1\leq i\leq k$.

By definition $X_t=\sum_{i=1}^kw_iM_t(X_t,A_i)$ which is equivalent to
\begin{equation*}
I=\sum_{i=1}^kw_if_t(X_t^{-1/2}A_iX_t^{-1/2})=\sum_{i=1}^kw_i\exp_I\left(t\log_I(X_t^{-1/2}A_iX_t^{-1/2})\right),
\end{equation*}
that is $0=\sum_{i=1}^kw_i\frac{f_t(X_t^{-1/2}A_iX_t^{-1/2})-I}{t}$. By \eqref{eq1} we have
\begin{equation}
\begin{split}
0&=\lim_{t\to 0+}\sum_{i=1}^kw_i\frac{f_t(X_t^{-1/2}A_iX_t^{-1/2})-I}{t}=\sum_{i=1}^kw_i\lim_{t\to 0+}\frac{f_t(X_t^{-1/2}A_iX_t^{-1/2})-I}{t}\\
&=\sum_{i=1}^kw_i\log_I(X_0^{-1/2}A_iX_0^{-1/2}).
\end{split}
\end{equation}
By this we have also that $\sum_{i=1}^kw_i\log_X(A_i)=0$.

\end{proof}

\begin{lemma}\label{karchercongruenceinv}
The set $Ks(\omega,\mathbb{A})$ is invariant under congruencies, i.e. for any $C\in \mathrm{GL}(E)$
\begin{equation*}
CKs(\omega,\mathbb{A})C^{*}=Ks(\omega,C\mathbb{A}C^{*}).
\end{equation*}
\end{lemma}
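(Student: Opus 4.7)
The plan is to first establish the congruence equivariance of the logarithm map at each point, namely the pointwise identity
\begin{equation*}
\log_{CXC^{*}}(CAC^{*})=C\log_{X}(A)C^{*}\quad\text{for all }C\in\mathrm{GL}(E),\ X,A\in\mathbb{P}.
\end{equation*}
Once this is available, summing over $i$ with the weights $w_i$ immediately shows that $X$ solves $\sum_i w_i\log_X(A_i)=0$ if and only if $CXC^{*}$ solves the analogous equation for the tuple $C\mathbb{A}C^{*}$, and so $Ks(\omega,C\mathbb{A}C^{*})=C\,Ks(\omega,\mathbb{A})\,C^{*}$.

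To prove the equivariance identity I would use the polar-type factorization of $CX^{1/2}$. Writing $CXC^{*}=(CX^{1/2})(CX^{1/2})^{*}$ and invoking polar decomposition, there exists a unitary $V\in\mathfrak{B}(E)$ with
\begin{equation*}
CX^{1/2}=(CXC^{*})^{1/2}V,\qquad\text{equivalently }\ (CXC^{*})^{-1/2}C=VX^{-1/2}.
\end{equation*}
Setting $Y:=(CXC^{*})^{-1/2}(CAC^{*})(CXC^{*})^{-1/2}$ and substituting the two displayed identities on the left and on the right yields
\begin{equation*}
Y=V\bigl(X^{-1/2}AX^{-1/2}\bigr)V^{*}.
\end{equation*}
Since $V$ is unitary, the functional calculus gives $\log_I(Y)=V\log_I(X^{-1/2}AX^{-1/2})V^{*}$; here $\log_I\in\mathfrak{L}$ is applied to a positive operator, which is legitimate in view of Proposition~\ref{injective}. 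Sandwiching by $(CXC^{*})^{1/2}$ and using $(CXC^{*})^{1/2}V=CX^{1/2}$ (and its adjoint) gives
\begin{equation*}
\log_{CXC^{*}}(CAC^{*})=(CXC^{*})^{1/2}\log_I(Y)(CXC^{*})^{1/2}=CX^{1/2}\log_I(X^{-1/2}AX^{-1/2})X^{1/2}C^{*},
\end{equation*}
which is exactly $C\log_X(A)C^{*}$ by the definition of $\log_X$.

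The small technical point to check is the use of the functional calculus in the step $\log_I(VHV^{*})=V\log_I(H)V^{*}$: this is standard for positive $H$ when $\log_I$ is a (real-valued) Borel function of a self-adjoint operator, and it applies here because $X^{-1/2}AX^{-1/2}\in\mathbb{P}$ and $\log_I$ is operator monotone on $(0,\infty)$, hence in particular a continuous function on the spectrum. With the equivariance in hand the set-theoretic statement is immediate: if $X\in Ks(\omega,\mathbb{A})$ then
\begin{equation*}
\sum_{i=1}^{k}w_{i}\log_{CXC^{*}}(CA_{i}C^{*})=C\!\left(\sum_{i=1}^{k}w_{i}\log_{X}(A_{i})\right)\!C^{*}=0,
\end{equation*}
so $CXC^{*}\in Ks(\omega,C\mathbb{A}C^{*})$; the reverse inclusion follows by applying the same argument to $C^{-1}$. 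The main (minor) obstacle is just the bookkeeping of the unitary $V$ arising from polar decomposition and making sure that the square-root cancellations go through when $C$ is only invertible rather than positive; once this is done the argument is routine.
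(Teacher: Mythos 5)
Your argument is correct, but it takes a somewhat different route from the paper's. The paper polar-decomposes $C=UP$ with $U$ unitary and $P\in\mathbb{P}$ and treats the two factors separately at the level of the whole equation: it first rewrites the Karcher equation in the one-sided form $\sum_i w_i\log_I(X^{-1}A_i)=0$ (using $\log_X(A)=X\log_I(X^{-1}A)$) and then, for the positive factor, conjugates by $P^{-1}(\cdot)P$ and uses the similarity invariance $P^{-1}\log_I(Y)P=\log_I(P^{-1}YP)$ of the functional calculus applied to the non-self-adjoint operator $X^{-1}A_i$ (which is similar to a positive one). You instead prove the stronger pointwise equivariance $\log_{CXC^{*}}(CAC^{*})=C\log_X(A)C^{*}$ for arbitrary invertible $C$ in one stroke, by polar-decomposing $CX^{1/2}=(CXC^{*})^{1/2}V$ rather than $C$ itself; this keeps every application of $\log_I$ on a genuinely positive operator and only ever needs unitary conjugation in the functional calculus, which is the more elementary tool. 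The trade-off is that the paper's version is shorter to write once the identity $\log_X(A)=X\log_I(X^{-1}A)$ from \eqref{mappings2} is in hand, while yours isolates a reusable transformer identity for $\log_X$ (the exact analogue of property (iii) in Definition~\ref{symmean} and of property (5) in Theorem~\ref{lambdaprop}) and avoids invoking the holomorphic functional calculus on operators that are only similar to positive ones. Both deductions of the set equality from the respective identities, including your reduction of the reverse inclusion to the forward one applied to $C^{-1}$, are sound.
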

\begin{proof}
For any $X\in Ks(\omega,\mathbb{A})$ we have $\sum_{i=1}^kw_i\log_X(A_i)=0$. Equivalently
\begin{equation}\label{karchereqy}
0=\sum_{i=1}^kw_i\log_X(A_i)=\sum_{i=1}^kw_i\log_I(X^{-1/2}A_iX^{-1/2})=\sum_{i=1}^kw_i\log_I(X^{-1}A_i).
\end{equation}
Let $C=UP$ the polar decomposition of $C$, i.e. $U^{-1}=U^{*}$ and $P\in\mathbb{P}$.
Then by \eqref{karchereqy} it follows directly that
\begin{equation*}
UKs(\omega,\mathbb{A})U^{*}=Ks(\omega,U\mathbb{A}U^{*}).
\end{equation*}
Similarly we have
\begin{equation*}
\begin{split}
0&=P^{-1}\left(\sum_{i=1}^kw_i\log_I(X^{-1}A_i)\right)P=\sum_{i=1}^kw_i\log_I(P^{-1}X^{-1}A_iP)\\
&=\sum_{i=1}^kw_i\log_I(P^{-1}X^{-1}P^{-1}PA_iP),
\end{split}
\end{equation*}
so $PXP\in Ks(\omega,P\mathbb{A}P)$, i.e. $PKs(\omega,\mathbb{A})P\subseteq Ks(\omega,P\mathbb{A}P)$. Also then $Ks(\omega,\mathbb{A})\subseteq P^{-1}Ks(\omega,P\mathbb{A}P)P^{-1}\subseteq Ks(\omega,\mathbb{A})$ which means
\begin{equation*}
PKs(\omega,\mathbb{A})P=Ks(\omega,P\mathbb{A}P).
\end{equation*}
From this and $UKs(\omega,\mathbb{A})U^{*}=Ks(\omega,U\mathbb{A}U^{*})$ we get that
\begin{equation*}
CKs(\omega,\mathbb{A})C^{*}=Ks(\omega,C\mathbb{A}C^{*}).
\end{equation*}

\end{proof}

\begin{proposition}\label{propunique}
There exists $\epsilon_{\omega}>1$ such that the generalized Karcher equation \eqref{karcherequx} has a unique solution on the order interval $[1/\epsilon_{\omega}A,\epsilon_{\omega}A]$ for any $A\in\mathbb{P}$.
\end{proposition}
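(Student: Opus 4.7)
I would approach this through a double reduction by congruence invariance followed by a quadratic-form argument along a line segment. First, Lemma~\ref{karchercongruenceinv} applied with $C=A^{-1/2}$ sends $Ks(\omega,\mathbb{A})\cap[A/\epsilon,\epsilon A]$ bijectively onto $Ks(\omega,A^{-1/2}\mathbb{A}A^{-1/2})\cap[I/\epsilon,\epsilon I]$, so it is enough to produce $\epsilon_{\omega}>1$, independent of $\mathbb{A}$, guaranteeing at most one solution in $[I/\epsilon_{\omega},\epsilon_{\omega}I]$. If $X_1,X_2$ are two such solutions, applying Lemma~\ref{karchercongruenceinv} a second time with $C=X_2^{-1/2}$ reduces matters to the case where $I$ and $\widetilde X:=X_2^{-1/2}X_1X_2^{-1/2}\in[I/\epsilon_{\omega}^2,\epsilon_{\omega}^2 I]$ both solve the Karcher equation for the transformed tuple (which I relabel $\mathbb{A}$). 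The goal becomes: show $\widetilde X=I$.

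To this end I would set $G(X):=\sum_i w_i\log_I(X^{-1/2}A_iX^{-1/2})$ (whose zeros are exactly the Karcher solutions) and consider the scalar function $\Phi(t):=\langle\widetilde X-I,\,G((1-t)I+t\widetilde X)\rangle_{HS}$ for $t\in[0,1]$. Since $\Phi(0)=\Phi(1)=0$, the fundamental theorem of calculus gives $\int_0^1\Phi'(t)\,dt=0$. The heart of the argument is to show that the integrand $\Phi'(t)=\langle\widetilde X-I,\,DG[X_t][\widetilde X-I]\rangle_{HS}$ is strictly negative for every $t\in[0,1]$ whenever $\widetilde X\neq I$, yielding the desired contradiction. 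For this I would rewrite the integral representation of Proposition~\ref{logrepr} in the form $\log_I(Y)=(Y-I)\int_{[0,1]}((1-s)Y+sI)^{-1}\,d\nu(s)$. Using the commuting identity $(1-s)(Y-I)((1-s)Y+sI)^{-1}=I-((1-s)Y+sI)^{-1}$, the Fréchet derivative collapses to the manifestly positive sandwich form $D\log_I(Y)[K]=\int_{[0,1]}((1-s)Y+sI)^{-1}K((1-s)Y+sI)^{-1}\,d\nu(s)$. Unwinding the chain rule for $G$ at $X=I$ — where the $\bar L$-proportional term vanishes because $I$ is a Karcher point — and invoking cyclic invariance of the trace together with the commutation $((1-s)A_i+sI)^{-1}A_i=A_i((1-s)A_i+sI)^{-1}$, the quadratic form $\langle H,DG[I][H]\rangle_{HS}$ collapses into $-\sum_i w_i\int_{[0,1]}\|P_{s,i}^{1/2}HP_{s,i}^{1/2}A_i^{1/2}\|_{HS}^2\,d\nu(s)$ with $P_{s,i}:=((1-s)A_i+sI)^{-1}$, which is strictly negative for $H\neq 0$ uniformly in $\mathbb{A}$.

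The main obstacle will be extending this strict negativity from $X=I$ to the entire Thompson ball $[I/\epsilon_{\omega}^2,\epsilon_{\omega}^2 I]$, uniformly in $\mathbb{A}$. At $X_t\neq I$ the derivative $DG[X_t]$ acquires additional terms — chain-rule contributions from $X^{\pm 1/2}$ together with a term proportional to the a priori non-vanishing $\bar L(X_t):=\sum_i w_i\log_I(X_t^{-1/2}A_iX_t^{-1/2})$ away from the Karcher points. Since $\bar L$ vanishes at $t=0,1$ and obeys the two-sided operator bound $1-x^{-1}\leq\log_I(x)\leq x-1$ from Proposition~\ref{injective}, this correction is controlled by the Thompson distance $\log\epsilon_{\omega}$ and can be made arbitrarily small in operator norm as $\epsilon_{\omega}\to 1$. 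This allows the principal sandwich term to dominate throughout the segment, producing the required strict inequality and hence the uniqueness.
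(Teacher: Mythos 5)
Your proposal takes a genuinely different route from the paper and, as written, has a gap I do not think can be closed. On the reading of the statement: as the paper's proof and its use inside Theorem~\ref{uniquegeneralizedkarcher} make clear, the hypothesis ``on the order interval $[1/\epsilon_{\omega}A,\epsilon_{\omega}A]$'' is placed on the \emph{data}, i.e.\ one assumes $A_i\in[1/\epsilon_{\omega}A,\epsilon_{\omega}A]$ for all $i$ and concludes that $Ks(\omega,\mathbb{A})$ is a singleton. The paper obtains this softly: after the same congruence reduction to $A=I$ that you perform, it applies the Implicit Function Theorem to $F_{\omega,\mathbb{A}}(X)=\sum_iw_i\log_I(X^{-1/2}A_iX^{-1/2})$ at the degenerate tuple $\mathbb{A}=(I,\dots,I)$, where $F_{\omega,\mathbb{I}}(X)=\log_I(X^{-1})$ and $DF_{\omega,\mathbb{I}}[I]=-\mathrm{id}$, and chooses $\epsilon_{\omega}$ so that $[1/\epsilon_{\omega}I,\epsilon_{\omega}I]^k$ lies in the resulting neighbourhood. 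You instead place the hypothesis on the \emph{solutions} and aim to show, for completely arbitrary $\mathbb{A}\in\mathbb{P}^k$, that at most one solution lies in a Thompson ball of fixed radius $\log\epsilon_{\omega}$. That is a strictly stronger claim, and the extra strength is exactly what breaks your argument.

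Two concrete problems. First, the proposition is stated in the operator setting $\mathbb{P}\subseteq S(E)$ of Sections 8--10; for a general bounded self-adjoint $H=\widetilde X-I$ the pairing $\langle\cdot,\cdot\rangle_{HS}=\mathrm{Tr}[\,\cdot\,\cdot\,]$ on which your function $\Phi$ is built does not exist, so the argument cannot even be set up outside finite dimensions. Second, even on $P(n,\mathbb{C})$ the step you yourself flag as ``the main obstacle'' genuinely fails: the pointwise negativity of $\langle H,DG[X_t][H]\rangle_{HS}$ cannot be arranged uniformly in $\mathbb{A}$ on a Thompson ball of fixed radius. Your formula at $X=I$, $-\sum_iw_i\int\|P_{s,i}^{1/2}HP_{s,i}^{1/2}A_i^{1/2}\|^2\,d\nu(s)$, is correct, but its coercivity constant and the size of the correction terms both scale with the $A_i$, and the relevant ratio grows with their condition numbers, so letting $\epsilon_{\omega}\to1$ does not produce a data-independent radius. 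For example, with $\log_I(x)=x-1$ and a single matrix $A=\bigl(\begin{smallmatrix}a&c\\c&b\end{smallmatrix}\bigr)$, a direct $2\times2$ computation at $X=\mathrm{diag}(1,v^{-2})$ shows that $\langle H,DG[X][H]\rangle_{HS}$ fails to be negative semidefinite once $a/b<(v-1)^2/(8(1+v))$ and $c^2$ is close enough to $ab$; thus for every fixed $\epsilon_{\omega}>1$ there are data and points at Thompson distance less than $\log\epsilon_{\omega}$ from the unique solution at which your integrand has the wrong sign. Nothing in your sketch (the vanishing of $\bar L$ at the endpoints, or the two-sided bound on $\log_I$) excludes such configurations along the segment. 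You would have to let $\epsilon_{\omega}$ depend on $\mathbb{A}$, which destroys the application in Theorem~\ref{uniquegeneralizedkarcher} where $\epsilon_{\omega}$ must be fixed before $t$ is chosen; the paper avoids all of this by putting the smallness hypothesis on the data and invoking the implicit function theorem.
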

\begin{proof}
Let $F_{\omega,\mathbb{A}}(X)=F(A_1,\ldots,A_k,X)=\sum_{i=1}^kw_i\log_I(X^{-1/2}A_iX^{-1/2})$. Then the map $F_{\omega,\mathbb{A}}:\mathbb{P}\mapsto S(E)$ is $C^{\infty}$ and $F_{\omega,\mathbb{A}}(X)=0$ if and only if $X\in Ks(\omega,\mathbb{A})$. The Fr\'echet derivative of $F_{\omega,\mathbb{A}}$ is a linear map on $S(E)$. Let $\mathbb{I}=(I,\ldots,I)\in\mathbb{P}^k$. Then $F_{\omega,\mathbb{I}}(X)=\log_I(X^{-1})$ so $F_{\omega,\mathbb{I}}(I)=0$ and by the property $\log_I'(1)=1$ we have that the Fr\'echet derivative $DF_{\omega,\mathbb{I}}[I]=-id_{E}$. Thus by the Implicit Function Theorem (Theorem 5.9 \cite{lang}) there exists and open neighborhood $U$ of $\mathbb{I}$ and a $C^{\infty}$ mapping $g:U\mapsto \mathbb{P}$ such that $F_{\omega,\mathbb{A}}(X)=0$ if and only if $x=g(\mathbb{A})$ for $\mathbb{A}\in U$. Pick $\epsilon_1>\epsilon>1$ such that $[1/\epsilon I,\epsilon I]^k\subseteq (1/\epsilon_1 I,\epsilon_1 I)^k\subseteq U$. Then for any $\mathbb{A}\in [1/\epsilon I,\epsilon I]^k$, $\Lambda_M(\omega;\mathbb{A})\in Ks(\omega,\mathbb{A})=\{g(\mathbb{A})\}$, i.e. the generalized Karcher equation has a unique solution $\Lambda_M(\omega;\mathbb{A})$, the $\omega$-weighted lambda extension of $M(\cdot,\cdot)$. It also follows that the unique solution is $C^{\infty}$ on the order interval $(1/\epsilon_1 I,\epsilon_1 I)^k$ so as $\Lambda_M(\omega;\mathbb{A})$.

Now let $A_i\in[1/\epsilon A,\epsilon A]$. Then $B_i=A^{-1/2}A_iA^{-1/2}\in[1/\epsilon I,\epsilon I]$. Then by the above we have $Ks(\omega,\mathbb{B})=\{\Lambda_M(\omega;\mathbb{B})\}$. Thus by Lemma~\ref{karchercongruenceinv} and Theorem~\ref{lambdaprop} we have
\begin{equation*}
\begin{split}
Ks(\omega,\mathbb{A})&=Ks(\omega,A^{1/2}\mathbb{B}A^{1/2})=A^{1/2}Ks(\omega,\mathbb{B})A^{1/2}=A^{1/2}\{\Lambda_M(\omega;\mathbb{B})\}A^{1/2}\\
&=\{\Lambda_M(\omega;A^{1/2}\mathbb{B}A^{1/2})\}=\{\Lambda_M(\omega;\mathbb{A})\}.
\end{split}
\end{equation*}

\end{proof}

\begin{remark}
The lambda extension $\Lambda_M(\omega;\mathbb{A})$ is $C^{\infty}$ on small enough neighborhoods of the diagonal in the product cone $\mathbb{P}^k$.
\end{remark}

\begin{theorem}\label{uniquegeneralizedkarcher}
$Ks(\omega,\mathbb{A})=\{\Lambda_M(\omega;\mathbb{A})\}$ for all $\omega\in\Delta_k$ and $A_i\in\mathbb{P}$, $1\leq i\leq k$.
\end{theorem}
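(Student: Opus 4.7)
I will show that any $X \in Ks(\omega,\mathbb{A})$ coincides with $\Lambda_M(\omega;\mathbb{A})$. The containment $\Lambda_M(\omega;\mathbb{A}) \in Ks(\omega,\mathbb{A})$ is Theorem~\ref{karchersatisfied}, so the work is in uniqueness. My plan is first a convexity argument yielding $X \leq \Lambda_M(\omega;\mathbb{A})$, then an implicit-function argument producing a branch of induced-mean fixed points that forces the reverse inequality.

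For the upper comparison, since $\log_I \in \mathfrak{L}$ is operator monotone and hence operator concave (Theorem V.2.5 of \cite{bhatia}), its analytic inverse $\exp_I$ is convex on the relevant real interval and satisfies the tangent-line inequality $\exp_I(y) \geq 1 + y$. Lifted through the functional calculus, $\exp_I(tZ) \geq I + tZ$, so conjugating by $X^{1/2}$ and summing with weights $w_i$ gives
\begin{equation*}
g_t(X) := \sum_{i=1}^{k} w_i\, M_t(X, A_i) \;\geq\; X + t \sum_{i=1}^{k} w_i \log_X(A_i) \;=\; X
\end{equation*}
for every Karcher solution $X$, the last equality being the defining equation of $Ks(\omega,\mathbb{A})$. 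Monotonicity of $g_t$ from Remark~\ref{monotone} together with Banach's fixed point theorem (via Lemma~\ref{powercontract}) then yields $M_t(\omega;\mathbb{A}) = \lim_n g_t^{\circ n}(X) \geq X$, and letting $t \to 0+$ in the monotone strong convergence of Theorem~\ref{inducedconv} gives $\Lambda_M(\omega;\mathbb{A}) \geq X$.

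For the reverse bound I define $F_{\omega,\mathbb{A}}(Y) := \sum_i w_i \log_Y(A_i)$, set $h(t, Y) := (g_t(Y) - Y)/t$ for $t > 0$, and extend by $h(0, Y) := F_{\omega,\mathbb{A}}(Y)$. The analytic expansion $M_t(Y, A_i) = Y + t \log_Y(A_i) + O(t^2)$, coming from $\exp_I(tz) = 1 + tz + O(t^2)$, shows $h$ is jointly analytic near $(0, X)$ with $h(0, X) = 0$. Provided $D_Y h(0, X) = DF_{\omega,\mathbb{A}}[X]$ is a topological isomorphism of the Banach space $S(E)$, the Banach-space Implicit Function Theorem used in Proposition~\ref{propunique} supplies a continuous curve $t \mapsto Y(t)$ of fixed points of $g_t$ with $Y(0) = X$. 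But the global uniqueness of the $g_t$-fixed point in Lemma~\ref{powercontract} forces $Y(t) = M_t(\omega;\mathbb{A})$ for all small $t > 0$, so letting $t \to 0+$ and invoking Theorem~\ref{inducedconv} yields $X = Y(0) = \Lambda_M(\omega;\mathbb{A})$.

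The main obstacle is showing that the Fréchet derivative $DF_{\omega,\mathbb{A}}[X]$ is a topological isomorphism of $S(E)$ at an arbitrary Karcher solution, extending the diagonal Jacobian calculation in the proof of Proposition~\ref{propunique}. By congruence invariance (Lemma~\ref{karchercongruenceinv}) I may reduce to $X = I$ with normalized data $B_i := X^{-1/2} A_i X^{-1/2}$; a direct Fréchet derivative computation, after cancelling one term via the Karcher identity $\sum_i w_i \log_I(B_i) = 0$, gives $DF_{\omega,\mathbb{A}}[I](H) = -\sum_{i=1}^{k} w_i\, D\log_I[B_i]\bigl((HB_i + B_iH)/2\bigr)$. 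The Nevanlinna representation \eqref{nevanlinna3} realises $D\log_I[B][\cdot]$ as the completely positive map $K \mapsto \beta K + \int_0^\infty (\lambda^2 + 1)(\lambda I + B)^{-1} K (\lambda I + B)^{-1}\, d\mu(\lambda)$, and using the commutation $B(\lambda I + B)^{-1} = I - \lambda(\lambda I + B)^{-1}$ together with cyclicity of the trace reduces $\mathrm{Tr}(H \cdot D\log_I[B_i]((HB_i + B_iH)/2))$ to $\beta\,\mathrm{Tr}(H^2 B_i) + \int_0^\infty (\lambda^2+1)\,\mathrm{Tr}(H B_i(\lambda I + B_i)^{-2} H)\, d\mu(\lambda)$. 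Each term is non-negative with equality only at $H = 0$ (since $B_i(\lambda I + B_i)^{-2} > 0$), so the bilinear form $H \mapsto \mathrm{Tr}(H \cdot DF_{\omega,\mathbb{A}}[I](H))$ is strictly negative; uniform spectral bounds on the $B_i$ from $B_i \in \mathbb{P}$ then lift this negative-definiteness to the coercive estimate that yields bounded invertibility of $DF_{\omega,\mathbb{A}}[I]$ on $S(E)$.
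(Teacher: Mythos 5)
Your first half is sound and self-contained: since $\log_I$ is operator concave and increasing, its inverse $\exp_I$ is convex and increasing, so the scalar tangent-line inequality $\exp_I(y)\geq 1+y$ lifts through the functional calculus to give $M_t(X,A_i)\geq X+t\log_X(A_i)$, hence $g_t(X)\geq X$ for every $X\in Ks(\omega,\mathbb{A})$; monotone iteration of $g_t$ (Remark~\ref{monotone}) together with Banach's fixed point theorem and the monotone limit of Theorem~\ref{inducedconv} then yields $X\leq\Lambda_M(\omega;\mathbb{A})$. The paper does not argue this way, but this inequality is correct.

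The genuine gap is in the reverse direction: everything rests on $DF_{\omega,\mathbb{A}}[X]$ being a topological isomorphism of $S(E)$ at an \emph{arbitrary} Karcher solution $X$, and this is precisely what you do not establish. Your trace-form computation is intrinsically finite-dimensional (and contains a slip: cyclicity gives $\mathrm{Tr}\bigl(B_i(\lambda I+B_i)^{-1}H(\lambda I+B_i)^{-1}H\bigr)$, not $\mathrm{Tr}\bigl(HB_i(\lambda I+B_i)^{-2}H\bigr)$ --- these differ when $H$ and $B_i$ do not commute, though both are nonnegative, so strict positivity and hence injectivity survive in finite dimensions). But the theorem is stated for $\mathbb{P}\subseteq S(E)$ with $E$ an arbitrary Hilbert space, where there is no trace, and the closing claim that ``uniform spectral bounds lift this to a coercive estimate yielding bounded invertibility on $S(E)$'' is unsupported: even a genuine lower norm bound gives injectivity and closed range but not surjectivity on the Banach space $(S(E),\|\cdot\|)$. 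The paper circumvents this entirely. It invokes the implicit function theorem only at the diagonal $\mathbb{A}=\mathbb{I}$, where $DF_{\omega,\mathbb{I}}[I]=-\mathrm{id}$ trivially (Proposition~\ref{propunique}), and reaches a general $X\in Ks(\omega,\mathbb{A})$ by the Schr\"oder relation $\log_I(f_t(z))=t\log_I(z)$: for small $t$ the contracted data $f_t(X^{-1/2}A_iX^{-1/2})$ lie in the order interval $[1/\epsilon_\omega I,\epsilon_\omega I]$ where local uniqueness applies, which turns $X$ into a fixed point of $Y\mapsto\Lambda_M(\omega;M_t(Y,A_1),\dots,M_t(Y,A_k))$; this map is a strict contraction (nonexpansiveness of $\Lambda_M$ composed with the strict contractions $M_t(\cdot,A_i)$), forcing $X=\Lambda_M(\omega;\mathbb{A})$. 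To repair your argument you would need to prove invertibility of $DF_{\omega,\mathbb{A}}[X]$ on $S(E)$ in full generality, or else adopt the paper's contraction-plus-rescaling route.
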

\begin{proof}
We start with a
\begin{claim}
The equation
\begin{equation}\label{eq:uniquegeneralizedkarcher}
X=\Lambda_M(\omega;M_{t}(X,A_1),\ldots,M_{t}(X,A_k))
\end{equation}
has a unique solution in $\mathbb{P}$ for all $0<t\leq t_0$ and $A_i\in\mathbb{P}$.
\end{claim}
Let $S$ be a bounded subset of $\mathbb{P}$. Let $A\in\mathbb{P}$ and $r<\infty$ given such that $S\subseteq\overline{B}_A(r)$ and for all $X,Y\in S$ the functions $g_i(X)=M_t(X,A_i)$ are strict contractions for all $1\leq i\leq k$. Clearly by the boundedness of $S$ and the set $\{A_1,\ldots,A_k\}$ and Theorem~\ref{contract} such $A\in\mathbb{P}$ and $r<\infty$ exists. Suppose the largest contraction coefficient for the functions $g_i(X)=M_t(X,A_i)$ for $1\leq i\leq k$ is $\rho_t$ on $\overline{B}_A(r)$. Now let $h_t(X)=\Lambda_M(\omega;M_{t}(X,A_1),\ldots,M_{t}(X,A_k))$. Then
\begin{equation*}
\begin{split}
d_\infty(h_t(X),h_t(Y))&=d_\infty(\Lambda_M(\omega;M_{t}(X,\mathbb{A})),\Lambda_M(\omega;M_{t}(Y,\mathbb{A})))\\
&\leq \max_{1\leq i\leq k}d_\infty(M_{t}(X,A_i),M_{t}(Y,A_i))\\
&\leq \rho_t d_\infty(X,Y)
\end{split}
\end{equation*}
where the first inequality follows from Proposition~\ref{nonexpansivemean} with properties (4) and (5) of Theorem~\ref{lambdaprop}. So by Banach's fixed point theorem $h_t(X)$ has a unique fixed point on $S$, so the equation \eqref{eq:uniquegeneralizedkarcher} has a unique positive definite solution on $S$. Since $S$ was arbitrary bounded subset of $\mathbb{P}$, it follows that the same holds on all of $\mathbb{P}$. The claim is proved.

Let $X\in Ks(\omega,\mathbb{A})$. We have that $f_t(x)=\exp_I(t\log_I(x))\in\mathfrak{P}(t)$ for $0<t\leq t_0$. By Lemma~\ref{maxmininp} we have that
\begin{equation*}
\left((1-t)+tx^{-1}\right)^{-1}\leq f_t(x)\leq (1-t)+tx
\end{equation*}
which means that there exists a small enough $0<t\leq t_0$ such that
\begin{equation*}
f_t(X^{-1/2}A_iX^{-1/2})\in [1/\epsilon_{\omega}I,\epsilon_{\omega}I]
\end{equation*}
for all $1\leq i\leq k$. Also it is easy to see that
\begin{equation*}
\sum_{i=1}^kw_i\log_I(X^{-1/2}A_iX^{-1/2})=0=\sum_{i=1}^kw_i\log_I(f_t(X^{-1/2}A_iX^{-1/2})),
\end{equation*}
since $\exp_I$ is the inverse of $\log_I$. From this it follows by Proposition~\ref{propunique} and the definition of the lambda extension $\Lambda_M(\omega;\cdot)$ that
\begin{equation*}
I=\Lambda_M(\omega;f_t(X^{-1/2}A_1X^{-1/2}),\ldots,f_t(X^{-1/2}A_kX^{-1/2})).
\end{equation*}
By property (5) in Theorem~\ref{lambdaprop} we have that the above is equivalent to
\begin{equation*}
\begin{split}
X&=\Lambda_M(\omega;X^{1/2}f_t(X^{-1/2}A_1X^{-1/2})X^{1/2},\ldots,X^{1/2}f_t(X^{-1/2}A_kX^{-1/2})X^{1/2})\\
&=\Lambda_M(\omega;M_{t}(X,A_1),\ldots,M_{t}(X,A_k)).
\end{split}
\end{equation*}
Now the Claim implies that the solution of the above equation is unique, hence all solutions $X\in Ks(\omega,\mathbb{A})$ are identically $\Lambda_M(\omega;\mathbb{A})$, i.e. $Ks(\omega,\mathbb{A})=\{\Lambda_M(\omega;\mathbb{A})\}$.

\end{proof}

Let us summarize our results for $\Lambda_M(\omega;{\Bbb A})$.

\begin{theorem}\label{lambdaprop2}
Let ${\Bbb A}=(A_{1},\dots,A_{k}), {\Bbb
B}=(B_{1},\dots,B_{k})\in {\Bbb P}^{k},\omega\in \Delta_{k}$ and $M,N\in\mathfrak{M}$ and $\Lambda_M(\omega;{\Bbb A})$, $\Lambda_N(\omega;{\Bbb A})$ the corresponding lambda extensions. Then
\begin{itemize}
 \item[(1)] $\Lambda_M(\omega;{\Bbb A})=A$ if $A_i=A$ for all $1\leq i\leq k$;
 \item[(2)] $\Lambda_M(\omega_{\sigma};{\Bbb A}_{\sigma})=\Lambda_M(\omega;{\Bbb A})$ for any
 permutation $\sigma;$
 \item[(3)] $\Lambda_M(\omega;{\Bbb A})\leq \Lambda_M(\omega;{\Bbb B})$ if $A_{i}\leq
 B_{i}$ for all $i=1,2,\dots,k;$
 \item[(4)] if $M(A,B)\leq N(A,B)$ for all $A,B\in{\Bbb P}$ then $\Lambda_M(\omega;{\Bbb A})\leq \Lambda_N(\omega;{\Bbb A})$;
 \item[(5)] $\Lambda_M(\omega;X{\Bbb A}X^{*})=X\Lambda_M(\omega;{\Bbb
 A})X^{*}$ for any $X\in \mathrm{GL}(E);$
 \item[(6)] $(1-u)\Lambda_M(\omega;{\Bbb A})+u\Lambda_M(\omega;{\Bbb B})\leq
 \Lambda_M(\omega;(1-u){\Bbb A}+u{\Bbb B})$ for any $u\in [0,1];$
 \item[(7)]  $d_{\infty}(\Lambda_M(\omega;{\Bbb A}), \Lambda_M(\omega;{\Bbb
 B}))\leq \max_{1\leq i\leq k}\{d_{\infty}(A_{i},B_{i})\};$
 \item[(8)] $\Lambda_M(\omega^{(n)};{\Bbb A}^{(n)})=\Lambda_M(\omega;{\Bbb A})$
 for any $n\in {\Bbb N};$
\item[(9)]
$\Phi(\Lambda_M(\omega;{\Bbb A}))\leq
\Lambda_M(\omega;\Phi({\Bbb A}))$ for any positive unital linear map
$\Phi,$ where $\Phi({\Bbb A})=(\Phi(A_{1}),\dots,\Phi(A_{k}));$
\item[(10)]
$\left(\sum_{i=1}^kA_i^{-1}\right)^{-1}\leq \Lambda_M(\omega;{\Bbb A})\leq \sum_{i=1}^kA_i;$
 \item[(11)]
$\Lambda_M(\omega;A_{1},\dots,A_{k-1},X)=X$ if and only if
$X=\Lambda_M({\hat \omega};A_{1},\dots,A_{k-1}).$ In particular,
$\Lambda_M(A_{1},\dots,A_{k},X)=X$ if and only if
$X=\Lambda_M(A_{1},\dots,A_{k});$
 \item[(12)] $\Lambda_M(\omega;{\Bbb A})$ is the unique solution of the operator equation $\sum_{i=1}^kw_i\log_X(A_i)=0$ where $\log_X(A)=X^{1/2}\log_I\left(X^{-1/2}AX^{-1/2}\right)X^{1/2}.$
\end{itemize}
\end{theorem}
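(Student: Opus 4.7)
Theorem~\ref{lambdaprop2} is largely a packaging result: properties (1)--(10) are already contained in Theorem~\ref{lambdaprop}, so my plan is to quote that theorem and focus the proof on the genuinely new clauses (11) and (12). For (12), existence of a solution of the generalized Karcher equation satisfied by $\Lambda_M(\omega;\mathbb{A})$ is exactly the content of Theorem~\ref{karchersatisfied}, and uniqueness in $\mathbb{P}$ is Theorem~\ref{uniquegeneralizedkarcher}; combining these gives $Ks(\omega,\mathbb{A})=\{\Lambda_M(\omega;\mathbb{A})\}$, which is (12).

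For (11), the plan is to reduce the claim to the characterization (12). Suppose first that $X=\Lambda_M(\omega;A_1,\dots,A_{k-1},X)$. By (12), $X$ is then the unique positive solution of
\begin{equation*}
\sum_{i=1}^{k-1}w_i\log_X(A_i)+w_k\log_X(X)=0.
\end{equation*}
Since $\log_I(1)=0$ by Proposition~\ref{injective}, we have $\log_X(X)=X^{1/2}\log_I(I)X^{1/2}=0$, so the displayed equation collapses to $\sum_{i=1}^{k-1}w_i\log_X(A_i)=0$. Dividing by $1-w_k>0$ yields $\sum_{i=1}^{k-1}\hat w_i\log_X(A_i)=0$, which by (12) applied to the $(k-1)$-tuple $(A_1,\dots,A_{k-1})$ with weight vector $\hat\omega$ forces $X=\Lambda_M(\hat\omega;A_1,\dots,A_{k-1})$. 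The converse is immediate by reversing the same computation: if $X=\Lambda_M(\hat\omega;A_1,\dots,A_{k-1})$, then $\sum_{i=1}^{k-1}\hat w_i\log_X(A_i)=0$ by (12), so $\sum_{i=1}^{k-1}w_i\log_X(A_i)+w_k\log_X(X)=0$, and the uniqueness part of (12) identifies $X$ with $\Lambda_M(\omega;A_1,\dots,A_{k-1},X)$. The unweighted case is just $\omega=(1/k,\dots,1/k)$.

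There is really no hard step here, since all the substantive analysis has been carried out earlier: the contraction estimates of Section~8, the construction of $\Lambda_M$ as a strong-operator limit in Theorem~\ref{inducedconv}, the Karcher equation identity of Theorem~\ref{karchersatisfied}, and the uniqueness argument of Theorem~\ref{uniquegeneralizedkarcher} (which itself rests on the local Implicit Function Theorem argument of Proposition~\ref{propunique}). The only point deserving a line of care is the legitimacy of manipulating $\log_X$ termwise, which is automatic because $\log_X$ is defined pointwise via functional calculus and the sum is a finite linear combination of self-adjoint operators.
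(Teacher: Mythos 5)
Your proposal is correct and follows essentially the same route as the paper: properties (1)--(10) are quoted from Theorem~\ref{lambdaprop}, (12) is the combination of Theorem~\ref{karchersatisfied} with Theorem~\ref{uniquegeneralizedkarcher}, and (11) is deduced from the Karcher-equation characterization via $\log_X(X)=0$ and rescaling the weights by $1-w_k$. Nothing further is needed.
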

\begin{proof}
We only need to show the previously unproved property (11). By Theorem~\ref{uniquegeneralizedkarcher} we have that $\Lambda_M(\omega;A_{1},\dots,A_{k-1},X)=X$ if and only if $\sum_{i=1}^{k-1}w_i\log_X(A_i)+w_k\log_I(I)=0$. Since $\log_I(I)=0$ we get that $\sum_{i=1}^{k-1}w_i\log_X(A_i)=0$, i.e. $\sum_{i=1}^{k-1}\frac{w_i}{1-w_k}\log_X(A_i)=0$.

\end{proof}

\begin{remark}
In \cite{lawsonlim1} Lawson and Lim proved the above theorem for the Karcher mean $\Lambda(\omega;\mathbb{A})$ given in \eqref{karchermean} using the matrix power means $P_t(\omega;\mathbb{A})$. The corresponding Karcher equation is \eqref{karcherequ}. The argument given here for the general case is the extension of their argument to cover all possible cases of induced logarithm maps $\log_I\in\mathfrak{L}$.
\end{remark}

\begin{remark}
The lambda extensions $\Lambda_M(\omega;\mathbb{A})$ of the two-variable weighted arithmetic and harmonic means are multivariable versions $\sum_{i=1}^kw_iA_i$ and $\left(\sum_{i=1}^kw_iA_i^{-1}\right)^{-1}$ respectively. This is so since the one parameter families of the corresponding means $M_t(\omega;\mathbb{A})$ are already $\sum_{i=1}^kw_iA_i$ and $\left(\sum_{i=1}^kw_iA_i^{-1}\right)^{-1}$ respectively.
\end{remark}

\begin{remark}
Theorem~\ref{uniquegeneralizedkarcher} gives us a tool to solve operator equations that can be written in the form of a generalized Karcher equation
\begin{equation*}
\sum_{i=1}^kw_i\log_X(A_i)=0
\end{equation*}
where the corresponding $\log_I$ is induced by an $M\in\mathfrak{M}$. The solution can be calculated by choosing a sequence $t_l\to 0+$ as $l\to\infty$ and then taking the limit
\begin{equation*}
\lim_{l\to\infty}M_{t_l}(\omega;\mathbb{A})=\Lambda_M(\omega;\mathbb{A}).
\end{equation*}
\end{remark}

\begin{corollary}\label{cor:opmean2}
If $k=2$, $\Lambda_M(w_1,w_2;A,B)\in\mathfrak{M}$ is an operator mean.
\end{corollary}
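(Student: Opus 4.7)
The plan is to apply the Kubo--Ando representation theorem (Theorem 3.2 of \cite{kubo}) by constructing a scalar operator monotone representing function $\tilde h\in\mathfrak{m}$ for $\Lambda_M(w_1,w_2;\cdot,\cdot)$. First set $h(C):=\Lambda_M(w_1,w_2;I,C)$ for $C\in\mathbb{P}$. By property (5) of Theorem~\ref{lambdaprop2},
\begin{equation*}
\Lambda_M(w_1,w_2;A,B)=A^{1/2}h(A^{-1/2}BA^{-1/2})A^{1/2},
\end{equation*}
and property (1) gives $h(I)=I$. Hence everything reduces to exhibiting a scalar operator monotone $\tilde h\in\mathfrak{m}$ on $(0,\infty)$ whose continuous functional calculus agrees with $h$ on $\mathbb{P}$.

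The natural source of $\tilde h$ is the approximating one parameter family. By Corollary~\ref{cor:opmean}, for each $0<t\leq t_0$ the two-variable induced mean $M_t(w_1,w_2;\cdot,\cdot)$ is itself an operator mean in $\mathfrak{M}$ with scalar representing function $g_t\in\mathfrak{m}$, so $M_t(w_1,w_2;I,C)=g_t(C)$ via functional calculus. Theorem~\ref{inducedconv} then asserts that $g_t(C)\downarrow h(C)$ strongly and monotonically as $t\to 0+$ for every fixed $C\in\mathbb{P}$. Specializing to scalar $C=cI$ with $c>0$ yields a pointwise scalar limit $\tilde h(c):=\lim_{t\to 0+}g_t(c)$. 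Each $g_t$ is operator monotone on $(0,\infty)$, hence concave and positive, and the family $\{g_t\}$ is uniformly bounded on compact subsets of $(0,\infty)$ by Lemma~\ref{maxmininp}. Consequently the pointwise limit $\tilde h$ is concave (thus continuous on $(0,\infty)$), positive, and operator monotone: if $a\leq b$ are Hermitian with spectra in $(0,\infty)$, then $g_t(a)\leq g_t(b)$ for every $t$, and the strong closedness of the positive semidefinite order lets the inequality pass to the limit.

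To close, I reconcile the operator-valued strong limit $h(C)$ with the scalar functional calculus of $\tilde h$. Since $g_t\downarrow \tilde h$ pointwise on $(0,\infty)$ with continuous limit, Dini's theorem upgrades the convergence to uniform convergence on every compact subset of $(0,\infty)$; the spectrum of any $C\in\mathbb{P}$ is such a compact subset, so continuity of the functional calculus on bounded self-adjoint operators forces $g_t(C)\to\tilde h(C)$ in operator norm, agreeing with the strong limit defining $h(C)$. Therefore $h=\tilde h$ via functional calculus, and since $\tilde h(1)=1$ with $\tilde h>0$ on $(0,\infty)$ (positivity following from the lower bound in property (10) of Theorem~\ref{lambdaprop2}), one has $\tilde h\in\mathfrak{m}$; Kubo--Ando then yields $\Lambda_M(w_1,w_2;\cdot,\cdot)\in\mathfrak{M}$. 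The main technical obstacle is precisely this reconciliation step: strong operator convergence of $g_t(C)$ need not automatically come from pointwise scalar convergence of $g_t$, and the bridge is the monotonicity of the net together with continuity of the concave limit, which together produce the Dini--type uniform convergence needed to feed the continuous functional calculus.
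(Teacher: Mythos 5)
Your proof is correct, but it takes a different route from the paper. The paper's own argument is a two-line appeal to Lemma 6.1 of \cite{kubo}, which states that a pointwise (weak) limit of operator means is again an operator mean; combined with Theorem~\ref{inducedconv}, which exhibits $\Lambda_M(w_1,w_2;A,B)$ as the strong limit of the operator means $M_t(w_1,w_2;A,B)$, the conclusion is immediate. You instead re-prove the relevant special case of that lemma by hand: you extract the scalar representing functions $g_t\in\mathfrak{m}$ of the induced means via Corollary~\ref{cor:opmean}, use the monotonicity of the net from Theorem~\ref{inducedconv} together with concavity (hence continuity) of the pointwise limit $\tilde h$ to invoke Dini's theorem, and then feed the resulting locally uniform convergence into the continuous functional calculus to identify the operator-valued strong limit with $\tilde h$ applied by functional calculus, finally checking the hypotheses of the Kubo--Ando representation theorem. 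This is longer but buys something the paper's proof does not make explicit: a concrete identification of the representing function of the lambda extension as the monotone, locally uniform limit of the $g_t$ (consistent with Proposition~\ref{P:2variablelambda}), and independence from Kubo--Ando's limit lemma. Two minor presentational points: the operator monotonicity of $\tilde h$ should logically come \emph{after} the Dini/functional-calculus step (since passing $g_t(a)\leq g_t(b)$ to the limit requires knowing $g_t(a)\to\tilde h(a)$ in a topology in which the order is closed), and the strict positivity of $\tilde h$ is most directly read off from the weighted harmonic lower bound $\left(w_1+w_2x^{-1}\right)^{-1}\leq g_t(x)$ of Lemma~\ref{maxmininp} rather than from the unweighted bound in property (10); but neither affects the validity of the argument.
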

\begin{proof}
By Theorem~\ref{inducedconv} the lambda extension is the strong limit of (induced) operator means, i.e.
\begin{equation*}
\Lambda_M(w_1,w_2;A,B)=\lim_{t\to 0+}M_{t}(w_1,w_2;A,B).
\end{equation*}
By Lemma 6.1 in \cite{kubo} the pointwise weak limit of operator means is an operator mean as well, so therefore it follows that the strong limit $\Lambda_M(w_1,w_2;A,B)$ of operator means is also an operator mean in the sense of Definition~\ref{symmean}.

\end{proof}

Similarly to the case of the induced means to find closed formulas for $\Lambda_M$ is nontrivial. Although for two variables there is an analogue for Proposition~\ref{P:2variable} in the case of $\Lambda_M$ as well.

\begin{proposition}\label{P:2variablelambda}
Let $\omega\in\Delta_2$, $A,B\in\mathbb{P}$ and $M\in\mathfrak{M}$ with representing function $f(x)=\exp_I(t\log_I(x))$. Then
\begin{eqnarray*}
\Lambda_M(w_1,w_2;A,B)=A^{1/2}g\left(A^{-1/2}BA^{-1/2}\right)A^{1/2}
\end{eqnarray*}
where
\begin{eqnarray}\label{ginverse2}
g^{-1}(x)=x\exp_I\left(-\frac{w_1}{w_2}\log_I(x^{-1})\right).
\end{eqnarray}
\end{proposition}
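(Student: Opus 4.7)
The plan is to exploit property~(5) of Theorem~\ref{lambdaprop2} to reduce to the case where the first argument is $I$, then characterize the lambda extension through the generalized Karcher equation (property~(12) of the same theorem), and finally solve it explicitly using the operator-mean structure guaranteed by Corollary~\ref{cor:opmean2}.

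First I would set $W=A^{-1/2}BA^{-1/2}$ and apply congruence invariance $\Lambda_M(w_1,w_2;A,B)=A^{1/2}\Lambda_M(w_1,w_2;I,W)A^{1/2}$, reducing matters to showing $\Lambda_M(w_1,w_2;I,W)=g(W)$ for the stated $g$. Write $X=\Lambda_M(w_1,w_2;I,W)$. By Corollary~\ref{cor:opmean2}, $\Lambda_M(w_1,w_2;\cdot,\cdot)\in\mathfrak{M}$, so there is an operator monotone representing function $g$ with $X=g(W)$; in particular $X$ and $W$ commute and both commute with every function of $W$ obtained through the continuous functional calculus.

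Next, I would invoke the generalized Karcher equation satisfied by $X$ (property~(12) of Theorem~\ref{lambdaprop2}):
\begin{equation*}
w_1\log_X(I)+w_2\log_X(W)=0,
\end{equation*}
which, after expanding $\log_X(\cdot)=X^{1/2}\log_I(X^{-1/2}\cdot X^{-1/2})X^{1/2}$ and cancelling the invertible factors $X^{1/2}$ on both sides, becomes
\begin{equation*}
w_1\log_I(X^{-1})+w_2\log_I(X^{-1/2}WX^{-1/2})=0.
\end{equation*}
Commutativity of $X$ and $W$ gives $X^{-1/2}WX^{-1/2}=WX^{-1}$, so the equation rearranges to
\begin{equation*}
\log_I(WX^{-1})=-\frac{w_1}{w_2}\log_I(X^{-1}).
\end{equation*}
Applying $\exp_I$ (which is well defined on the spectrum in question, since $\log_I\in\mathfrak{L}$ is a bijection on $(0,\infty)$ by Proposition~\ref{injective}) yields
\begin{equation*}
WX^{-1}=\exp_I\!\Bigl(-\tfrac{w_1}{w_2}\log_I(X^{-1})\Bigr),
\end{equation*}
and multiplication by $X$ on the left (again using commutativity) gives $W=X\exp_I(-\tfrac{w_1}{w_2}\log_I(X^{-1}))$. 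This is precisely the identity $W=g^{-1}(X)$ with $g^{-1}$ as in \eqref{ginverse2}, proving $X=g(W)$.

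The only subtle point I would need to double-check is the use of the functional-calculus identity $X^{-1/2}WX^{-1/2}=WX^{-1}$; this is valid precisely because $X=g(W)$, which is why I insert the appeal to Corollary~\ref{cor:opmean2} before manipulating the Karcher equation. Everything else reduces to an algebraic rearrangement of the scalar functional equation induced by $\log_I$, so no further technical obstacle arises.
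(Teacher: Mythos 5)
Your proof is correct and follows essentially the route the paper intends: the paper's own proof is a one-line deferral to the computation of Proposition~\ref{P:2variable}, with the defining fixed-point equation replaced by the generalized Karcher equation, which is exactly the functional equation $\log_I(U^{-1}W)=-\tfrac{w_1}{w_2}\log_I(U^{-1})$ that you derive and invert. The only cosmetic difference is that the paper's template keeps $A$ general and uses the similarity invariance $Mf(N)M^{-1}=f(MNM^{-1})$ of the functional calculus, whereas you reduce to $A=I$ and justify the rearrangement via commutativity of $X$ and $W$ from Corollary~\ref{cor:opmean2}; both are sound.
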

\begin{proof}
The proof in principle is the same as the proof of Proposition~\ref{P:2variable}.

\end{proof}

\section{Further properties of induced operator means and lambda extensions}
Int this section we consider further properties and problems related to induced operator means and lambda extensions of operator means. One of the main problems here are the characterization of the set of lambda extensions in two variables. The reason for that is, that given a lambda extension $\Lambda_M(\omega;{\Bbb A})$, it can be regarded as a multivariate extension of its two variable version $\Lambda_M(w_1,w_2;A_1,A_2)$. Therefore if a 2-variable operator mean in $\mathfrak{M}$ is a lambda extension, then it automatically admits multivariate extensions through its lambda extension. Similar situation arises in the case of induced operator means.

We start with some basic observations. Elements in $\mathfrak{P}(t)$ directly generate elements of $\mathfrak{L}$.

\begin{proposition}\label{P:PtgenerateL}
Let $f\in\mathfrak{P}(t)$. Then the function $\log_I(x)=\frac{f(x)-1}{f'(1)}$ is in $\mathfrak{L}$.
\end{proposition}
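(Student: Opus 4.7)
The plan is to verify directly the three defining properties of $\mathfrak{L}$: operator monotonicity on $(0,\infty)$, vanishing at $1$, and derivative equal to $1$ at $1$.

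First I would note that since $f \in \mathfrak{P}(t)$ with $t = f'(1) > 0$, the function $\log_I(x) = (f(x)-1)/t$ is obtained from $f$ by an affine transformation with positive leading coefficient. Operator monotonicity is preserved under addition of real constants and multiplication by positive scalars (this is immediate from the definition via the positive definite order on $\mathbb{P}$), so $\log_I$ is operator monotone on $(0,\infty)$.

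Next I would compute the normalizations. Since $f(1) = 1$ by the definition of $\mathfrak{P}(t)$, we obtain $\log_I(1) = (f(1)-1)/t = 0$. Since $f \in \mathfrak{P}(t)$ admits the integral representation \eqref{integralchar} and hence is analytic on $(0,\infty)$, the derivative at $1$ exists and equals $t$, so $\log_I'(1) = f'(1)/t = t/t = 1$.

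Combining the three facts, $\log_I$ is operator monotone on $(0,\infty)$ with $\log_I(1) = 0$ and $\log_I'(1) = 1$, which is precisely the definition of $\mathfrak{L}$. There is no real obstacle here; the statement is essentially a bookkeeping lemma that prepares the way for constructing lambda extensions from arbitrary members of $\mathfrak{P}(t)$ without first having to solve Schr\"oder's functional equation. The only subtlety worth mentioning is the observation that $t = f'(1) > 0$ (guaranteed by Proposition~\ref{inPt} whenever $f$ is not a trivial mean), which ensures the division in the definition is well-defined and preserves operator monotonicity.
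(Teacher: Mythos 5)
Your proof is correct and follows essentially the same route as the paper's (one-line) argument: verify operator monotonicity under the affine rescaling, then check $\log_I(1)=0$ and $\log_I'(1)=1$ from the normalizations $f(1)=1$, $f'(1)=t>0$. The extra detail you supply about positivity of $t$ and preservation of operator monotonicity is just an expansion of what the paper leaves implicit.
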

\begin{proof}
The function $\log_I$ is operator monotone and also $\log_I(1)=0$ and $\log'_I(1)=1$, hence is in $\mathfrak{L}$.

\end{proof}

It immediately follows that the generalized Karcher equations corresponding to such $\log_I$ admit unique solutions which are actually induced operator means, similarly to the case of the matrix power means in Proposition~\ref{powerkarcher}:

\begin{theorem}\label{P:lambdaisinduced}
All induced operator means $M(\omega;\mathbb{A})$ are unique solutions of generalized Karcher equations corresponding to logarithm maps given in Proposition~\ref{P:PtgenerateL}.
\end{theorem}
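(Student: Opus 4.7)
The plan is to show that, under a direct algebraic manipulation, the defining fixed-point equation of the induced operator mean $M(\omega;\mathbb{A})$ coincides with the generalized Karcher equation for the particular logarithm map supplied by Proposition~\ref{P:PtgenerateL}. Uniqueness will then be automatic because the fixed-point equation for $M(\omega;\mathbb{A})$ already has a unique positive solution (via Lemma~\ref{powercontract} and Banach's fixed point theorem).

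First I would fix $M\in\mathfrak{M}$ with representing function $f$. If $f$ is the left trivial mean then $f'(1)=0$ and the construction in Proposition~\ref{P:PtgenerateL} does not apply; the left trivial mean also does not yield a unique induced mean, so this case is excluded. If $f$ is the right trivial mean ($f(x)=x$), then $\log_I(x)=x-1$, the Karcher equation reduces to $\sum_i w_i(A_i-X)=0$, i.e.\ $X=\sum_i w_iA_i$, which coincides with the induced mean of the right trivial mean. In the remaining, generic case, Proposition~\ref{inPt} gives $f\in\mathfrak{P}(t)$ with $t=f'(1)\in(0,1)$, so by Proposition~\ref{P:PtgenerateL} the function
\[
\log_I(x)\;:=\;\frac{f(x)-1}{f'(1)}
\]
belongs to $\mathfrak{L}$.

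Next I would carry out the direct computation. Writing $\log_X(A)=X^{1/2}\log_I(X^{-1/2}AX^{-1/2})X^{1/2}$ as in Definition~\ref{generalizedkarcherequ} and substituting the formula above, the generalized Karcher equation becomes
\[
0=\sum_{i=1}^k w_i\log_X(A_i)=\frac{1}{f'(1)}\sum_{i=1}^k w_i\bigl(X^{1/2}f(X^{-1/2}A_iX^{-1/2})X^{1/2}-X\bigr).
\]
Since $f'(1)\neq 0$, recognizing $M(X,A_i)=X^{1/2}f(X^{-1/2}A_iX^{-1/2})X^{1/2}$ via \eqref{mean} reduces the Karcher equation to
\[
X=\sum_{i=1}^k w_i M(X,A_i),
\]
which is precisely the defining equation \eqref{meanequ3} of the induced operator mean $M(\omega;\mathbb{A})$.

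Finally, to conclude I would invoke the existence and uniqueness result for \eqref{meanequ3} established immediately before the definition of induced means: Lemma~\ref{powercontract} shows that $X\mapsto \sum_i w_i M(X,A_i)$ is a strict contraction on any bounded subset of $\mathbb{P}$ with respect to Thompson's part metric containing the $A_i$, so Banach's fixed-point theorem (together with the covering $\mathbb{P}=\bigcup_k[k^{-1}I,kI]$) yields a unique positive solution, namely $M(\omega;\mathbb{A})$. The only mildly delicate point is making sure that the two notions of ``unique solution in $\mathbb{P}$'' line up — but since the equivalence above is an algebraic identity (not merely an implication), the set of solutions of the Karcher equation equals the set of solutions of the induced-mean equation, and both are the singleton $\{M(\omega;\mathbb{A})\}$.
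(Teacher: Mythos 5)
Your proposal is correct and follows essentially the same route as the paper: both reduce the generalized Karcher equation for $\log_I(x)=\frac{f(x)-1}{f'(1)}$ to the fixed-point equation \eqref{meanequ3} by an algebraic identity and then invoke the contraction-based uniqueness. Your explicit handling of the left and right trivial means is a small extra care the paper omits, but it does not change the argument.
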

\begin{proof}
Let $\omega\in\Delta_k$ and $\mathbb{A}\in\mathbb{P}^k$. Let $M\in\mathfrak{M}$ with representing function $f(x)$. By Proposition~\ref{P:PtgenerateL}, the function $\log_I(x)=\frac{f(x)-1}{f'(1)}$ is in $\mathfrak{L}$. Now the induced operator mean $M(\omega;\mathbb{A})$ is the unique solution of the operator equation \eqref{meanequ3}
\begin{equation*}
X=\sum_{i=1}^kw_iM(X,A_i).
\end{equation*}
This is equivalent to
\begin{eqnarray*}
0&=&\sum_{i=1}^kw_i[M(X,A_i)-X]\\
0&=&\sum_{i=1}^kw_i\frac{M(X,A_i)-X}{f'(1)}\\
0&=&\sum_{i=1}^kw_iX^{1/2}\log_I(X^{-1/2}A_iX^{-1/2})X^{1/2}\\
0&=&\sum_{i=1}^kw_i\log_X(A_i),
\end{eqnarray*}
a generalized Karcher equation.

\end{proof}

The above property of induced operator means is an notable structural result which makes induced means quite similar to lambda extensions. It is important to note however, that such $\log_I$ might not give rise to operator means in the form $f_t(x)=\exp_I(t\log_I(x))$ for any $t\in(0,1)$, since $f_t(x)$ might have ramification points (where it is obviuosly not holomorphic) corresponding to the ramification points of $\log_I$, hence the $\exp_I$ lack holomorphic inverses at those points.

How about the converse of the above result? Given a $\log_I\in\mathfrak{L}$ we might provide the unique solutions of the operator equations $\sum_{i=1}^kw_i\log_X(A_i)=0$ directly using the induced mean $M(\omega;\mathbb{A})$.
\begin{proposition}
Let $\log_I\in\mathfrak{L}$ and suppose that $\lim_{x\to 0+}\log_I(x)>-\infty$. Then the function
\begin{equation*}
f_t(x)=t\log_I(x)+1
\end{equation*}
is in $\mathfrak{P}(t)$ for $0<t\leq\frac{1}{|\lim_{x\to 0+}\log_I(x)|}$.
\end{proposition}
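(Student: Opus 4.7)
The claim breaks naturally into four conditions defining $\mathfrak{P}(t)$: $f_t$ must be operator monotone on $(0,\infty)$; we must have $f_t(1)=1$ and $f_t'(1)=t$; and crucially $f_t(x)>0$ for every $x>0$. The first three are immediate; essentially all of the content lies in establishing positivity, and this is where the hypothesis $\lim_{x\to 0+}\log_I(x)>-\infty$ and the range $0<t\leq 1/|\lim_{x\to 0+}\log_I(x)|$ will be used.

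For the easy parts, note that $\log_I$ is operator monotone on $(0,\infty)$ by hypothesis, so $t\log_I(x)$ is operator monotone for any $t>0$, and adding the constant $1$ preserves operator monotonicity; thus $f_t$ is operator monotone. Substituting $x=1$ and using $\log_I(1)=0$ (from the definition of $\mathfrak{L}$) gives $f_t(1)=1$, and differentiating together with $\log_I'(1)=1$ gives $f_t'(1)=t$.

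The main step is positivity. Because $\log_I\in\mathfrak{L}$, it is operator monotone, hence monotone, and in fact strictly monotone on $(0,\infty)$ (by Proposition~\ref{injective} it is injective and nonconstant). Therefore the infimum of $\log_I$ on $(0,\infty)$ is attained only in the limit as $x\to 0+$, and the strict monotonicity gives
\[
\log_I(x) \;>\; \lim_{y\to 0+}\log_I(y) \;=\; -L, \qquad x>0,
\]
where $L=|\lim_{x\to 0+}\log_I(x)|<\infty$ by assumption. Note $L\geq 1$, since by Lemma~\ref{maxmininl} we have $\log_I(x)\leq x-1$, so $\lim_{x\to 0+}\log_I(x)\leq -1$. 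Hence for any $t\in(0,1/L]$ and any $x>0$,
\[
f_t(x) \;=\; 1+t\log_I(x) \;>\; 1-tL \;\geq\; 0.
\]
This gives $f_t(x)>0$ throughout $(0,\infty)$, which together with operator monotonicity and the normalization $f_t(1)=1$, $f_t'(1)=t$ established above shows $f_t\in\mathfrak{P}(t)$.

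The only possible subtlety is the edge case $t=1/L$: the strict inequality $\log_I(x)>-L$ is what guarantees that even at this endpoint $f_t$ remains strictly positive rather than merely nonnegative, so it belongs to $\mathfrak{P}(t)$ in the sense of the paper's definition. No other obstacle is expected, since the whole argument reduces to a scalar lower bound propagated through the functional calculus via the standard interpretation of operator monotone functions.
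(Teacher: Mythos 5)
Your proof is correct and follows essentially the same route as the paper: operator monotonicity of $f_t$ is inherited from $\log_I$, the normalizations $f_t(1)=1$, $f_t'(1)=t$ are immediate from $\log_I(1)=0$, $\log_I'(1)=1$, and positivity follows because strict monotonicity forces $f_t(x)>\lim_{y\to 0+}f_t(y)=1-tL\geq 0$ for $t\leq 1/L$. Your added observation that $L\geq 1$ (via $\log_I(x)\leq x-1$) is a harmless extra that the paper omits.
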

\begin{proof}
Since $f_t$ is operator monotone hence monotonically increasing, therefore $f_t(x)>0$ for all $x\in(0,\infty)$ by operator monotonicity and the fact that $\lim_{x\to 0+}f_t(x)\geq 0$. Also $f_t(1)=1$, so $f_t\in\mathfrak{P}(t)$.

\end{proof}

\begin{proposition}
Let $\log_I\in\mathfrak{L}$ with $\lim_{x\to 0+}\log_I(x)>-\infty$ so by the previous result $M_t(I,x)=f_t(x)=t\log_I(x)+1$ is in $\mathfrak{P}(t)$ for $0<t\leq\frac{1}{|\lim_{x\to 0+}\log_I(x)|}$. Then the induced operator mean $M_t(\omega,\mathbb{A})$ is the unique solution of the generalized Karcher equation
\begin{equation*}
\sum_{i=1}^kw_i\log_X(A_i)=0
\end{equation*}
in $\mathbb{P}$ where $\log_X(A)=X^{1/2}\log_I\left(X^{-1/2}AX^{-1/2}\right)X^{1/2}$.
\end{proposition}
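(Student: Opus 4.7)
The plan is to show the equivalence of the defining equation for $M_t(\omega,\mathbb{A})$ with the generalized Karcher equation via a direct algebraic manipulation, exploiting the fact that the representing function $f_t(x) = t\log_I(x)+1$ is affine in $\log_I$.

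First I would expand $M_t(X,A_i)$ using formula \eqref{mean} and the particular form of $f_t$. Since
\begin{equation*}
M_t(X,A_i) = X^{1/2}\bigl[t\log_I(X^{-1/2}A_iX^{-1/2})+1\bigr]X^{1/2} = t\log_X(A_i) + X,
\end{equation*}
the defining equation of the induced operator mean (equation \eqref{meanequ3}) becomes
\begin{equation*}
X = \sum_{i=1}^{k} w_i M_t(X,A_i) = t\sum_{i=1}^{k} w_i\log_X(A_i) + X\sum_{i=1}^{k}w_i,
\end{equation*}
which, since $\omega\in\Delta_k$, simplifies to $0 = t\sum_{i=1}^{k} w_i\log_X(A_i)$. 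Because $t>0$, this is precisely the generalized Karcher equation $\sum_{i=1}^k w_i\log_X(A_i)=0$. So $X\in\mathbb{P}$ solves the Karcher equation if and only if it solves the defining equation for the induced operator mean $M_t(\omega;\mathbb{A})$.

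For uniqueness I would simply invoke the earlier uniqueness result for induced operator means: by Lemma~\ref{powercontract} and the subsequent Banach fixed point argument, the equation $X = \sum_{i=1}^{k}w_iM_t(X,A_i)$ has a unique positive solution in $\mathbb{P}$, namely $M_t(\omega,\mathbb{A})$. Since we have just shown this equation is identical to the generalized Karcher equation (after dividing by $t$), the Karcher equation has $M_t(\omega,\mathbb{A})$ as its unique positive solution.

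There is essentially no obstacle here, since the proposition amounts to noting that the particular additive form $f_t(x) = t\log_I(x)+1$ of the representing function converts the fixed-point equation directly into the Karcher equation without passing through the $\exp_I$–$\log_I$ semigroup machinery used in Theorem~\ref{uniquegeneralizedkarcher}. The only mild subtlety worth flagging is that the range of admissible $t$ (guaranteeing $f_t\in\mathfrak{P}(t)$) is exactly what the preceding proposition supplies, so that $M_t\in\mathfrak{M}$ and the induced operator mean $M_t(\omega,\mathbb{A})$ is well defined in the first place.
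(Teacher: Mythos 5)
Your proposal is correct and is essentially the paper's own argument: the paper likewise rewrites the defining fixed-point equation as $0=\sum_{i=1}^kw_iX^{1/2}\frac{M_t(I,X^{-1/2}A_iX^{-1/2})-I}{t}X^{1/2}=\sum_{i=1}^kw_i\log_{X}(A_i)$, which is exactly your observation that $M_t(X,A_i)=t\log_X(A_i)+X$ collapses the equation to the generalized Karcher equation. The uniqueness via the contraction/fixed-point result for induced means is also the intended mechanism.
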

\begin{proof}
By definition the induced operator mean $X_t=M_t(\omega,\mathbb{A})$ is the unique solution of
\begin{equation*}
X_t=\sum_{i=1}^kw_iM_t(X_t,A_i)
\end{equation*}
in $\mathbb{P}$. The above is equivalent to
\begin{equation*}
0=\sum_{i=1}^kw_iX_t^{1/2}\frac{M_t(I,X_t^{-1/2}A_iX_t^{-1/2})-I}{t}X_t^{1/2}=\sum_{i=1}^kw_i\log_{X_t}(A_i).
\end{equation*}

\end{proof}

\begin{remark}
In the above case it is clear that every $M_t(\omega;\mathbb{A})$ is just the same for all $0<t\leq\frac{1}{|\lim_{x\to 0+}\log_I(x)|}$, also $M_t(\omega;\mathbb{A})$ is a one parameter family of induced operator means, although they generally do not form those one parameter families of induced operator means, in the sense of the previous section, that lead to a lambda extension by letting $t\to 0+$.
\end{remark}

One might wonder whether all lambda extensions occur as induced matrix means. If that would be the case we could directly calculate lambda extensions using induced operator means. This is not the case however:

\begin{proposition}\label{P:2variablelambdainduced}
Let $M\in\mathfrak{M}$ with representing function $f(x)=\exp_I(t\log_I(x))$. Suppose that the corresponding lambda extension $\Lambda_M(w_1,w_2;A,B)=N(w_1,w_2;A,B)$ for all $\omega\in\Delta_2$, $A,B\in\mathbb{P}$ where $N\in\mathfrak{M}$ with representing function $g(x)$. Then
\begin{eqnarray*}
\log_I(x)=\frac{g(x)-1}{g'(1)}.
\end{eqnarray*}
\end{proposition}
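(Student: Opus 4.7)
The plan is to reduce the hypothesis to a scalar functional equation by equating the Kubo-Ando representing functions on the two sides, and then to extract the claimed identity via a single differentiation at $w_1=0$. By Proposition~\ref{P:2variablelambda}, the representing function $g_\Lambda$ of $\Lambda_M(w_1,w_2;\cdot,\cdot)$ is determined by
\begin{equation*}
g_\Lambda^{-1}(x)=x\exp_I\!\left(-\tfrac{w_1}{w_2}\log_I(x^{-1})\right),
\end{equation*}
while by Proposition~\ref{P:2variable} the representing function $g_N$ of the induced mean $N(w_1,w_2;\cdot,\cdot)$ is determined by
\begin{equation*}
g_N^{-1}(x)=x\,g^{-1}\!\left(\tfrac{1-w_1 g(x^{-1})}{w_2}\right).
\end{equation*}
The hypothesis forces $g_\Lambda=g_N$ on $(0,\infty)$; cancelling the factor $x$ on the two sides and applying $\log_I$ gives the scalar identity
\begin{equation*}
\log_I\!\left(g^{-1}\!\left(\tfrac{1-w_1 g(x^{-1})}{w_2}\right)\right)=-\tfrac{w_1}{w_2}\log_I(x^{-1}),
\end{equation*}
valid for every $x>0$ and every $w_1\in(0,1)$ with $w_2=1-w_1$.

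Next I would make the substitutions $y=x^{-1}$ and $u=g(y)$, which turn the identity into the one-parameter family
\begin{equation*}
\log_I\!\left(g^{-1}\!\left(\tfrac{1-tu}{1-t}\right)\right)=-\tfrac{t}{1-t}\,\log_I(g^{-1}(u)),\qquad t:=w_1\in(0,1),
\end{equation*}
valid for all $u$ in the range of $g$. Both sides vanish at $t=0$ by the normalisations $g(1)=1$ and $\log_I(1)=0$, so it is natural to differentiate in $t$ at $t=0$. A routine computation gives $\tfrac{d}{dt}\bigl[-\tfrac{t}{1-t}\bigr]\big|_{t=0}=-1$ and $\tfrac{d}{dt}\bigl[\tfrac{1-tu}{1-t}\bigr]\big|_{t=0}=1-u$, so using $\log_I'(1)=1$ and $(g^{-1})'(1)=1/g'(1)$ together with the chain rule yields
\begin{equation*}
-\log_I(g^{-1}(u))=\frac{1-u}{g'(1)}.
\end{equation*}
Setting $y=g^{-1}(u)$, this rearranges to $\log_I(y)=\frac{g(y)-1}{g'(1)}$, which is exactly the conclusion.

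The only real obstacle is justifying the differentiation at $t=0$ and the division by $g'(1)$. Both are mild: $\log_I$ and $g$ are real analytic on $(0,\infty)$ by the integral representations \eqref{nevanlinna2} and \eqref{kuboint}, so the composition $t\mapsto\log_I\!\left(g^{-1}((1-tu)/(1-t))\right)$ is analytic in a neighbourhood of $t=0$ for each fixed $u$, legitimising the derivative calculation; and $g'(1)\in(0,1]$ by Proposition~\ref{inPt}, with $g'(1)>0$ unless $N$ is the left trivial mean (a degenerate case in which the hypothesis is vacuous, since the defining equation for the induced mean of the left trivial mean has no unique solution). Hence the inverse function theorem supplies $(g^{-1})'(1)=1/g'(1)$, and the argument goes through.
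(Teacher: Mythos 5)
Your proposal is correct and follows essentially the same route as the paper: equate the two inverse representing functions from Propositions~\ref{P:2variable} and~\ref{P:2variablelambda}, cancel the factor $x$, and differentiate with respect to $w_1$ at $w_1=0$ using the normalisations $\exp_I(0)=1$, $\exp_I'(0)=1$, $\log_I(1)=0$, $\log_I'(1)=1$. Your extra step of applying $\log_I$ and substituting $u=g(x^{-1})$ before differentiating is only a cosmetic variation, and your added remarks on analyticity and on $g'(1)>0$ are sensible but not a departure from the paper's argument.
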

\begin{proof}
By Proposition~\ref{P:2variable} and~\ref{P:2variablelambda} we have that
\begin{eqnarray*}
x\exp_I\left(-\frac{w_1}{w_2}\log_I(x^{-1})\right)&=&xg^{-1}\left(\frac{1-w_1g(x^{-1})}{w_2}\right)\\
\exp_I\left(-\frac{w_1}{1-w_1}\log_I(x^{-1})\right)&=&g^{-1}\left(\frac{1-w_1g(x^{-1})}{1-w_1}\right)\\
\frac{\partial}{\partial w_1}\left.\exp_I\left(-\frac{w_1}{1-w_1}\log_I(x^{-1})\right)\right|_{w_1=0}&=&\left.\frac{\partial}{\partial w_1}g^{-1}\left(\frac{1-w_1g(x^{-1})}{1-w_1}\right)\right|_{w_1=0}\\
-\log_I(x^{-1})&=&\frac{1}{g'(1)}(1-g(x^{-1}))
\end{eqnarray*}
from which the assertion follows.

\end{proof}

\begin{remark}
Not all $\log_I\in\mathfrak{L}$ can be given in the above form. The most convenient counterexample is the principal branch of the logarithm $\log_I(x)=\log(x)$. It is easy to see that if $\log(x)=\frac{g(x)-1}{g'(1)}$ then $g(x)=g'(1)\log(x)+1$. There exists no such $g(x)$, since the function $s\log(x)+1$ is although operator monotone, but is not positive on $(0,\infty)$  close to zero for any finite $s>0$, hence it cannot be a representing function of any member of $\mathfrak{M}$.
\end{remark}

How big is the set of lambda extensions $\Lambda_M(w_1,w_2;A,B)$ in $\mathfrak{M}$? Is any element of $\mathfrak{M}$ a lambda extension of some $M\in\mathfrak{M}$? If that is the case then every operator mean in the sense of Kubo-Ando occurs as a lambda extension, hence has a multivariable extension $\Lambda_M(\omega;{\Bbb A})$ with the same $M\in\mathfrak{M}$. We investigate this question now.

\begin{proposition}\label{P:2variablelambdainPt}
Let $M\in\mathfrak{M}$ with representing function $f(x)=\exp_I(t\log_I(x))$. Then the representing function $g(x)$ of the corresponding lambda extension $\Lambda_M(w_1,w_2;A,B)$ for all $\omega\in\Delta_2$, $A,B\in\mathbb{P}$ is in $\mathfrak{P}(w_2)$.
\end{proposition}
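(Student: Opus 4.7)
The plan is to reduce the statement to four checkable conditions that define membership in $\mathfrak{P}(w_2)$: operator monotonicity on $(0,\infty)$, positivity on $(0,\infty)$, the normalization $g(1)=1$, and the derivative identity $g'(1)=w_2$. The first three will be essentially free from earlier results, and all the work goes into the last one, which will be obtained by a direct differentiation of the inverse formula provided by Proposition~\ref{P:2variablelambda}.

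First, by Corollary~\ref{cor:opmean2}, $\Lambda_M(w_1,w_2;A,B)$ is itself a two-variable operator mean in $\mathfrak{M}$, so its representing function $g$ is operator monotone and normalized with $g(1)=1$. Applying Proposition~\ref{harmonicreprprop} to this $g$, we obtain $g(x)=\int_{[0,1]}[(1-s)+sx^{-1}]^{-1}d\nu(s)$ for a probability measure $\nu$ on $[0,1]$, so $g(x)>0$ for every $x\in(0,\infty)$. Hence, once we establish $g'(1)=w_2\in(0,1)$, the trichotomy in Proposition~\ref{inPt} will force $g\in\mathfrak{P}(g'(1))=\mathfrak{P}(w_2)$.

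Second, I would use Proposition~\ref{P:2variablelambda}, which supplies the closed expression $g^{-1}(x)=x\exp_I\!\left(-\tfrac{w_1}{w_2}\log_I(x^{-1})\right)$. Writing $u(x)=-\tfrac{w_1}{w_2}\log_I(x^{-1})$, we have $u(1)=0$ and $u'(1)=\tfrac{w_1}{w_2}\log_I'(1)=\tfrac{w_1}{w_2}$, using that $\log_I\in\mathfrak{L}$ so $\log_I(1)=0$ and $\log_I'(1)=1$. Since $\exp_I$ is the holomorphic inverse of $\log_I$ at $0$, we have $\exp_I(0)=1$ and $\exp_I'(0)=1$. Differentiating the product $g^{-1}(x)=x\exp_I(u(x))$ and evaluating at $x=1$ yields
\begin{equation*}
(g^{-1})'(1)=\exp_I(0)+\exp_I'(0)\cdot u'(1)=1+\frac{w_1}{w_2}=\frac{1}{w_2},
\end{equation*}
so the inverse function theorem gives $g'(1)=w_2$. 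Since $\omega\in\Delta_2$ forces $w_2\in(0,1)$, combining this with the first paragraph shows $g\in\mathfrak{P}(w_2)$.

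The proof has no real obstacle, since all ingredients are in place; the only subtlety is justifying the differentiation of $g^{-1}$ at $x=1$, which follows from the analyticity of $\log_I$ on $(0,\infty)$ (being operator monotone, cf.\ the integral representation \eqref{nevanlinna2}) together with the local holomorphic inverse $\exp_I$ guaranteed by Proposition~\ref{injective} and the remark preceding Proposition~\ref{meanswithexp}.
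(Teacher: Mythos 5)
Your proof is correct and follows essentially the same route as the paper: both hinge on differentiating the inverse formula $g^{-1}(x)=x\exp_I\bigl(-\tfrac{w_1}{w_2}\log_I(x^{-1})\bigr)$ from Proposition~\ref{P:2variablelambda} at $x=1$ using $\exp_I(0)=\exp_I'(0)=1$ and $\log_I(1)=0$, $\log_I'(1)=1$ to get $(g^{-1})'(1)=1/w_2$, hence $g'(1)=w_2$, and then invoking the fact that the lambda extension is itself a Kubo--Ando mean to place $g$ in $\mathfrak{P}(w_2)$. Your citations (Corollary~\ref{cor:opmean2} plus Propositions~\ref{harmonicreprprop} and~\ref{inPt} for positivity and membership) are a slightly more detailed bookkeeping of the same final step the paper handles by appealing to Theorem~\ref{lambdaprop2}.
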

\begin{proof}
By Proposition~\ref{P:2variablelambda} we have that
\begin{eqnarray*}
g^{-1}(x)&=&x\exp_I\left(-\frac{w_1}{w_2}\log_I(x^{-1})\right)\\
\frac{\partial}{\partial x}\left.g^{-1}(x)\right|_{x=1}&=&\frac{\partial}{\partial x}\left.\exp_I\left(-\frac{w_1}{w_2}\log_I(x^{-1})\right)\right|_{x=1}\\
\frac{1}{g'(g^{-1}(1))}&=&\exp_I\left(-\frac{w_1}{w_2}\log_I(x^{-1})\right)\\
&&+\left.x\exp'_I\left(-\frac{w_1}{w_2}\log_I(x^{-1})\right)\frac{w_1}{w_2}\log'_I(x^{-1})x^{-2}\right|_{x=1}\\
\frac{1}{g'(1)}&=&1+\frac{w_1}{w_2}=\frac{1}{w_2}
\end{eqnarray*}
where we used that $\exp_I(0)=1$, $\exp'_I(0)=1$, $\log_I(1)=0$, $\log'_I(1)=1$. Now by Theorem~\ref{lambdaprop2} we know that $\Lambda_M(w_1,w_2;A,B)$ is a matrix or operator mean since the properties in Definition~\ref{symmean} are fulfilled hence the representing function $g(x)$ is positive operator monotone on $(0,\infty)$, moreover $g'(1)=w_2$, so $g\in\mathfrak{P}(w_2)$.

\end{proof}

By the previous Proposition~\ref{P:2variablelambdainPt} it is clear that if an operator mean is a lambda extension, then the derivative $g'(1)=w_2$ of its representing function, where $\omega=(w_1,w_2)$ is the weight of the lambda extension. In the next result we will use the following fact. A real function $g(x)$ is a representing function of a mean in $\mathfrak{M}$ if and only if the function $g^*(x)=\frac{x}{g(x)}$ is a representing function of a mean in $\mathfrak{M}$, i.e. it is positive operator monotone on $(0,\infty)$ (cf. Proposition 7.1 \cite{schilling}). In this setting we say that $g^*(x)$ is the conjugate pair of $g(x)$ and vice versa.

\begin{theorem}\label{2varlambdacharacterization}
Let $M\in\mathfrak{M}$ be an operator mean with representing function $g(x)$ such that $g'(1)\neq 0,1/2,1$. Let $g^*(x)=\frac{x}{g(x)}$ denote the conjugate pair. Define the function $h(x)$ s.t.
\begin{enumerate}
\item if $g'(1)<1/2$ then $h(x):=\frac{x}{g^{*-1}(x)}$
\item if $g'(1)>1/2$ then $h(x):=xg^{-1}(x^{-1}).$
\end{enumerate}
Then $M$ is a lambda extension if and only if there exists a positive integer $n$, such that the function $h^{\circ 2n}(x)$ is in $\mathfrak{m}$, i.e. it is a representing function of an operator mean in $\mathfrak{M}$. Moreover in this case the function $\log_I\in\mathfrak{L}$ in \eqref{ginverse2} in Proposition~\ref{P:2variablelambda} is unique.
\end{theorem}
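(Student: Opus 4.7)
The strategy is to reduce both directions to the Schroeder-type functional equation $\log_I\circ h=-r\log_I$ satisfied by the Koenigs function of the inducing semigroup, where $r\in(0,1)$ equals $(1-t)/t$ in case~(2) and $t/(1-t)$ in case~(1). The hypothesis $h^{\circ 2n}\in\mathfrak{m}$ serves only to guarantee the existence (and uniqueness) of that $\log_I$.

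\smallskip

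\emph{Only if.} Suppose $M=\Lambda_{M'}(w_1,w_2;\cdot,\cdot)$ for some $M'\in\mathfrak{M}$ with representing function $f=\exp_I(t_0\log_I)$, $\log_I\in\mathfrak{L}$. By Proposition~\ref{P:2variablelambda}, $g^{-1}(x)=x\exp_I(-\tfrac{1-t}{t}\log_I(x^{-1}))$ with $t=w_2=g'(1)$. In case~(2) the substitution $x\mapsto x^{-1}$ in this formula gives immediately $h(x)=xg^{-1}(x^{-1})=\exp_I(-r\log_I(x))$ with $r=(1-t)/t<1$. In case~(1) I would use $g(v)g^*(v)=v$ to write $v:=g^{*-1}(x)$ as the solution of $g(v)=v/x$; plugging $y=v/x$ into the lambda extension formula and simplifying yields $\log_I(x/v)=-\tfrac{t}{1-t}\log_I(x)$, i.e.\ $h(x)=x/v=\exp_I(-r\log_I(x))$ with $r=t/(1-t)<1$. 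Iterating the Schroeder relation then gives
\begin{equation*}
h^{\circ 2n}(x)=\exp_I\bigl(r^{2n}\log_I(x)\bigr),
\end{equation*}
and for $n$ large enough that $r^{2n}\le t_0$, Proposition~\ref{ftinPt} places $h^{\circ 2n}$ in $\mathfrak{P}(r^{2n})\subseteq\mathfrak{m}$.

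\smallskip

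\emph{If.} Assume $h^{\circ 2n}\in\mathfrak{m}$ for some $n$. A direct differentiation of the definitions gives $h'(1)=-r$ in both cases, so $(h^{\circ 2n})'(1)=r^{2n}\in(0,1)$, and Theorem~\ref{uniformconv} produces a unique $\log_I\in\mathfrak{L}$ with $h^{\circ 2n}(x)=\exp_I(r^{2n}\log_I(x))$. The crucial step is to pass from $h^{\circ 2n}$ back to $h$. I would set $\psi(y):=\log_I(h(\exp_I(y)))$ and use $h\circ h^{\circ 2n}=h^{\circ 2n}\circ h$ together with the Schroeder equation for $h^{\circ 2n}$ to derive the homogeneity $\psi(r^{2n}y)=r^{2n}\psi(y)$. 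Since $\psi$ is real-analytic near $0$ with $\psi(0)=0$ and $r^{2n}\neq r^{2nk}$ for integer $k\neq 1$, the power series expansion forces $\psi(y)=\psi'(0)y$; a chain-rule computation gives $\psi'(0)=h'(1)=-r$, so $\log_I(h(x))=-r\log_I(x)$, i.e.\ $h=\exp_I(-r\log_I)$.

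\smallskip

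Inverting the definition of $h$ using $h^{-1}(u)=\exp_I(-\log_I(u)/r)$ (and $g(w)=w/x$, $w=g^{*-1}(x)$ in case~(1); a direct substitution in case~(2)) recovers
\begin{equation*}
g^{-1}(y)=y\exp_I\bigl(-\tfrac{1-t}{t}\log_I(y^{-1})\bigr),
\end{equation*}
which is the lambda extension formula of Proposition~\ref{P:2variablelambda}. Any $s_0\in(0,r^{2n}]$ produces, via Proposition~\ref{ftinPt}, an element $\exp_I(s_0\log_I)\in\mathfrak{P}(s_0)\subseteq\mathfrak{m}$; taking it as the representing function of an $M'\in\mathfrak{M}$ exhibits $M$ as $\Lambda_{M'}(w_1,w_2;\cdot,\cdot)$. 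Uniqueness of $\log_I$ finally follows by applying Proposition~\ref{meanswithexp} to the operator mean $h^{\circ 2n}$: its Koenigs function is unique in the class of continuously differentiable invertible functions on $(0,\infty)$ vanishing at $1$ with derivative $1$, and the only-if direction shows any admissible $\log_I$ must belong to this class and satisfy the same Schroeder equation.

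\smallskip

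The main obstacle is the iteration-root step where $h$ must be recovered from $h^{\circ 2n}$; the homogeneity/power-series argument substitutes for the abstract uniqueness of iteration roots and essentially uses the attractivity $|r|<1$, which is exactly what the exclusion $g'(1)\ne 1/2$ secures. The cases $g'(1)=0,1$ correspond to the trivial means and are degenerate for the semigroup picture, which is why they are excluded from the statement.
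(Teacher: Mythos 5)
Your proof is correct, and its ``only if'' half coincides with the paper's: both derive $h=\exp_I(-r\,\log_I)$ from the lambda-extension formula of Proposition~\ref{P:2variablelambda} (with $r$ equal to $w_1/w_2$ or $w_2/w_1$ according to the case), iterate the Schroeder relation, and invoke Proposition~\ref{ftinPt} once $r^{2n}\le t_0$. Where you genuinely diverge is the key step of the converse: recovering $h$ from $h^{\circ 2n}$. The paper cites Kuczma's Theorem~11.6.1 on locally analytic iterative roots to assert that every $2n$-th analytic root of $h^{\circ 2n}$ near the fixed point $1$ has the form $\exp_I\bigl(c^{1/(2n)}\log_I(z)\bigr)$, and then selects the correct complex root of $c$ by matching $h'(1)$; uniqueness of $\log_I$ is likewise delegated to Kuczma's Theorem~4.6.1. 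You instead exploit the commutation $h\circ h^{\circ 2n}=h^{\circ 2n}\circ h$ to obtain the homogeneity $\psi(r^{2n}y)=r^{2n}\psi(y)$ for the conjugated map $\psi=\log_I\circ h\circ\exp_I$, and kill all Taylor coefficients of order $\ge 2$ using $r^{2n}\ne r^{2nk}$; this is a self-contained, elementary replacement of the iteration-root theorem (it is in effect the standard proof that a map commuting with a linearizable contraction is linearized by the same Koenigs coordinate), and your appeal to Proposition~\ref{meanswithexp} for uniqueness stays entirely inside the paper. Two small points you gloss over, neither fatal: the identity $\log_I(h(x))=-r\log_I(x)$ is first obtained only near $x=1$ and must be propagated to all of $(0,\infty)$ by analytic continuation before you can conclude $g$ equals the representing function of the lambda extension globally (the paper says this explicitly); and you should note that $c=r^{2n}\in(0,1)$, so $h^{\circ 2n}$ is a nontrivial mean and Theorem~\ref{uniformconv} applies to produce an operator monotone $\log_I\in\mathfrak{L}$.
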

\begin{proof}
Suppose that $M$ is a lambda extension. Then by Proposition~\ref{P:2variablelambda} and some simple calculation with $g^*(x)$ we have
\begin{eqnarray*}
xg^{-1}(x^{-1})&=&\exp_I\left(-\frac{w_1}{w_2}\log_I(x)\right)\\
\frac{x}{g^{*-1}(x)}&=&\exp_I\left(-\frac{w_2}{w_1}\log_I(x)\right)
\end{eqnarray*}
where $\log_I\in\mathfrak{L}$ and there exists $t_0\in(0,1]$ s.t. $f_{t_0}(x)=\exp_I(t_0\log_I(x))$ is in $\mathfrak{P}(t_0)$. Then by Proposition~\ref{ftinPt} we have that for all $t\in[0,t_0)$ the function $f_{t}(x)=\exp_I(t\log_I(x))$ is in $\mathfrak{P}(t)$. By the previous Proposition~\ref{P:2variablelambdainPt} we have that $w_2=g'(1)$, moreover it is not hard to see that for the conjugate pair $w_1={g^*}'(1)$. So this means that either $g'(1)<1/2$ or $g'(1)>1/2$ the derivative $|h'(1)|<1$, since then $h'(1)$ is either $-w_2/w_1$ or $-w_1/w_2$. Also we have that $h^{\circ 2}(x)=\exp_I\left(h'(1)^2\log_I(x)\right)$ where $0\leq h'(1)^2<1$. So the iterates
\begin{equation*}
h^{\circ 2n}(x)=\exp_I\left(h'(1)^{2n}\log_I(x)\right),
\end{equation*}
i.e. there exists a positive integer $n$, s.t. $h'(1)^{2n}\leq t_0$. This means that $h^{\circ 2n}\in\mathfrak{P}(h'(1)^{2n})$ by Proposition~\ref{ftinPt}.

Conversely suppose that there exists a positive integer $n$, such that the function $h^{\circ 2n}(x)\in\mathfrak{m}$ (it is a representing function of an operator mean), or equivalently $h^{\circ 2n}\in\mathfrak{P}(h'(1)^{2n})$. This also means that by denoting $c=\left.\frac{d}{dx}h^{\circ 2n}(x)\right|_{x=1}$ we have according to Theorem~\ref{uniformconv} and Proposition~\ref{meanswithexp} that
\begin{equation*}
h^{\circ 2n}(x)=\exp_I\left(c\log_I(x)\right),
\end{equation*}
where $\log_I$ is unique among functions in $\mathfrak{L}$. Also note that by the definition of $h(x)$ it follows that $c=h'(1)^{2n}$ and also $-1<h'(1)<0$. Now by Theorem 11.6.1 in \cite{kuczma} concerning locally analytic iterative roots of holomorphic functions having a fixed point with corresponding derivative at the fixed point which has modulus strictly less than 1, it follows that all $2n$-th iterative holomorphic roots of $h^{\circ 2n}(z)$ in the neighborhood of the fixed point $1$ are of the form 
\begin{equation*}
h(z)=\exp_I\left(c^{1/(2n)}\log_I(z)\right)
\end{equation*}
in an open neighborhood of the fixed point $1$, where $c^{1/(2n)}$ can be any complex $2n$-th root of $c$. But in our case $-1<h'(1)<0$ by definition of $h(x)$, so it follows that
\begin{equation}\label{eq:2varlambdacharacterization1}
h(z)=\exp_I\left(h'(1)\log_I(z)\right)
\end{equation}
in an open neighborhood of the fixed point $1$, since the derivatives of the iterative root and $h(z)$ at the fixed point $1$ must be the same. Moreover Theorem 4.6.1 in \cite{kuczma} also ensures us that this $\log_I$ in \eqref{eq:2varlambdacharacterization1} is unique among holomorphic functions $\sigma$ with $\sigma(1)=0$, $\sigma'(1)=1$ in the neighborhood of the fixed point $1$, hence among functions in $\mathfrak{L}$. Since $h'(1)$ is either $-w_2/w_1$ or $-w_1/w_2$ given that either $g'(1)<1/2$ or $g'(1)>1/2$ with $w_1={g^*}'(1)$ and $w_2=1-w_1=g'(1)$ we arrive at the two equations
\begin{eqnarray*}
zg^{-1}(z^{-1})&=&\exp_I\left(-\frac{w_1}{w_2}\log_I(z)\right)\\
\frac{z}{g^{*-1}(z)}&=&\exp_I\left(-\frac{w_2}{w_1}\log_I(z)\right),
\end{eqnarray*}
where the first equation holds if $g'(1)>1/2$ and if $g'(1)<1/2$ then the second one is fulfilled in a neighborhood of the fixed point $1$. Now since the functions $g$ and $\log_I$ are holomorphic everywhere on $\mathbb{C}\setminus [0,-\infty)$ we obtain from this by analytic continuation that
\begin{equation*}
w_1\log_I(1/g(z))+w_2\log_I(z/g(z))=0
\end{equation*}
is satisfied, which is the generalized Karcher equation for $\log_I$ and since by assumption $h^{\circ 2n}\in\mathfrak{P}(h'(1)^{2n})$, therefore $\log_I$ generates a one parameter family of induced operator means converging to the lambda extension. We already derived that $\log_I$ is uniquely determined by $h$ through \eqref{eq:2varlambdacharacterization1}. Also it is easy to see that $h$ is uniquely determined by $g$, so actually $g$ uniquely determines $\log_I$ and vice versa.

\end{proof}

We were unable to derive similar characterizations in the case when $g'(1)=1/2$. There is one clue however.
\begin{proposition}\label{2varlambdacharacterizatio2}
Let $M\in\mathfrak{M}$ be an operator mean with representing function $g(x)$ such that $g'(1)=1/2$. Define the function as $h(x):=xg^{-1}(x^{-1})$. Then $M$ is a lambda extension if and only if $g(x)=xg(1/x)$ and there exists a holomorphic function $k(z)$ with $k(1)=0$, $k'(1)=1/2$ s.t.
\begin{equation}\label{eq:2varlambdacharacterizatio2}
\log_I(z)=-k(h(z))+k(z)
\end{equation}
where $\log_I\in\mathfrak{L}$ and there exists a $t\in(0,1]$ such that the function $f_t(x)=\exp_I(t\log_I(x))$ is in $\mathfrak{P}(t)$ (where $\exp_I$ denotes the inverse of $\log_I$ as usual).
\end{proposition}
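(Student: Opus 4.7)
The strategy is to recognize that, whenever $g$ is symmetric, the map $h(z)=zg^{-1}(z^{-1})$ is an involution, and that under the change of variable $z=1/g(y)$ the two-variable generalized Karcher equation (Proposition~\ref{P:2variablelambda}) with symmetric weights $w_1=w_2=1/2$ takes the compact form $\log_I(z)+\log_I(h(z))=0$. Once this is established, the representation $\log_I(z)=k(z)-k(h(z))$ becomes essentially tautological via the choice $k=\log_I/2$.

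First I would verify the key lemma: if $g(z)=zg(1/z)$, then $h\circ h=\mathrm{id}$. With $y=g^{-1}(z^{-1})$, i.e.\ $g(y)=z^{-1}$, symmetry yields $g(1/y)=g(y)/y=1/(zy)$, so $g^{-1}(1/(zy))=1/y$. Consequently $h(z)=zy$ and
\[
h(h(z))=h(zy)=(zy)\,g^{-1}((zy)^{-1})=(zy)(1/y)=z.
\]
The same calculation shows $h(z)=y/g(y)$, so substituting $z=1/g(y)$ in the Karcher equation for $\Lambda_M(1/2,1/2;I,y)$, namely $\log_I(1/g(y))+\log_I(y/g(y))=0$, produces $\log_I(z)+\log_I(h(z))=0$.

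For the necessity direction: if $M$ is a lambda extension, Proposition~\ref{P:2variablelambdainPt} forces $w_2=g'(1)=1/2$ and hence $w_1=1/2$, so by property~(2) of Theorem~\ref{lambdaprop2} the mean is symmetric and $g(z)=zg(1/z)$. The unique $\log_I\in\mathfrak{L}$ supplied by Proposition~\ref{P:2variablelambda} satisfies $\log_I\circ h=-\log_I$ by the lemma above. Setting $k:=\log_I/2$, one immediately has $k(1)=0$, $k'(1)=1/2$, and
\[
k(z)-k(h(z))=\tfrac{1}{2}\bigl(\log_I(z)-\log_I(h(z))\bigr)=\log_I(z).
\]
The existence of $t\in(0,1]$ with $f_t\in\mathfrak{P}(t)$ is built into the definition of a lambda extension.

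For the sufficiency direction, assuming $g(z)=zg(1/z)$ and $\log_I(z)=k(z)-k(h(z))$ with $\log_I\in\mathfrak{L}$, the lemma gives $h\circ h=\mathrm{id}$, whence
\[
\log_I(h(z))=k(h(z))-k(z)=-\log_I(z).
\]
Re-substituting $z=1/g(y)$ shows that $X=g(y)$ solves the two-variable Karcher equation $\tfrac{1}{2}\log_X(I)+\tfrac{1}{2}\log_X(y)=0$. Since some $f_t\in\mathfrak{P}(t)$ with $t\in(0,1]$ is assumed, Theorem~\ref{uniquegeneralizedkarcher} identifies this unique solution with $\Lambda_{M_t}(1/2,1/2;I,y)$. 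Property~(5) of Theorem~\ref{lambdaprop2} then upgrades this scalar identity to $M(A,B)=\Lambda_{M_t}(1/2,1/2;A,B)$, exhibiting $M$ as a lambda extension. The only real subtlety is to ensure that the involution identity $h\circ h=\mathrm{id}$ and the functional equation propagate from the real domain to the full holomorphic domain $\mathbb{C}\setminus(-\infty,0]$, which is automatic since $g$, $g^{-1}$, and $\log_I$ all admit unambiguous analytic continuations there.
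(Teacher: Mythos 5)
Your proof is correct, and it takes a genuinely more self-contained route than the paper. The paper's proof leans on Theorem 4.6.2 of Kuczma et al., which characterizes \emph{all} normalized holomorphic solutions of the functional equation $\sigma(h(z))=-\sigma(z)$ near the fixed point: a solution exists iff $h\circ h=\mathrm{id}$, and every solution has the form $\sigma=k-k\circ h$. The paper then only needs to observe that $g(x)=xg(1/x)$ is equivalent to $h\circ h=\mathrm{id}$ and that the Karcher equation for equal weights reduces to $\log_I\circ h=-\log_I$. You bypass the citation entirely: for necessity you note that $k=\log_I/2$ already realizes the representation once $\log_I\circ h=-\log_I$ is in hand (which you extract, as the paper does, from Proposition~\ref{P:2variablelambda} together with $w_1=w_2=1/2$ forced by Proposition~\ref{P:2variablelambdainPt} and symmetry forced by permutation invariance), and for sufficiency you check by hand that symmetry of $g$ makes $h$ an involution, so that the representation $\log_I=k-k\circ h$ immediately yields $\log_I\circ h=-\log_I$ and hence the two-variable generalized Karcher equation, after which Theorem~\ref{uniquegeneralizedkarcher} identifies $g$ with the representing function of the lambda extension. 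What the paper's route buys is the stronger structural statement that the $k$'s in \eqref{eq:2varlambdacharacterizatio2} exhaust all normalized solutions, which explains why the proposition is phrased as an existence clause over all such $k$; what your route buys is a shorter, elementary argument in which the existence clause becomes nearly tautological ($k=\log_I/2$ always works). Your closing remark about propagating the identities from a real neighborhood of $1$ to $\mathbb{C}\setminus(-\infty,0]$ by analytic continuation addresses a point the paper also treats implicitly, so no gap there.
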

\begin{proof}
This result is based on Theorem 4.6.2 in \cite{kuczma} and the functional equation
\begin{equation}\label{eq:2varlambdacharacterizatio2:2}
\log_I(h(z))=-\log_I(z)
\end{equation}
which follows from Proposition~\ref{P:2variablelambda} and Proposition~\ref{P:2variablelambdainPt} with $w_2=g'(1)$. Theorem 4.6.2 in \cite{kuczma} says (with transforming the fixed point $1$ to $0$) that \eqref{eq:2varlambdacharacterizatio2:2} has a solution $\log_I(z)$ with $\log_I(1)=0$ and $\log_I'(1)=1$ if and only if $h(h(z))=z$ and all solutions of \eqref{eq:2varlambdacharacterizatio2:2} are given in the form \eqref{eq:2varlambdacharacterizatio2} where $k(z)$ ranges over all holomorphic functions with $k(1)=0$, $k'(1)=1/2$. Now some calculation reveals that $g(x)=xg(1/x)$ is equivalent to $h(h(x))=x$. Also there must exist a $t\in(0,1]$ such that the function $f_t(x)=\exp_I(t\log_I(x))$ is in $\mathfrak{P}(t)$, otherwise $\log_I$ does not generate a one parameter family of induced operator means that converge to the lambda extension as $t\to 0+$.

\end{proof}


\begin{remark}
It is easy to see that there exist representing functions $g$ in $\mathfrak{m}$ such that $g'(1)=1/2$, but $g(x)\neq xg(1/x)$. By Proposition~\ref{harmonicreprprop} and Proposition~\ref{inPt}
\begin{equation*}
g(x)=\int_{[0,1]}(1-s+sx^{-1})^{-1}d\nu(s)
\end{equation*}
where $\nu$ is a probability measure on $[0,1]$ and $g'(1)=\int_{[0,1]}sd\nu(s)$. We also have that property $g(x)=xg(1/x)$ is equivalent to the symmetricity of the represented operator mean by $g$, which itself is equivalent to $d\nu(s)=d\nu(1-s)$ for all $s\in[0,1]$ according to Corollary~\ref{chracterizesymmetric}. It is easy to construct a probability measure $\nu$ on $[0,1]$ such that $\int_{[0,1]}sd\nu(s)=1/2$, but $d\nu(s)\neq d\nu(1-s)$. So it follows that there exist operator means $M\in\mathfrak{M}$ such that they are not lambda extensions, so the set of 2-variable lambda extensions form a proper subset of $\mathfrak{M}$.
\end{remark}

\begin{remark}
We mention one more problem in this section. This is related to the integral representation by Proposition~\ref{harmonicreprprop} of positive operator monotone $f(x)$ on $(0,\infty)$ with $f(1)=1$:
\begin{equation}\label{harmonicrepr2}
f(x)=\int_{[0,1]}[(1-s)+sx^{-1}]^{-1}d\nu(s)
\end{equation}
where $\nu$ is a unique probability measure over the closed interval $[0,1]$. This representation has a natural analogue in multiple variables for $A_i\in\mathbb{P}$, $1\leq i\leq k$:
\begin{equation}\label{harmonicreprk}
M(A_1,\ldots,A_k)=\int_{\Delta_k}\left(\sum_{i=1}^kw_iA_i^{-1}\right)^{-1}d\nu(w_1,\ldots,w_k)
\end{equation}
where $\nu$ is a probability measure over the simplex $\Delta_k$. It is clear that $M(A_1,\ldots,A_k)$ is operator monotone in each of its entries since it is the convex combination (with respect to the probability measure $\nu$ supported over the compact $\Delta_k$) of weighted harmonic means, moreover fulfills the property $CM(A_1,\ldots,A_k)C^*=M(CA_1C^*,\ldots,CA_kC^*)$ and also $M(A,\ldots,A)=A$. Moreover for positive real numbers it turns into a positive real function. The question is how large is the set of $k$-variable functions on $\mathbb{P}$ that can be represented in the form \eqref{harmonicreprk}? Are all functions $M(A_1,\ldots,A_k):\mathbb{P}^k\mapsto\mathbb{P}$ with properties
\begin{enumerate}
	\item $M(A,\ldots,A)=A$,
	\item $CM(A_1,\ldots,A_k)C^*=M(CA_1C^*,\ldots,CA_kC^*)$ for all $C\in \mathrm{GL}(E)$,
	\item $M(A_1,\ldots,A_k)$ is operator monotone,
	\item $M(a_1,\ldots,a_k)$ is real for all $0<a_i\in\mathbb{R}$,
\end{enumerate}
representable in the form \eqref{harmonicreprk}?
\end{remark}

\section{The classification of affine matrix means}
In this section we turn back to one of the first mentioned problems for matrix means and characterize all affine matrix means. In order to do this we stick again to the finite dimensional case of $\textit{P}(n,\mathbb{C})$. Due to Proposition~\ref{mappingsprop} we have the exponential and logarithm map of affine matrix means in the form
\begin{equation}\label{mappings3}
\begin{split}
\exp_p(X)=p^{1/2}\exp_I\left(p^{-1/2}Xp^{-1/2}\right)p^{1/2}\\
\log_p(X)=p^{1/2}\log_I\left(p^{-1/2}Xp^{-1/2}\right)p^{1/2}
\end{split}
\end{equation}
for $p\in \textit{P}(n,\mathbb{C})$, where $\exp_I(X)$ and $\log_I(X)$ are analytic functions. The function $\exp_I:\textit{H}(n,\mathbb{C})\mapsto \textit{P}(n,\mathbb{C})$ and $\log_I(X)$ is its inverse, $\log'_I(I)=I, \exp'_I(0)=I, \log_I(I)=0, \exp_I(0)=I$. Suppose that \eqref{mappings3} represent the exponential and logarithm map of an affinely connected manifold. Then the analytic function $\exp_I(t)$ is the solution of some geodesic equations 
\begin{equation*}
\exp''_I(t)+\Gamma\left(\exp'_I(t),\exp'_I(t),\exp_I(t)\right)=0\text{,}
\end{equation*}
where $\Gamma(\cdot,\cdot,\cdot):\textit{H}(n,\mathbb{C})\times \textit{H}(n,\mathbb{C})\times \textit{P}(n,\mathbb{C})\mapsto \textit{H}(n,\mathbb{C})$ is a smooth function in all variables and linear in the first two, representing the Christoffel symbols of an affine connection. By Propostion~15 and Corollary~16 of Chapter~6 in \cite{spivak} we know that connections which have the same torsion and geodesics are identical and for an arbitrary connection there is a unique connection with vanishing torsion and with the same geodesics. If we have an affine connection with non-symmetric Christoffel symbols $\Gamma^{i}_{jk}$, it has the same geodesics as its symmetric part $\frac{\Gamma^{i}_{jk}+\Gamma^{i}_{kj}}{2}$, so without loss of generality we can assume in our case that all connections are symmetric, so we will be considering mappings $\Gamma(\cdot,\cdot,\cdot)$ which are symmetric in their first two arguments.

\begin{proposition}\label{transform1}
Suppose that $\Gamma(\cdot,\cdot,\cdot), \exp_I(\cdot), \exp_p(\cdot)$ are functions given with the above properties. Then
\begin{equation}
\Gamma(X,X,p)=p^{1/2}\Gamma\left(p^{-1/2}Xp^{-1/2},p^{-1/2}Xp^{-1/2},I\right)p^{1/2}
\end{equation}
for $p\in \textit{P}(n,\mathbb{C})$ and $X\in \textit{H}(n,\mathbb{C})$.
\end{proposition}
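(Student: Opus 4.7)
The plan is to derive the identity directly from the geodesic equation by inserting into it the geodesic $\gamma(t)=\exp_p(Xt)$ and evaluating at $t=0$. Under the stated assumptions, $\gamma$ is the unique geodesic through $p$ with initial velocity $X$, so it must satisfy the ODE
\[
\gamma''(t)+\Gamma\bigl(\gamma'(t),\gamma'(t),\gamma(t)\bigr)=0,
\]
and at $t=0$ this reads $\Gamma(X,X,p)=-\gamma''(0)$. So the whole argument reduces to computing $\gamma''(0)$ from the closed-form expression of $\exp_p$ and matching it against the special case $p=I$.

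First I would expand $\gamma(t)=p^{1/2}\exp_I\!\bigl(p^{-1/2}Xp^{-1/2}\,t\bigr)p^{1/2}$ and note the crucial commutativity: for every $t$ the argument of $\exp_I$ is a scalar multiple of the fixed Hermitian matrix $Y:=p^{-1/2}Xp^{-1/2}$, so all values $Y t$ commute with each other and with $Y$ itself. Using the power series for $\exp_I$ and term-by-term differentiation, this gives
\[
\gamma'(t)=p^{1/2}\exp_I'(Yt)\,Y\,p^{1/2},\qquad \gamma''(t)=p^{1/2}\exp_I''(Yt)\,Y^2\,p^{1/2}.
\]
Setting $t=0$ and using $\exp_I(0)=I$, $\exp_I'(0)=I$ yields $\gamma(0)=p$, $\gamma'(0)=X$, and
\[
\gamma''(0)=p^{1/2}\,\exp_I''(0)\bigl(p^{-1/2}Xp^{-1/2}\bigr)^2\,p^{1/2}.
\]

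Next I would specialize to $p=I$, where the same computation with $\gamma(t)=\exp_I(Xt)$ gives $\Gamma(X,X,I)=-\exp_I''(0)X^2$. Substituting $Y=p^{-1/2}Xp^{-1/2}$ in place of $X$ in this identity produces $\Gamma(Y,Y,I)=-\exp_I''(0)Y^2$, and combining with the boxed expression for $\gamma''(0)$ gives exactly
\[
\Gamma(X,X,p)=-\gamma''(0)=p^{1/2}\,\Gamma\bigl(p^{-1/2}Xp^{-1/2},p^{-1/2}Xp^{-1/2},I\bigr)\,p^{1/2},
\]
which is the desired relation.

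The only potentially delicate point is justifying the chain rule for $t\mapsto \exp_I(Yt)$ in the operator setting; but since $Yt$ commutes with $Y$ this is immediate from the power series representation of the analytic function $\exp_I$ on a neighborhood of $0$, and it is really the same commutativity trick already exploited in the proof of the proposition following Definition~\ref{affinemean}. No further input is needed: the whole claim is a book-keeping consequence of the geodesic ODE together with the explicit form \eqref{mappings3} of the exponential map, and symmetry of $\Gamma$ in its first two slots ensures that equality on the diagonal $(X,X)$ carries all the information we need at this stage.
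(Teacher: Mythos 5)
Your argument is correct, and it rests on the same engine as the paper's proof: substitute the explicit geodesic $\gamma(t)=p^{1/2}\exp_I\bigl(p^{-1/2}Xp^{-1/2}t\bigr)p^{1/2}$ into the geodesic ODE and read off $\Gamma(X,X,p)=-\ddot\gamma(0)$. The difference is in how the two sides are compared. The paper never evaluates $\ddot\gamma(0)$ in closed form: it writes down the geodesic equation once for the curve $\exp_I(Yt)$ based at $I$ (with $Y=p^{-1/2}Xp^{-1/2}$), conjugates it by $p^{1/2}$, writes it down again for $\exp_p(Xt)$, observes that the two left-hand sides are literally the same function of $t$, and equates the right-hand sides at $t=0$ using only $\exp_I(0)=I$ and $\exp_I'(0)=I$. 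You instead compute $\ddot\gamma(0)=p^{1/2}\exp_I''(0)\,Y^2\,p^{1/2}$ explicitly from the power series (legitimate, since $Yt$ commutes with $Y$), which yields the stronger statement $\Gamma(X,X,I)=-\exp_I''(0)X^2$, hence $\Gamma(X,X,p)=-\exp_I''(0)Xp^{-1}X$, from which the covariance identity is immediate. What your route buys is that it makes Proposition~\ref{transform2} and Theorem~\ref{transform3} essentially redundant — the constant $c=\exp_I''(0)$ and the quadratic form of $\Gamma$ on the diagonal drop out at once — whereas the paper deliberately isolates the covariance identity here and defers the explicit evaluation of $\Gamma(\cdot,\cdot,I)$ to Theorem~\ref{transform3} via the decomposition of a diagonal matrix into orthogonal projections. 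Both are sound; yours is shorter but proves more than the stated proposition, the paper's is more modular.
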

\begin{proof}
Let $\gamma(t)=\exp_I\left(p^{-1/2}Xp^{-1/2}t\right)$. Since $\exp_I$ is an analytic function we have
\begin{equation*}
\begin{split}
\dot{\gamma}(t)&=p^{-1/2}Xp^{-1/2}\exp'_I\left(p^{-1/2}Xp^{-1/2}t\right)\\
\ddot{\gamma}(t)&=p^{-1/2}Xp^{-1/2}\exp''_I\left(p^{-1/2}Xp^{-1/2}t\right)p^{-1/2}Xp^{-1/2}
\end{split}
\end{equation*}
and other formulas hold for $\dot{\gamma}(t)$ and $\ddot{\gamma}(t)$ similarly to the second part of the proof of Theorem~\ref{geodesics}. By the geodesic equations we have
\begin{equation*}
\begin{split}
&\ddot{\gamma}(t)=-\Gamma\left(\dot{\gamma}(t),\dot{\gamma}(t),\gamma(t)\right)\\
Xp^{-1/2}\exp''_I\left(p^{-1/2}Xp^{-1/2}t\right)&p^{-1/2}X=-p^{1/2}\Gamma\left(p^{-1/2}Xp^{-1/2}\exp'_I\left(p^{-1/2}\times\right.\right.\\
\left.Xp^{-1/2}t\right),p^{-1/2}Xp^{-1/2}\exp'_I&\left(p^{-1/2}Xp^{-1/2}t\right),\left.\exp_I\left(p^{-1/2}Xp^{-1/2}t\right)\right)p^{1/2}\text{.}
\end{split}
\end{equation*}
If we consider the geodesic equations for $\gamma(t)=\exp_p(Xt)$ we get
\begin{equation*}
\begin{split}
Xp^{-1/2}\exp''_I\left(p^{-1/2}Xp^{-1/2}t\right)p^{-1/2}X=-\Gamma\left(Xp^{-1/2}\exp'_I\left(p^{-1/2}Xp^{-1/2}t\right)p^{1/2},\right.\\
p^{1/2}\exp'_I\left.\left(p^{-1/2}Xp^{-1/2}t\right)p^{-1/2}X,p^{1/2}\exp_I\left(p^{-1/2}Xp^{-1/2}t\right)p^{1/2}\right)\text{.}
\end{split}
\end{equation*}
The left hand sides of the two equations above are the same so as the right hand sides. Taking $t=0$ and that $\exp'_I(0)=I, \exp_I(0)=I$ we get for all $p\in \textit{P}(n,\mathbb{C}), X\in \textit{H}(n,\mathbb{C})$ that
\begin{equation*}
\begin{split}
p^{1/2}\Gamma\left(p^{-1/2}Xp^{-1/2},p^{-1/2}Xp^{-1/2},I\right)p^{1/2}=\\
=\Gamma\left(X,X,p\right)\text{,}
\end{split}
\end{equation*}
which proves the assertion.

\end{proof}

By the above result we have just reduced the problem of characterizing $\Gamma\left(X,X,p\right)$ to the characterzation of $\Gamma\left(X,X,I\right)$. Now we will show that $\Gamma\left(X,X,p\right)$ is invariant under similarity transformations.

\begin{proposition}\label{transform2}
For all $p\in \textit{P}(n,\mathbb{C})$ and $X\in \textit{H}(n,\mathbb{C})$ and invertible $S$ we have
\begin{equation}
\Gamma\left(SXS^{-1},SXS^{-1},SpS^{-1}\right)=S\Gamma\left(X,X,p\right)S^{-1}\text{.}
\end{equation}
\end{proposition}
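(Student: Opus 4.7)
The plan is to mimic the argument of Proposition~\ref{transform1}, with the congruence action $p \mapsto p^{-1/2}(\cdot)p^{-1/2}$ replaced by the similarity action $p \mapsto SpS^{-1}$. The main engine is the equivariance of the exponential map under similarity.

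First I would establish that for every invertible $S$,
\begin{equation*}
S\exp_p(Xt)S^{-1} = \exp_{SpS^{-1}}(SXS^{-1}t).
\end{equation*}
Using the second form in \eqref{mappings2}, namely $\exp_p(X) = p\exp_I(p^{-1}X)$, and noting that $(SpS^{-1})^{-1}(SXS^{-1}) = Sp^{-1}XS^{-1}$, the analytic power series of $\exp_I$ gives term-by-term $\exp_I(Sp^{-1}XS^{-1}t) = S\exp_I(p^{-1}Xt)S^{-1}$. Multiplying on the left by $SpS^{-1}$ yields $SpS^{-1}\exp_I((SpS^{-1})^{-1}(SXS^{-1})t) = S\cdot p\exp_I(p^{-1}Xt)\cdot S^{-1} = S\exp_p(Xt)S^{-1}$, which is the claim.

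Next I would set $\gamma(t) = \exp_p(Xt)$ and $\tilde\gamma(t) = S\gamma(t)S^{-1}$. Since conjugation by $S$ commutes with the time derivative, $\dot{\tilde\gamma}(t) = S\dot\gamma(t)S^{-1}$ and $\ddot{\tilde\gamma}(t) = S\ddot\gamma(t)S^{-1}$. The geodesic equation for $\gamma$, $\ddot\gamma(t) + \Gamma(\dot\gamma(t),\dot\gamma(t),\gamma(t)) = 0$, conjugated by $S$, becomes
\begin{equation*}
\ddot{\tilde\gamma}(t) + S\Gamma(\dot\gamma(t),\dot\gamma(t),\gamma(t))S^{-1} = 0.
\end{equation*}
On the other hand, by the equivariance established above, $\tilde\gamma(t) = \exp_{SpS^{-1}}(SXS^{-1}t)$ is itself a geodesic starting at $SpS^{-1}$ with initial velocity $SXS^{-1}$, hence satisfies its own geodesic equation
\begin{equation*}
\ddot{\tilde\gamma}(t) + \Gamma(\dot{\tilde\gamma}(t),\dot{\tilde\gamma}(t),\tilde\gamma(t)) = 0.
\end{equation*}
Subtracting the two equations and evaluating at $t=0$ (where $\dot\gamma(0) = X$, $\gamma(0) = p$) gives the desired identity $S\Gamma(X,X,p)S^{-1} = \Gamma(SXS^{-1},SXS^{-1},SpS^{-1})$.

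The principal obstacle, which I would address last, is that for a general invertible $S$ the matrix $SpS^{-1}$ need not lie in $\textit{P}(n,\mathbb{C})$ and $SXS^{-1}$ need not be Hermitian, so the symbols $\Gamma(SXS^{-1},SXS^{-1},SpS^{-1})$ and $\exp_{SpS^{-1}}(\cdot)$ are not immediately defined. To handle this I would first run the argument for $S$ in a sufficiently small neighborhood of $I$ in $\textrm{GL}(n,\mathbb{C})$, where $SpS^{-1}$ remains positive definite and $SXS^{-1}$ remains close to the Hermitian matrix $X$, so that everything is defined on the original domain. Since both sides of the identity are analytic in the entries of $S$ (via the power series of $\exp_I$ and the smoothness of $\Gamma$, both of which admit holomorphic extensions to an open neighborhood of the real domain in $M_n(\mathbb{C})$), the identity propagates by analytic continuation in $S$ to all of $\textrm{GL}(n,\mathbb{C})$, with $\Gamma$ interpreted through its natural holomorphic extension.
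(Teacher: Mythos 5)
Your core argument coincides with the paper's: conjugate the geodesic equation by $S$, use the power series of $\exp_I$ to identify the conjugated curve with another $\exp$-curve, and read off the identity by comparing the two geodesic equations. (The paper works at basepoint $I$ and compares $\exp_I(Xt)$ with $\exp_I(SXS^{-1}t)$; your version at a general basepoint $p$ with evaluation at $t=0$ is a harmless variant that in fact delivers the statement at every $p$ more directly.) The one step that would fail as written is your reduction to $S$ in a small neighborhood of $I$: for a non-unitary $S$ arbitrarily close to the identity, $SXS^{-1}$ is not Hermitian and $SpS^{-1}$ is not Hermitian (it has positive spectrum, being similar to $p$, but it does not lie in $\textit{P}(n,\mathbb{C})$), so no neighborhood of $I$ in $\mathrm{GL}(n,\mathbb{C})$ keeps the arguments of $\Gamma$ and of the exponential map inside their original real domain, and there is no open set of $S$ from which to launch the analytic continuation. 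The identity holds verbatim only for unitary $S$; to reach all of $\mathrm{GL}(n,\mathbb{C})$ one must either continue holomorphically from the totally real submanifold $\textit{U}(n,\mathbb{C})$ (whose complexification is $\mathrm{GL}(n,\mathbb{C})$), or work from the outset with the complex-bilinear extension of $\Gamma(\cdot,\cdot,q)$ and the analytic extension in $q$ --- which is what the paper does tacitly when it writes down the geodesic equation for $\exp_I(SXS^{-1}t)$ without comment. Note that in the sequel only the unitary instance is actually used (to reduce $\Gamma(X,X,I)$ to diagonal $X$), so the case your argument does establish rigorously already suffices for the classification.
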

\begin{proof}
We have by the geodesic equations
\begin{equation*}
\begin{split}
X^2\exp_I''(Xt)&=-\Gamma\left(X\exp_I'(Xt),X\exp_I'(Xt),\exp_I(Xt)\right)\\
SX^2\exp_I''(Xt)S^{-1}&=-S\Gamma\left(X\exp_I'(Xt),X\exp_I'(Xt),\exp_I(Xt)\right)S^{-1}\text{.}
\end{split}
\end{equation*}
Similarly if we consider the geodesic equations for the curve $\gamma(t)=\exp_I\left(SXS^{-1}t\right)$ we get
\begin{equation*}
\begin{split}
SX^2S^{-1}\exp_I''(SXS^{-1}t)&=-\Gamma\left(SXS^{-1}\exp_I'(SXS^{-1}t),SXS^{-1}\exp_I'(SXS^{-1}t),\right.\\
&\left.\exp_I(SXS^{-1}t)\right)\\
SX^2\exp_I''(Xt)S^{-1}&=-\Gamma\left(SX\exp_I'(Xt)S^{-1},SX\exp_I'(Xt)S^{-1},\right.\\
&\left.S\exp_I(Xt)S^{-1}\right)\text{.}
\end{split}
\end{equation*}
Again since the above two equations are identical we get the assertion.

\end{proof}

By the above proposition we have for Hermitian $X$ that
\begin{equation}
\Gamma\left(X,X,I\right)=U\Gamma\left(D,D,I\right)U^{*}\text{,}
\end{equation}
for some diagonal $D$ and unitary $U$, so it is enough to characterize $\Gamma\left(X,X,I\right)$ for diagonal $X$.

\begin{theorem}\label{transform3}
Let $D$ be diagonal with real coefficients. Then
\begin{equation}
\Gamma\left(D,D,I\right)=-cD^2\text{,}
\end{equation}
for some real valued constant $c$.
\end{theorem}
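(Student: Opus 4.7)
The plan is to read off $\Gamma(D,D,I)$ directly from the geodesic equation applied to the explicit geodesic $\gamma(t) := \exp_I(Dt)$ emanating from $I$ with initial velocity $D$. Since $\exp_I$ acts via scalar functional calculus (as has been the standing interpretation throughout the preceding propositions) and $D$ is diagonal real, the curve $\gamma(t)$ is diagonal for all $t$, and its Taylor expansion in $t$ can be read off a single-variable expansion of $\exp_I$.

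Concretely, I would write the scalar power series
\begin{equation*}
\exp_I(x) = 1 + x + \tfrac{c}{2}\, x^2 + O(x^3),
\end{equation*}
with $c := \exp_I''(0) \in \mathbb{R}$, where the constant and linear coefficients are pinned down by the normalizations $\exp_I(0)=1$ and $\exp_I'(0)=1$. Applying functional calculus to $Dt$ and differentiating twice in $t$ yields
\begin{equation*}
\gamma(0)=I, \qquad \dot\gamma(0) = D, \qquad \ddot\gamma(0) = cD^2.
\end{equation*}
Substitution into the geodesic equation $\ddot\gamma(0) + \Gamma(\dot\gamma(0),\dot\gamma(0),\gamma(0)) = 0$ gives $\Gamma(D,D,I) = -cD^2$, which is precisely the assertion.

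The main point to justify is that $\exp_I''(0)$ is a single real scalar independent of $D$, i.e.\ that $\exp_I$ genuinely acts via scalar functional calculus rather than being a more general matrix-valued analytic map. This is however implicit in the computations of Propositions~\ref{transform1} and~\ref{transform2}: the identity $\dot\gamma(t) = p^{-1/2}Xp^{-1/2}\exp_I'(p^{-1/2}Xp^{-1/2}t)$ used there relies on $\exp_I$ being a single-variable power series applied through the functional calculus, so that a matrix commutes with its own image and derivatives under $\exp_I$. Granting this standing interpretation, no additional invariance argument via the preceding two propositions is required, and the whole proof reduces to reading off the quadratic Taylor coefficient of $\exp_I$ at $0$.
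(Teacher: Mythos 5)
Your argument is correct, and it rests on the same underlying identity as the paper's proof --- substituting the explicit geodesic $\gamma(t)=\exp_I(Xt)$ into the geodesic equation and evaluating at $t=0$ --- but you execute it in one step where the paper takes four. The paper first shows $\Gamma(I,I,I)=-cI$ using $\gamma(t)=\exp_I(\lambda It)$, then $\Gamma(P,P,I)=-cP$ for projections (which is exactly your computation, with $P^2=P$ used to rewrite $cP^2$ as $cP$), then derives the orthogonality relation $\Gamma(P_1,P_2,I)=0$ for mutually orthogonal projections from bilinearity, and finally assembles $\Gamma(D,D,I)=-cD^2$ from the spectral decomposition $D=\sum_i\lambda_iP_i$. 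You simply keep $D^2$ instead of specializing to idempotents, which collapses the spectral bookkeeping entirely; as you note, the only thing to justify is that $\exp_I''(0)$ is a single real scalar, and this is indeed the standing interpretation: $\log_I$ is produced by Theorem~\ref{uniformconv} as a scalar operator monotone function with real Taylor coefficients, $\exp_I$ is its functional-calculus inverse, and the formula $\dot\gamma(t)=p^{-1/2}Xp^{-1/2}\exp_I'\bigl(p^{-1/2}Xp^{-1/2}t\bigr)$ in the proof of Proposition~\ref{transform1} already presupposes exactly this. The paper makes the same appeal when it asserts that $c=\exp_I''(0)$ is real ``since $\exp_I$ is an analytic function with real coefficients in its Taylor series,'' so you are not assuming anything the paper does not. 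A side benefit of your version is that it yields $\Gamma(X,X,I)=-cX^2$ for arbitrary Hermitian $X$ directly, which makes the reduction to diagonal matrices via Proposition~\ref{transform2} unnecessary for the purposes of Theorem~\ref{classify}; what the paper's longer route buys is the explicit orthogonality relation between spectral projections, which is mildly illuminating but not needed for the stated conclusion.
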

\begin{proof}
First we will show that $\Gamma\left(I,I,I\right)=cI$ for some real constant $c$. Consider the case when $\gamma(t)=\exp_I(\lambda It)$ for some real $\lambda$. Then by the geodesic equations for $\gamma(t)$ we have
\begin{equation*}
\lambda^2\exp_I''(\lambda It)=-\Gamma\left(\lambda\exp_I'(\lambda It),\lambda\exp_I'(\lambda It),\exp_I(\lambda It)\right)\text{.}
\end{equation*}
By linearity of $\Gamma(\cdot,\cdot,\cdot)$ in the first two variables, this is equivalent to
\begin{equation*}
\lambda^2\exp_I''(\lambda It)=-\lambda^2\Gamma\left(\exp_I'(\lambda It),\exp_I'(\lambda It),\exp_I(\lambda It)\right)\text{.}
\end{equation*}
Letting $t=0$ we get
\begin{equation*}
cI=-\Gamma\left(I,I,I\right)\text{,}
\end{equation*}
where $c=\exp_I''(0)$ is a real number, since $\exp_I:\textit{H}(n,\mathbb{C})\mapsto \textit{P}(n,\mathbb{C})$ is an analytic function with real coefficients in its Taylor series.

The next step is to show that for a projection $P=P^2=P^*$ we have $\Gamma\left(P,P,I\right)=-cP$. Consider again $\gamma(t)=\exp_I(Pt)$. Then the geodesic equations read
\begin{equation*}
P^2\exp_I''(Pt)=-\Gamma\left(P\exp_I'(Pt),P\exp_I'(Pt),\exp_I(Pt)\right)\text{.}
\end{equation*}
Since $P^2=P$ and again letting $t=0$ we get
\begin{equation*}
cP=-\Gamma\left(P,P,I\right)\text{,}
\end{equation*}
where $c$ is trivially the same constant as determined above for $\Gamma\left(I,I,I\right)$. Now suppose that we have two mutually orthogonal projections $P_1, P_2$ such that $P_1P_2=0$. Then we have for the projection $P_1+P_2$ using linearity of $\Gamma(\cdot,\cdot,\cdot)$ in the first two variables that
\begin{equation*}
\begin{split}
&\Gamma\left(P_1,P_1,I\right)+\Gamma\left(P_2,P_2,I\right)=-c(P_1+P_2)=\Gamma\left(P_1+P_2,P_1+P_2,I\right)=\\
&=\Gamma\left(P_1,P_1,I\right)+\Gamma\left(P_1,P_2,I\right)+\Gamma\left(P_2,P_1,I\right)+\Gamma\left(P_2,P_2,I\right)\text{,}
\end{split}
\end{equation*}
which yields that for mutually orthogonal projections $P_1, P_2$ we get the orthogonality relation
\begin{equation*}
\Gamma\left(P_1,P_2,I\right)=0\text{.}
\end{equation*}

Finally since a diagonal $D$ can be written as $D=\sum_i\lambda_iP_i$ for mutually orthogonal projections $P_i$, we have
\begin{equation*}
\begin{split}
&\Gamma\left(D,D,I\right)=\Gamma\left(\sum_i\lambda_iP_i,\sum_i\lambda_iP_i,I\right)=\\
&=\sum_i\lambda_i^2\Gamma\left(P_i,P_i,I\right)=-\sum_i\lambda_i^2cP_i=\\
&=-cD^2\text{,}
\end{split}
\end{equation*}
which is what needed to be shown.

\end{proof}

The above three theorems with the other preceeding results presented here, lead us to the concluding

\begin{theorem}\label{classify}
All affine matrix means $M_t(X,Y)$ are of the form
\begin{equation}\label{means2}
M(X,Y)=\begin{cases}
    X^{1/2}\left[(1-t)I+t\left(X^{-1/2}YX^{-1/2}\right)^{1-\kappa}\right]^{\frac{1}{1-\kappa}}X^{1/2}&\text{if $\kappa\neq1$,}\\
    X^{1/2}\left(X^{-1/2}YX^{-1/2}\right)^{t}X^{1/2}&\text{if $\kappa=1$,}
    \end{cases}
\end{equation}
where $0\leq\kappa\leq 2$. The symmetric affine connections corresponding to these means are
\begin{equation}\label{affineclassify}
\nabla_{X_p}Y_p=DY[p][X_p]-\frac{\kappa}{2}\left(X_pp^{-1}Y_p+Y_pp^{-1}X_p\right)\text{.}
\end{equation}
\end{theorem}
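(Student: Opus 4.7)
The plan is to combine the three preparatory results (Propositions~\ref{transform1},~\ref{transform2} and Theorem~\ref{transform3}) to pin down the shape of the Christoffel symbols $\Gamma(\cdot,\cdot,\cdot)$ completely, then read off the corresponding connection and invoke Theorem~\ref{geodesics} together with Example~\ref{ex1} to extract the list of matrix means.

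Starting from Theorem~\ref{transform3}, which gives $\Gamma(D,D,I)=-cD^{2}$ on real diagonal matrices with a universal constant $c\in\mathbb{R}$, I would first upgrade this to all of $\textit{H}(n,\mathbb{C})$ by spectral decomposition. Writing $X=UDU^{*}$ with $U$ unitary and $D$ real diagonal, Proposition~\ref{transform2} applied with $S=U$ (which satisfies $UIU^{-1}=I$) gives
\begin{equation*}
\Gamma(X,X,I)=U\,\Gamma(D,D,I)\,U^{*}=-cUD^{2}U^{*}=-cX^{2}.
\end{equation*}
Transporting from $I$ to an arbitrary $p\in \textit{P}(n,\mathbb{C})$ via Proposition~\ref{transform1} then yields $\Gamma(X,X,p)=-cXp^{-1}X$. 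Polarizing in the (symmetric, bilinear) first two arguments produces
\begin{equation*}
\Gamma(X,Y,p)=-\tfrac{c}{2}\bigl(Xp^{-1}Y+Yp^{-1}X\bigr),
\end{equation*}
so that $\nabla_{X_p}Y_p=DY[p][X_p]+\Gamma(X_p,Y_p,p)$ takes the claimed form \eqref{affineclassify} with $\kappa:=c$.

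With the connection so determined, Theorem~\ref{geodesics} together with its subsequent corollary describes the corresponding exponential map explicitly and hence identifies the $t$-dividing point of the geodesic from $X$ to $Y$ as the right-hand side of \eqref{means2}. By Definition~\ref{affinemean}, every affine matrix mean arises in this way, so classification reduces to pinning down the values of $\kappa\in\mathbb{R}$ for which the resulting expression actually satisfies Definition~\ref{symmean}. This amounts to asking when the scalar representing function $f_{q}(x)=[(1-t)+tx^{q}]^{1/q}$, with $q=1-\kappa$, is operator monotone on $(0,\infty)$; by Example~\ref{ex1} this holds precisely for $q\in[-1,1]$, i.e.\ for $\kappa\in[0,2]$, with the removable singularity at $\kappa=1$ producing the weighted geometric mean $X^{1/2}(X^{-1/2}YX^{-1/2})^{t}X^{1/2}$.

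The genuine content has already been absorbed into Propositions~\ref{transform1},~\ref{transform2} and Theorem~\ref{transform3}, so the remainder is largely bookkeeping. The one nontrivial input from outside is the operator-monotonicity cutoff $q\in[-1,1]$ from Example~\ref{ex1}, which is the step that separates the one-parameter family of affine connections exhibiting the invariance forced by Propositions~\ref{transform1} and~\ref{transform2} from the (strictly smaller) subfamily whose dividing-point operation actually lies in $\mathfrak{M}$. I do not expect a serious obstacle; the most delicate point is simply to verify that passing from the diagonal case to the full Hermitian case by unitary conjugation does not reintroduce any freedom in the constant $c$, which follows because conjugation by $U$ fixes the base point $I$ and the diagonal case already pinned $c$ down universally.
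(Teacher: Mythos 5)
Your proposal is correct and follows essentially the same route as the paper: combine Propositions~\ref{transform1},~\ref{transform2} and Theorem~\ref{transform3} to force $\Gamma(X,X,p)=-cXp^{-1}X$, recover the symmetric connection by polarization (the paper instead cites the uniqueness of the torsion-free connection with given geodesics from Spivak), and then use Theorem~\ref{geodesics} plus the operator-monotonicity range $q\in[-1,1]$ from Example~\ref{ex1} to restrict to $\kappa\in[0,2]$. Your explicit spectral-decomposition step upgrading the diagonal case to all Hermitian $X$ is a detail the paper leaves implicit, but the argument is the same.
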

\begin{proof}
By Proposition~\ref{transform1},~\ref{transform2} and Theorem~\ref{transform3} we have that the functions $\Gamma(\cdot,\cdot,\cdot):\textit{H}(n,\mathbb{C})\times \textit{H}(n,\mathbb{C})\times \textit{P}(n,\mathbb{C})\mapsto \textit{H}(n,\mathbb{C})$ representing the Christoffel symbols are of the form
\begin{equation}\label{}
\Gamma(X,X,p)=-cXp^{-1}X\text{.}
\end{equation}
This formula determines the functions that are the symmetric parts of the possible connections, and these connections have geodesics determined by Theorem~\ref{geodesics} in the form \eqref{means2}. Again by Propostion~15 and Corollary~16 of Chapter~6 in \cite{spivak} we know that connections which have the same torsion and geodesics are identical and for an arbitrary connection there is a unique connection with vanishing torsion and with the same geodesics. So in other words since the connections \eqref{affineclassify} are symmetric, affine and have the same geodesics, therefore they give \textit{the} sought symmetric connections for each $\kappa$ if we choose $c=\kappa$.

The corresponding geodesics are given in \eqref{means}, and these are matrix means if and only if $\kappa\in [0,2]$, since the representing functions $f(t)$ in \eqref{mean} turn out to be operator monotone only in these cases due to Example~\ref{ex1}.

\end{proof}

The above result gives us the complete classification of affine matrix means. So now we can concetrate only on the connections \eqref{affineclassify}. In the next section we solve the metrization problem of these connections.

\section{The holonomy groups and metrizability of the affine family}
Let $W$ be a smooth connected manifold with an affine connection $\nabla$. The holonomy group $\mathcal{H}_p(\nabla)$ of the connection $\nabla$ at point $p\in W$ is defined to be the set of all linear automorphisms of the tangent space $T_pW$ at $p$ induced by parallel transports along $p$ based closed rectificable curves. If $W$ is simply connected then $\mathcal{H}_p(\nabla)$ is known to be a Lie subgroup of $\End(T_pW)$ \cite{merkulovschwachhofer}. In case of non-simply connectedness the restricted holonomy group $\hat{\mathcal{H}}_p(\nabla)$ is defined as the normal subgroup of $\mathcal{H}_p(\nabla)$ which is induced by closed rectificable curves homotopic to zero, see Chapter II Section 4 in \cite{kobayashi1} for more detailed information. Let $\mathfrak{h}_p(\nabla)$ and $\hat{\mathfrak{h}}_p(\nabla)$ denote the Lie algebra of $\mathcal{H}_p(\nabla)$ and $\hat{\mathcal{H}}_p(\nabla)$ respectively. The holonomy group $\mathcal{H}_p(\nabla)$ is known to be an invariant of the connected manifold $W$, since $\mathcal{H}_p(\nabla)$ is conjugate to every other $\mathcal{H}_q(\nabla)$ by parallel transports.

Now suppose that the connection $\nabla$ is real analytic. Then by Theorem~10.8 of Chapter II and Theorem~9.2 of Chapter III in \cite{kobayashi1}, $\hat{\mathfrak{h}}_p(\nabla)$ is generated by the successive covariant differentials $\nabla^rR$, $r=0,1,2,\ldots$ at the point $p$ where $R(X,Y)$ denotes the curvature endomorphism of the connection $\nabla$. This is a version of Ambrose-Singer's theorem of Kobayashi-Nomizu. The curvature tensor $R$ is defined as
\begin{equation*}
R(X,Y)Z=\nabla_X\nabla_YZ-\nabla_Y\nabla_XZ-\nabla_{[X,Y]}Z
\end{equation*}
or expressed in local coordinate system with the Christoffel symbols $\Gamma^{i}_{jk}$ as
\begin{equation}\label{curvature}
R^{i}_{jkl}=\frac{\partial \Gamma^{i}_{lj}}{\partial x^k}-\frac{\partial \Gamma^{i}_{kj}}{\partial x^l}+\Gamma^{i}_{km}\Gamma^{m}_{lj}-\Gamma^{i}_{lm}\Gamma^{m}_{kj}.
\end{equation}

Suppose now that the connection $\nabla$ is torsion-free, i.e.
\begin{equation}
\nabla_XY-\nabla_YX-[X,Y]=0
\end{equation}
for all vector fields $X,Y$ or equivalently $\Gamma^{i}_{jk}=\Gamma^{i}_{kj}$ everywhere. Then $\nabla$ is the Levi-Civita connection of a Riemannian metric if and only if the corresponding holonomy group $\hat{\mathcal{H}}_p(\nabla)$ is a compact Lie group. More generally there exists a non-degenerate $\nabla$ invariant bilinear form $\left\langle \cdot,\cdot\right\rangle_p$ if and only if $\hat{\mathcal{H}}_p(\nabla)$ leaves $\left\langle \cdot,\cdot\right\rangle_p$ invariant.

In \cite{merkulovschwachhofer} all possible irreducible holonomy groups of torsion-free affine connections are classified, so in principle we know what kind of groups can occur, at least in the reducible case. Again we are interested in the connections
\begin{equation}\label{affineclassify2}
\nabla_{X_p}Y_p=DY[p][X_p]-\frac{\kappa}{2}\left(X_pp^{-1}Y_p+Y_pp^{-1}X_p\right).
\end{equation}
These connections are real analytic, torsion-free and the corresponding manifold $\textit{P}(n,\mathbb{C})$ is analytic simply connected. So to answer the question of metrizability we have to determine the holonomy groups $\hat{\mathcal{H}}_p(\nabla)$.

In our case it turns out that
\begin{equation}\label{covdiffcurv}
\begin{split}
\Gamma^{i}_{jk}E_i=&-\frac{\kappa}{2}(E_jp^{-1}E_k+E_kp^{-1}E_j)\\
R^{i}_{jkl}E_i=&\left(\frac{\kappa}{2}-\frac{\kappa^2}{4}\right)p\left[p^{-1}E_j,\left[p^{-1}E_k,p^{-1}E_l\right]\right],
\end{split}
\end{equation}
where the $E_i$ form the standard basis of the vector space of $\textit{H}(n,\mathbb{C})$ and $[\cdot,\cdot]$ is the commutator. Note that the tangent space is $\textit{H}(n,\mathbb{C})$, so the left hand sides are in $\textit{H}(n,\mathbb{C})$. In order to determine which of these manifolds are symmetric spaces it is sufficient to calculate the covariant differential $R^{s}_{jkl;m}$, since it vanishes everywhere if and only if the underlying manifold is a symmetric space \cite{helgason}. Given the basis $E_i$ for $\textit{H}(n,\mathbb{C})$ we have the identities
\begin{equation*}
\begin{split}
R(X,Y;A_1,\ldots,A_r)Z=&\sum_{i,j,k,l_1,\ldots,l_r,m}R^{i}_{jkm;l_1,\ldots,l_r}E_iX^{j}Y^{k}Z^{m}A_1^{l_1}\cdots A_1^{l_r}\\
R^{i}_{jkm;l_1,\ldots,l_{r+1}}=&\frac{\partial}{\partial x^{l_{r+1}}}R^{i}_{jkm;l_1,\ldots,l_r}+\Gamma^{i}_{sl_{r+1}}R^{s}_{jkm;l_1,\ldots,l_r}\\
&-\sum_{\mu}\Gamma^{s}_{l_{\mu}l_{r+1}}R^{i}_{jkm;l_1,\ldots,s,\ldots,l_r}
\end{split}
\end{equation*}
where indices after $;$ denote covariant differentiation. Now we prove an analogue of Lemma 1 given in the proof of Theorem 9.2 of Chapter III \cite{kobayashi1}.

\begin{theorem}\label{thmsubsequentdiff}
Let the smooth connected manifold $\textit{P}(n,\mathbb{C})$ be equipped with real analytic connection $\nabla$ and curvature tensor given by \eqref{covdiffcurv} with $\kappa\in\mathbb{R}$. Then
\begin{equation}\label{subsequentdiff}
R(X,Y;A_1,\ldots,A_r)Z=(1-\kappa)^{r}D(R(X,Y)Z)[p][A_1,\ldots,A_r]
\end{equation}
where $D(R(X,Y)Z)[p][A_1,\ldots,A_r]$ denotes the r-th Fr\'echet differential of the map $R(X,Y)Z$ at the point $p\in\textit{P}(n,\mathbb{C})$ in the directions $A_i\in\textit{H}(n,\mathbb{C})$.
\end{theorem}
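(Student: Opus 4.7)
\emph{Proof strategy.} The plan is to proceed by induction on $r$, reducing the computation to $p=I$ via the $\textit{GL}(n,\mathbb{C})$-equivariance of the connection under the congruence action $p\mapsto gpg^{*}$ (which preserves $\nabla$ and hence both sides of the claimed identity). At $p=I$ the Christoffel symbol simplifies to $\Gamma(U,V)=-\tfrac{\kappa}{2}(UV+VU)$. Expanding the nested commutators in the curvature formula gives
\begin{equation*}
R(X,Y)Z=\tfrac{\kappa(2-\kappa)}{4}\bigl(Xp^{-1}Yp^{-1}Z-Xp^{-1}Zp^{-1}Y-Yp^{-1}Zp^{-1}X+Zp^{-1}Yp^{-1}X\bigr),
\end{equation*}
a linear combination of monomials of the form $Y_{1}p^{-1}Y_{2}p^{-1}Y_{3}$. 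Since $D(p^{-1})[A]=-p^{-1}Ap^{-1}$, the iterated Fr\'echet derivative $D^{r}(R(X,Y)Z)$ is again a linear combination of ``matrix polynomial'' tensors of the form $Y_{1}p^{-1}Y_{2}p^{-1}\cdots p^{-1}Y_{m}$, where the $Y_{i}$ are constants drawn from $\{X,Y,Z,A_{1},\ldots,A_{r}\}$.

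\emph{Key lemma.} I would isolate the following fact: for any $(1,m)$-tensor field $T(Y_{1},\ldots,Y_{m};p)=Y_{1}p^{-1}Y_{2}\cdots p^{-1}Y_{m}$ with constant $Y_{i}$, at $p=I$ one has $\nabla_{A}T=(1-\kappa)D_{A}T$. To prove this, write $P_{j}=Y_{1}\cdots Y_{j}AY_{j+1}\cdots Y_{m}$ for $0\le j\le m$, so that $D_{A}T|_{I}=-\sum_{j=1}^{m-1}P_{j}$. A direct calculation of the Christoffel corrections at $p=I$ yields
\begin{equation*}
\Gamma(A,T|_{I})=-\tfrac{\kappa}{2}(P_{0}+P_{m}),\qquad\sum_{i=1}^{m}T(Y_{1},\ldots,\Gamma(A,Y_{i}),\ldots,Y_{m})|_{I}=-\tfrac{\kappa}{2}\Bigl(P_{0}+2\sum_{j=1}^{m-1}P_{j}+P_{m}\Bigr),
\end{equation*}
because each internal slot $Y_i$ contributes to both $P_{i-1}$ and $P_i$ while the boundary $Y_1,Y_m$ each appear only once. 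Subtracting, the endpoint terms $P_{0},P_{m}$ cancel and the Christoffel contribution is exactly $\kappa\sum_{j=1}^{m-1}P_{j}$, giving $\nabla_{A}T|_{I}=D_{A}T|_{I}+\kappa\sum_{j=1}^{m-1}P_{j}=(1-\kappa)D_{A}T|_{I}$.

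\emph{Inductive step.} The base case $r=0$ is trivial. Assuming $\nabla^{r}R=(1-\kappa)^{r}D^{r}R$ as tensor fields and applying $\nabla$, the scalar $(1-\kappa)^{r}$ pulls outside. Since $D^{r}R$ is a linear combination of matrix-polynomial tensors of the form in the Lemma, the Lemma (extended by linearity) gives $\nabla(D^{r}R)=(1-\kappa)D(D^{r}R)=(1-\kappa)D^{r+1}R$ at $p=I$, hence $\nabla^{r+1}R=(1-\kappa)^{r+1}D^{r+1}R$. Equivariance propagates the identity from $p=I$ to every point of $\textit{P}(n,\mathbb{C})$.

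\emph{Main obstacle.} The substantive work is the Lemma: one must see how the Christoffel corrections at the $m$ input slots ``double up'' on the interior products $P_{j}$ (each internal slot is sandwiched between two neighbors) and combine with the boundary term $\Gamma(A,T)$ so that the endpoint contributions $P_{0}$ and $P_{m}$ cancel precisely, leaving exactly $\kappa\sum_{j=1}^{m-1}P_{j}$. This structural cancellation, which is independent of $m$, is the source of the uniform $1-\kappa$ factor at every step of the induction and hence of the $(1-\kappa)^{r}$ scaling in the statement.
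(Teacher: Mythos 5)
Your proof is correct, and it reaches the identity by a genuinely cleaner route than the paper's. The paper never reduces to $p=I$: it works at a general point, shows by induction that $\nabla^{r}R$ is a linear combination of words $pS$ with $S$ a product of the factors $p^{-1}X,p^{-1}Y,p^{-1}Z,p^{-1}A_i$, and then writes the covariant-derivative recursion in two equivalent normal forms (one built from left-words $pS$, one from right-words $Sp$); subtracting the two shows that the residual commutator part of the Christoffel corrections vanishes identically, leaving exactly the $(1-\kappa)$ multiple of the Fr\'echet derivative. Your argument replaces that two-normal-form subtraction with the congruence equivariance of the connection (which lets you evaluate everything at $I$) plus the explicit slot-counting lemma: the interior insertions $P_1,\dots,P_{m-1}$ are each hit twice by the slot corrections while the endpoint terms $P_0,P_m$ cancel against $\Gamma(A,T)$, yielding the net factor $\kappa\sum_{j=1}^{m-1}P_j$ and hence $\nabla_A T=(1-\kappa)D_AT$. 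The two proofs rest on the same structural facts (the monomial form of the iterated derivatives and the interplay of the $-\tfrac{\kappa}{2}(UV+VU)$ corrections with $D(p^{-1})[A]=-p^{-1}Ap^{-1}$), but yours makes the cancellation mechanism transparent and avoids the paper's somewhat opaque left/right bookkeeping, at the modest cost of having to verify (as you do) that both sides of the identity are equivariant under $p\mapsto gpg^{*}$ so that the computation at $I$ suffices. One small caution: your displayed expansion of $R(X,Y)Z$ orders the slots as $p[p^{-1}X,[p^{-1}Y,p^{-1}Z]]$ whereas the paper's \eqref{curvendo} uses $p[p^{-1}Z,[p^{-1}X,p^{-1}Y]]$; this is only a labelling convention and does not affect the argument, since both are linear combinations of the monomials your lemma handles.
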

\begin{proof}
The proof is based on writing $R(X,Y)Z$ and its subsequent covariant differentials in essentially two equivalent ways. First of all note that
\begin{equation}\label{invdiff}
\frac{\partial}{\partial x^{i}}p^{-1}=D(x^{-1})[p][E_i]=-p^{-1}E_ip^{-1},
\end{equation}
so the differential operator $\frac{\partial}{\partial x^{i}}$ is equivalent to Fr\'echet differentiation at $p$ in the direction of $E_i$, also
\begin{equation}\label{curvendo}
\begin{split}
R(X,Y)Z&=\left(\frac{\kappa}{2}-\frac{\kappa^2}{4}\right)p\left[p^{-1}Z,\left[p^{-1}X,p^{-1}Y\right]\right]\\
&=\left(\frac{\kappa}{2}-\frac{\kappa^2}{4}\right)\left\{Z\left[p^{-1}X,p^{-1}Y\right]+\left[Yp^{-1},Xp^{-1}\right]Z\right\}\\
&=\left(\frac{\kappa}{2}-\frac{\kappa^2}{4}\right)\left[Zp^{-1},\left[Xp^{-1},Yp^{-1}\right]\right]p.
\end{split}
\end{equation}
Using index-less notation and the linearity of $R(X,Y;A_1,\ldots,A_{r})Z$ we have
\begin{equation}\label{subsequentcodiff}
\begin{split}
&R(X,Y;A_1,\ldots,A_{r+1})Z=\nabla_{A_{r+1}}(R(X,Y;A_1,\ldots,A_r)Z)\\
&-\frac{\kappa}{2}\left\{A_{r+1}p^{-1}R(X,Y;A_1,\ldots,A_r)Z+R(X,Y;A_1,\ldots,A_r)Zp^{-1}A_{r+1}\right.\\
&-R(A_{r+1}p^{-1}X+Xp^{-1}A_{r+1},Y;A_1,\ldots,A_r)Z\\
&-R(X,A_{r+1}p^{-1}Y+Yp^{-1}A_{r+1};A_1,\ldots,A_r)Z\\
&-R(X,Y;A_1,\ldots,A_r)(A_{r+1}p^{-1}Z+Zp^{-1}A_{r+1})\\
&\left.-\sum_{i=1}^rR(X,Y;A_1,\ldots,A_{r+1}p^{-1}A_i+A_ip^{-1}A_{r+1},\ldots,A_r)Z\right\}.
\end{split}
\end{equation}
Again the first term in the above equation is equivalent to
\begin{equation}\label{partial1}
(A_{r+1})^{s}\frac{\partial}{\partial x^{s}}R(X,Y;A_1,\ldots,A_r)Z=D(R(X,Y;A_1,\ldots,A_r)Z)[p][A_{r+1}].
\end{equation}
\begin{claim}
$R(X,Y;A_1,\ldots,A_r)Z$ is the linear combination of terms $pS$, where $S$ is some word which is a product of the terms $p^{-1}X,p^{-1}Y,p^{-1}A_1,\ldots,p^{-1}A_r$ of the first order.
\end{claim}
We prove by induction. For $r=0$ it clearly holds by the first equality in \eqref{curvendo}. Suppose that it holds for some $r$. Then by \eqref{subsequentcodiff} it is easy to see that it holds for $r+1$, due to \eqref{invdiff}, the linearity of $R(X,Y;A_1,\ldots,A_r)Z$ and the product rule of Fr\'echet differentiation. The claim is proved.

By the claim $R(X,Y;A_1,\ldots,A_r)Z$ is the linear combination of terms $pS$, therefore by linearity, \eqref{partial1} and \eqref{invdiff} we have
\begin{equation}
\begin{split}
&\nabla_{A_{r+1}}(R(X,Y;A_1,\ldots,A_r)Z)=A_{r+1}p^{-1}R(X,Y;A_1,\ldots,A_r)Z\\
&-R(A_{r+1}p^{-1}X,Y;A_1,\ldots,A_r)Z-R(X,A_{r+1}p^{-1}Y;A_1,\ldots,A_r)Z\\
&-R(X,Y;A_1,\ldots,A_r)(A_{r+1}p^{-1}Z)-\sum_{i=1}^rR(X,Y;A_1,\ldots,A_{r+1}p^{-1}A_i,\ldots,A_r)Z.
\end{split}
\end{equation}
Combining the above we arrive at a version of \eqref{subsequentcodiff}:
\begin{equation*}
\begin{split}
&R(X,Y;A_1,\ldots,A_{r+1})Z=\\
&=\left(1-\frac{\kappa}{2}\right)\left\{A_{r+1}p^{-1}R(X,Y;A_1,\ldots,A_r)Z-R(A_{r+1}p^{-1}X,Y;A_1,\ldots,A_r)Z\right.\\
&-R(X,A_{r+1}p^{-1}Y;A_1,\ldots,A_r)Z-R(X,Y;A_1,\ldots,A_r)(A_{r+1}p^{-1}Z)\\
&\left.-\sum_{i=1}^rR(X,Y;A_1,\ldots,A_{r+1}p^{-1}A_i,\ldots,A_r)Z\right\}\\
&-\frac{\kappa}{2}\left\{R(X,Y;A_1,\ldots,A_r)Zp^{-1}A_{r+1}-R(Xp^{-1}A_{r+1},Y;A_1,\ldots,A_r)Z\right.\\
&-R(X,Yp^{-1}A_{r+1};A_1,\ldots,A_r)Z-R(X,Y;A_1,\ldots,A_r)(Zp^{-1}A_{r+1})\\
&\left.-\sum_{i=1}^rR(X,Y;A_1,\ldots,A_ip^{-1}A_{r+1},\ldots,A_r)Z\right\},
\end{split}
\end{equation*}
which is equivalent to
\begin{equation}\label{subsequentcodiff2}
\begin{split}
&R(X,Y;A_1,\ldots,A_{r+1})Z=\\
&=(1-\kappa)\left\{A_{r+1}p^{-1}R(X,Y;A_1,\ldots,A_r)Z-R(A_{r+1}p^{-1}X,Y;A_1,\ldots,A_r)Z\right.\\
&-R(X,A_{r+1}p^{-1}Y;A_1,\ldots,A_r)Z-R(X,Y;A_1,\ldots,A_r)(A_{r+1}p^{-1}Z)\\
&\left.-\sum_{i=1}^rR(X,Y;A_1,\ldots,A_{r+1}p^{-1}A_i,\ldots,A_r)Z\right\}\\
&+\frac{\kappa}{2}\left\{p\left[p^{-1}A_{r+1},p^{-1}R(X,Y;A_1,\ldots,A_r)Z\right]\right.\\
&-R(p[p^{-1}A_{r+1},p^{-1}X],Y;A_1,\ldots,A_r)Z\\
&-R(X,p[p^{-1}A_{r+1},p^{-1}Y];A_1,\ldots,A_r)Z-R(X,Y;A_1,\ldots,A_r)(p[p^{-1}A_{r+1},p^{-1}Z])\\
&\left.-\sum_{i=1}^rR(X,Y;A_1,\ldots,p[p^{-1}A_{r+1},p^{-1}A_i],\ldots,A_r)Z\right\}.
\end{split}
\end{equation}
Now we can reverse the claim and using the exactly the same argument starting with the third equality in \eqref{curvendo} we can prove that $R(X,Y;A_1,\ldots,A_r)Z$ is the linear combination of terms $Sp$, where $S$ is some word which is a product of the terms $Xp^{-1},Yp^{-1},A_1p^{-1},\ldots,A_p^{-1}r$ of the first order. Similarly we end up with
\begin{equation}\label{subsequentcodiff22}
\begin{split}
&R(X,Y;A_1,\ldots,A_{r+1})Z=\\
&=(1-\kappa)\left\{R(X,Y;A_1,\ldots,A_r)Zp^{-1}A_{r+1}-R(Xp^{-1}A_{r+1},Y;A_1,\ldots,A_r)Z\right.\\
&-R(X,Yp^{-1}A_{r+1};A_1,\ldots,A_r)Z-R(X,Y;A_1,\ldots,A_r)(Zp^{-1}A_{r+1})\\
&\left.-\sum_{i=1}^rR(X,Y;A_1,\ldots,A_ip^{-1}A_{r+1},\ldots,A_r)Z\right\}\\
&+\frac{\kappa}{2}\left\{\left[R(X,Y;A_1,\ldots,A_r)Zp^{-1},A_{r+1}p^{-1}\right]p\right.\\
&-R([Xp^{-1},A_{r+1}p^{-1}]p,Y;A_1,\ldots,A_r)Z\\
&-R(X,[Yp^{-1},A_{r+1}p^{-1}]p;A_1,\ldots,A_r)Z-R(X,Y;A_1,\ldots,A_r)([Zp^{-1},A_{r+1}p^{-1}]p)\\
&\left.-\sum_{i=1}^rR(X,Y;A_1,\ldots,[A_ip^{-1},A_{r+1}p^{-1}]p,\ldots,A_r)Z\right\}.
\end{split}
\end{equation}
Now subtracting \eqref{subsequentcodiff22} from \eqref{subsequentcodiff2} and using the fact that
\begin{equation*}
[Ap^{-1},Bp^{-1}]p=p[p^{-1}A,p^{-1}B]=-p[p^{-1}B,p^{-1}A]
\end{equation*}
for any $A,B\in\textit{H}(n,\mathbb{C})$ and linearity of $R(X,Y;A_1,\ldots,A_{r})Z$, we get that
\begin{equation}
\begin{split}
&R(X,Y;A_1,\ldots,A_{r+1})Z-R(X,Y;A_1,\ldots,A_{r+1})Z=0=\\
&=p\left[p^{-1}A_{r+1},p^{-1}R(X,Y;A_1,\ldots,A_r)Z\right]-R(p[p^{-1}A_{r+1},p^{-1}X],Y;A_1,\ldots,A_r)Z\\
&-R(X,p[p^{-1}A_{r+1},p^{-1}Y];A_1,\ldots,A_r)Z-R(X,Y;A_1,\ldots,A_r)(p[p^{-1}A_{r+1},p^{-1}Z])\\
&-\sum_{i=1}^rR(X,Y;A_1,\ldots,p[p^{-1}A_{r+1},p^{-1}A_i],\ldots,A_r)Z.
\end{split}
\end{equation}
So in particular \eqref{subsequentcodiff2} is just
\begin{equation*}
\begin{split}
&R(X,Y;A_1,\ldots,A_{r+1})Z=\\
&=(1-\kappa)\left\{A_{r+1}p^{-1}R(X,Y;A_1,\ldots,A_r)Z-R(A_{r+1}p^{-1}X,Y;A_1,\ldots,A_r)Z\right.\\
&-R(X,A_{r+1}p^{-1}Y;A_1,\ldots,A_r)Z-R(X,Y;A_1,\ldots,A_r)(A_{r+1}p^{-1}Z)\\
&\left.-\sum_{i=1}^rR(X,Y;A_1,\ldots,A_{r+1}p^{-1}A_i,\ldots,A_r)Z\right\}\\
&=(1-\kappa)p\left\{p^{-1}A_{r+1}p^{-1}R(X,Y;A_1,\ldots,A_r)Z-p^{-1}R(A_{r+1}p^{-1}X,Y;A_1,\ldots,A_r)Z\right.\\
&-p^{-1}R(X,A_{r+1}p^{-1}Y;A_1,\ldots,A_r)Z-p^{-1}R(X,Y;A_1,\ldots,A_r)(A_{r+1}p^{-1}Z)\\
&\left.-\sum_{i=1}^rp^{-1}R(X,Y;A_1,\ldots,A_{r+1}p^{-1}A_i,\ldots,A_r)Z\right\}.
\end{split}
\end{equation*}
Considering again \eqref{invdiff} and the first claim we get that the above is equivalent to
\begin{equation*}
R(X,Y;A_1,\ldots,A_{r+1})Z=(1-\kappa)D(R(X,Y;A_1,\ldots,A_{r}))[p][A_{r+1}],
\end{equation*}
which is just \eqref{subsequentdiff}.

\end{proof}

Now again the Lie algebra $\hat{\mathfrak{h}}_p(\nabla)$ is generated by the endomosphisms $\nabla^rR$. This means that the generated algebra grows as $r$ increases and after some finitely many steps it stabilizes and taking higher covariant derivatives of $R$ is unnecessary. Since the manifold $\textit{P}(n,\mathbb{C})$ is simply connected the holonomy group and the restricted holonomy group coincide, so $\hat{\mathfrak{h}}_p(\nabla)=\mathfrak{h}_p(\nabla)$. By the second formula in \eqref{curvendo} and \eqref{subsequentdiff} we have the following

\begin{corollary}\label{representation}
The Lie algebra $\mathfrak{h}_p(\nabla)$ is faithfully represented over the vector space $V=\textit{H}(n,\mathbb{C})$ (or $V=\textit{H}(n,\mathbb{R})$) with $\rho:\mathfrak{h}_p(\nabla)\mapsto\End(V)$ given as
\begin{equation}\label{action}
\rho(W)Z=ZW+W^{*}Z
\end{equation}
for $W\in\mathfrak{h}_p(\nabla)$ and $Z\in\textit{H}(n,\mathbb{C})$ (or $Z\in\textit{H}(n,\mathbb{R})$).
\end{corollary}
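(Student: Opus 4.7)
The plan is to combine the Ambrose--Singer theorem (as stated in Kobayashi--Nomizu and already cited in the text) with Theorem~\ref{thmsubsequentdiff} to reduce the statement to an explicit computation. Since $\textit{P}(n,\mathbb{C})$ is simply connected, $\hat{\mathfrak{h}}_p(\nabla)=\mathfrak{h}_p(\nabla)$, and this Lie algebra is linearly spanned by the endomorphisms $(\nabla^{r}R)(X,Y;A_{1},\dots,A_{r})$ of $V=\textit{H}(n,\mathbb{C})$. By Theorem~\ref{thmsubsequentdiff} each of these equals $(1-\kappa)^{r}$ times the $r$-th Fr\'echet derivative $D_{p}^{r}[R(X,Y)\,\cdot\,]$ evaluated on $(A_{1},\dots,A_{r})$, so it suffices to analyse $R(X,Y)Z$ together with its iterated Fr\'echet derivatives at $p$ along Hermitian directions.

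Next I would regroup the curvature endomorphism from \eqref{covdiffcurv} as
\[
R(X,Y)Z \;=\; c\bigl\{Z\,[p^{-1}X,p^{-1}Y] \;-\; p\,[p^{-1}X,p^{-1}Y]\,p^{-1}Z\bigr\}, \qquad c=\tfrac{\kappa}{2}-\tfrac{\kappa^{2}}{4}.
\]
Setting $W:=c\,[p^{-1}X,p^{-1}Y]$ and using $(p^{-1})^{*}=p^{-1}$, $X^{*}=X$, $Y^{*}=Y$, a direct check gives $c\,p\,[p^{-1}X,p^{-1}Y]\,p^{-1}=-W^{*}$, so that $R(X,Y)Z=ZW+W^{*}Z=\rho(W)Z$. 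The next task is to verify that this sandwich shape survives the two operations that generate $\mathfrak{h}_{p}(\nabla)$. For Fr\'echet differentiation in a Hermitian direction $A$ the identity $D_{p}(p^{-1})[A]=-p^{-1}Ap^{-1}$ is again Hermitian, so the conjugate-transpose operation commutes with $D_{p}[\,\cdot\,][A]$ on expressions polynomial in $p^{-1}$; thus $(D_{p}W[A])^{*}=D_{p}(W^{*})[A]$, and induction shows that every iterated Fr\'echet derivative has the form $\rho(W^{(r)})$ for some polynomial matrix $W^{(r)}$ in $p^{-1},X,Y,A_{1},\dots,A_{r}$. For Lie brackets a short calculation based on $(W_{i}W_{j})^{*}=W_{j}^{*}W_{i}^{*}$ yields
\[
[\rho(W_{1}),\rho(W_{2})] \;=\; \rho\bigl([W_{2},W_{1}]\bigr),
\]
so the set of matrices $W$ obtained this way forms a Lie subalgebra $\mathfrak{g}\subseteq M_{n}(\mathbb{C})$, and we can identify $\mathfrak{h}_{p}(\nabla)$ with $\rho(\mathfrak{g})$.

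The one delicate point---and I expect this to be the only genuine obstacle---is \emph{faithfulness}. Setting $Z=I$ in $\rho(W)Z=0$ gives $W+W^{*}=0$; feeding this back shows $[Z,W]=0$ for every Hermitian $Z$, so $W$ commutes with every element of $M_{n}(\mathbb{C})$ and must be a scalar; combined with $W=-W^{*}$ this forces $W\in i\mathbb{R}\cdot I$. Hence $\ker\rho$ is the one-dimensional line $i\mathbb{R}\cdot I$, and the claim reduces to showing $\mathfrak{g}\cap i\mathbb{R}\cdot I=\{0\}$. For this I would use a trace argument: the initial generators $W=c[p^{-1}X,p^{-1}Y]$ are commutators, hence traceless; Fr\'echet derivatives of an identically traceless matrix-valued function are again traceless; Lie brackets of traceless matrices are traceless. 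Thus $\mathfrak{g}\subseteq\mathfrak{sl}_{n}(\mathbb{C})$, and since $i\mathbb{R}\cdot I$ meets $\mathfrak{sl}_{n}(\mathbb{C})$ only in $\{0\}$, the representation $\rho$ is injective on $\mathfrak{g}$. The real case $V=\textit{H}(n,\mathbb{R})$ follows by restriction, since all the polynomials $W^{(r)}$ involved have real coefficients whenever $X,Y,A_{i}$ and $p$ lie in $\textit{H}(n,\mathbb{R})\cup\textit{P}(n,\mathbb{R})$.
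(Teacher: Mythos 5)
Your proof is correct and follows the same route the paper takes: the second equality in \eqref{curvendo} exhibits $R(X,Y)$ as $\rho\bigl(c\,[p^{-1}X,p^{-1}Y]\bigr)$, and Theorem~\ref{thmsubsequentdiff} reduces all higher covariant derivatives to Fr\'echet derivatives, which visibly preserve the sandwich form $Z\mapsto ZW+W^{*}Z$. The paper asserts the corollary with no further argument, so your explicit checks of closure under brackets and of faithfulness (via $\ker\rho=i\mathbb{R}\cdot I$ together with the tracelessness of all generators) supply exactly the details it leaves implicit.
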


We are in position to do a case by case analysis for different values of $\kappa$. $\mathfrak{so}(n,\mathbb{R})$ denotes the Lie algebra of skew-symmetric n-by-n matrices over the real field $\mathbb{R}$, $\mathfrak{su}(n,\mathbb{C})$ denotes the Lie algebra of skew-Hermitian matrices with vanishing trace over $\mathbb{C}$, $\mathfrak{sl}(n,\mathbb{F})$ denotes the Lie algebra of traceless matrices over the field $\mathbb{F}$.

\begin{theorem}\label{holonomies}
Let the smooth connected manifold $\textit{P}(n,\mathbb{C})$ with tangent space $\textit{H}(n,\mathbb{C})$ be equipped with real analytic connection
\begin{equation}\label{affineclassify3}
\nabla_{X_p}Y_p=DY[p][X_p]-\frac{\kappa}{2}\left(X_pp^{-1}Y_p+Y_pp^{-1}X_p\right)
\end{equation}
with $\kappa\in\mathbb{R}$. Then the holonomy algebra $\mathfrak{h}_p(\nabla)$ is as follows:
\begin{equation}
\mathfrak{h}_p(\nabla)=
    \begin{cases}
    \text{the trivial algebra}&\text{if $\kappa=0,2$,}\\
    \mathfrak{su}(n,\mathbb{C})&\text{if $\kappa=1$,}\\
    \mathfrak{sl}(n,\mathbb{C})&\text{else.}
    \end{cases}
\end{equation}
In the case of the submanifold $\textit{P}(n,\mathbb{R})$ with tangent space $\textit{H}(n,\mathbb{R})$ we have
\begin{equation}
\mathfrak{h}_p(\nabla)=
    \begin{cases}
    \text{the trivial algebra}&\text{if $\kappa=0,2$,}\\
    \mathfrak{so}(n,\mathbb{R})&\text{if $\kappa=1$,}\\
    \mathfrak{sl}(n,\mathbb{R})&\text{else.}
    \end{cases}
\end{equation}
\end{theorem}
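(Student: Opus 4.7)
The plan is to apply the Ambrose--Singer type theorem invoked just before \eqref{covdiffcurv} (Theorem 10.8 of Chapter II and Theorem 9.2 of Chapter III in \cite{kobayashi1}), which generates $\mathfrak{h}_p(\nabla)$ as a Lie algebra from the endomorphisms $R$, $\nabla R$, $\nabla^2 R,\ldots$ at $p$. By Corollary~\ref{representation} each such generator has the shape $\rho(W)Z=ZW+W^*Z$ for some $W\in M_n(\mathbb{C})$, and since holonomy algebras at different points are conjugate under parallel transport (together with the congruence invariance afforded by Proposition~\ref{transform2}) I will carry out the computation entirely at $p=I$.

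First I would dispose of the degenerate cases $\kappa=0,2$: the prefactor $\frac{\kappa}{2}-\frac{\kappa^2}{4}$ in \eqref{covdiffcurv} vanishes, so $R\equiv 0$, all its covariant derivatives vanish, and $\mathfrak{h}_p(\nabla)$ is trivial. For the other values of $\kappa$, evaluating \eqref{covdiffcurv} at $p=I$ gives $R(X,Y)Z=c[Z,[X,Y]]=cZ[X,Y]-c[X,Y]Z$ with $c=\frac{\kappa}{2}-\frac{\kappa^2}{4}$, and matching this with $\rho(W)Z=ZW+W^*Z$ forces $W=c[X,Y]$, which for Hermitian $X,Y$ is traceless and skew-Hermitian. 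The standard identity $[\textit{H}(n,\mathbb{C}),\textit{H}(n,\mathbb{C})]=\mathfrak{su}(n,\mathbb{C})$ for $n\geq 2$, verified by computing the three brackets $[E_{ii}-E_{jj},E_{ij}+E_{ji}]$, $[E_{ii}-E_{jj},i(E_{ij}-E_{ji})]$ and $[E_{ij}+E_{ji},i(E_{ij}-E_{ji})]$, then shows that the zeroth order generators already span $\mathfrak{su}(n,\mathbb{C})$. The real case is identical with $\mathfrak{so}(n,\mathbb{R})$.

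For $\kappa=1$, Theorem~\ref{thmsubsequentdiff} gives $(1-\kappa)^r=0$ for all $r\geq 1$, so every higher covariant derivative vanishes and the curvature alone generates the holonomy algebra; since $\mathfrak{su}(n,\mathbb{C})$ is itself a Lie subalgebra, this forces $\mathfrak{h}_p(\nabla)=\mathfrak{su}(n,\mathbb{C})$ (respectively $\mathfrak{so}(n,\mathbb{R})$). For $\kappa\notin\{0,1,2\}$ I will compute the first covariant derivative using Theorem~\ref{thmsubsequentdiff}: differentiating $p\mapsto cp[p^{-1}Z,[p^{-1}X,p^{-1}Y]]$ in a Hermitian direction $A$ and regrouping the resulting endomorphism in the form $ZW_1+W_1^*Z$ identifies $W_1=-(1-\kappa)c([AX,Y]+[X,AY])$, which is traceless (a sum of commutators) but generically Hermitian rather than skew-Hermitian. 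Testing on $X=E_{11}-E_{22}$ and $Y=A=E_{12}+E_{21}$ one obtains $W_1=2(1-\kappa)c\,(E_{11}-E_{22})$, a nonzero element of $i\mathfrak{su}(n,\mathbb{C})$ (the subspace of traceless Hermitians) precisely because $(1-\kappa)c\neq 0$ under the hypothesis $\kappa\notin\{0,1,2\}$.

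The argument then closes via a standard irreducibility step: since $[\text{skew-Hermitian},\text{Hermitian}]\subseteq\text{Hermitian}$, the space $i\mathfrak{su}(n,\mathbb{C})$ is an $\mathfrak{su}(n,\mathbb{C})$-submodule under the adjoint action, and via multiplication by $i$ it is isomorphic to the adjoint representation of the simple Lie algebra $\mathfrak{su}(n,\mathbb{C})$ on itself, hence irreducible for $n\geq 2$; the $\mathrm{ad}\,\mathfrak{su}(n,\mathbb{C})$-orbit of the single nonzero $W_1$ produced above is therefore all of $i\mathfrak{su}(n,\mathbb{C})$, yielding $\mathfrak{h}_p(\nabla)\supseteq\mathfrak{su}(n,\mathbb{C})\oplus i\mathfrak{su}(n,\mathbb{C})=\mathfrak{sl}(n,\mathbb{C})$. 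For the reverse inclusion the recursion in the proof of Theorem~\ref{thmsubsequentdiff} builds every $\nabla^rR$ out of sums of commutators and Fr\'echet derivatives of such, so each lifts through $\rho$ to a traceless $W$, and modulo the one-dimensional kernel $i\mathbb{R}I$ of $\rho$ these lifts embed into $\mathfrak{sl}(n,\mathbb{C})$, giving $\mathfrak{h}_p(\nabla)\subseteq\mathfrak{sl}(n,\mathbb{C})$. The real case is analogous via the Cartan decomposition $\mathfrak{sl}(n,\mathbb{R})=\mathfrak{so}(n,\mathbb{R})\oplus(\text{traceless symmetric})$, with irreducibility of $\mathfrak{so}(n,\mathbb{R})$ acting on the traceless symmetric matrices supplied by the isotropy representation of $\textit{SL}(n,\mathbb{R})/\textit{SO}(n,\mathbb{R})$. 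The hard part will be the bookkeeping: carefully extracting $W_1$ from the first covariant derivative so that it lands in the Hermitian (not skew-Hermitian) summand, and checking inductively that every $\nabla^rR$ lifts through $\rho$ to a traceless $W$; once these two technical points are in place, the irreducibility conclusion is essentially formal.
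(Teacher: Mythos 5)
Your proposal is correct and follows the same skeleton as the paper (generation of $\mathfrak{h}_p(\nabla)$ by the endomorphisms $\nabla^rR$ via the Kobayashi--Nomizu version of Ambrose--Singer, reduction to $p=I$, the vanishing prefactor $\frac{\kappa}{2}-\frac{\kappa^2}{4}$ for $\kappa=0,2$, covariant constancy of $R$ for $\kappa=1$ via Theorem~\ref{thmsubsequentdiff}, and tracelessness of the lifts $W$ for the inclusion $\mathfrak{h}_p(\nabla)\subseteq\mathfrak{sl}$), but it closes the crucial spanning step for $\kappa\notin\{0,1,2\}$ differently. The paper proves $\mathfrak{h}_p(\nabla)\supseteq\mathfrak{sl}(n,\mathbb{F})$ by a purely explicit computation: it evaluates the first-order generators $G=-[A_1X,Y]-[X,A_1Y]$ on the basis elements $E^{\pm}_{ik}$ and exhibits all of $E_{zy}$ ($z\neq y$) and $E_{zz}-E_{ii}$ directly, so no representation theory is needed. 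You instead produce a single witness $W_1=2(1-\kappa)c\,(E_{11}-E_{22})$ (your computation of which is correct) and then invoke irreducibility of the isotropy module --- $i\mathfrak{su}(n,\mathbb{C})$ as the adjoint representation of the simple algebra $\mathfrak{su}(n,\mathbb{C})$, respectively the traceless symmetrics under $\mathfrak{so}(n,\mathbb{R})$ --- to sweep out the whole complement; this is cleaner conceptually and makes transparent \emph{why} a single non-skew generator suffices, at the cost of importing the simplicity/irreducibility facts (which do hold for all $n\geq 2$, including the abelian $\mathfrak{so}(2)$ acting irreducibly over $\mathbb{R}$ on the $2$-dimensional space of traceless symmetric $2\times 2$ matrices). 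Two minor points: your general $W_1=-(1-\kappa)c([AX,Y]+[X,AY])$ is generically neither Hermitian nor skew-Hermitian, not ``generically Hermitian'' --- what you actually need, and what your chosen example delivers, is that its Hermitian part is nonzero, the skew part being already absorbed into $\mathfrak{su}$; and your explicit handling of the kernel $i\mathbb{R}I$ of $\rho$ is a point the paper glosses over (it asserts faithfulness in Corollary~\ref{representation}), so your remark that the traceless lifts are the canonical ones is a small improvement in care.
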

\begin{proof}
By the conjugate invariancy of $\mathcal{H}_p(\nabla)$ it is enough to consider the case when $p=I$.

Suppose $\kappa=0,2$. Then the curvature \eqref{covdiffcurv} of the connection vanishes, so $\mathfrak{h}_p(\nabla)$ is the trivial algebra.

Suppose $\kappa=1$. Then the curvature \eqref{covdiffcurv} is nonzero, but is covariantly constant, all first and higher order covariant derivatives vanish due to Theorem~\ref{thmsubsequentdiff}. Therefore the manifold is a symmetric space that is very well known and the algebra $\mathfrak{h}_p(\nabla)$ by \eqref{curvendo} is generated by elements of the form $[X,Y]$ where $X,Y\in\textit{H}(n,\mathbb{F})$. We have for all $[X,Y]=W\in\mathfrak{h}_p(\nabla)$ that
\begin{equation*}
W^*=[X,Y]^*=-[X^*,Y^*]=-[X,Y]
\end{equation*}
where $^*$ can be replaced by the transpose $^T$ over $\mathbb{F}=\mathbb{R}$. Also since $TrW=Tr[X,Y]=0$ we have $\mathfrak{h}_p(\nabla)=\mathfrak{so}(n,\mathbb{R})$ if $\mathbb{F}=\mathbb{R}$ and $\mathfrak{h}_p(\nabla)=\mathfrak{su}(n,\mathbb{C})$ if $\mathbb{F}=\mathbb{C}$.

Suppose $\kappa\neq 0,1,2$. Then by Theorem~\ref{thmsubsequentdiff} the higher order covariant derivatives $\nabla^rR$ (as we will see immediately) no longer vanish. Let $W\in\mathfrak{h}_p(\nabla)$. Then by \eqref{curvendo}, \eqref{subsequentdiff} and Corollary~\ref{representation}
\begin{equation*}
W=\left(\frac{\kappa}{2}-\frac{\kappa^2}{4}\right)(1-\kappa)^{r}D([p^{-1}X,p^{-1}Y])[p][A_1,\ldots,A_r],
\end{equation*}
where $X,Y\in\textit{H}(n,\mathbb{F})$. I.e. $W$ is given by the linear combination of commutators of some n-by-n matrices over the field $\mathbb{F}$, so $TrW=0$. This tells us that
\begin{equation}\label{slsubalgebra}
\mathfrak{h}_p(\nabla)\subseteq\mathfrak{sl}(n,\mathbb{F}).
\end{equation}
Now we will show that the generated algebra already stabilizes for $r=1$. Without loss of generality we can assume that $p=I$. Then we have to consider the generators of the form
\begin{equation}\label{generators}
G=D([p^{-1}X,p^{-1}Y])[I][A_1]=-[A_1X,Y]-[X,A_1Y].
\end{equation}
Let
\begin{equation*}
\begin{split}
E^+_{ik}=&
    \begin{cases}
    E_{ik}+E_{ki}&\text{if $i\neq k$,}\\
    E_ii&\text{if $i=k$,}
    \end{cases}\\
E^-_{ik}=&E_{ik}-E_{ki}
\end{split}
\end{equation*}
where $E_{ik}$ is the matrix with zero entries excluding the $(ik)$ entry which is $1$. Then $E^+_{ik}$ form a basis of $\textit{H}(n,\mathbb{R})$ and $E^-_{ik}$ form a basis of the vector space of skew-Hermitian matrices $\textit{SH}(n,\mathbb{R})$ over the real field $\mathbb{R}$. The vector space $\textit{SH}(n,\mathbb{C})$ is defined similarly over $\mathbb{C}$. Note also that $\textit{H}(n,\mathbb{C})\cong\textit{H}(n,\mathbb{R})\oplus\textit{SH}(n,\mathbb{R})$ and that $E^+_{ik}E^-_{lm}=0$ in general. Suppose that $A_1=E^+_{iz}$, $X=E^+_{ky}$ and $Y=E^+_{ik}$. Then by \eqref{generators}
\begin{equation*}
G=-E^+_{iz}E^+_{ky}E^+_{ik}+E^+_{ik}E^+_{iz}E^+_{ky}-E^+_{ky}E^+_{iz}E^+_{ik}+E^+_{iz}E^+_{ik}E^+_{ky}.
\end{equation*}
Using that $E^+_{ik}=E^+_{ki}$ and imposing restrictions $z\neq k$ and $y\neq i$ we get that
\begin{equation*}
G=
    \begin{cases}
    E_{zy}&\text{if $z\neq y$,}\\
    E_{zz}-E_{ii}&\text{if $y=z$.}
    \end{cases}
\end{equation*}
So the matrices $G$ of this form span the whole $\mathfrak{sl}(n,\mathbb{R})$, i.e. considering \eqref{slsubalgebra} we have $\mathfrak{h}_p(\nabla)=\mathfrak{sl}(n,\mathbb{R})$ if $\mathbb{F}=\mathbb{R}$. Similarly if $A_1=E^-_{iz}$, $X=E^-_{ky}$ and $Y=E^-_{ik}$, then we get the same generator $G$, so $\mathfrak{h}_p(\nabla)=\mathfrak{sl}(n,\mathbb{R})\oplus\mathfrak{sl}(n,\mathbb{R})$ due to $E^+_{ik}E^-_{lm}=0$ if $\mathbb{F}=\mathbb{C}$; that is $\mathfrak{h}_p(\nabla)=\mathfrak{sl}(n,\mathbb{C})$ in the complex case.

\end{proof}

By the proof of the previous Theorem~\ref{holonomies} we see that $R^{s}_{jkl;m}=0$ everywhere if and only if $\kappa=0,1,2$. This proves the following
\begin{corollary}
The only matrix means which are affine means corresponding to symmetric spaces are the arithmetic, harmonic and geometric means.
\end{corollary}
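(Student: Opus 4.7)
The plan is to use the standard characterization that a connected affinely connected manifold is (locally) symmetric if and only if $\nabla R \equiv 0$, together with the explicit formula from Theorem~\ref{thmsubsequentdiff} for iterated covariant derivatives of the curvature tensor. First I would specialize \eqref{subsequentdiff} to $r=1$:
\begin{equation*}
R(X,Y;A_1)Z=(1-\kappa)\,D(R(X,Y)Z)[p][A_1],
\end{equation*}
so that $\nabla R$ factors as the product of the scalar $(1-\kappa)$ with the Fr\'echet differential of the curvature endomorphism in the base-point argument.

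Next I would read off the vanishing of each factor. From \eqref{covdiffcurv} the curvature endomorphism carries the prefactor $\tfrac{\kappa}{2}-\tfrac{\kappa^{2}}{4}=\tfrac{\kappa(2-\kappa)}{4}$, which is zero precisely for $\kappa=0$ and $\kappa=2$; in those two cases $R\equiv 0$, the connection is flat, and trivially $\nabla R\equiv 0$. For $\kappa=1$ we have $R\not\equiv 0$ but the factor $(1-\kappa)$ kills $\nabla R$, recovering the classical symmetric space $\textit{GL}(n,\mathbb{C})/\textit{U}(n,\mathbb{C})$.

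For the remaining range $\kappa\in[0,2]\setminus\{0,1,2\}$ I need to verify that $\nabla R$ does not vanish identically, i.e. that $D(R(X,Y)Z)[p][A_1]$ is not the zero map on $\textit{H}(n,\mathbb{C})^{3}\times\textit{H}(n,\mathbb{C})$. This is exactly the content extracted in the proof of Theorem~\ref{holonomies}: when $A_{1}=E^{+}_{iz}$, $X=E^{+}_{ky}$, $Y=E^{+}_{ik}$ (with suitable index restrictions) the generator $G=-[A_{1}X,Y]-[X,A_{1}Y]$ was shown to produce nonzero matrices of the form $E_{zy}$ or $E_{zz}-E_{ii}$, in particular $\nabla R$ has nonzero components at $p=I$ and hence everywhere by analyticity of $\nabla$. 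So for any $\kappa\neq 0,1,2$ the connection \eqref{invconnection2} fails $\nabla R\equiv 0$ and the manifold is not locally symmetric.

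Finally I would invoke Theorem~\ref{classify}, which tells us that every affine matrix mean arises from a connection \eqref{invconnection2} with $\kappa\in[0,2]$ and that the corresponding two-variable means \eqref{means2} are the weighted arithmetic ($\kappa=0$), geometric ($\kappa=1$) and harmonic ($\kappa=2$) means. Intersecting the classification of affine means with the set of parameters for which $\nabla R\equiv 0$ holds yields exactly these three, proving the corollary. The only mildly nontrivial point is the non-vanishing of $D(R(X,Y)Z)[p]$ for generic $\kappa$, but this has already been carried out inside Theorem~\ref{holonomies} so the present corollary is essentially a bookkeeping consequence.
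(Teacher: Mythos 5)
Your proposal is correct and follows essentially the same route as the paper: the paper's proof is precisely the observation that, by the computations in Theorem~\ref{thmsubsequentdiff} and the proof of Theorem~\ref{holonomies}, the covariant derivative $R^{s}_{jkl;m}$ vanishes identically if and only if $\kappa=0,1,2$, combined with the classification of affine means in Theorem~\ref{classify}. You have merely made explicit the three factors (vanishing of $R$ for $\kappa=0,2$, the $(1-\kappa)$ prefactor for $\kappa=1$, and the nonvanishing generators $G$ otherwise) that the paper leaves implicit.
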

Since we know the holonomy groups we can decide their metrizability.

\begin{corollary}\label{metrizability}
The affine connections \eqref{affineclassify3} are metric in the following cases:
\begin{enumerate}
	\item $n=1,2$, $\kappa$ arbitrary, $\mathbb{F}=\mathbb{R}$ or $\mathbb{C}$,
	\item $n\geq 3$, $\kappa=0,1,2$, $\mathbb{F}=\mathbb{R}$ or $\mathbb{C}$.
\end{enumerate}
\end{corollary}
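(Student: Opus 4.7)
The plan is to use the standard fact that a torsion-free real analytic connection on a simply connected manifold admits a compatible (possibly pseudo-Riemannian) metric if and only if the holonomy group preserves a non-degenerate symmetric bilinear form on any one tangent space. All the cases listed in Theorem~\ref{holonomies} have been computed, and the action of $\mathfrak{h}_p(\nabla)$ on $V=\textit{H}(n,\mathbb{F})$ is given by Corollary~\ref{representation} as $\rho(W)Z = ZW + W^*Z$. So the problem reduces to asking, for each $(n,\kappa,\mathbb{F})$, whether the resulting representation of $\mathfrak{h}_p(\nabla)$ leaves some non-degenerate symmetric bilinear form on $V$ invariant.

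First I would handle the positive cases. For $\kappa=0,2$ the holonomy is trivial and the inner product $\langle X,Y\rangle_I=\operatorname{Tr}(XY)$ at $I$ (transported by parallelism) gives a flat Euclidean metric; the case $\kappa=2$ is already realized by the harmonic-mean metric $\operatorname{Tr}(p^{-2}Xp^{-2}Y)$ mentioned earlier. For $\kappa=1$ the holonomy $\mathfrak{so}(n,\mathbb{R})$ (resp.\ $\mathfrak{su}(n,\mathbb{C})$) is compact, and the trace form at $I$ is invariant; this recovers the classical symmetric-space structure on $\textit{P}(n,\mathbb{C})\cong \textit{GL}(n,\mathbb{C})/\textit{U}(n,\mathbb{C})$. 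For $n=1$ the tangent space is one dimensional and any choice of nonzero scalar form is trivially invariant. For $n=2$ and arbitrary $\kappa$, I would exhibit an invariant non-degenerate quadratic form explicitly: the determinant $\det:\textit{H}(2,\mathbb{F})\to\mathbb{R}$. Indeed the infinitesimal action $\rho(W)$ integrates to $Z\mapsto e^{W^*}Ze^{W}$, and since $W\in\mathfrak{sl}(2,\mathbb{F})$ implies $\det(e^W)=1$, one gets $\det(e^{W^*}Ze^W)=\det(Z)$. On $\textit{H}(2,\mathbb{R})$ this is a quadratic form of signature $(2,1)$ and on $\textit{H}(2,\mathbb{C})$ it is the Minkowski form of signature $(3,1)$; in both cases the polarization is a non-degenerate invariant symmetric bilinear form, so the connection is (pseudo)metric.

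The main obstacle is the negative direction: I have to show that for $n\ge 3$ and $\kappa\notin\{0,1,2\}$ no invariant non-degenerate bilinear form exists. The strategy is representation-theoretic. By Theorem~\ref{holonomies} the holonomy algebra is $\mathfrak{sl}(n,\mathbb{F})$, and Corollary~\ref{representation} identifies its action on $V=\textit{H}(n,\mathbb{F})$ with the natural symmetric-square-type representation $Z\mapsto A^*ZA$ at the group level. An invariant symmetric bilinear form on $V$ is the same as an $\mathrm{SL}(n,\mathbb{F})$-equivariant map $V\to V^*$. I would check (using the standard highest-weight calculation over $\mathbb{C}$) that $V_{\mathbb{C}}\cong\operatorname{Sym}^2(\mathbb{C}^n)$ is an irreducible $\mathrm{SL}(n,\mathbb{C})$-module with highest weight $2\omega_1$, while $V_{\mathbb{C}}^*$ has highest weight $2\omega_{n-1}$; these are isomorphic only when $\omega_1=\omega_{n-1}$, i.e. when $n\le 2$. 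For $n\ge 3$ therefore $V\not\cong V^*$, so by Schur's lemma the only equivariant map $V\to V^*$ is zero, and there is no invariant non-degenerate bilinear form. This completes the classification, and the two lists in the statement exhaust exactly the metric cases.
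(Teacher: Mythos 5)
Your argument is correct in substance but takes a noticeably different, and more self-contained, route than the paper. The paper's proof is essentially a citation: it invokes the Merkulov--Schwachh\"ofer classification of irreducible holonomies (and Berger's list of metric ones), notes the exceptional isomorphism $\mathfrak{sl}(2,\mathbb{F})\cong\mathfrak{sp}(2,\mathbb{F})$ to explain why $n=2$ is metric, and asserts that the representation of $\mathfrak{sl}(n,\mathbb{F})$ on $\textit{H}(n,\mathbb{F})$ is non-metric for $n\geq 3$. You instead make both halves explicit: for $n=2$ you exhibit the invariant form directly as the determinant on $\textit{H}(2,\mathbb{F})$ (which is exactly the concrete content of the $\mathfrak{sl}_2\cong\mathfrak{sp}_2$ coincidence --- $\operatorname{Sym}^2$ of the standard symplectic representation carries the invariant discriminant form), and for $n\geq 3$ you give a Schur's-lemma argument showing $V\not\cong V^*$, which rules out \emph{any} invariant non-degenerate bilinear form rather than just symmetric ones. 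What your approach buys is independence from the classification tables and an explicit description of the (pseudo-Riemannian, indefinite-signature) metrics in the $n=2$ case; what the paper's approach buys is brevity. One small imprecision to repair: the identification $V_{\mathbb{C}}\cong\operatorname{Sym}^2(\mathbb{C}^n)$ with highest weight $2\omega_1$ is literally correct only for $\mathbb{F}=\mathbb{R}$. For $\mathbb{F}=\mathbb{C}$ the group acts on $\textit{H}(n,\mathbb{C})$ by $Z\mapsto A^*ZA$, which is not complex-linear in $A$; the tangent space is a real form of $V\otimes\overline{V}$ (Hermitian forms), and you need the analogous observation that this real representation of $\mathrm{SL}(n,\mathbb{C})$ is not self-dual for $n\geq 3$ (it reduces to $V\cong V^*$ for the standard representation, which again holds only for $n=2$). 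With that adjustment the conclusion goes through in both cases.
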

\begin{proof}
The case $n=1$ is trivial. In \cite{merkulovschwachhofer} all irreducible holonomies of affine connections are classified and metrizability is also dicussed. The metric connections were classified by Berger long ago. The holonomy $\mathfrak{sl}(2,\mathbb{R})$ is isomorphic to $\mathfrak{sp}(2,\mathbb{R})$ which is metric, there exists an invariant symplectic form. Also $\mathfrak{sl}(2,\mathbb{C})$ is isomorphic to $\mathfrak{sp}(2,\mathbb{C})$ there exists an invariant symplectic form. This isomorphic correspondence fails in higher dimensions $n\geq 3$, where the holonomies $\mathfrak{sl}(n,\mathbb{F})$ ($\mathbb{F}=\mathbb{R}$ or $\mathbb{C}$) with representation over $\textit{H}(n,\mathbb{F})$ is non-metric.

\end{proof}

\begin{remark}
In the second case in Corollary~\ref{metrizability} although there exists no metric structure, however by inspecting the holonomy group $\mathcal{H}_p(\nabla)$ we get that there exist totally geodesic flat submanifolds. That is if we consider the subset $\textit{D}(n,\mathbb{F})$ of diagonal matrices of $\textit{H}(n,\mathbb{F})$ in both cases $\mathbb{F}=\mathbb{R}$ or $\mathbb{C}$, we get a totally geodesic Euclidean submanifold and a Riemannian metric on $\textit{D}(n,\mathbb{F})$ is given in the form
\begin{equation*}
Tr\left\{p^{-2\kappa}\log^2_p(q)\right\}
\end{equation*}
where $\log_p(q)$ is the logarithm map given in \eqref{mappingsaff}.
\end{remark}

So there exist no previously unknown affine matrix mean which correspond to a Riemannian manifold. Although we have found a previously unknown, generally non-metrizable, one parameter family of affinely connected manifolds where the points of the geodesics are matrix means, in particular matrix power means.



\end{document}